\tikzset{
  .../.tip={[sep=2pt 1]
    Round Cap[]. Circle[length=0.5pt 1] Circle[length=0.5pt 1] Circle[length=0.5pt 1, sep=2pt]}}
\tikzset{%
  half dotted/.style={
    decoration={show path construction, 
      lineto code={
          \draw[#1] (\tikzinputsegmentfirst) --($(\tikzinputsegmentfirst)!.3!(\tikzinputsegmentlast)$);,
          \draw[loosely dotted,-,#1] ($(\tikzinputsegmentfirst)!.5!(\tikzinputsegmentlast)$)--(\tikzinputsegmentlast);,
      }
    },
    decorate
  },
}
\newcommand{\REG}{{\rm REG}}
\newcommand{\SING}{{\rm SING}}
\newcommand{\GCH}{{\rm GCH}}
\newcommand{\ZFC}{{\rm ZFC}}
\newcommand{\ORD}{\mathop{{\rm ORD}}}
\renewcommand{\emptyset}{\varnothing}
\renewcommand{\P}{{\mathbb P}}
\newcommand{\Q}{{\mathbb Q}}
\newcommand{\R}{{\mathcal R}}
\newcommand{\I}{{\mathcal I}}
\newcommand{\Ult}{\mathop{\rm Ult}}
\newcommand{\forces}{\Vdash}
\newcommand{\restrict}{\upharpoonright}
\newcommand{\concat}{\mathbin{{}^\smallfrown}}
\newcommand{\<}{\langle}
\renewcommand{\>}{\rangle}
\newcommand{\st}{\mid}
\newcommand{\defn}{\mathop{\rm def}}
\newcommand{\ran}{\mathop{\rm ran}}
\newcommand{\ot}{\mathop{\rm ot}\nolimits}
\newcommand{\id}{\mathop{\rm id}}
\newcommand{\crit}{\mathop{\rm crit}}
\newcommand{\Lev}{\mathop{\rm Lev}}
\newcommand{\gp}{\mathop{\rm \Gamma}}
\newcommand{\NS}{{\mathop{\rm NS}}}
\newcommand{\Tr}{{\mathop{\rm Tr}}}
\newcommand{\Refl}{{\mathop{\rm Refl}}}
\newcommand{\NSS}{{\mathop{\rm NSS}}}
\newcommand{\NSub}{{\mathop{\rm NSub}}}
\newcommand{\NPreRam}{{\mathop{\rm NPreRam}}}
\newcommand{\diagonalunion}{\bigtriangledown}
\renewcommand{\and}{\mathop{\&}}
\newtheorem{theorem}{Theorem}[section]
\newtheorem{lemma}[theorem]{Lemma}
\newtheorem{corollary}[theorem]{Corollary}
\newtheorem{proposition}[theorem]{Proposition}
\theoremstyle{definition}
\newtheorem{question}[theorem]{Question}
\newtheorem{remark}[theorem]{Remark}
\newtheorem{definition}[theorem]{Definition}
\thanks{The author would like to thank Sean Cox, Monroe Eskew, Victoria Gitman and Chris Lambie-Hanson for many helpful conversations regarding the topics of this article. The author also thanks the anonymous referee for the detailed review which greatly improved this article.}
\subjclass[2000]{03E35, 03E55}
\date{\today}
\begin{document}

\title{Large cardinal ideals}

\author[Brent Cody]{Brent Cody}
\address[Brent Cody]{ 
Virginia Commonwealth University,
Department of Mathematics and Applied Mathematics,
1015 Floyd Avenue, PO Box 842014, Richmond, Virginia 23284, United States
} 
\email[B. ~Cody]{bmcody@vcu.edu} 
\urladdr{http://www.people.vcu.edu/~bmcody/}

\begin{abstract}
Building on work of Holy, L\"ucke and Njegomir \cite{MR3913154} on small embedding characterizations of large cardinals, we use some classical results of Baumgartner (see \cite{MR0384553} and \cite{MR0540770}), to give characterizations of several well-known large cardinal ideals, including the Ramsey ideal, in terms of generic elementary embeddings; we also point out some seemingly inherent differences between small embedding and generic embedding characterizations of subtle cardinals. Additionally, we present a simple and uniform proof which shows that, when $\kappa$ is weakly compact, many large cardinal ideals on $\kappa$ are nowhere $\kappa$-saturated. Lastly, we survey some recent consistency results concerning the weakly compact ideal as well as some recent results on the subtle, ineffable and $\Pi^1_1$-indescribable ideals on $P_\kappa\lambda$, and we close with a list of open questions.
\end{abstract}

\subjclass[2010]{Primary 03E55; Secondary 03E02, 03E05}

\keywords{}

\maketitle

\tableofcontents



\section{Introduction}\label{section_introduction}

Baumgartner (see \cite{MR0384553} and \cite{MR0540770}) showed that many large cardinal properties can also be viewed as properties of subsets of cardinals and not just of the cardinals themselves, and this leads naturally to a consideration of ideals associated to large cardinals. For example, a set $X\subseteq\kappa$ is \emph{Ramsey} if for every function $f:[X]^{<\omega}\to \kappa$ with $f(a)<\min(a)$ for all $a\in[X]^{<\omega}$, there is a set $H\subseteq X$ of size $\kappa$ which is \emph{homogeneous} for $f$, meaning that $f\restrict[H]^n$ is constant for all $n<\omega$. Baumgartner showed that if $\kappa$ is a Ramsey cardinal then the collection of non-Ramsey subsets of $\kappa$ is a nontrivial normal ideal on $\kappa$. Similarly, one can define normal ideals associated to indescribability, subtlety, ineffability and many other large cardinal notions. In fact, Baumgartner showed that certain characterizations of almost ineffability, ineffability and Ramseyness require the consideration of large cardinal ideals (for example, see Remark \ref{remark_baumgartners_characterizations_of_ineffability} below).

It is a well-known and often-used fact that the stationarity of a set $S\subseteq\kappa$ can be characterized in terms of certain types of elementary embeddings. For example, a set $S\subseteq\kappa$ is stationary if and only if there is some forcing $\P$ such that whenever $G\subseteq\P$ is generic there is, in $V[G]$, an elementary embedding 
\[j:(V,\in)\to (M,\in^M)\subseteq V[G]\] with critical point $\kappa$ where $M$ is well-founded up to $\kappa^+$ such that $\kappa\in j(S)$. Recall that such elementary embeddings are obtained from stationary sets by using generic ultrapowers. Alternatively, $S\subseteq\kappa$ is stationary if and only if there is a nontrivial elementary embedding $j:M\to H(\kappa^+)$ such that $M$ is transitive of size less than $\kappa$, $j(\crit(j))=\kappa$, $S\in\ran(j)$ and $\crit(j)\in S$. Recall that such embeddings are obtained from stationary sets by taking the inverses of certain transitive collapse maps. Inspired by the work of Holy-L\"ucke-Njegomir \cite{MR3913154}, in this article we address the question: to what extent can these elementary embedding characterizations of the nonstationary ideal be generalized to large cardinal ideals? 

In Section \ref{section_preliminaries}, we cover some preliminaries, including a few basic properties of ideals we will need later on, as well as some facts concerning elementary embedding characterizations of stationary sets.

In Section \ref{section_embeddings}, we show that both the generic embedding and the transitive collapse embedding characterizations of stationarity can be generalized to many large cardinal ideals including the subtle, ineffable and Ramsey ideals. Let us emphasize that for some large cardinal notions, such as subtlety, inherent differences emerge between generic embedding and transitive collapse embedding characterizations (see Proposition \ref{proposition_subtle_embedding}, Remark \ref{remark_subtle_difference} and Proposition \ref{remark_subtle_difference} below).

In Section \ref{section_splitting}, we provide a simple and uniform proof of a folklore result, which states that Solovay's splitting theorem for stationary sets can be generalized to many large cardinal ideals on $\kappa$ including the $\Pi^1_n$-indescribable ideal, the almost ineffable ideal, the ineffable ideal, the Ramsey ideal and others, when these ideals are nontrivial. Specifically, we show that if $\kappa$ is weakly compact and $I$ is a normal ideal on $\kappa$ which is definable over $H(\kappa^+)$, then $I$ is nowhere $\kappa$-saturated.\footnote{The author would like to thank Sean Cox for pointing out this result and the included proof.} Let us emphasize that this result is essentially folklore, although it may not have been widely known previously since Hellsten states the result only for the $\Pi^1_n$-indescribable ideals \cite[Theorem 2]{MR2653962} and Foreman gives a different proof for the weakly compact ideal which uses generic embeddings \cite[Proposition 6.4]{MR2768692}.

In Section \ref{section_consistency_results}, we give a survey of some consistency results concerning the weakly compact ideal. We discuss a result of Hellsten \cite{MR2653962} on the saturation of the weakly compact ideal, as well as some results due to the author \cite{MR3985624} and Cody-Sakai \cite{MR4050036} on the weakly compact reflection principle. We also state a few theorems due to Gitman, Cody and Lambie-Hanson \cite{CodyGitmanLambieHanson} regarding forcing a $\square(\kappa)$-like principle to hold at a weakly compact cardinal.

In Section \ref{section_p_kappa_lambda}, we give a survey of selected results involving large cardinal ideals on $P_\kappa\lambda$. For example, we discuss two-cardinal versions of indescribability, subtlety and ineffability.

Finally, in Section \ref{section_questions}, we state several open questions.

\section{Preliminaries}\label{section_preliminaries}

\subsection{Basic terminology and facts about ideals}

An \emph{ideal} $I$ on a cardinal $\kappa$ is a collection of subsets of $\kappa$ which is closed under finite unions and closed under subsets. If $\kappa$ is a cardinal and $I$ is an ideal on $\kappa$ then $I^+=\{X\subseteq\kappa\st X\notin I\}$ is the collection of $I$-positive sets, $I^*=\{X\subseteq\kappa\st\kappa\setminus X\in I\}$ is the filter dual to $I$. If $S\in I^+$ then $I\restrict S=\{X\subseteq\kappa\st X\cap S\in I\}$ is an ideal on $\kappa$ extending $I$ and notice that $S\in (I\restrict S)^*$. An ideal $I$ on $\kappa$ is \emph{normal} if the collection $I^+$ satisfies the Fodor property: for all $S\in I^+$ whenever $f:S\to\kappa$ satisfies $f(\alpha)<\alpha$ for all $\alpha\in S$, then there is an $H\in P(S)\cap I^+$ such that $f\restrict H$ is constant. Equivalently, $I$ is normal if it is closed under diagonal unions, that is, whenever $\{X_\alpha\st\alpha<\kappa\}\subseteq I$ we have $\bigtriangledown_{\alpha<\kappa}X_\alpha:=\{\beta<\kappa\st \beta\in\bigcup_{\alpha<\beta}X_\alpha\}\in I$.

In Section \ref{section_embeddings} below, we will be concerned with showing that certain large cardinal ideals are obtained by taking the ideal generated by a union of some other large cardinal ideals. Given a family $\mathcal{A}\subseteq P(\kappa)$ of subsets of $\kappa$, the \emph{ideal generated by $\mathcal{A}$} is defined to be
\[\overline{A}=\{X\subseteq\kappa\st (\exists\mathcal{B}\in[\mathcal{A}]^{<\omega}) X\subseteq\bigcup\mathcal{B}\}.\]
the collection of all subsets of $\kappa$ which are contained in some union of finitely many sets from $\mathcal{A}$. In what follows we will repeatedly use both of the following observations.

\begin{remark}
It is easy to see that if $J$ and $K$ are ideals on a cardinal $\kappa$, then the ideal on $\kappa$ generated by $J\cup K$ is the collection of all $X\subseteq\kappa$ which can be written as a disjoint union of a set from $J$ and a set from $K$. That is, 
\[\overline{J\cup K}=\{X\subseteq \kappa\st(\exists A\in J)(\exists B\in K)(X=A\cup B \text{ and } A\cap B=\emptyset)\}.\]
\end{remark}

\begin{remark}\label{remark_ideal_containment}
Suppose $I_0$, $I_1$ and $J$ are ideals on $\kappa$. If we want to prove that $J=\overline{I_0\cup I_1}$, part of what we must show is that $J\supseteq\overline{I_0\cup I_1}$, or equivalently $J^+\subseteq\overline{I_0\cup I_1}^+$. Notice that we may obtain a chain of equivalences directly from the definitions involved:
\begin{align*}
J^+\subseteq\overline{I_0\cup I_1}^+ &\iff \overline{I_0\cup I_1}\subseteq J \\
	&\iff I_0\cup I_1\subseteq J\\
	&\iff J^+\subseteq I_0^+\cap I_1^+.
\end{align*}
In what follows, in order to prove that the property $J^+\subseteq\overline{I_0\cup I_1}^+$ (or equivalently the property $J\supseteq\overline{I_0\cup I_1}$) holds for various ideals, we will prove $J^+\subseteq I_0^+\cap I_1^+$ and include a reference to this remark.
\end{remark}

Given an ideal $I$ on $\kappa$, we write $P(\kappa)/I$ to denote the usual atomless\footnote{We write $P(\kappa)/I$ when we really mean $P(\kappa)/I-\{[\emptyset]\}$.} boolean algebra obtained from $I$. If $G$ is $(V,P(\kappa)/I)$-generic, then we let $U_G$ be the canonical $V$-ultrafilter obtained from $G$ extending the dual filter $I^*$. The appropriate version of {\L}os's Theorem can be easily verified, and thus we obtain a canonical generic ultrapower embedding $j:V\to V^\kappa/U_G$ in $V[G]$. If $I$ is a normal ideal then the generic ultrafilter $U_G$ is $V$-normal and the critical point of the corresponding, possibly illfounded, generic ultrapower $j:V\to V^\kappa/U_G\subseteq V[G]$ is $\kappa$. The following lemma is a standard tool for working with generic ultrapowers (see \cite[Lemma 22.14]{Jech:Book} or \cite[Section 2]{MR2768692} for more details).

\begin{lemma}
Suppose $I$ is a normal ideal on $\kappa$. Let $G$ be $(V,P(\kappa)/I)$-generic and $j:V\to V^\kappa/U_G$ be the corresponding generic elementary embedding and let $E$ be the ultrapower of the $\in$ relation. Then
\begin{enumerate}
\item $E$ is wellfounded on the ordinals up to $\kappa^+$,
\item $\kappa=[id]_{U_G}$,
\item for all $X\in P(\kappa)^V$ we have $X\in U_G$ if and only if $\kappa E j(X)$ and 
\item for all $f:\kappa\to V$ with $f\in V$ we have $[f]_{U_G}=j(f)(\kappa)$.
\end{enumerate}
\end{lemma}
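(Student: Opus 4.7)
My plan is to handle each item by unpacking the definition of the generic ultrapower and applying \L o\'s's theorem together with the $V$-normality of $U_G$ inherited from the normality of $I$. Items (2), (3), and (4) reduce to short computations; the substantive content is in (1).

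For (2), normality of $I$ implies every bounded subset of $\kappa$ lies in $I$, so for each $\alpha < \kappa$ the set $\{\xi < \kappa : \xi > \alpha\}$ lies in $U_G$, giving $\alpha = j(\alpha)\, E\, [\id]_{U_G}$. Conversely, if $[f]_{U_G}\, E\, [\id]_{U_G}$, then $f$ is regressive on a $U_G$-set, and $V$-normality of $U_G$ collapses $f$ to a constant $\alpha < \kappa$ on a $U_G$-set, whence $[f]_{U_G} = \alpha$. Thus $[\id]_{U_G} = \kappa$. For (3), $\kappa\, E\, j(X) = [c_X]_{U_G}$ unpacks via \L o\'s and (2) to $X = \{\xi : \xi \in X\} \in U_G$. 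For (4), the canonical embedding sends $f$ to $[c_f]_{U_G}$, so $j(f)(\kappa) = [c_f]_{U_G}([\id]_{U_G}) = [\xi \mapsto f(\xi)]_{U_G} = [f]_{U_G}$ by \L o\'s.

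For (1), suppose toward a contradiction that $\langle [f_n]_{U_G} : n < \omega\rangle \in V[G]$ is an $E$-descending sequence of ordinals in the ultrapower all $E$-below $j(\gamma)$ for some common $\gamma < \kappa^+$; such a common bound exists by taking a supremum, since $\kappa^+$ is regular. After choosing representatives $f_n \in V$ with $\ran(f_n) \subseteq \gamma$, the sets $A_n := \{\xi < \kappa : f_{n+1}(\xi) < f_n(\xi)\}$ lie in $V$ and, by \L o\'s, in $U_G$. Any $\xi \in \bigcap_n A_n$ would yield an infinite descending sequence $f_0(\xi) > f_1(\xi) > \cdots$ of ordinals in $\gamma$, impossible; so it suffices to show $\bigcap_n A_n \neq \emptyset$.

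The main obstacle is that $\langle A_n : n < \omega\rangle$ a priori lives in $V[G]$ and not in $V$, so the $\sigma$-completeness of $I^*$ (a consequence of $\kappa$-completeness of $I$, which in turn follows from normality) cannot be applied directly to the intersection. The resolution is a Boolean-valued argument in the $\sigma$-complete Boolean algebra $P(\kappa)/I$: using the maximum principle in $V$, one extracts a $V$-sequence of names $\dot f_n$ for the entries of the descending sequence; since $G$ is $V$-generic for a $\sigma$-complete Boolean algebra, $G$ is closed under $V$-indexed countable meets, and a Boolean computation shows that any condition forcing the existence of the descending sequence must already force $\bigcap_n \dot A_n \neq \emptyset$, producing the contradiction. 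This Boolean refinement is the delicate step, carried out explicitly in \cite[Lemma 22.14]{Jech:Book}.
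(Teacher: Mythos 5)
First, a point of comparison: the paper does not actually prove this lemma; it states it as a standard fact and cites Jech and Foreman's handbook chapter, so there is no in-paper argument to measure you against. Your treatments of (2), (3) and (4) are the standard \L o\'s computations and are correct (for (2) one also needs $j(\alpha)=\alpha$ for $\alpha<\kappa$, which follows from the $\kappa$-completeness of $U_G$ with respect to sequences lying in $V$ --- itself a genericity argument of the same kind as the one you invoke for $V$-normality).

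The argument for (1) has a genuine gap, and the approach cannot be repaired. Notice that your sketch never uses the hypothesis $\gamma<\kappa^+$ (equivalently $|\gamma|\le\kappa$): you only use that the $f_n$ have range in a fixed ordinal $\gamma$ and that the sets $A_n$ lie in $U_G$. If the concluding ``Boolean computation'' --- that any condition forcing the existence of the descending sequence already forces $\bigcap_n\dot A_n\neq\emptyset$ --- were valid, then the identical argument with $\gamma$ an arbitrary ordinal would show that the generic ultrapower of \emph{every} normal ideal is wellfounded, i.e.\ that every normal ideal is precipitous; this is false (for instance, in $L$ there are no precipitous ideals on $\omega_1$, while $\NS_{\omega_1}$ is normal). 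The concrete failure point is that a condition forcing the existence of such a sequence need not decide the names $\dot f_n$; different generics below it yield different sequences $\<A_n\st n<\omega\>$, and the closure of $U_G$ under countable intersections applies only to sequences that lie in $V$. No Boolean manipulation forces a single countable intersection taken in $V$ to be nonempty, and Jech's Lemma 22.14, to which you defer this step, does not contain such an argument. Relatedly, your reduction to ``all $[f_n]$ $E$-below $j(\gamma)$'' already aims at the wrong statement: for $\gamma\ge\kappa$ the $E$-predecessors of $j(\gamma)$ include every class $[f]$ with $f\in{}^\kappa\kappa\cap V$, and these can be illfounded for a non-precipitous normal ideal. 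What ``wellfounded up to $\kappa^+$'' asserts, and what the cited proofs establish, is that the canonical functions $h_\gamma$ (where $h_\gamma(\xi)=\ot(e_\gamma[\xi])$ for a surjection $e_\gamma:\kappa\to\gamma$) represent an $E$-increasing \emph{initial segment} of the $M$-ordinals of order type $\kappa^+$: one shows that every $[g]\mathrel{E}[h_\gamma]$ equals $[h_\delta]$ for some $\delta<\gamma$, by pressing down the regressive function $\xi\mapsto\min\{\eta<\xi\st e_\gamma(\eta)\text{ is the }g(\xi)\text{-th element of }e_\gamma[\xi]\}$ --- this is where normality and genericity enter, exactly as in your proof of (2), and where the hypothesis $|\gamma|\le\kappa$ is indispensable. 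No countable-intersection argument appears, or is available.
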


\begin{definition}
When we say \emph{there is a generic elementary embedding $j:V\to M\subseteq V[G]$} we mean that there is some forcing poset $\P$ such that whenever $G$ is $(V,\P)$-generic then, in $V[G]$, there are definable classes $M$, $E$ and $j$ such that $j:(V,\in)\to (M,E)$ is an elementary embedding, where $(M,E)$ is possibly not wellfounded.
\end{definition}

\subsection{Small embeddings}

Suppose $\kappa$ is a regular uncountable cardinal. By iteratively taking Skolem hulls, one can build an elementary substructure $X\prec H(\kappa^+)$ such that $\kappa\in X$, $X\cap\kappa\in \kappa$ and $|X|<\kappa$. Let $\pi: X\to M$ be the transitive collapse of $X$ and let $j=\pi^{-1}$. Then $j:M\to H(\kappa^+)$ is an elementary embedding with $j(\crit(j))=\kappa$. Holy, L\"ucke and Njegomir \cite{MR3913154} used such embeddings to give new characterizations of several large cardinal notions, including subtlety, $\Pi^1_n$-indescribability, ineffability, measurability and $\lambda$-supercompactness, and to give new proofs of results of Christoph Weiss \cite{MR2959668} on the consistency strength of certain generalized tree properties.


\begin{definition}[Holy, L\"ucke, Njegomir \cite{MR3913154}]
Given cardinals $\kappa<\theta$, we say that a non-trivial elementary embedding $j:M\to H(\theta)$ is a \emph{small embedding for $\kappa$} if $M\in H(\theta)$ is transitive and $j(\crit(j))=\kappa$.
\end{definition}

\subsection{Embedding characterizations of stationarity}

In this section we give two (folklore) characterizations of stationary subsets of a cardinal $\kappa$ in terms of elementary embeddings, and we discuss several modest generalizations of some results of \cite{MR3913154}.

\begin{proposition}[Folklore]\label{proposition_stationarity_characterizations}
Suppose $\kappa>\omega$ is a regular cardinal. The following are equivalent.
\begin{enumerate}
\item $S\subseteq\kappa$ is stationary.
\item There is a generic elementary embedding $j:V\to M\subseteq V[G]$ with critical point $\kappa$ such that $\kappa \in j(S)$.
\item There is a small embedding $j:M\to H(\kappa^+)$ for $\kappa$ such that $S\in\ran(j)$ and $\crit(j)\in S$.
\end{enumerate}
\end{proposition}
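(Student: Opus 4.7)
My plan is to prove the cycle as the two separate equivalences $(1) \iff (2)$ and $(1) \iff (3)$; in each pair the forward direction is a standard construction (a generic ultrapower or a Skolem hull), while the reverse direction combines elementarity with closure of clubs.

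For $(1) \Rightarrow (2)$, when $S$ is stationary the ideal $\NS_\kappa \restrict S$ is a nontrivial normal ideal on $\kappa$ with $S$ in its dual filter, so forcing with $P(\kappa)/(\NS_\kappa \restrict S)$ and invoking the generic ultrapower lemma above yields $j : V \to M \subseteq V[G]$ with $\crit(j) = \kappa$ and $\kappa \in j(S)$, because $S$ lies in the generic ultrafilter $U_G$. For $(2) \Rightarrow (1)$, given a club $C \subseteq \kappa$ in $V$, the fact that $j$ fixes each ordinal below $\kappa$ gives $j(C) \cap \kappa = C$, so $\kappa$ is an $M$-limit of $j(C)$ below $j(\kappa)$; since ordinals up to $\kappa^+$ are wellfounded in the ultrapower, elementarity forces $\kappa \in j(C)$. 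Combining with $\kappa \in j(S)$ yields $\kappa \in j(C \cap S)$, hence $C \cap S \neq \emptyset$ by elementarity.

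For $(1) \Rightarrow (3)$, I would fix a Skolem function $F$ for the structure $(H(\kappa^+), \in, S, \kappa)$ and note that the set $D$ of $\alpha < \kappa$ satisfying $\Sk^F(\alpha) \cap \kappa = \alpha$ is a club in $\kappa$; stationarity of $S$ then yields some $\alpha \in D \cap S$, and $X := \Sk^F(\alpha)$ has cardinality less than $\kappa$, contains $S$ and $\kappa$, and satisfies $X \cap \kappa = \alpha$, so the inverse of the transitive collapse $\pi : X \to M$ is a small embedding $j = \pi^{-1}$ for $\kappa$ with $S \in \ran(j)$ and $\crit(j) = \alpha \in S$. The subtlest direction is $(3) \Rightarrow (1)$: given a single small embedding $j : M \to H(\kappa^+)$ with $\alpha := \crit(j) \in S$ and $X := \ran(j)$, an external club $C \subseteq \kappa$ witnessing non-stationarity of $S$ need not belong to $X$, so the usual ``limit point inside $X$'' trick does not apply directly. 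My fix is to reflect non-stationarity itself: if $S$ were non-stationary, then the $\Sigma_1$ statement ``there exists a club $D \subseteq \kappa$ with $D \cap S = \emptyset$'' holds in $H(\kappa^+)$ with parameter $S \in X$, so by elementarity it holds in $X$, producing a genuine club $\bar{C} \in X$ disjoint from $S$. Since $\bar{C} \in X$ is unbounded in $\kappa$, the ordinal $\alpha = X \cap \kappa$ is a limit of $\bar{C}$ and hence lies in $\bar{C}$ by closure, contradicting $\alpha \in S$.
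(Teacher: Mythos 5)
Your proof is correct and follows essentially the same route as the paper's: a generic ultrapower of $\NS_\kappa\restrict S$ for $(1)\Rightarrow(2)$, the limit-point-of-$j(C)$ argument for $(2)\Rightarrow(1)$, a Skolem-hull construction (the paper uses a continuous elementary chain, which is the same device) for $(1)\Rightarrow(3)$, and reflecting the non-stationarity of $S$ to obtain a club disjoint from $S$ inside the hull for $(3)\Rightarrow(1)$. The only differences are cosmetic --- you work in $\ran(j)$ where the paper pulls the club back to $M$ and pushes it forward again --- and your write-up is, if anything, more explicit than the paper's about why $\kappa\in j(C)$.
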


\begin{proof}
Let us show that (1) and (2) are equivalent. If $S\subseteq\kappa$ is stationary, let $G$ be $(V,P(\kappa)/(\NS_\kappa\restrict S))$-generic, let $U_G$ be the generic ultrafilter obtained from $G$ and let $j:V\to M=V^\kappa/U_G$ be the corresponding generic ultrapower embedding. Since $U_G$ is a $V$-normal $V$-ultrafilter, the critical point of $j$ is $\kappa$ and since $S\in U_G$ we have $\kappa \in j(S)$. Conversely, suppose there is a generic elementary embedding $j:V\to M\subseteq V[G]$ with critical point $\kappa$ such that $\kappa \in j(S)$. If $C\subseteq\kappa$ is a club in $V$, then $\kappa \in j(S\cap C)$ and by elementarity $S\cap C\neq\emptyset$.

Next we show that (1) and (3) are equivalent.  Suppose $S\subseteq\kappa$ is stationary. Let $\<X_\alpha\st\alpha<\kappa\>$ be a continuous increasing elementary chain of submodels of $H(\kappa^+)$ each of cardinality less than $\kappa$ such that $S\in X_0$, $\alpha\subseteq X_\alpha\cap\kappa\in \kappa$ for all $\alpha<\kappa$. Since $S$ is stationary, there is an $\alpha<\kappa$ such that $\alpha=X_\alpha\cap\kappa\in S$. Let $j:M\to H(\kappa^+)$ be the inverse of the Mostowski collapse of $X_\alpha$. Then $j$ is a small embedding for $\kappa$ with $S\in \ran(j)$ and $\crit(j)=X_\alpha\cap\kappa=\alpha$. Conversely, suppose $j:M\to H(\kappa^+)$ is a small embedding for $\kappa$ such that $S\in\ran(j)$ and $\crit(j)\in S$. Assume $S$ is not stationary in $\kappa$. Then $S$ is not a stationary subset of $\kappa$ in $H(\kappa^+)$ and by elementarity it follows that in $M$ there is a club $C\subseteq\crit(j)$ such that $C\cap j^{-1}(S)=\emptyset$. Again by elementarity, $j(C)\cap S=\emptyset$, but this is impossible since $\crit(j)\in j(C)\cap S$.
\end{proof}

\begin{proposition}[Folklore]\label{proposition_formula}
Given an $\mathcal{L}_\in$-formula $\varphi(v_0,v_1)$, the following statements are equivalent for every cardinal $\kappa$ and every set $x$.
\begin{enumerate}
\item $\kappa$ is regular and uncountable and $\{\alpha<\kappa\st \varphi(\alpha,x)\}$ is stationary in $\kappa$.
\item There is a generic elementary embedding $j:V\to M\subseteq V[G]$ with critical point $\kappa$ such that $M\models$ $\varphi(\kappa,j(x))$.
\item There is a small embedding $ j:M\to H(\kappa^+)$ for $\kappa$ such that $\varphi(\crit( j),x)$ holds and $x\in \ran( j)$.
\end{enumerate}
\end{proposition}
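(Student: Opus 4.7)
The plan is to reduce Proposition \ref{proposition_formula} directly to Proposition \ref{proposition_stationarity_characterizations} by taking $S=\{\alpha<\kappa : \varphi(\alpha,x)\}$ and using elementarity to translate between ``$\kappa\in j(S)$'' and ``$M\models\varphi(\kappa,j(x))$'' in (2), and between ``$\crit(j)\in S$'' and ``$\varphi(\crit(j),x)$'' in (3).

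For (1)$\iff$(2), I would first observe that for any nontrivial elementary embedding $j:V\to M$ (even into a possibly illfounded $M$) with critical point $\kappa$, a standard argument forces $\kappa$ to be a regular uncountable cardinal in $V$: if $f:\mu\to\kappa$ were cofinal with $\mu<\kappa$, then $j(f)(\alpha)=f(\alpha)<\kappa$ for all $\alpha<\mu$, contradicting the cofinality of $j(f)$ in $j(\kappa)>\kappa$. Hence the regularity clause of (1) is automatic from (2). Given this, elementarity yields $j(S)=\{\alpha<j(\kappa) : M\models\varphi(\alpha,j(x))\}$, so $\kappa\in j(S)$ is equivalent to $M\models\varphi(\kappa,j(x))$, and the equivalence then follows immediately from the corresponding equivalence (1)$\iff$(2) of Proposition \ref{proposition_stationarity_characterizations}.

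For (1)$\iff$(3), I would adapt the Skolem-hull construction used in the proof of Proposition \ref{proposition_stationarity_characterizations} by additionally requiring $x\in X_0$, so that after inverting the transitive collapse one obtains a small embedding $j:M\to H(\kappa^+)$ with $x\in\ran(j)$ and $\crit(j)\in S$. Writing $x^*=j^{-1}(x)$ and $S^*=\{\alpha<\crit(j) : M\models\varphi(\alpha,x^*)\}$, elementarity gives $j(S^*)=\{\alpha<\kappa : H(\kappa^+)\models\varphi(\alpha,x)\}$, and the clause ``$\crit(j)\in S$'' translates to ``$\varphi(\crit(j),x)$ holds'' via this identification. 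The converse is analogous: given such a small embedding, the same computation identifies $\crit(j)$ as an element of $S$, and Proposition \ref{proposition_stationarity_characterizations} yields the stationarity of $S$.

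The only real obstacle is a bookkeeping point concerning the model in which $\varphi$ is evaluated. The set $S$ in (1) is defined by $V$-truth of $\varphi$, whereas the small-embedding argument naturally produces $\{\alpha<\kappa : H(\kappa^+)\models\varphi(\alpha,x)\}$, and one needs sufficient absoluteness of $\varphi$ between $V$ and $H(\kappa^+)$ to match them. For the intended applications (indescribability, subtlety, ineffability, Ramseyness) this absoluteness is routine, but it is the one place where some implicit hypothesis on $\varphi$ (or on the complexity of $x$) is being used.
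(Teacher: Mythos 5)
Your proposal is correct and takes essentially the same route as the paper: the paper's proof of (1)$\iff$(2) is exactly the generic-ultrapower-of-$\NS_\kappa\restrict S$ argument from Proposition \ref{proposition_stationarity_characterizations} applied to $S=\{\alpha<\kappa\st\varphi(\alpha,x)\}$, combined with the elementarity translation $\kappa\in j(S)\iff M\models\varphi(\kappa,j(x))$, and for (1)$\iff$(3) the paper simply cites \cite[Lemma 2.1]{MR3913154}. Your explicit derivation of the regularity and uncountability of $\kappa$ from (2) fills in a step the paper leaves implicit.

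The absoluteness issue you flag at the end is genuine, and it is worth locating exactly where it bites. It does not affect (1)$\iff$(2), since there $j$ is elementary on all of $V$ and the $V$-truth of $\varphi$ transfers directly to $M$. It also does not affect (1)$\Rightarrow$(3): in the Skolem-hull construction one chooses $\alpha\in S$ with $X_\alpha\cap\kappa=\alpha$, and then $\varphi(\crit(j),x)$ holds in $V$ outright, with no detour through $H(\kappa^+)$-truth. It bites only in (3)$\Rightarrow$(1), where to reflect a putative club disjoint from $S$ into $M$ one needs $S$, i.e.\ the truth of $\varphi(\cdot,x)$ below $\kappa$, to be computed correctly over $H(\kappa^+)$ so that $S\in\ran(j)$; for an arbitrary $\mathcal{L}_\in$-formula this need not hold with the target model fixed at $H(\kappa^+)$. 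This is precisely why the small-embedding characterizations in \cite{MR3913154} (and Proposition \ref{proposition_indescribability_embedding}(3) of this paper) are phrased with ``for all sufficiently large $\theta$'' rather than with $\theta=\kappa^+$: for a fixed $\varphi$ one chooses $\theta$ so that $H(\theta)$ reflects $\varphi$ for the relevant parameters, making $S$ definable over $H(\theta)$ from $x$ and $\kappa$ and hence an element of $\ran(j)$. For the formulas actually used in the ensuing corollary (``cardinal'', ``regular'', ``strong limit'', ``inaccessible'') absoluteness to $H(\kappa^+)$ is immediate, so your argument is complete for all intended applications.
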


\begin{proof}
For the equivalence of (1) and (3) see \cite[Lemma 2.1]{MR3913154}. Let us show that (1) and (2) are equivalent. 

Let $S=\{\alpha<\kappa\st\varphi(\alpha,x)\}$. Since $S$ is stationary in $\kappa$ we may let $G\subseteq P(\kappa)/(\NS_\kappa\restrict S)$ be generic and let $U_G$ be the $V$-normal $V$-ultrafilter obtained from $G$. Let $j:V\to V^\kappa/U_G\subseteq V[G]$ be the corresponding generic ultrapower. Since $\NS_\kappa\restrict S$ is a normal ideal on $\kappa$, the critical point of $j$ is $\kappa$, and since $S\in (\NS_\kappa\restrict S)^*\subseteq U_G$ we have $\kappa \in j(S)$. In other words, $V^\kappa/U_G\models$ $\varphi(\kappa,j(x))$. For the converse, fix a club $C\in P(\kappa)^V$. Let $j:V\to M\subseteq M[G]$ be a generic embedding with critical point $\kappa$ such that $M\models$ $\varphi(\kappa,j(x))$. Then $\kappa\in j(\{\alpha<\kappa\st\varphi(\alpha,x)\}\cap C)$, and by elementarity $\{\alpha<\kappa\st\varphi(\alpha,x)\}\cap C\neq\emptyset$.
\end{proof}

The following corollary of Proposition \ref{proposition_formula} is the analogue of \cite[Corollary 2.2]{MR3913154}, which gives similar characterizations of various types of cardinals in terms of small embeddings.
\begin{corollary}[Folklore]
Let $\kappa$ be a cardinal.
\begin{enumerate}
\item $\kappa$ is uncountable and regular if and only if there is a generic embedding $j:V\to M\subseteq V[G]$ with critical point $\kappa$.
\item $\kappa$ is weakly inaccessible if and only if there is a generic embedding $j:V\to M\subseteq V[G]$ such that $M\models$ ``$\kappa$ is a cardinal.''
\item $\kappa$ is inaccessible if and only if there is a generic embedding $j:V\to M\subseteq V[G]$ with critical point $\kappa$ such that $M\models$ ``$\kappa$ is a strong limit.''
\item $\kappa$ is weakly Mahlo if and only if there is a generic embedding $j:V\to M\subseteq V[G]$ with critical point $\kappa$ such that $M\models$ ``$\kappa$ is regular.''
\item $\kappa$ is Mahlo if and only if there is a generic embedding $j:V\to M\subseteq V[G]$ with critical point $\kappa$ such that $M\models$ ``$\kappa$ is inaccessible.''
\end{enumerate}
\end{corollary}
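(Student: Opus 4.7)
The plan is to derive all five items uniformly as applications of Proposition \ref{proposition_formula}, by choosing in each case a first-order formula $\varphi(v_0,v_1)$ so that the stationarity of $\{\alpha<\kappa : \varphi(\alpha,x)\}$, combined with $\kappa$'s being regular uncountable, captures precisely the large cardinal property in question. The parameter $x$ is irrelevant; one can take $x=\emptyset$ throughout.

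For item (1), I would take $\varphi(v_0,v_1)$ to be $v_0=v_0$: the set $\{\alpha<\kappa : \varphi(\alpha,x)\}$ is then $\kappa$ itself, stationary in $\kappa$ precisely when $\kappa$ is regular uncountable, and the consequent $M\models\varphi(\kappa,j(x))$ is automatic. For item (2), take $\varphi$ to assert ``$v_0$ is a cardinal.'' One then checks that the set of cardinals below $\kappa$ is stationary iff $\kappa$ is a limit cardinal, since for a successor $\kappa=\lambda^+$ the cardinals below $\kappa$ are bounded by $\lambda$, while for a limit cardinal they form a club in $\kappa$; coupled with regular uncountable, this is weak inaccessibility. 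Items (3)--(5) follow the same template, with $\varphi$ taken respectively to be ``$v_0$ is a strong limit cardinal,'' ``$v_0$ is a regular uncountable cardinal,'' and ``$v_0$ is inaccessible.'' In (3) one notes that if $\kappa$ is not a strong limit then some fixed $\alpha_0<\kappa$ has $2^{\alpha_0}\geq\kappa$, bounding the strong limits below $\kappa$; otherwise they form a club. For (4) and (5) the relevant stationarity is, by definition, weak Mahloness (resp.\ Mahloness), and one observes that having stationarily many regular cardinals below $\kappa$ already forces $\kappa$ to be a limit cardinal, so weak inaccessibility is implicit.

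The one potential subtlety is that the consequent $M\models\varphi(\kappa,j(x))$ is evaluated in a possibly illfounded $M$; however, since $\kappa$ and $j(x)=x$ lie in the wellfounded part of $M$ (which contains all ordinals up to $\kappa^+$ by the generic ultrapower lemma in the preliminaries), the bounded statements about $\kappa$ appearing in each chosen $\varphi$ are evaluated correctly. Given this, each equivalence is then a direct instance of Proposition \ref{proposition_formula}, and I do not anticipate any substantive obstacle once the formulas are identified.
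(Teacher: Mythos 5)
Your derivation is correct and is exactly the paper's (implicit) argument: the corollary is stated without proof as a direct consequence of Proposition \ref{proposition_formula}, obtained by instantiating $\varphi$ with the formulas you list, and your verifications that the resulting stationarity conditions reduce to the named large cardinal properties are the standard ones. (The only caveat is that item (2) of the corollary as printed omits ``with critical point $\kappa$,'' which your instance of Proposition \ref{proposition_formula} does supply and which is needed for the right-to-left direction, since otherwise a trivial generic embedding would witness the right-hand side for any cardinal whatsoever.)
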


\section{Large cardinal ideals and elementary embeddings}\label{section_embeddings}

\subsection{The $\Pi^m_n$-indescribability ideals}\label{section_indescribability_embeddings}


Recall that a formula is $\Pi^m_n$ if it starts with a block of universal quantifiers of type $m+1$ variables, followed by a block of type $m+1$ existential quantifiers, and so on with at most $n$ blocks in all, followed by a formula containing unquantified variables of type at most $m+1$ and quantified variables of type at most $m$. Similarly, a formula is $\Sigma^m_n$ if it begins with a block of type $m+1$ existential quantifiers. See \cite[Chapter 0]{MR1994835} for more details. 

A set $S\subseteq\kappa$ is \emph{$\Pi^m_n$-indescribable} if for every  $A\in V_{\kappa+1}$ and every $\Pi^m_n$-sentence $\varphi$ over $(V_\kappa,\in,A)$, whenever $(V_\kappa,\in,A)\models\varphi$ there is an $\alpha\in S$ such that $(V_\alpha,\in,A\cap V_\alpha)\models\varphi$. Levy \cite{MR0281606} showed that if $\kappa$ is a $\Pi^m_n$-indescribable cardinal then the collection
\[\Pi^m_n(\kappa)=\{X\subseteq\kappa\st\text{$X$ is not $\Pi^m_n$-indescribable}\}\]
is a normal ideal on $\kappa$, which is referred to as the \emph{$\Pi^m_n$-indescribable ideal} on $\kappa$ (see \cite[Proposition 6.11]{Kanamori:Book}). A set $S\subseteq\kappa$ is called \emph{weakly $\Pi^m_n$-indescribable} if for every $A\subseteq\kappa$ and every $\Pi^m_n$-sentence $\varphi$ over $(\kappa,\in,A)$, whenever $(\kappa,\in,A)\models\varphi$ there is an $\alpha\in S$ such that $(\alpha,\in,A\cap\alpha)\models\varphi$. It is easy to check \cite{MR0384553} that a set $S\subseteq\kappa$ is $\Pi^m_n$-indescribable if and only if $\kappa$ is inaccessible and $S$ is weakly $\Pi^m_n$-indescribable. Furthermore, when $\kappa$ is weakly $\Pi^m_n$-indescribable, the collection 
\[\widetilde{\Pi}^m_n(\kappa)=\{X\subseteq\kappa\st \text{$X$ is not weakly $\Pi^m_n$-indescribable}\}\]
of non-weakly $\Pi^m_n$-indescribable subsets of $\kappa$ is a nontrivial normal ideal on $\kappa$. 

\begin{remark}\label{remark_transfinite_indescribability}
Let us note here that Sharpe and Welch \cite{MR2817562} defined a notion of $\Pi^1_\xi$-indescribability of a cardinal $\kappa$ where $\xi<\kappa^+$ by demanding that the existence of a winning strategy for a particular player in a certain finite game played at $\kappa$ implies that the same player has a winning strategy for the game played at $\alpha$. Independently, Bagaria \cite{MR3894041} defined a natural notion of $\Pi^1_\xi$-formula for $\xi\geq\omega$. For example, a formula $\varphi$ is $\Pi^1_\omega$ if it is of the form $\bigwedge_{n<\omega}\varphi_n$ where each $\varphi_n$ is $\Pi^1_n$, and it contains only finitely-many free second-order variables. A set $S\subseteq\kappa$ is $\Pi^1_\xi$-indescribable if and only if for all $A\subseteq V_\kappa$ whenever $\varphi$ is $\Pi^1_\xi$ and $(V_\kappa,\in,A)\models\varphi$, there must be some $\alpha\in S$ such that $(V_\alpha,\in,A\cap V_\alpha)\models\varphi$. Independently, Brickhill-Welch \cite{BrickhillWelch} and Bagaria \cite{MR3894041} showed that if $\kappa$ is $\Pi^1_\xi$-indescribable where $\xi<\kappa$ then the collection
\[\Pi^1_\xi(\kappa)=\{X\subseteq\kappa\st\text{$X$ is not $\Pi^1_\xi$-indescribable}\}\]
is a normal ideal on $\kappa$.
\end{remark}

\begin{remark}
Recall that $\kappa$ is $\Pi^1_0$-indescribable if and only if $\kappa$ is inaccessible, and in this case the $\Pi^1_0$-indescribable ideal equals $\NS_\kappa$. 
\end{remark}

\begin{definition}
For notational convenience later on we let
\[\Pi^1_{-1}(\kappa)=[\kappa]^{<\kappa}\]
for all cardinals $\kappa$.
\end{definition}

\begin{lemma}\label{lemma_indescribable_filter}
If $\kappa$ is $\Pi^m_n$-indescribable, $A\in V_{\kappa+1}$ and $\varphi$ is a $\Pi^m_n$-sentence over $(V_\kappa,\in, A)$ such that $(V_\kappa,\in,A)\models\varphi$ then the set 
\[\{\alpha<\kappa\st (V_\alpha,\in,A\cap V_\alpha)\models\varphi\}\]
is in the $\Pi^m_n$-indescribable filter $\Pi^m_n(\kappa)^*$.
\end{lemma}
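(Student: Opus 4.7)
The plan is to argue by contradiction on the complement of the given set. Let $T=\{\alpha<\kappa\st (V_\alpha,\in,A\cap V_\alpha)\not\models\varphi\}$; showing that $\{\alpha<\kappa\st (V_\alpha,\in,A\cap V_\alpha)\models\varphi\}$ belongs to $\Pi^m_n(\kappa)^*$ is the same as showing that $T\in\Pi^m_n(\kappa)$, i.e., that $T$ is not $\Pi^m_n$-indescribable.

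So suppose toward a contradiction that $T$ is $\Pi^m_n$-indescribable. The key observation is that one may feed the \emph{same} parameter $A\in V_{\kappa+1}$ and the \emph{same} $\Pi^m_n$-sentence $\varphi$ directly into the definition of $\Pi^m_n$-indescribability of $T$: by hypothesis $(V_\kappa,\in,A)\models\varphi$, so there must exist some $\alpha\in T$ with $(V_\alpha,\in,A\cap V_\alpha)\models\varphi$. But this directly contradicts the definition of $T$. Hence $T$ is not $\Pi^m_n$-indescribable, which is exactly the conclusion of the lemma.

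I expect no real obstacle here: the argument is essentially a tautological reading of the definition, and in particular requires no encoding of additional predicates, no trick to lift $\varphi$ to a more complex sentence, and no use of the fact that $\kappa$ itself is $\Pi^m_n$-indescribable (beyond ensuring that $\Pi^m_n(\kappa)$ is a genuine ideal so that the statement $T\in\Pi^m_n(\kappa)$ makes the intended sense). The only minor thing to note is that the argument is uniform across $m,n$: it uses nothing about the complexity $\Pi^m_n$ other than that the same class of formulas is used both in defining the reflection set and in testing indescribability of $T$. In particular, the analogous statement for weak $\Pi^m_n$-indescribability (with $(\kappa,\in,A)$ and $(\alpha,\in,A\cap\alpha)$ in place of $(V_\kappa,\in,A)$ and $(V_\alpha,\in,A\cap V_\alpha)$), and for the Bagaria/Brickhill--Welch transfinite $\Pi^1_\xi$-indescribability ideal mentioned in Remark \ref{remark_transfinite_indescribability}, has exactly the same one-line proof.
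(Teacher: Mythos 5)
Your proof is correct and is essentially identical to the paper's: the paper also notes that the complement $\kappa\setminus Z$ cannot be $\Pi^m_n$-indescribable, since otherwise $\varphi$ would reflect to some $\alpha$ in that complement, contradicting its definition. Nothing to add.
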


\begin{proof}
Suppose $(V_\kappa,\in,A)\models\varphi$ where $\varphi$ is a $\Pi^m_n$-sentence. Let $Z=\{\alpha<\kappa\st(V_\alpha,\in,A\cap V_\alpha)\models\varphi\}$. The set 
\[\kappa\setminus Z=\{\alpha<\kappa\st(V_\alpha,\in,A\cap V_\alpha)\models\lnot\varphi\}\]
is not $\Pi^m_n$-indescribable because if it were then the fact that $(V_\kappa,\in,A)\models\varphi$ would imply that $(V_\alpha,\in,A\cap V_\alpha)\models\varphi$ for some $\alpha\in\kappa\setminus Z$, a contradiction. Thus $\kappa\setminus Z\in\Pi^m_n(\kappa)$ which implies $Z\in \Pi^m_n(\kappa)^*$.
\end{proof}

The following is implicit in the work of Holy, L\"ucke and Njegomir \cite{MR3913154}, the only difference is that we generalize their characterization of $\Pi^m_n$-indescribable cardinals to subsets of $\kappa$ and we also give a generic embedding characterization.

\begin{proposition}[{Holy, L\"ucke and Njegomir \cite[Lemma 4.2]{MR3913154}}]\label{proposition_indescribability_embedding}
For $n,m<\omega$, $\kappa$ a regular cardinal and $S\subseteq\kappa$ the following are equivalent.
\begin{enumerate}
\item $S$ is $\Pi^m_n$-indescribable.
\item There is a generic embedding $j:V\to M\subseteq V[G]$ with critical point $\kappa$ such that $\kappa\in j(S)$ and for every $\Pi^m_n$-sentence $\varphi$ over $(V_\kappa,\in,A)$ where $A\in (V_{\kappa+1})^V$ we have\footnote{Note that $A\in M$ since $A=j(A)\cap V_\kappa$ and $j(A),V_\kappa\in M$.}
\[((V_\kappa,\in,A)\models\varphi)^V \implies ((V_\kappa, \in,A)\models\varphi)^M.\]
\item For all sufficiently large cardinals $\theta$ there is a small embedding $j:M\to H(\theta)$ for $\kappa$ such that
\begin{enumerate}
\item $S\in\ran(j)$,
\item $\crit(j)\in S$ and
\item for every $\Pi^m_n$-sentence $\varphi$ over $(V_{\crit(j)},\in,A)$ where $A\in M\cap V_{\crit(j)+1}$ we have
\[((V_{\crit(j)},\in,A)\models\varphi)^M \implies ((V_{\crit(j)},\in,A)\models\varphi)^V.\]
\end{enumerate}
\end{enumerate}
\end{proposition}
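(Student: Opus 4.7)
The plan is to prove $(1) \iff (2)$ using a generic ultrapower together with Lemma \ref{lemma_indescribable_filter}, and to prove $(1) \iff (3)$ by adapting the argument of \cite[Lemma 4.2]{MR3913154} from cardinals to positive sets.

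For $(1) \implies (2)$, I would force with $P(\kappa)/(\Pi^m_n(\kappa) \restrict S)$, which is normal and nontrivial because $S$ is $\Pi^m_n$-indescribable, and take the resulting generic ultrapower $j : V \to M = V^\kappa/U_G$. Then $\crit(j) = \kappa$, and $\kappa \in j(S)$ since $S$ lies in the dual filter. Given $A \in V_{\kappa+1}^V$ and a $\Pi^m_n$-sentence $\varphi$ with $(V_\kappa,\in,A) \models \varphi$ in $V$, Lemma \ref{lemma_indescribable_filter} places the set $\{\alpha<\kappa : (V_\alpha,\in,A\cap V_\alpha) \models \varphi\}$ in $\Pi^m_n(\kappa)^* \subseteq (\Pi^m_n(\kappa)\restrict S)^* \subseteq U_G$, so {\L}o\'s's theorem, together with $V_\kappa^M = V_\kappa^V$ and $j(A) \cap V_\kappa = A$ (as $j$ fixes $V_\kappa$ pointwise), delivers $((V_\kappa,\in,A) \models \varphi)^M$. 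For $(2) \implies (1)$, the reflection clause applied to any such $A$ and $\varphi$ gives the analogue in $M$; since $\kappa < j(\kappa)$ and $\kappa \in j(S)$, the ordinal $\kappa$ witnesses in $M$ the sentence $\exists \alpha < j(\kappa)\, [\alpha \in j(S) \wedge (V_\alpha,\in,j(A)\cap V_\alpha) \models \varphi]$, and elementarity of $j$ pulls this back to the required $\alpha \in S$ reflecting $\varphi$.

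For $(1) \implies (3)$, with $\theta$ chosen so that $V_{\kappa+m} \in H(\theta)$, I would build a continuous increasing elementary chain $\langle X_\beta : \beta < \kappa \rangle \prec H(\theta)$ of submodels of size less than $\kappa$ with $S \in X_0$, $\beta \subseteq X_\beta \cap \kappa \in \kappa$, and with $X_\beta$ closed under $\gamma \mapsto V_{\gamma+m}$ so that, after collapsing, $V_{\beta+m}^{M_\beta} = V_{\beta+m}^V$ at the relevant stages. Encoding the chain into a predicate $B \subseteq V_\kappa$, the assertion ``the inverse of the Mostowski collapse of $X_\kappa$ is a small embedding whose critical point lies in $S$ and satisfies (c)'' becomes a $\Pi^m_n$-sentence over $(V_\kappa,\in,B)$ that holds in $V$; $\Pi^m_n$-indescribability of $S$ then reflects this to some $\beta \in S$, producing the desired small embedding $j_\beta : M_\beta \to X_\beta \prec H(\theta)$. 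For $(3) \implies (1)$, fix $A \in V_{\kappa+1}$ and a $\Pi^m_n$-sentence $\varphi$ with $(V_\kappa,\in,A) \models \varphi$, choose $\theta$ with $V_{\kappa+m}, A, S \in H(\theta)$, and apply (3) to obtain a small embedding $j : M \to H(\theta)$ with $A, S \in \ran(j)$, $\crit(j) \in S$, and property (c); setting $\bar{A} = j^{-1}(A)$, the standard critical point argument gives $\bar{A} = A \cap V_{\crit(j)}$, and absoluteness of $\Pi^m_n$-satisfaction between $V$ and $H(\theta)$ combined with elementarity of $j$ yields $((V_{\crit(j)},\in,\bar{A}) \models \varphi)^M$, which (c) then transports to $V$, reflecting $\varphi$ into $S$ at $\crit(j)$.

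The principal obstacle is the construction in $(1) \implies (3)$: one must package the existence of the inverse collapse together with property (c) as a single $\Pi^m_n$-sentence over a subset of $V_\kappa$, and one must verify that a reflecting $\beta \in S$ really yields a collapse producing the agreement $V_{\beta+m}^{M_\beta} = V_{\beta+m}^V$ required by (c). These technical points are handled essentially as in \cite[Lemma 4.2]{MR3913154}, with $\Pi^m_n$-indescribability of the positive set $S$ replacing that of $\kappa$ as the source of the reflecting ordinal.
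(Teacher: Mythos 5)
Your proposal is correct and follows essentially the same route as the paper: the equivalence of (1) and (2) is proved by forcing with $P(\kappa)/(\Pi^m_n(\kappa)\restrict S)$ and invoking Lemma \ref{lemma_indescribable_filter} for the forward direction, with the pull-back via elementarity for the converse, exactly as in the paper's argument. For the equivalence of (1) and (3) the paper simply defers to \cite[Lemma 4.2]{MR3913154}, and your sketch is a reasonable outline of that same adaptation.
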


\begin{proof}
The proof that (1) and (3) are equivalent is very similar to \cite[Lemma 4.2]{MR3913154}. We leave the details to the reader.

To see that (1) implies (2), suppose $S\subseteq\kappa$ is $\Pi^m_n$-indescribable and let $G$ be generic for $P(\kappa)/(\Pi^m_n(\kappa)\restrict S)$. Let $U_G$ be the $V$-normal $V$-ultrafilter on $P(\kappa)^V$ obtained from $G$ and note that $U_G$ extends $(\Pi^m_n(\kappa)\restrict S)^*$, hence $S\in U_G$. Let $j:V\to M=\Ult(V,U_G)\subseteq V[G]$ be the generic ultrapower by $U_G$. Clearly, $\crit(j)=\kappa$ and $\kappa \in j(S)$. Let $\varphi(A)$ be a $\Pi^m_n$-sentence with parameter $A\in (V_{\kappa+1})^V$ such that $((V_\kappa,\in,A)\models\varphi)^V$. Since $\kappa$ is $\Pi^m_n$-indescribable in $V$, it follows from Lemma \ref{lemma_indescribable_filter} that the set $C=\{\alpha<\kappa\st(V_\alpha,\in,A\cap V_\alpha)\models\varphi\}$ is in the filter $(\Pi^m_n(\kappa)\restrict S)^*$ and is hence also in $U_G$. Thus, \[\kappa \in j(C)=\{\alpha \in j(\kappa)\st ((V_\alpha,E,j(A)\cap V_\alpha)\models\varphi)^M\},\] which implies $((V_\kappa,\in,A)\models\varphi)^M$.

For (2) implies (1), fix $S\subseteq\kappa$ and suppose that $j:V\to M\subseteq V[G]$ is a generic embedding as in (2). We will show that $S$ is $\Pi^m_n$-indescribable. Fix $A\in (V_{\kappa+1})^V$ and let $\varphi(A)$ be a $\Pi^m_n$-sentence over $(V_\kappa,\in,A)$ such that $(V_\kappa,\in,A)\models\varphi$. By the assumed properties of $j$ we obtain $((V_\kappa,\in,j(A)\cap V_\kappa)\models\varphi(j(A)\cap V_\kappa))^M$ and hence by elementarity, $V\models$ $(\exists\alpha\in S) (V_\alpha,\in,A\cap V_\alpha)\models\varphi(A\cap V_\alpha)$. Thus, $S$ is $\Pi^m_n$-indescribable. 
\end{proof}

\begin{remark}
Notice that in Proposition \ref{proposition_indescribability_embedding}, the existence of a single generic embedding suffices to characterize the $\Pi^m_n$-indescribability of $S\subseteq\kappa$, whereas, it may seem, we need to demand the existence of many small embeddings to characterize the $\Pi^m_n$-indescribability of $S$. However, the proof \cite[Lemma 4.2]{MR3913154} shows that, in fact, a single small embedding also suffices. That is, we can replace Proposition \ref{proposition_indescribability_embedding}(3) with the following.
\begin{itemize}
\item[(3)] There is a small embedding $j:M\to H(\beth_m(\kappa)^+)$ for $\kappa$ such that
\begin{itemize}
\item[(a)] $S\in \ran(j)$,
\item[(b)] $\crit(j)\in S$ and
\item[(c)] for every $\Pi^m_n$-sentence $\varphi$ over $(V_{\crit(j)},\in,A)$ where $A\in M\cap V_{\crit(j)+1}$ we have
\[((V_{\crit(j)},\in,A)\models\varphi)^M\implies((V_{\crit(j)},\in,A)\models\varphi)^V.\]
\end{itemize}
\end{itemize}
\end{remark}

\begin{remark}
It is not too difficult to see that the characterization given in Proposition \ref{proposition_indescribability_embedding}(2) can be extended to $\Pi^1_\xi$-indescribable subsets of $\kappa$ for all $\xi<\kappa^+$. See Remark \ref{remark_transfinite_indescribability} for the definition of $\Pi^1_\xi$-indescribability and \cite[Proposition 8.3]{CodyRefinement} for a proof of this result.
\end{remark}

\subsection{The subtle ideal}

Recall that for $S\subseteq\kappa$ we say that a sequence $\vec{S}=\<S_\alpha\st\alpha\in S\>$ is an \emph{$S$-list} if $S_\alpha\subseteq\alpha$ for all $\alpha\in S$. Jensen and Kunen defined a set $S\subseteq\kappa$ to be \emph{subtle} if for every $S$-list $\vec{S}=\<S_\alpha\st\alpha\in S\>$ and for every club $C\subseteq\kappa$ there exist $\alpha,\beta\in S\cap C$ with $\alpha<\beta$ such that $S_\alpha=S_\beta\cap \alpha$. Although the least subtle cardinal is not weakly compact, every subtle cardinal is a stationary limit (and more) of cardinals which are $\Pi^1_n$-indescribable for every $n<\omega$ (this follows from Lemma \ref{lemma_subtlety_is_stronger_than_indescribability} below).

To show that the collection of non-subtle subsets of a subtle cardinal forms a normal ideal let us fix some notation. We let $\gp:\ORD\times\ORD\to\ORD$ denote the standard definable pairing function. If $\alpha$ is a closure point of $\gp$, meaning $\gp[\alpha\times\alpha]\subseteq\alpha$, and if $\vec{A}=\<A_\xi\st\xi<\alpha\>$ is a sequence of subsets of $\alpha$, then we can use $\gp$ to code the sequence into a single subset of $\alpha$: $A=[[\vec{A}]]=\{\gp(\eta,\xi)\st \eta\in A_\xi\}$. The following lemma was used in \cite{MR0384553} and its proof is straightforward.

\begin{lemma}\label{lemma_godel_pairing}
Suppose $\alpha,\beta\in\ORD$ are closure points of $\gp$ with $\alpha<\beta$, and further suppose that $A\subseteq\alpha$ codes the sequence $\vec{A}=\<A_\xi\st\xi<\alpha\>$ of subsets of $\alpha$ and $B\subseteq\beta$ codes the sequence $\vec{B}=\<B_\xi\st\xi<\beta\>$ of subsets of $\beta$. In other words, $A=[[\vec{A}]]$ and $B=[[\vec{B}]]$. Then it follows that if $A=B\cap\alpha$ then $A_\xi=B_\xi\cap\alpha$ for all $\xi<\alpha$.
\end{lemma}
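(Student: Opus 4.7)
The plan is to prove the equality $A_\xi = B_\xi \cap \alpha$ for each fixed $\xi < \alpha$ by a direct double-inclusion argument, using only two features of the Gödel pairing function: its injectivity and the hypothesis that both $\alpha$ and $\beta$ are closed under $\gp$. The coding convention $C = [[\vec{C}]] = \{\gp(\eta,\zeta) : \eta \in C_\zeta\}$ together with injectivity of $\gp$ means that for any $\eta, \zeta$ in the appropriate range we have the biconditional $\eta \in C_\zeta \iff \gp(\eta,\zeta) \in C$, and this will be the only nontrivial fact used.

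Fix $\xi < \alpha$. For the forward inclusion $A_\xi \subseteq B_\xi \cap \alpha$, take $\eta \in A_\xi$. Then $\eta < \alpha$, and by the coding convention applied to $A$, $\gp(\eta,\xi) \in A$. Since $A = B \cap \alpha$ we have $\gp(\eta,\xi) \in B$, and the coding convention applied to $B$ (using injectivity of $\gp$) yields $\eta \in B_\xi$.

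For the reverse inclusion $B_\xi \cap \alpha \subseteq A_\xi$, take $\eta \in B_\xi \cap \alpha$. Then $\gp(\eta,\xi) \in B$ by the coding convention for $B$. Here the closure hypothesis enters: since $\eta,\xi < \alpha$ and $\alpha$ is a closure point of $\gp$, we have $\gp(\eta,\xi) < \alpha$, so $\gp(\eta,\xi) \in B \cap \alpha = A$. Applying the coding convention for $A$ in reverse (again using injectivity of $\gp$) gives $\eta \in A_\xi$, completing the argument.

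There is essentially no obstacle here; the only point that needs to be flagged is the role of the closure hypothesis, which is invoked exactly once, to guarantee that pairs $\gp(\eta,\xi)$ with $\eta,\xi < \alpha$ land below $\alpha$ and hence in $B \cap \alpha = A$. Injectivity of $\gp$ is what makes the coding $[[\cdot]]$ decodable and turns the membership statement $\gp(\eta,\xi) \in C$ into the unambiguous statement $\eta \in C_\xi$; this is a standard property of the Gödel pairing function and does not need to be reproved.
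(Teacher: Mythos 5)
Your proof is correct: the double-inclusion argument, using injectivity of $\gp$ to decode membership and invoking closure of $\alpha$ under $\gp$ exactly once (to see that $\gp(\eta,\xi)<\alpha$ in the reverse inclusion), is precisely the intended argument. The paper omits the proof entirely, remarking only that it is straightforward, so there is nothing to compare against; your write-up supplies the expected details.
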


\begin{proposition}[Baumgartner \cite{MR0384553}]
If $\kappa$ is subtle then the collection 
\[\NSub_\kappa=\{X\subseteq\kappa\st\text{$X$ is not subtle}\}\]
is a normal ideal.
\end{proposition}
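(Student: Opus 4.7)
The plan is to verify that $\NSub_\kappa$ is an ideal closed under diagonal unions; the fact that $\kappa$ is subtle is exactly the statement $\kappa\notin\NSub_\kappa$, so the ideal is nontrivial. Closure under subsets is immediate: if $X$ is not subtle as witnessed by an $X$-list $\vec{S}=\<S_\alpha\st\alpha\in X\>$ and a club $C$, and if $Y\subseteq X$, then $\vec S\restrict Y$ is a $Y$-list and the same $C$ witnesses that $Y$ is not subtle. Thus the real content is to prove normality, i.e.\ closure under diagonal unions (which also subsumes closure under finite unions).

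Fix a sequence $\<X_\alpha\st\alpha<\kappa\>\subseteq\NSub_\kappa$ and, for each $\alpha$, a witness consisting of an $X_\alpha$-list $\vec{S^\alpha}=\<S^\alpha_\xi\st\xi\in X_\alpha\>$ and a club $C_\alpha\subseteq\kappa$ with no $\xi<\eta$ in $X_\alpha\cap C_\alpha$ satisfying $S^\alpha_\xi=S^\alpha_\eta\cap\xi$. I want to show $\nabla_{\alpha<\kappa}X_\alpha\in\NSub_\kappa$. The main technical step is to combine all the witnesses $\vec{S^\alpha}$ into a single list on $\nabla_\alpha X_\alpha$, while still being able to recover each individual $S^\alpha_\beta$ \emph{and} the information that $\beta\in X_\alpha$, from a coherence relation $T_{\beta_0}=T_{\beta_1}\cap\beta_0$. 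For $\beta<\kappa$ a closure point of the G\"odel pairing function $\gp$ and $\alpha<\beta$, set
\[
R^\alpha_\beta=\begin{cases}\{\gp(0,\eta)\st\eta\in S^\alpha_\beta\}\cup\{\gp(1,0)\} & \text{if }\beta\in X_\alpha,\\ \emptyset & \text{otherwise,}\end{cases}
\]
and then let $T_\beta=[[\<R^\alpha_\beta\st\alpha<\beta\>]]$. The point of the flag $\gp(1,0)$ is that it allows us to detect membership in $X_\alpha$ from $R^\alpha_\beta$, while the coordinate $\gp(0,\cdot)$ records $S^\alpha_\beta$. Take $C=\triangle_{\alpha<\kappa}C_\alpha\cap\{\beta<\kappa\st \gp[\beta\times\beta]\subseteq\beta\}$, a club in $\kappa$.

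Suppose toward contradiction that $\nabla_\alpha X_\alpha$ is subtle. Then the $(\nabla_\alpha X_\alpha)$-list $\<T_\beta\st\beta\in\nabla_\alpha X_\alpha\>$ and the club $C$ yield $\beta_0<\beta_1$ in $(\nabla_\alpha X_\alpha)\cap C$ with $T_{\beta_0}=T_{\beta_1}\cap\beta_0$. By Lemma \ref{lemma_godel_pairing} (applied with the sequences of length $\beta_0$ and $\beta_1$, padding $R^\alpha_{\beta_1}$ for $\alpha\geq\beta_0$ is immaterial since the conclusion only concerns $\alpha<\beta_0$), we obtain $R^\alpha_{\beta_0}=R^\alpha_{\beta_1}\cap\beta_0$ for every $\alpha<\beta_0$. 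Because $\beta_0\in\nabla_\alpha X_\alpha$, fix $\alpha^*<\beta_0$ with $\beta_0\in X_{\alpha^*}$; then $\gp(1,0)\in R^{\alpha^*}_{\beta_0}\subseteq R^{\alpha^*}_{\beta_1}$, and this flag forces $\beta_1\in X_{\alpha^*}$. Decoding the $\gp(0,\cdot)$-coordinate and using closure of $\beta_0$ under $\gp$, we recover $S^{\alpha^*}_{\beta_0}=S^{\alpha^*}_{\beta_1}\cap\beta_0$. Finally, $\beta_0,\beta_1\in\triangle_{\alpha<\kappa}C_\alpha$ together with $\alpha^*<\beta_0<\beta_1$ gives $\beta_0,\beta_1\in C_{\alpha^*}$, contradicting the choice of $\vec{S^{\alpha^*}},C_{\alpha^*}$ as a witness to $X_{\alpha^*}\in\NSub_\kappa$.

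The part requiring care is the design of $R^\alpha_\beta$: the list $\<T_\beta\>$ is a single $(\nabla_\alpha X_\alpha)$-list, so each $T_\beta$ is a subset of $\beta$, and Lemma \ref{lemma_godel_pairing} only gives coherence of the whole code $T_{\beta_0}=T_{\beta_1}\cap\beta_0$; one must be able to read off \emph{both} membership $\beta_i\in X_{\alpha^*}$ and the value $S^{\alpha^*}_{\beta_i}$ from this coherence, which is exactly what the flag construction above arranges. Everything else is routine bookkeeping with G\"odel pairing and diagonal intersections of clubs.
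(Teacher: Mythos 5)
Your proof is correct and follows essentially the same route as the paper: both arguments merge the family of witnessing lists into a single list via the G\"odel-pairing coding of Lemma \ref{lemma_godel_pairing}, intersect with the diagonal intersection of the witnessing clubs, and then invoke subtlety of the combined set to decode a contradicting coherent pair. The only (dual) difference is that you verify closure under diagonal unions directly, which forces the slightly richer code $\<R^\alpha_\beta\st\alpha<\beta\>$ with membership flags, whereas the paper verifies the equivalent Fodor property for a regressive $f$ and therefore only needs to code the single pair $(S^{f(\alpha)}_\alpha,\{f(\alpha)\})$ at each $\alpha$.
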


\begin{proof}
Let $S\subseteq\kappa$ be subtle and suppose $f:S\to\kappa$ is regressive. We must show that $f$ is constant on a subtle subset of $S$, that is, we must show that for some $\eta<\kappa$ we have $f^{-1}(\eta)\in\NSub_\kappa^+$. Suppose not. Let \[D=\{\alpha<\kappa\st\text{$\alpha$ is closed under the pairing function $\gp$}\}.\] Then $D$ is club and hence $S\cap D$ is subtle. By assumption, for each $\eta<\kappa$, we may let $\vec{S}^\eta=\<S^\eta_\alpha\st\alpha\in f^{-1}(\eta)\>$ be an $f^{-1}(\eta)$-list and let $C_\eta$ be a club such that for all $\alpha,\beta\in f^{-1}(\eta)\cap C_\eta$ with $\alpha<\beta$ we have $S^\eta_\alpha\neq S^\eta_\beta\cap\alpha$.

Define an $S\cap D$-list $\vec{S}=\<S_\alpha\st\alpha\in S\cap D\>$ by letting $S_\alpha=[[\<S^{f(\alpha)}_\alpha,\{f(\alpha)\}\>]]$. Let $C=\bigtriangleup_{\eta<\kappa}C_\eta$. Since $S\cap D$ is subtle there exists $\alpha,\beta\in S\cap D\cap C$ with $\alpha<\beta$ and $S_\alpha=S_\beta\cap\alpha$. Thus $[[\<S^{f(\alpha)}_\alpha,\{f(\alpha)\}\>]]=[[\<S^{f(\beta)}_\beta,\{f(\beta)\}\>]]\cap\alpha$ and since $\alpha$ and $\beta$ are both closed under G\"odel-pairing, it follows from Lemma \ref{lemma_godel_pairing} that $S^{f(\alpha)}_\alpha=S^{f(\beta)}_\beta\cap\alpha$ and $f(\alpha)=f(\beta)$. If we let $\eta=f(\alpha)=f(\beta)$ then we have $S^\eta_\alpha=S^\eta_\beta\cap\alpha$ where $\alpha,\beta\in f^{-1}(\eta)\cap C_\eta$, a contradiction.
\end{proof}

In the following result we give a characterization of subtle sets in terms of elementary embeddings. The fact that (1) and (3) are equivalent in the following is due to Holy, L\"ucke and Njegomir \cite[Lemma 5.2]{MR3913154}; we provide a proof here since it will be referenced below in order to emphasize a difference in character between generic embedding and small embedding characterizations of large cardinal ideals.

\begin{proposition}\label{proposition_subtle_embedding}
For all cardinals $\kappa$ and all $S\subseteq\kappa$, the following are equivalent.
\begin{enumerate}
\item $S\subseteq\kappa$ is subtle
\item There is a generic elementary embedding $j:V\to M\subseteq V[G]$ with critical point $\kappa$ and $\kappa\in j(S)$ such that for every $S$-list $\vec{S}=\<S_\alpha\st\alpha\in S\>$ and every club $C\subseteq\kappa$ in $V$, we have $S_\alpha=j(\vec{S})(\kappa)\cap\alpha$ for some $\alpha\in S\cap C$.
\item For every $S$-list $\vec{S}=\<S_\alpha\st\alpha\in S\>$ and for every club $C\subseteq\kappa$, there is a small embedding $j:M\to H(\kappa^+)$ for $\kappa$ such that $S,C,\vec{S}\in\ran(j)$, $\crit(j)\in S$ and $S_\alpha=S_{\crit(j)}\cap\alpha$ for some $\alpha\in C\cap \crit(j)$.
\end{enumerate}
\end{proposition}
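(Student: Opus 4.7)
The plan is to prove $(1)\Leftrightarrow(2)$ and $(1)\Leftrightarrow(3)$ separately. The equivalence $(1)\Leftrightarrow(3)$ follows \cite[Lemma 5.2]{MR3913154}: for $(1)\Rightarrow(3)$, given $\vec{S}$ and $C$ I would take a continuous elementary chain $\<X_\alpha\st\alpha<\kappa\>$ of substructures of $H(\kappa^+)$ of cardinality less than $\kappa$ containing $S$, $C$, $\vec{S}$ with $\alpha\subseteq X_\alpha\cap\kappa\in\kappa$, form the club $D=\{\alpha<\kappa\st X_\alpha\cap\kappa=\alpha\}$, and apply subtlety of $S$ to $\vec{S}$ and $C\cap D$ to produce $\alpha<\beta$ in $S\cap C\cap D$ with $S_\alpha=S_\beta\cap\alpha$; the inverse transitive collapse of $X_\beta$ is the required small embedding. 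For $(3)\Rightarrow(1)$, apply $(3)$ to $\vec{S}$ and $C$ to obtain a small embedding $j$ and some $\alpha\in C\cap\crit(j)$ with $S_\alpha=S_{\crit(j)}\cap\alpha$; since $C\in\ran(j)$ and $j\restrict\crit(j)=\id$, the unique $\bar{C}\in M$ with $j(\bar{C})=C$ equals $C\cap\crit(j)$ and is a club in $\crit(j)$ by elementarity, forcing $\crit(j)\in C$, so the pair $\alpha<\crit(j)$ in $S\cap C$ witnesses subtlety.

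For $(1)\Rightarrow(2)$, I would take the generic ultrapower by $P(\kappa)/(\NSub_\kappa\restrict S)$: let $G$ be generic, $U_G$ the resulting $V$-normal $V$-ultrafilter, and $j\colon V\to M=V^\kappa/U_G\subseteq V[G]$ the corresponding embedding, so $\crit(j)=\kappa$ and $\kappa\in j(S)$. I would then show this single embedding suffices for all $S$-lists $\vec{S}$ and clubs $C$ in $V$. For each such pair define
\[T_{\vec{S},C}=\{\beta\in S\st(\exists\alpha\in S\cap C\cap\beta)\;S_\alpha=S_\beta\cap\alpha\}.\]
The key step is to show $S\setminus T_{\vec{S},C}\in\NSub_\kappa$: if this complement were subtle, applying the definition to the restricted list $\vec{S}\restrict(S\setminus T_{\vec{S},C})$ and the club $C$ would produce $\alpha<\beta$ in $(S\setminus T_{\vec{S},C})\cap C$ with $S_\alpha=S_\beta\cap\alpha$, placing $\beta$ into $T_{\vec{S},C}$, a contradiction. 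Hence $T_{\vec{S},C}\in(\NSub_\kappa\restrict S)^*\subseteq U_G$, so $\kappa\in j(T_{\vec{S},C})$; together with $j(\vec{S})(\kappa)=[\vec{S}]_{U_G}$ (Lemma 2.3(4)), unpacking via {\L}o\'s's theorem yields the desired $\alpha\in S\cap C$ with $S_\alpha=j(\vec{S})(\kappa)\cap\alpha$.

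For $(2)\Rightarrow(1)$, fix $\vec{S}$ and a club $C\subseteq\kappa$ in $V$. Since $C$ is unbounded in $\kappa=\crit(j)$ and $j(C)$ is closed in $j(\kappa)$, one has $\kappa\in j(C)$; by $(2)$ pick $\alpha\in S\cap C$ with $S_\alpha=j(\vec{S})(\kappa)\cap\alpha$. In $M$, both $\alpha$ and $\kappa$ lie in $j(S)\cap j(C)$, $\alpha<\kappa$, and $j(\vec{S})(\alpha)=S_\alpha=j(\vec{S})(\kappa)\cap\alpha$; elementarity of $j$ transports this existential back to $V$, producing $\alpha<\beta$ in $S\cap C$ with $S_\alpha=S_\beta\cap\alpha$, as required.

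The main obstacle is the uniformity in $(1)\Rightarrow(2)$: showing that a single generic ultrafilter concentrates on the witness set $T_{\vec{S},C}$ for every list and club in the ground model. The essential insight is that the complementary set $S\setminus T_{\vec{S},C}$ cannot itself be subtle, precisely because any putative witness to its subtlety would land inside $T_{\vec{S},C}$. This is also what makes the generic embedding characterization for subtlety genuinely stronger than the small embedding one, as emphasized in the remarks following the proposition.
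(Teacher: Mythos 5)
Your proposal is correct and follows essentially the same route as the paper: the equivalence of (1) and (3) via a continuous chain of elementary substructures and transitive collapses, the direction (1)$\Rightarrow$(2) by forcing with $P(\kappa)/(\NSub_\kappa\restrict S)$ and showing that the anticipation set $T_{\vec{S},C}$ (the paper's set $X$) lies in the dual filter because its complement in $S$ cannot be subtle, and (2)$\Rightarrow$(1) by reflecting the pair $\alpha,\kappa\in j(S\cap C)$ through elementarity. You also supply two details the paper leaves implicit, namely the verification of (3)$\Rightarrow$(1) and the justification that $\kappa\in j(C)$; both are handled correctly.
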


\begin{proof}
For (1) implies (3), suppose $S$ is subtle, $\vec{S}=\<S_\alpha\st\alpha\in S\>$ is an $S$-list and $C\subseteq\kappa$ is a club. Let $\<X_\alpha\st\alpha<\kappa\>$ be a continuous increasing sequence of elementary substructures of $H(\kappa^+)$ of cardinality less than $\kappa$ with $\vec{S},C\in X_0$ and $\alpha\subseteq X_\alpha\cap\kappa\in\kappa$ for all $\alpha<\kappa$. Then $D=\{\alpha\in C\st\alpha= X_\alpha\cap\alpha\}$ is club in $\kappa$ and since $S\cap D$ is subtle there are $\alpha,\beta\in S\cap D$ with $\alpha<\beta$ and $S_\alpha=S_\beta\cap\alpha$. Let $\pi:X_\beta\to M$ be the transitive collapse of $X_\beta$. Then $j=\pi^{-1}:M\to H(\kappa^+)$ is a small embedding for $\kappa$ with $\crit(j)=\beta$, $\vec{S}$, $C\in\ran(j)$ and $S_\alpha=S_{\crit(j)}\cap\alpha$. The converse, namely (3) implies (1), is quite straight forward.

To see that (1) implies (2), suppose $S\subseteq\kappa$ is subtle. Clearly $S$ is stationary in $\kappa$. Let $G\subseteq P(\kappa)/(\NSub_\kappa\restrict S)$ be generic. Let $U_G$ be the $V$-normal $V$-ultrafilter obtained from $G$ and let $j:V\to V^\kappa/U_G\subseteq V[G]$ be the generic ultrapower by $U_G$. Then the critical point of $j$ is $\kappa$ and $\kappa\in j(S)$ since $S\in U_G$. Fix an $S$-list $\vec{S}=\<S_\alpha\st\alpha\in S\>$ and a club $C\subseteq\kappa$. Let us show that the set 
\[X=\{\xi\in S\st (\exists\zeta\in S\cap C\cap\xi)\ S_\zeta=S_\xi\cap\zeta\}\] is in $(\NSub_\kappa\restrict S)^*$; in other words, we will show that $\kappa\setminus X\in\NSub_\kappa\restrict S$, i.e. $(\kappa\setminus X)\cap S\in\NSub_\kappa$. Since 
\[(\kappa\setminus X)\cap S=\{\xi\in S\st (\forall\zeta\in  S\cap C\cap\xi)\ S_\zeta\neq S_\xi\cap\zeta\}\] we have that $S_\alpha\neq S_\beta\cap\alpha$ for all $\alpha,\beta\in C\cap (\kappa\setminus X)\cap S$ with $\alpha<\beta$. Hence $(\kappa\setminus X)\cap S$ is not subtle. Thus $X\in(\NSub_\kappa\restrict S)^*\subseteq U_G$ which implies that $\kappa\in j(X)$ and hence there is an $\alpha\in  S\cap C$ such that $S_\alpha=j(\vec{S})(\kappa)\cap\alpha$.

Conversely, for (2) implies (1), fix an $S$-list $\vec{S}=\<S_\alpha\st\alpha\in S\>$ and a club $C\subseteq\kappa$ and suppose $j:V\to M\subseteq V[G]$ is a generic elementary embedding with critical point $\kappa$ such that $\kappa\in j(S)$ and $S_\alpha=j(\vec{S})(\kappa)\cap\alpha$ for some $\alpha\in S\cap C$. Since $\alpha,\kappa\in j(S\cap C)$, it follows by elementarity that there exist $\zeta,\xi\in S\cap C$ with $\zeta<\xi$ such that $S_\zeta=S_\xi\cap\zeta$.
\end{proof}

\begin{remark}\label{remark_subtle_difference}
Let us point out that there is a substantial difference between the generic embedding characterization and the small embedding characterization of subtlety given in the previous proposition, which is not manifested in the embedding characterizations of stationarity. Namely, in the generic embedding characterization of the subtlety of $S\subseteq\kappa$, Proposition \ref{proposition_subtle_embedding}(2), a single embedding $j$ works for all $S$-lists and all clubs $C$. Whereas, in the small embedding characterization, Proposition \ref{proposition_subtle_embedding}(3), for each $S$-list there is a small embedding with the desired property. This seems to be an inherent difference between these two characterizations because one cannot place all club subsets of $\kappa$ into an elementary substructure $X_0\prec H(\theta)$ while still guaranteeing that $X_0\cap\kappa\in\kappa$. Indeed, it is easy to see that for regular cardinals $\omega<\kappa<\theta$, if $X\prec H(\theta)$ and $X\cap\kappa\in\kappa$ then $X$ does not contain all club subsets of $\kappa$. Suppose not. Let $X\prec H(\theta)$ contain all club subsets of $\kappa$ with $X\cap\kappa\in\kappa$. Then $\crit(j)=X\cap\kappa\in C$ for all clubs $C\subseteq\kappa$, which is a contradiction.
\end{remark}

As it will be needed below for a characterization of ineffability, let us present a result of Baumgartner, which shows that if $\kappa$ is a subtle cardinal then there are many cardinals less than $\kappa$ which are $\Pi^1_n$-indescribable for all $n<\omega$. Recall that for a cardinal $\kappa$, $S\subseteq\kappa$ and an $S$-list $\vec{S}=\<S_\alpha\st\alpha\in S\>$, a set $X\subseteq\kappa$ is \emph{homogeneous} for $\vec{S}$ if and only if whenever $\alpha,\beta\in X$ with $\alpha<\beta$ we have $S_\alpha=S_\beta\cap\alpha$.

\begin{lemma}[Baumgartner \cite{MR0384553}]\label{lemma_subtlety_is_stronger_than_indescribability}
Let $\kappa$ be a cardinal and $S\subseteq\kappa$. Assume $S$ is subtle and $\vec{S}=\<S_\alpha\st\alpha\in S\>$ is an $S$-list. Let 
\begin{align*}
A=\{\alpha\in S\st \exists X\subseteq S\cap\alpha & \text{ such that $X$ is $\Pi^1_n$-indescribable in $\alpha$}\\
	&\text{for all $n<\omega$ and $X$ is homogeneous for $\vec{S}$}\}
\end{align*}
Then $S\setminus A$ is not subtle.
\end{lemma}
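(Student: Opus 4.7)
The plan is to derive a contradiction from the assumption that $T := S \setminus A$ is subtle, by constructing a $T$-list whose subtlety witness is forced to conflict with its own definition. As a preliminary step, I would pass to a smaller subtle set on which the encoding machinery runs smoothly: on a club, each $\alpha$ is closed under the G\"odel pairing function $\gp$, and standard bad-$S$-list constructions (coding cofinal sequences for singular $\alpha$, cardinality collapses for non-cardinals, and surjections witnessing failure of strong-limit-ness) show that the set of non-inaccessible ordinals below $\kappa$ lies in $\NSub_\kappa$. Hence, setting $T' := \{\alpha \in T \st \alpha \text{ is inaccessible and closed under } \gp\}$, the set $T'$ remains subtle by normality.

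For each $\alpha \in T'$, define $H_\alpha := \{\beta \in S \cap \alpha \st S_\beta = S_\alpha \cap \beta\}$. A quick check shows $H_\alpha$ is homogeneous for $\vec{S}$, since for $\beta_1 < \beta_2$ in $H_\alpha$ one has $S_{\beta_2} \cap \beta_1 = (S_\alpha \cap \beta_2) \cap \beta_1 = S_\alpha \cap \beta_1 = S_{\beta_1}$, and clearly $H_\alpha \subseteq S \cap \alpha$. Since $\alpha \notin A$ and $H_\alpha$ is a candidate for the existential defining $A$, there must exist some $n_\alpha < \omega$ for which $H_\alpha$ is not $\Pi^1_{n_\alpha}$-indescribable in $\alpha$; using inaccessibility of $\alpha$ to pass to the weakly indescribable formulation, pick $A_\alpha \subseteq \alpha$ and a $\Pi^1_{n_\alpha}$-sentence $\varphi_\alpha$ such that $(\alpha, \in, A_\alpha) \models \varphi_\alpha$ while $(\beta, \in, A_\alpha \cap \beta) \not\models \varphi_\alpha$ for every $\beta \in H_\alpha$.

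The crucial step is to package the tuple $(S_\alpha, A_\alpha, n_\alpha, \varphi_\alpha)$ into a single subset $U_\alpha \subseteq \alpha$ using $\gp$: use the column $\gp(0, \cdot)$ to carry $S_\alpha$, the column $\gp(1, \cdot)$ to carry $A_\alpha$, and a single ordinal $\gp(2, \gp(n_\alpha, \lceil \varphi_\alpha \rceil))$ (which lies below $\omega$) to carry the finite datum $(n_\alpha, \varphi_\alpha)$. Then $\vec{U} := \langle U_\alpha \st \alpha \in T' \rangle$ is a $T'$-list. Applying subtlety of $T'$ to $\vec{U}$ and any club $C \subseteq \kappa$ yields $\alpha < \beta$ in $T' \cap C$ with $U_\alpha = U_\beta \cap \alpha$; decoding via Lemma \ref{lemma_godel_pairing} simultaneously produces $S_\alpha = S_\beta \cap \alpha$, $A_\alpha = A_\beta \cap \alpha$, $n_\alpha = n_\beta$, and $\varphi_\alpha = \varphi_\beta$. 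The first equation places $\alpha \in H_\beta$, so the witness property at $\beta$ forces $(\alpha, \in, A_\beta \cap \alpha) \not\models \varphi_\beta$; translating via the remaining equations, this reads $(\alpha, \in, A_\alpha) \not\models \varphi_\alpha$, contradicting the witness property at $\alpha$.

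The main obstacle will be engineering the encoding so that the single relation $U_\alpha = U_\beta \cap \alpha$ transfers all four ingredients in precisely the form needed for the final contradiction; the preliminary reduction to inaccessible, $\gp$-closed $\alpha$ is where one must either cite or reprove the fact that the set of non-inaccessible ordinals below a subtle cardinal lies in $\NSub_\kappa$, via auxiliary bad-$S$-list arguments.
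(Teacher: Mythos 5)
Your proposal is correct and follows essentially the same route as the paper: assume $S\setminus A$ is subtle, attach to each $\beta$ the homogeneous anticipation set $B_\beta=\{\gamma\in S\cap\beta : S_\gamma=S_\beta\cap\gamma\}$, extract a witness $(n_\beta,\varphi_\beta,A_\beta)$ to its non-indescribability, code everything into a single list via G\"odel pairing, and let subtlety produce a coherent pair $\alpha<\beta$ that contradicts itself. The one divergence is in handling the second-order parameter: the paper fixes a bijection $b:V_\kappa\to\kappa$ and restricts to the club where $b\restrict V_\alpha:V_\alpha\to\alpha$ is a bijection, so it works with $(V_\alpha,\in,A_\beta\cap V_\alpha)$ uniformly and never needs to discard non-inaccessible points, whereas your reduction to inaccessible $\alpha$ and the weak-indescribability formulation leans on the additional (true, but separately proved) fact that the non-inaccessible ordinals below a subtle cardinal form a non-subtle set.
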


\begin{proof}

Suppose $S\setminus A$ is a subtle subset of $\kappa$. Fix a bijection $b:V_\kappa\to \kappa$ and let $C$ be a club subset of $\kappa$ such that for all $\alpha\in C$ we have $b\restrict V_\alpha : V_\alpha \to \alpha$ is a bijection and $\alpha$ is closed under the standard definable pairing function $\gp:\kappa\times\kappa\to\kappa$. By normality of the subtle ideal, it follows that $E:=(S\setminus A)\cap C$ is a subtle subset of $\kappa$. We define an $E$-list $\vec{E}$ as follows. Fix $\beta\in E$ and let $B_\beta=\{\alpha\in S\cap\beta\st S_\alpha=S_\beta\cap\alpha\}$. Notice that $B_\beta\cup\{\beta\}$ is homogeneous for $\vec{S}$. Since $\beta\in E$, it follows that for some $n<\omega$ the set $B_\beta$ is not $\Pi^1_n$-indescribable in $\beta$. Let $\varphi_\beta$ be a $\Pi^1_n$-sentence and let $A_\beta\subseteq V_\beta$ be such that $(V_\beta,\in,\gp,A_\beta)\models\varphi_\beta$ while for all $\alpha\in B_\beta$ we have $(V_\alpha,\in,\gp\restrict\alpha,A_\beta\cap\alpha)\models\lnot\varphi_\beta$. Let $E_\beta=[[(S_\beta,\{\varphi_\beta\},b[A_\beta])]]$. This defines an $E$-list $\vec{E}=\<E_\beta\st\beta\in E\>$. 

Since $E$ is subtle there exist $\alpha<\beta$ in $E$ such that $E_\alpha=E_\beta\cap\alpha$. It follows that $S_\alpha=S_\beta\cap\alpha$ and thus $\alpha\in B_\beta$. Furthermore, $b[A_\alpha]=b[A_\beta]\cap \alpha$ and hence $A_\alpha= A_\beta\cap V_\alpha$. Also we have $\varphi_\alpha=\varphi_\beta=\varphi$. By definition of $\varphi_\alpha$ we see that $(V_\alpha,\in,\gp,A_\alpha)\models\varphi$, and since $\alpha\in B_\beta$ we have $(V_\alpha,\in,\gp\restrict\alpha,A_\beta\cap V_\alpha)\models\lnot\varphi$, a contradiction.
\end{proof}

An easy corollary of the previous lemma shows that, although the definition of subtlety only asserts the existence of homogeneous sets of size 2, one can obtain much larger homogeneous sets for free; quoting Abramson-Harrington-Kleinberg-Zwicker, ``a little coherence goes a long way'' (see \cite[Section 3.6]{MR0460120}).

\begin{corollary}[Baumgartner \cite{MR0384553}]\label{corollar_indescribable_hom_sets_from_subtlety}
A set $S\subseteq\kappa$ is subtle if and only if for every $S$-list $\vec{S}=\<S_\alpha\st\alpha\in S\>$ and every club $C\subseteq\kappa$ there is an $\alpha\in S\cap C$ such that there is a set $H\subseteq S\cap C\cap\alpha$ homogeneous for $\vec{S}$ which is $\Pi^1_n$-indescribable for all $n<\omega$.
\end{corollary}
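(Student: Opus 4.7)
The plan is to prove both directions directly, with the reverse direction being essentially immediate from the definition of subtle, and the forward direction being an easy consequence of Lemma \ref{lemma_subtlety_is_stronger_than_indescribability}.

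For the reverse direction, I would fix an $S$-list $\vec{S}$ and a club $C$, obtain the hypothesized $\alpha \in S \cap C$ together with $H \subseteq S \cap C \cap \alpha$ homogeneous for $\vec{S}$ and $\Pi^1_n$-indescribable for all $n<\omega$. Since any $\Pi^1_n$-indescribable set has at least two (in fact unboundedly many) elements, I can pick $\gamma < \delta$ in $H$; both lie in $S \cap C$, and the homogeneity of $H$ gives $S_\gamma = S_\delta \cap \gamma$, witnessing subtlety of $S$.

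For the forward direction, assume $S$ is subtle and fix an $S$-list $\vec{S}$ and a club $C \subseteq \kappa$. The first small step is to observe that $S \cap C$ is subtle: given any $(S \cap C)$-list $\vec{T}$ and any club $C' \subseteq \kappa$, extend $\vec{T}$ arbitrarily to an $S$-list and apply the subtlety of $S$ to this list with the club $C \cap C'$, obtaining the desired coherent pair inside $(S \cap C) \cap C'$. Next, I apply Lemma \ref{lemma_subtlety_is_stronger_than_indescribability} to the subtle set $S \cap C$ with the restricted list $\vec{S} \restrict (S \cap C)$, producing a set
\[ A = \{\alpha \in S \cap C \st \exists X \subseteq (S \cap C) \cap \alpha \text{ homogeneous for } \vec{S} \text{ and } \Pi^1_n\text{-indescribable in } \alpha \text{ for all } n < \omega\} \]
such that $(S \cap C) \setminus A$ is not subtle. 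Since $\NSub_\kappa$ is an ideal and $S \cap C = A \cup ((S\cap C) \setminus A)$ is subtle, $A$ itself must be subtle, and in particular nonempty. Any $\alpha \in A$ together with the witnessing $H := X$ completes the proof.

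The main (minor) obstacle here is just the bookkeeping needed to translate between an $S$-list and an $(S \cap C)$-list when invoking the lemma, and verifying that subtlety is preserved under intersection with a club; both are routine. There is no substantive obstacle, since all of the real work has been absorbed into Lemma \ref{lemma_subtlety_is_stronger_than_indescribability}.
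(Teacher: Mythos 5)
Your proof is correct and follows exactly the route the paper intends: the corollary is stated as an immediate consequence of Lemma \ref{lemma_subtlety_is_stronger_than_indescribability}, and your argument (reduce to the subtle set $S\cap C$, apply the lemma to get that $A$ is $\NSub_\kappa$-positive and hence nonempty, and read off the converse from the fact that an indescribable homogeneous set has at least two elements) supplies precisely the routine details the paper omits.
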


As another corollary of Lemma \ref{lemma_subtlety_is_stronger_than_indescribability}, we easily obtain an additional characterization of subtlety in terms of generic elementary embeddings.

\begin{corollary}
For all cardinals $\kappa$ and $S\subseteq\kappa$ the following are equivalent.
\begin{enumerate}
\item $S$ is subtle.
\item There is a generic embedding $j:V\to M\subseteq V[G]$ with critical point $\kappa$ and $\kappa\in j(S)$ such that for every $S$-list $\vec{S}=\<S_\alpha\st\alpha\in S\>$ and every club $C\subseteq\kappa$ in $V$, we have $S_\alpha=j(\vec{S})(\kappa)\cap\alpha$ for some $\alpha\in S\cap C$ and furthermore, $M\models$ ``there is a set $H\subseteq S$ which is $\Pi^1_n$-indescribable in $\kappa$ for all $n<\omega$ and homogeneous for $\vec{S}$''.
\end{enumerate}
\end{corollary}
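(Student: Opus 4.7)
The plan is to derive both directions of the equivalence by combining Proposition \ref{proposition_subtle_embedding} with Lemma \ref{lemma_subtlety_is_stronger_than_indescribability}. The new content here is only the additional clause asserting the existence of $H$ inside $M$; the ``$S_\alpha=j(\vec{S})(\kappa)\cap\alpha$'' clause is already the content of Proposition \ref{proposition_subtle_embedding}(2).

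For the implication $(1)\Rightarrow(2)$, I would take the generic ultrapower produced in the proof of Proposition \ref{proposition_subtle_embedding}: let $G\subseteq P(\kappa)/(\NSub_\kappa\restrict S)$ be $V$-generic, let $U_G$ be the induced $V$-normal $V$-ultrafilter, and let $j:V\to M=V^\kappa/U_G\subseteq V[G]$ be the generic ultrapower embedding, so $\crit(j)=\kappa$ and $\kappa\in j(S)$. By Proposition \ref{proposition_subtle_embedding}, this embedding already witnesses that for every $S$-list $\vec{S}$ and every club $C\subseteq\kappa$ in $V$ there is some $\alpha\in S\cap C$ with $S_\alpha=j(\vec{S})(\kappa)\cap\alpha$. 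It remains to verify the extra indescribability/homogeneity clause. Fix any $S$-list $\vec{S}=\<S_\alpha\st\alpha\in S\>$ in $V$, and let
\begin{align*}
A=\{\alpha\in S\st \exists X\subseteq S\cap\alpha & \text{ such that $X$ is $\Pi^1_n$-indescribable in $\alpha$}\\
	&\text{for all $n<\omega$ and $X$ is homogeneous for $\vec{S}$}\}.
\end{align*}
By Lemma \ref{lemma_subtlety_is_stronger_than_indescribability}, $S\setminus A$ is not subtle, so $S\setminus A\in\NSub_\kappa$, hence $A\in(\NSub_\kappa\restrict S)^*\subseteq U_G$. Therefore $\kappa\in j(A)$. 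Since $j$ has critical point $\kappa$ and $S,\vec{S}$ lie in $V_\kappa$-like rank, we have $j(S)\cap\kappa=S$ and $j(\vec{S})\restrict\kappa=\vec{S}$, so the statement defining $j(A)$ applied at $\kappa$ yields inside $M$ a set $H\subseteq j(S)\cap\kappa=S$ which is $\Pi^1_n$-indescribable in $\kappa$ for every $n<\omega$ and homogeneous for $\vec{S}$, as required.

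The converse $(2)\Rightarrow(1)$ is immediate: any $j$ satisfying the hypotheses of (2) automatically satisfies the hypotheses of Proposition \ref{proposition_subtle_embedding}(2), and hence $S$ is subtle.

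The only real subtlety (pun aside) is the bookkeeping needed to pass the set $A$ through the generic ultrapower and to see that the statement ``$H\subseteq j(S)\cap\kappa$ is $\Pi^1_n$-indescribable in $\kappa$ and homogeneous for $j(\vec{S})\restrict\kappa$'' in $M$ translates to the external statement in the corollary; this is handled by the identifications $j(S)\cap\kappa=S$ and $j(\vec{S})\restrict\kappa=\vec{S}$. Everything else is a direct quotation of the two cited results, so no further calculation is needed.
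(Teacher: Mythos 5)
Your proposal is correct and follows essentially the same route as the paper: force with $P(\kappa)/(\NSub_\kappa\restrict S)$, invoke Proposition \ref{proposition_subtle_embedding} for the anticipation clause, and use Lemma \ref{lemma_subtlety_is_stronger_than_indescribability} to put the set $A$ of ordinals admitting indescribable homogeneous sets into $(\NSub_\kappa\restrict S)^*\subseteq U_G$, so that $\kappa\in j(A)$ yields the required $H$ in $M$. The extra bookkeeping you supply about $j(S)\cap\kappa=S$ and $j(\vec{S})\restrict\kappa=\vec{S}$ is correct and is left implicit in the paper.
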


\begin{proof}
For (2) implies (1) the proof is the same as that of Proposition \ref{proposition_subtle_embedding}. For (1) implies (2), assuming $S$ is subtle, let $G\subseteq P(\kappa)/(\NSub_\kappa\restrict S)$ be generic, let $U_G$ be the corresponding $V$-normal ultrafilter and let $j:V\to V^\kappa/U_G\subseteq V[G]$ be the generic ultrapower. As in the proof of Proposition \ref{proposition_subtle_embedding}, one can check that there is an $\alpha\in S\cap C$ such that $S_\alpha=j(\vec{S})(\kappa)\cap\alpha$. For the remaining statement, let 
\begin{align*}
A=\{\alpha\in S\st \exists X\subseteq S\cap\alpha & \text{ such that $X$ is $\Pi^1_n$-indescribable in $\alpha$}\\
	&\text{for all $n<\omega$ and $X$ is homogeneous for $\vec{S}$}\}
\end{align*}
and notice that by Lemma \ref{lemma_subtlety_is_stronger_than_indescribability}, $S\cap(\kappa\setminus A)\in\NSub_\kappa$, which implies $A\in(\NSub_\kappa\restrict S)^*$ and hence $\kappa\in j(A)$.
\end{proof}

\subsection{The almost ineffable and ineffable ideals}


Given a regular cardinal $\kappa$ and an ideal $I\supseteq [\kappa]^{<\kappa}$ we define another ideal $\I(I)$ by letting $S\notin\I(I)$ if and only if for every $S$-list $\vec{S}=\<S_\alpha\st\alpha\in S\>$ there is a set $H\in P(S)\cap I^+$ which is homogeneous for $\vec{S}$. We say that a set $S\subseteq\kappa$ is \emph{almost ineffable} if $S\in \I([\kappa]^{<\kappa})^+$, that is, every $S$-list $\vec{S}=\<S_\alpha\st\alpha\in S\>$ has a homogeneous set $H\subseteq S$ of size $\kappa$. Similarly, $S$ is \emph{ineffable} if $S\in\I(\NS_\kappa)^+$, that is, every $S$-list $\vec{S}=\<S_\alpha\st\alpha\in S\>$ has a homogeneous set $H\subseteq S$ which is stationary in $\kappa$. Baumgartner \cite{MR0384553} showed that if $\kappa$ is ineffable then the ideal $\I(\NS_\kappa)$ is nontrivial and normal, and similarly, if $\kappa$ is almost ineffable then $\I([\kappa]^{<\kappa})$ is nontrivial and normal. The collection $\I([\kappa]^{<\kappa})$ is called the \emph{almost ineffable ideal} and $\I(\NS_\kappa)$ is called the \emph{ineffable ideal}. It is easy to see that for any ideal $I\supseteq[\kappa]^{<\kappa}$ we have $S\in\I(I)^+$ if and only if for every $S$-list $\vec{S}=\<S_\alpha\st\alpha\in S\>$ there is a set $D\subseteq\kappa$ which is anticipated by $S$ on a set in $I^+$, meaning that the set $\{\alpha\in S\st S_\alpha=D\cap\alpha\}$ is in $I^+$. This easily leads to the following characterization of the ineffability of a set $S\subseteq\kappa$; note that below we give another characterization of ineffability (see Proposition \ref{proposition_ineffable_embedding_baumgartner}).

\begin{proposition}\label{proposition_ineffable_embedding_stationary}
For all cardinals $\kappa$ and $S\subseteq\kappa$ the following are equivalent.
\begin{enumerate}
\item $S$ is ineffable.
\item For all $S$-lists $\vec{S}=\<S_\alpha\st\alpha\in S\>$ there is a generic elementary embedding $j:V\to M\subseteq M[G]$ with critical point $\kappa$ and $\kappa\in j(S)$ such that $j(\vec{S})(\kappa)\in V$.
\item For all $S$-lists $\vec{S}=\<S_\alpha\st\alpha\in S\>$ there is a small embedding $j:M\to H(\kappa^+)$ for $\kappa$ with $S,\vec{S}\in\ran(j)$ and $\crit(j)\in S$ such that $S_{\crit(j)}\in M$.
\end{enumerate}
\end{proposition}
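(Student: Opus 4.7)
\emph{Plan.} I would prove $(1)\Leftrightarrow(2)$ via generic ultrapowers and $(1)\Leftrightarrow(3)$ via transitive collapse embeddings, mirroring the corresponding arguments for stationarity (Proposition \ref{proposition_stationarity_characterizations}) and for subtlety (Proposition \ref{proposition_subtle_embedding}).

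For $(1)\Rightarrow(2)$, given an $S$-list $\vec{S}$, I would unpack ineffability to fix $D\subseteq\kappa$ so that $T_D:=\{\alpha\in S \st S_\alpha = D\cap\alpha\}$ is stationary, then force with $P(\kappa)/(\NS_\kappa\restrict T_D)$ to obtain a $V$-normal ultrafilter $U_G\supseteq(\NS_\kappa\restrict T_D)^*$, and let $j:V\to M\subseteq V[G]$ be the generic ultrapower. Then $\crit(j)=\kappa$ and $\kappa\in j(T_D)\subseteq j(S)$, and since $\alpha\mapsto S_\alpha$ and $\alpha\mapsto D\cap\alpha$ agree on the $U_G$-large set $T_D$, {\L}o\'s's theorem yields $j(\vec{S})(\kappa)=j(D)\cap\kappa=D\in V$. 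For $(2)\Rightarrow(1)$, I would fix $\vec{S}$ together with a generic embedding $j$ such that $D:=j(\vec{S})(\kappa)\in V$; for any club $C\subseteq\kappa$ in $V$ we have $\kappa\in j(S)\cap j(C)$ and $j(\vec{S})(\kappa)=D=j(D)\cap\kappa$, so by elementarity there exists $\alpha\in S\cap C$ with $S_\alpha=D\cap\alpha$, which shows $T_D$ meets every club and hence is stationary, witnessing the ineffability of $\vec{S}$.

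For $(1)\Rightarrow(3)$, I would pick $D$ with $T_D$ stationary and form a continuous $\prec$-chain $\langle X_\alpha:\alpha<\kappa\rangle$ of elementary submodels of $H(\kappa^+)$ of size less than $\kappa$ with $\vec{S},S,D\in X_0$ and $\alpha\subseteq X_\alpha\cap\kappa\in\kappa$; the reflecting club $\{\alpha:X_\alpha\cap\kappa=\alpha\}$ meets $T_D$ at some $\beta$, and letting $\pi:X_\beta\to M$ be the Mostowski collapse and $j:=\pi^{-1}$ one reads off $\crit(j)=\beta\in T_D\subseteq S$, $\vec{S},S\in\ran(j)$ and $\pi(D)=D\cap\beta=S_\beta\in M$. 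For $(3)\Rightarrow(1)$, I would fix $\vec{S}$ and assume for contradiction that $T_D$ is non-stationary for every $D\subseteq\kappa$. Applying (3) to $\vec{S}$ yields a small embedding $j:M\to H(\kappa^+)$ with $\beta:=\crit(j)\in S$, $S_\beta\in M$ and $\vec{S},S\in\ran(j)$; setting $D:=j(S_\beta)\subseteq\kappa$ and using $j\restrict\beta=\id\restrict\beta$ together with $S_\beta\subseteq\beta$ gives $D\cap\beta=S_\beta$, hence $\beta\in T_D$. To convert this into a contradiction one fixes, via a well-ordering of $H(\kappa^+)$, a canonical club $C_D$ disjoint from $T_D$, and replaces $\vec{S}$ by an auxiliary $S$-list $\vec{S}'$ whose $\alpha$-th entry G\"odel-codes both $S_\alpha$ and enough of this wellordering to make the assignment $D\mapsto C_D$ definable within $\ran(j)$; reapplying (3) to $\vec{S}'$ forces the new critical point to lie in $C_D$, contradicting $T_D\cap C_D=\emptyset$.

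\emph{Main obstacle.} The delicate step is $(3)\Rightarrow(1)$: as stated, (3) only guarantees $S,\vec{S}\in\ran(j)$, so to force $\beta\in C_D$ one must smuggle the parameters defining $C_D$ into $\ran(j)$ by passing to an enriched $S$-list, and care is required to ensure that the required choice function $D\mapsto C_D$ is captured by that coding. The other three implications are essentially verbatim adaptations of the stationarity and subtlety arguments already carried out in the preceding propositions.
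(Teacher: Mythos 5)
Your treatment of $(1)\Leftrightarrow(2)$ coincides with the paper's (force with $\NS_\kappa\restrict T_D$ below a stationary anticipation set for one direction; read off the anticipation set of $D=j(\vec{S})(\kappa)$ and test it against clubs via $\kappa\in j(T_D)\cap j(C)$ for the other), and your $(1)\Rightarrow(3)$ is the standard collapse argument the paper delegates to Holy--L\"ucke--Njegomir. The problem is your $(3)\Rightarrow(1)$. The contradiction scheme you sketch requires that, after passing to the auxiliary list $\vec{S}'$, the club $C_{D'}$ associated to the set $D'=j'(S_{\crit(j')})$ produced by the \emph{new} embedding lies in $\ran(j')$. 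Since $D'$ is not known in advance, this forces you to capture the entire assignment $D\mapsto C_D$ for $D$ ranging over $P(\kappa)$. That assignment is a function on $P(\kappa)$ of hereditary cardinality $2^\kappa$; it is not an element of $H(\kappa^+)$, so it cannot lie in $\ran(j')\prec H(\kappa^+)$, and it certainly cannot be G\"odel-coded into an $S$-list, whose $\alpha$-th entry is a subset of $\alpha$ and which is itself a single set of hereditary size $\kappa$. If instead you only code the one club $C$ witnessing nonstationarity of $T_D$ for the $D$ obtained from the first embedding, the second embedding may produce a different $D'$, and $C$ need not avoid $T_{D'}$. So the step you yourself flag as delicate is in fact a genuine gap.

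The intended argument is much shorter and uses the hypothesis $S_{\crit(j)}\in M$ exactly where your proof does not: apply $(3)$ once to $\vec{S}$ to get $j:M\to H(\kappa^+)$ with $\delta=\crit(j)\in S$ and $S_\delta\in M$, and set $D=j(S_\delta)$. Since $S_\delta\in M=\dom(j)$, we have $D\in\ran(j)$, and hence $T_D=\{\alpha\in S\st S_\alpha=D\cap\alpha\}$, being definable in $H(\kappa^+)$ from $S,\vec{S},D\in\ran(j)$, also lies in $\ran(j)$; moreover $\delta\in T_D$ because $j(S_\delta)\cap\delta=S_\delta$. Thus $j$ is a small embedding for $\kappa$ with $T_D\in\ran(j)$ and $\crit(j)\in T_D$, so $T_D$ is stationary by Proposition \ref{proposition_stationarity_characterizations}, and $T_D$ is a stationary homogeneous set for $\vec{S}$. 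No auxiliary list, no choice of clubs, and no contradiction argument is needed.
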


\begin{proof}
The proof that (1) and (3) are equivalent is very similar to that of \cite[Lemma 5.5]{MR3913154}. For (1) implies (2), suppose $S$ is ineffable, let $\vec{S}=\<S_\alpha\st\alpha\in S\>$ be an $S$-list and let $D\subseteq\kappa$ be such that the set $A=\{\alpha\in S\st S_\alpha=D\cap\alpha\}$ is stationary. Let $G\subseteq P(\kappa)/(\NS_\kappa\restrict A)$ and let $j:V\to M=V^\kappa/U_G\subseteq V[G]$ be the corresponding generic ultrapower. Then, $\crit(j)=\kappa$ and $\kappa\in j(S)$. Furthermore, since $A\in U_G$ it follows that $\kappa\in j(A)$ and hence $j(\vec{S})(\kappa)=D\in V$. 

For (2) implies (1), fix an $S$-list $\vec{S}=\<S_\alpha\st\alpha\in S\>$. We must show that $\vec{S}$ has a homogeneous set that is stationary in $\kappa$. Using (2), there is a generic elementary embedding $j:V\to M\subseteq V[G]$ with critical point $\kappa$ and $\kappa\in j(S)$ such that $D=j(\vec{S})(\kappa)\in V$. Let $H=\{\alpha\in S\st S_\alpha=D\cap\alpha\}$. Clearly $H\in V$ and $H$ is homogeneous for $\vec{S}$. To see that $H$ is stationary in $\kappa$, fix a club $C\subseteq\kappa$ and notice that $\kappa\in j(H)\cap j(C)$, and hence $H\cap C\neq\emptyset$ by elementarity.
\end{proof}

By combining the arguments of Proposition \ref{proposition_indescribability_embedding} and Proposition \ref{proposition_ineffable_embedding_stationary}, we obtain the following characterization of the ideals $\I(\Pi^1_n(\kappa))$ for $n<\omega$, which generalizes the generic embedding characterization of ineffability from Proposition \ref{proposition_ineffable_embedding_stationary}; it is not clear whether or not the small embedding characterization from Proposition \ref{proposition_ineffable_embedding_stationary} can also be generalized in this way.

\begin{proposition}\label{proposition_ineffability_anticipation_embedding}
For all $n<\omega$, all cardinals $\kappa$ and all $S\subseteq\kappa$, the following are equivalent.
\begin{enumerate}
\item $S\in\I(\Pi^1_n(\kappa))^+$
\item For all $S$-lists $\vec{S}=\<S_\alpha\st\alpha\in S\>$ there is a generic elementary embedding $j:V\to M\subseteq V[G]$ with critical point $\kappa$ and $\kappa\in j(S)$ such that the following properties hold.
\begin{enumerate}
\item $j(\vec{S})(\kappa)\in V$
\item For all $R\in V_{\kappa+1}^V$ and all $\Pi^1_n$-sentences $\varphi$ we have
\[((V_\kappa,\in,R)\models\varphi)^V\implies((V_\kappa,\in,R)\models\varphi)^M.\]
\end{enumerate}
\end{enumerate}
\end{proposition}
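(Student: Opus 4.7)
The plan is to combine the generic-ultrapower argument of Proposition~\ref{proposition_indescribability_embedding} with the anticipation-based characterization of ineffability from Proposition~\ref{proposition_ineffable_embedding_stationary}. The bridge is the equivalent reformulation already observed in the preamble to Proposition~\ref{proposition_ineffable_embedding_stationary}: $S\in\I(\Pi^1_n(\kappa))^+$ iff for every $S$-list $\vec{S}$ there exists some $D\subseteq\kappa$ such that the anticipation set $A:=\{\alpha\in S : S_\alpha=D\cap\alpha\}$ belongs to $\Pi^1_n(\kappa)^+$. Thus, in place of forcing with $P(\kappa)/(\NS_\kappa\restrict A)$ as in Proposition~\ref{proposition_ineffable_embedding_stationary}, the idea is to force with $P(\kappa)/(\Pi^1_n(\kappa)\restrict A)$, whose generic ultrapower will simultaneously witness both clauses (a) and (b).

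For $(1)\Rightarrow(2)$, given an $S$-list $\vec{S}$, I would fix $D$ and $A$ as above, let $G$ be $(V, P(\kappa)/(\Pi^1_n(\kappa)\restrict A))$-generic, let $U_G$ be the associated $V$-normal $V$-ultrafilter (which contains $A$), and let $j:V\to M=V^\kappa/U_G$ be the corresponding generic ultrapower. Normality gives $\crit(j)=\kappa$; the fact that $A\in U_G$ gives $\kappa\in j(A)\subseteq j(S)$, and unpacking $\kappa\in j(A)$ yields $j(\vec{S})(\kappa)=j(D)\cap\kappa=D\in V$, which is (a). For (b), given $R\in V_{\kappa+1}^V$ and a $\Pi^1_n$-sentence $\varphi$ with $(V_\kappa,\in,R)\models\varphi$ in $V$, Lemma~\ref{lemma_indescribable_filter} places the reflection set $\{\alpha<\kappa : (V_\alpha,\in,R\cap V_\alpha)\models\varphi\}$ into $\Pi^1_n(\kappa)^*\subseteq U_G$, so $\kappa$ lies in its $j$-image. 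Using $R=j(R)\cap V_\kappa\in M$ exactly as in the footnote to Proposition~\ref{proposition_indescribability_embedding}, this translates to $((V_\kappa,\in,R)\models\varphi)^M$.

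For $(2)\Rightarrow(1)$, given an $S$-list $\vec{S}$, I would apply the hypothesis to obtain a generic embedding $j:V\to M\subseteq V[G]$, set $D:=j(\vec{S})(\kappa)\in V$, and define $H:=\{\alpha\in S : S_\alpha=D\cap\alpha\}$. Then $H\in V$, $H$ is visibly homogeneous for $\vec{S}$, and $\kappa\in j(H)$ since $j(\vec{S})(\kappa)=D=j(D)\cap\kappa$. It remains to show $H\in\Pi^1_n(\kappa)^+$. Fix $R\in V_{\kappa+1}^V$ and a $\Pi^1_n$-sentence $\varphi$ with $(V_\kappa,\in,R)\models\varphi$ in $V$. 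By clause (b) we obtain $((V_\kappa,\in,R)\models\varphi)^M$; since $\kappa\in j(H)$ and $j(R)\cap V_\kappa=R$, the ordinal $\kappa$ itself witnesses in $M$ the statement $(\exists\alpha\in j(H))\,(V_\alpha,\in,j(R)\cap V_\alpha)\models\varphi$. Elementarity of $j$ then reflects this to $V$, producing some $\alpha\in H$ with $(V_\alpha,\in,R\cap V_\alpha)\models\varphi$.

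The only points requiring any care are routine: confirming that $R\in M$ and that the satisfaction predicate for $\Pi^1_n$-sentences transfers correctly under $j$, both of which are handled as in the proof of Proposition~\ref{proposition_indescribability_embedding} via the identity $R=j(R)\cap V_\kappa$ and the standard uniform definability of $\Pi^1_n$-satisfaction. I do not anticipate any deeper obstacle, because the ideal $\Pi^1_n(\kappa)\restrict A$ encodes the anticipation data and the $\Pi^1_n$-indescribability data in a single forcing, which is precisely the feature that (per the remark in the paper immediately preceding the proposition) seems to fail for the small-embedding analogue.
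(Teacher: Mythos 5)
Your proof is correct and follows essentially the same route as the paper's: reduce $S\in\I(\Pi^1_n(\kappa))^+$ to the existence of a $\Pi^1_n$-indescribable anticipation set $A$, force with $P(\kappa)/(\Pi^1_n(\kappa)\restrict A)$ to get the forward direction, and for the converse read off $D=j(\vec{S})(\kappa)$ and verify that the anticipation set is $\Pi^1_n$-indescribable using clause (b) and elementarity. No gaps.
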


\begin{proof}
For (1) implies (2), suppose $S\in\I(\Pi^1_n(\kappa))^+$ and fix an $S$-list $\vec{S}=\<S_\alpha\st\alpha\in S\>$. Let $D\subseteq\kappa$ be such that the set $A=\{\alpha\in S\st S_\alpha=D\cap\alpha\}$ is $\Pi^1_n$-indescribable. Let $G\subseteq P(\kappa)/(\Pi^1_n(\kappa)\restrict A)$ be generic and let $j:V\to M=V^\kappa/U_G\subseteq V[G]$ be the corresponding generic ultrapower embedding. Since $\Pi^1_n(\kappa)\restrict S$ is a normal ideal on $\kappa$, it follows that $U_G$ is a $V$-normal ultrafilter. Hence $\crit(j)=\kappa$ and $M$ is well-founded up to $\kappa^+$. Since $A\in(\Pi^1_n(\kappa) \restrict A)^*\subseteq U_G$, it follows that $\kappa\in j(A)$ and thus $j(\vec{S})(\kappa)=D$, so (2)(a) holds. Since $\Pi^1_n(\kappa)^*\subseteq(\Pi^1_n(\kappa)\restrict A)^*\subseteq U_G$, the same argument as that of Proposition \ref{proposition_indescribability_embedding} shows that (2)(b) holds. 

Conversely, suppose (2) holds. Fix an $S$-list $\vec{S}=\<S_\alpha\st\alpha\in S\>$ and let $j$ be a generic elementary embedding as in (2). Let $D=j(\vec{S})(\kappa)$. We will argue that $A=\{\alpha\in S\st S_\alpha=D\cap\alpha\}$ is $\Pi^1_n$-indescribable, which is sufficient since $A$ is clearly homogeneous for $\vec{S}$. By (2)(a) it follows that $D\in V$, and thus $A\in V$. Suppose $R\in V_{\kappa+1}^V$ and $\varphi$ is a $\Pi^1_n$-sentence such that $((V_\kappa,\in,R)\models\varphi)^V$. By (2)(b), it follows that $((V_\kappa,\in,R)\models\varphi)^M$. Since $j(\vec{S})(\kappa)=j(D)\cap\kappa$, we see that $\kappa\in j(A)$, and thus, it follows by elementarity that there is some $\alpha\in A$ such that $((V_\alpha,\in,R\cap V_\alpha)\models\varphi)^V$. In other words, $A$ is $\Pi^1_n$-indescribable.
\end{proof}

\begin{remark}
Notice that the case $n=-1$, where $\I(\Pi^1_{-1}(\kappa))=\I([\kappa]^{<\kappa})$ is conspicuously missing from Proposition \ref{proposition_ineffable_embedding_stationary} and Proposition \ref{proposition_ineffability_anticipation_embedding}. One reason for this is that the assumption that $S\subseteq\kappa$ is almost ineffable gives for each $S$-list $\vec{S}=\<S_\alpha\st\alpha\in S\>$ a set $D\subseteq\kappa$ such that $A=\{\alpha\in S\st S_\alpha=D\cap\alpha\}$ has size $\kappa$, and forcing with the non-normal ideal $[\kappa]^{<\kappa}\restrict A$ is problematic. Nonetheless, we give a generic embedding characterization of almost ineffable sets below in Proposition \ref{proposition_ineffable_embedding_baumgartner}.
\end{remark}

Baumgartner proved \cite{MR0384553} that, for a cardinal $\kappa$ and $S\subseteq\kappa$, demanding every $S$-list has a $\Pi^1_n$-indescribable homogeneous set implies $S$ is $\Pi^1_{n+1}$-indescribable. We include a proof here for the reader's convenience. Together with Theorem \ref{theorem_baumgartner_ineffable_ideal}, this result will be use below in the proof of Proposition \ref{proposition_ineffable_embedding_baumgartner} to establish another generic embedding characterization of the ideal $\I(\Pi^1_n(\kappa))$.

\begin{lemma}[Baumgartner \cite{MR0384553}]\label{lemma_ineffability_implies_indescribability}
Suppose $\kappa$ is a cardinal and $S\subseteq\kappa$ is such that every $S$-list $\vec{S}=\<S_\alpha\st\alpha\in S\>$ has a homogeneous set which is $\Pi^1_n$-indescribable where $n\in\{-1\}\cup\omega$. Then $S$ is $\Pi^1_{n+2}$-indescribable.
\end{lemma}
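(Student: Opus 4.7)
The plan is to argue by contradiction, adapting the $S$-list coding from the proof of Lemma~\ref{lemma_subtlety_is_stronger_than_indescribability}. Fix $A\in V_{\kappa+1}$ and a $\Pi^1_{n+2}$-sentence $\varphi$ over $(V_\kappa,\in,A)$ with $(V_\kappa,\in,A)\models\varphi$, and write $\varphi$ as $\forall X\,\psi(X)$ with $\psi$ a $\Sigma^1_{n+1}$-formula; when $n\geq 0$, write $\psi(X)\equiv\exists Y\,\chi(X,Y)$ with $\chi$ a $\Pi^1_n$-formula. Suppose toward a contradiction that no $\alpha\in S$ reflects $\varphi$; then for each $\alpha\in S$ we may fix $X_\alpha\subseteq V_\alpha$ with $(V_\alpha,\in,A\cap V_\alpha)\models\lnot\psi(X_\alpha)$.

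Next I would fix a bijection $b:V_\kappa\to\kappa$ and let $D$ be the club of $\alpha<\kappa$ such that $b\restrict V_\alpha$ bijects $V_\alpha$ onto $\alpha$ and $\alpha$ is closed under the G\"odel pairing function $\gp$. For $\alpha\in S\cap D$, set $S_\alpha=[[\<b[X_\alpha],\,b[A\cap V_\alpha]\>]]$, where $[[\,\cdot\,]]$ is the pairing-coding notation used above, and for $\alpha\in S\setminus D$ arrange $S_\alpha$ (along the lines of the proof of Lemma~\ref{lemma_subtlety_is_stronger_than_indescribability}) so that any homogeneous subset of $S$ must concentrate on $D$. Applying the hypothesis to $\vec{S}=\<S_\alpha\st\alpha\in S\>$ yields a $\Pi^1_n$-indescribable $H\subseteq S$ homogeneous for $\vec{S}$; without loss of generality $H\subseteq D$. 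By Lemma~\ref{lemma_godel_pairing} and homogeneity, $b[X_\alpha]=b[X_\beta]\cap\alpha$, and hence $X_\alpha=X_\beta\cap V_\alpha$, whenever $\alpha<\beta$ lie in $H$. Define $X=\bigcup_{\alpha\in H}X_\alpha\subseteq V_\kappa$, so that $X\cap V_\alpha=X_\alpha$ for every $\alpha\in H$.

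Since $(V_\kappa,\in,A)\models\varphi$, we have $(V_\kappa,\in,A)\models\psi(X)$. For $n\geq 0$, fix $Y\subseteq V_\kappa$ witnessing $\exists Y\,\chi(X,Y)$, so that $\chi(X,Y)$ is a $\Pi^1_n$-sentence over $(V_\kappa,\in,A,X,Y)$ holding in $V_\kappa$. The $\Pi^1_n$-indescribability of $H$ then produces some $\alpha\in H$ with $(V_\alpha,\in,A\cap V_\alpha,X_\alpha,Y\cap V_\alpha)\models\chi(X_\alpha,Y\cap V_\alpha)$, whence $(V_\alpha,\in,A\cap V_\alpha)\models\psi(X_\alpha)$, contradicting the choice of $X_\alpha$. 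For $n=-1$, $\psi$ is first-order, and the argument is concluded by strengthening the $S$-list coding so as to force $H$ to lie in the club of $\alpha$ at which $(V_\alpha,\in,A\cap V_\alpha,X\cap V_\alpha)\prec(V_\kappa,\in,A,X)$; some $\alpha\in H$ then reflects $\psi(X)$, yielding $(V_\alpha,\in,A\cap V_\alpha)\models\psi(X_\alpha)$ and again contradicting the choice of $X_\alpha$.

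The main obstacle is the coding step, namely arranging the $S$-list so that (i)~homogeneity of $H$ yields the coherence $X_\alpha=X_\beta\cap V_\alpha$ for $\alpha<\beta$ in $H$, and (ii)~any homogeneous $H$ may be taken inside $D$. The G\"odel-pairing trick encapsulated by Lemma~\ref{lemma_godel_pairing} provides (i); the inclusion of the auxiliary coordinate $b[A\cap V_\alpha]$, together with a uniform choice of $S_\alpha$ on $S\setminus D$, handles (ii). A secondary subtlety is the case $n=-1$, where one cannot reflect the inner matrix $\chi$ via indescribability of $H$ and must instead force $H$ into an elementarity club through the coding.
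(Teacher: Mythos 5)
Your route is genuinely different from the paper's. The paper first invokes the equivalence ``$S$ is $\Pi^1_m$-indescribable iff $\kappa$ is inaccessible and $S$ is \emph{weakly} $\Pi^1_m$-indescribable,'' so the failure of reflection is witnessed over $(\kappa,\in,A)$ by sets $S_\alpha\subseteq\alpha$ that already form an $S$-list; no G\"odel-pairing coding, no bijection $b$, and no club $D$ of closure points are needed. You instead work over $(V_\kappa,\in,A)$ and import the coding apparatus from the proof of Lemma \ref{lemma_subtlety_is_stronger_than_indescribability}. For $n\geq 0$ your version does go through, and your worry (ii) dissolves: you do not need to rig $S_\alpha$ on $S\setminus D$ at all, because $\kappa\setminus D$ is nonstationary, $\NS_\kappa\subseteq\Pi^1_n(\kappa)$ for $n\geq 0$, and $\Pi^1_n(\kappa)$ is an ideal, so $H\cap D$ is still $\Pi^1_n$-indescribable and still homogeneous; just replace $H$ by $H\cap D$. (Incidentally, your reflection step correctly reflects $\chi(X,Y)$ to a point of $H$, which is what makes $X\cap V_\alpha=X_\alpha$ available.)

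The genuine gap is the case $n=-1$. Your proposed repair --- ``strengthen the coding so as to force $H$ to lie in the club of $\alpha$ at which $(V_\alpha,\in,A\cap V_\alpha,X\cap V_\alpha)\prec(V_\kappa,\in,A,X)$'' --- is circular as stated: that club is computed from $X=\bigcup_{\alpha\in H}X_\alpha$, which is only defined after the hypothesis has been applied to the list and produced $H$, so it cannot be built into the list in advance. Here a homogeneous set of size $\kappa$ need not be stationary, so you also cannot recover by intersecting with a club afterwards. The non-circular way to get the effect you want is to code into $S_\alpha$ (via $b$ and G\"odel pairing) the full elementary diagram of $(V_\alpha,\in,A\cap V_\alpha,X_\alpha)$ rather than just $b[X_\alpha]$: homogeneity of $H$ then makes $\<(V_\alpha,\in,A\cap V_\alpha,X_\alpha)\st\alpha\in H\>$ an elementary chain whose union is $(V_\kappa,\in,A,X)$, whence \emph{every} $\alpha\in H$ satisfies $(V_\alpha,\in,A\cap V_\alpha)\models\psi(X_\alpha)$, contradicting the choice of $X_\alpha$. (The paper's own $n=-1$ paragraph is also terse exactly here --- it reflects via stationarity of $S$ and then asserts $X\cap\alpha=S_\alpha$ at the reflection point, which likewise requires an argument of this kind --- so this is the step you should treat with the most care.)
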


\begin{proof}
Let us first handle the case in which $n=-1$. Suppose every $S$-list has a homogeneous set $H$ in $(\Pi^1_{-1}(\kappa))^+=([\kappa]^{<\kappa})^+$. In other words, $S$ is almost ineffable. Let us show that $S$ is $\Pi^1_1$-indescribable. Since the almost ineffability of $S$ implies that $\kappa$ is inaccessible, it suffices to show that $S$ is weakly $\Pi^1_1$-indescribable (although showing that $S$ is $\Pi^1_n$-indescribable in $\kappa$ is not much harder using a bijection $b:V_\kappa\to\kappa$ and a club $C\subseteq\kappa$ of closure points of $b$). Suppose not. Then there is an $A\subseteq\kappa$ and a $\Pi^1_1$-sentence $\forall X\psi(X)$ such that $(\kappa,\in,A)\models\forall X\psi(X)$ and for all $\alpha\in S$ there is an $S_\alpha\subseteq \alpha$ such that $(\alpha,\in,A\cap\alpha,S_\alpha)\models\lnot\psi(S_\alpha)$. Let $H\subseteq S$ be homogeneous for $\vec{S}=\<S_\alpha\st\alpha\in S\>$ with $|H|=\kappa$ and let $X=\bigcup_{\alpha\in S}S_\alpha$. Then $(\kappa,\in,A,X)\models \psi(X)$ where $\psi(X)$ is a $\Pi^1_0$ sentence. Since $\kappa$ is inaccessible and $S$ is stationary, it follows that $S$ is $\Pi^1_0$-indescribable. Thus, there is an $\alpha\in S$ such that $(\alpha,\in,A\cap\alpha,X\cap\alpha)\models\psi(X\cap\alpha)$, but since $X\cap\alpha=S_\alpha$, this is a contradiction.


Now suppose $n\geq 0$ and every $S$-list $\vec{S}$ has a $\Pi^1_n$-indescribable homogeneous set $H$. Then $\kappa$ is inaccessible, so to show that $S$ is $\Pi^1_{n+2}$-indescribable, it suffices to show that $S$ is weakly $\Pi^1_{n+2}$-indescribable. Suppose $S$ is not weakly $\Pi^1_{n+2}$-indscribable. Then there is an $A\subseteq\kappa$ and a $\Pi^1_{n+2}$ sentence $\forall X\exists Y\psi(X,Y)$, where $\psi(X,Y)$ is a $\Pi^1_n$ formula, such that $(\kappa,\in,A)\models\forall X\exists Y\psi(X,Y)$ and for all $\alpha\in S$ there is an $S_\alpha\subseteq \alpha$ such that $(\alpha,\in,A\cap\alpha,S_\alpha)\models\forall Y\lnot\psi(S_\alpha,Y)$. Then $\vec{S}=\<S_\alpha\st\alpha\in S\>$ is an $S$-list and we may let $H\subseteq S$ be a $\Pi^1_n$-indescribable homogeneous set for $\vec{S}$. Since $(\kappa,\in,A)\models\forall X\exists Y\psi(X,Y)$, if we let $X=\bigcup_{\alpha\in H}S_\alpha$ then there is a $Y\subseteq\kappa$ such that $(\kappa,\in,A,X,Y)\models\psi(X,Y)$. Since $S$ is weakly $\Pi^1_n$-indescribable, and the set 
\[C=\{\alpha<\kappa\st(\alpha,\in,A\cap\alpha,X\cap\alpha,Y\cap\alpha)\models\psi(X,Y)\}\]
is in the weakly $\Pi^1_n$-indescribable filter, it follows that $S\cap C$ is weakly $\Pi^1_n$-indescrib\-able. Thus, there is some $\alpha\in S\cap C$ such that 
\[(\alpha,\in,A\cap\alpha,X\cap\alpha,Y\cap\alpha)\models\psi(X\cap\alpha,Y\cap\alpha).\]
Since $X\cap\alpha=S_\alpha$ (by homogeneity of $H$), it follows that
\[(\alpha,\in,A\cap\alpha,S_\alpha)\models\exists Y\psi(S_\alpha,Y),\]
a contradiction.
\end{proof}

\begin{remark}\label{remark_baumgartners_characterizations_of_ineffability}
Baumgartner showed that a cardinal $\kappa$ is almost ineffable if and only if it is subtle, $\Pi^1_1$-indescribable, and the ideal generated by the subtle ideal together with the $\Pi^1_1$-indescribable ideal is nontrivial and equals the almost ineffable ideal; moreover, the statement about the ideals cannot be removed from this characterization since the least cardinal which is subtle and $\Pi^1_1$-indescribable is not almost ineffable (this follows from Theorem \ref{theorem_baumgartner_ineffable_ideal}). Similarly, $\kappa$ is ineffable if and only if it is subtle, $\Pi^1_2$-indescribable and the ideal generated by the subtle ideal and the $\Pi^1_2$-indescribable ideal is nontrivial and equals the ineffable ideal; as above, the statement about ideals cannot be removed from the characterization because the least subtle cardinal which is $\Pi^1_2$-indescribable is not ineffable (this follows from Theorem \ref{theorem_baumgartner_ineffable_ideal}). In fact, Baumgartner proved a more general result, which appears as Theorem \ref{theorem_baumgartner_ineffable_ideal} below.
\end{remark}

Let us consider the ideals $\I(\Pi^1_n(\kappa))$ where $n\in\{-1\}\cup\omega$. Recall, using the notation introduced above, $\I(\Pi^1_{-1}(\kappa))=\I([\kappa]^{<\kappa})$ is the almost ineffable ideal and $\I(\Pi^1_0(\kappa))=\I(\NS_\kappa)$ is the ineffable ideal. By definition, a set $S\subseteq\kappa$ is in $\I(\Pi^1_n(\kappa))^+$ if and only if every $S$-list has a homogeneous set $H\subseteq S$ which is $\Pi^1_n$-indescribable in $\kappa$. Since ineffability implies almost ineffability, which in turn implies subtlety, it is easy to see that
\[\NSub_\kappa\subseteq\I([\kappa]^{<\kappa})\subseteq\I(\NS_\kappa)\subseteq\I(\Pi^1_1(\kappa))\subseteq\I(\Pi^1_2(\kappa))\subseteq\cdots.\]
In fact, by applying Lemma \ref{lemma_ineffability_implies_indescribability}, one can show that the above containments are proper when the ideals are nontrivial. Let us show that $\NSub_\kappa\subsetneq\I([\kappa]^{<\kappa})$ when $\kappa$ is almost ineffable (see Corollary \ref{corollary_sub_prop_contained} below). Similar arguments, which are left to the reader, establish the remaining proper containments (for a more general result see \cite[Theorem 1.5]{Holy-Lucke}).

Recall that if $S$ is stationary in $\kappa$ and for each $\alpha\in S$ we have a set $S_\alpha$ which is stationary in $\alpha$, then $\bigcup_{\alpha\in S}S_\alpha$ is stationary in $\kappa$. The analog of this result also holds for large cardinal ideals. For example, the stationary limit of subtle cardinals is subtle by the following Lemma.

\begin{lemma}\label{lemma_stationry_limit}
Suppose $S\in\NS_\kappa^+$ and that for each $\alpha\in S$ we have a set $S_\alpha\in\NSub_\alpha^+$. Then $\bigcup_{\alpha\in S}S_\alpha\in\NSub_\kappa^+$.
\end{lemma}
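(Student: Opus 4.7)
The plan is to mimic the standard proof that the stationary union of stationary sets is stationary, but at the level of subtlety, with clubs and lists replaced by their restrictions to each $S_\alpha$. Set $T=\bigcup_{\alpha\in S}S_\alpha$. To show $T$ is subtle, fix an arbitrary $T$-list $\vec{T}=\<T_\beta\st\beta\in T\>$ and an arbitrary club $C\subseteq\kappa$; we must produce $\beta_1<\beta_2$ in $T\cap C$ with $T_{\beta_1}=T_{\beta_2}\cap\beta_1$.

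The first step is to find a single $\alpha\in S$ whose ``local'' subtlety will do all the work. Let $C'=\acc(C)$, the set of limit points of $C$ below $\kappa$; this is a club in $\kappa$. Since $S$ is stationary, pick any $\alpha\in S\cap C'$. Then $C\cap\alpha$ is a club in $\alpha$, and by hypothesis $S_\alpha\in\NSub_\alpha^+$, i.e.\ $S_\alpha$ is a subtle subset of $\alpha$.

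The second step is to restrict $\vec{T}$ to $S_\alpha$. Since $S_\alpha\subseteq T$, the sequence $\<T_\beta\st\beta\in S_\alpha\>$ is an $S_\alpha$-list (each $T_\beta\subseteq\beta<\alpha$). Applying the subtlety of $S_\alpha$ in $\alpha$ to this list together with the club $C\cap\alpha$, we obtain $\beta_1<\beta_2$ in $S_\alpha\cap C\cap\alpha$ such that $T_{\beta_1}=T_{\beta_2}\cap\beta_1$. Since $S_\alpha\subseteq T$ and $C\cap\alpha\subseteq C$, we have $\beta_1,\beta_2\in T\cap C$, which is exactly what the definition of subtlety of $T$ requires.

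There is no real obstacle here; the only point worth checking is that ``subtle subset of $\alpha$'' applied to the restricted list and the restricted club returns witnesses which automatically lie in $T\cap C$, and this is immediate from $S_\alpha\subseteq T$. (Since subtlety of $S_\alpha$ forces $\alpha$ to be inaccessible, one could instead take $\alpha$ to be any limit of $C$ that lies in $S$, so the restriction step goes through without any cardinal arithmetic worries.)
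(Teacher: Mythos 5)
Your proof is correct and follows essentially the same route as the paper's: choose $\alpha\in S\cap\acc(C)$ by stationarity of $S$, restrict the $T$-list and the club to $\alpha$, and apply the subtlety of $S_\alpha$ there to get the two witnesses, which lie in $T\cap C$ automatically. The paper writes this more tersely (denoting $\acc(C)$ by $C'$), but the argument is the same.
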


\begin{proof}
To show that $T=\bigcup_{\alpha\in S}S_\alpha$ is subtle in $\kappa$, fix a $T$-list $\vec{T}=\<T_\alpha\st\alpha\in T\>$ and a club $C\subseteq\kappa$. Since $S$ is stationary in $\kappa$, we may choose $\alpha\in S\cap C'$. Since $S_\alpha$ is subtle in $\alpha$ and $C\cap\alpha$ is a club subset of $\alpha$, there exist $\xi,\zeta\in S_\alpha\cap C\subseteq T\cap C$ with $\xi<\zeta$ such that $T_\xi= T_\zeta\cap \xi$. Thus, $T$ is subtle in $\kappa$.
\end{proof}

\begin{lemma}\label{lemma_set_of_non_subtles}
If $\kappa$ is subtle then the set
\[T=\{\alpha<\kappa\st\text{$\alpha$ is not subtle}\}\]
is subtle.
\end{lemma}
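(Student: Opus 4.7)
The plan is to prove the statement by induction on $\kappa$, using Lemma \ref{lemma_stationry_limit} as the principal recursive tool. Formally, I would prove by transfinite induction that for every subtle cardinal $\kappa$, the set $T_\kappa := \{\alpha<\kappa \st \alpha \text{ is not subtle}\}$ is subtle in $\kappa$.

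If no ordinal below $\kappa$ is subtle (in particular, when $\kappa$ is the least subtle cardinal), then $T_\kappa = \kappa$, which is subtle since $\kappa$ is. Otherwise, assume the claim holds for every subtle cardinal strictly less than $\kappa$, and suppose toward contradiction that $T := T_\kappa \in \NSub_\kappa$. Then $S := \kappa \setminus T \in \NSub_\kappa^*$, and nontriviality of the subtle ideal (so that $\kappa\in\NSub_\kappa^+$) gives $\NSub_\kappa^* \subseteq \NSub_\kappa^+$; hence $S$ is subtle in $\kappa$, and in particular stationary and unbounded. Moreover, every $\alpha \in S$ is subtle by definition of $S$. By the inductive hypothesis applied at each such $\alpha$, the set $T \cap \alpha = T_\alpha = \{\beta<\alpha\st\beta\text{ is not subtle}\}$ lies in $\NSub_\alpha^+$. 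Lemma \ref{lemma_stationry_limit}, applied with the stationary set $S$ and the sequence $\<T \cap \alpha \st \alpha \in S\>$, then yields $\bigcup_{\alpha \in S}(T \cap \alpha) \in \NSub_\kappa^+$. Since $S$ is unbounded in $\kappa$, this union is exactly $T$, contradicting $T \in \NSub_\kappa$.

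The substance of the proof is carried by Lemma \ref{lemma_stationry_limit}; everything else is routine bookkeeping, together with the observation that $\NSub_\kappa^* \subseteq \NSub_\kappa^+$. The main pitfall I expect to avoid is attempting a direct, non-inductive argument: the naive approach of extending a hypothetical bad $T$-list to a $\kappa$-list by setting $S_\alpha = \emptyset$ on $\kappa \setminus T$ allows the subtlety of $\kappa$ to be witnessed entirely on $\kappa\setminus T$ (since the extended list is trivially coherent there), producing no contradiction. Induction over subtle cardinals seems to be essential precisely because it lets one reconstruct $T$ itself as a stationary-limit-style union of subtle sets.
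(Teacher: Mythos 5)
Your proof is correct and is essentially the paper's own argument: the paper phrases the induction as a least-counterexample argument, takes $S=\kappa\setminus T\in\NSub_\kappa^*\subseteq\NSub_\kappa^+$, notes that $T\cap\alpha$ is subtle in each $\alpha\in S$ by minimality, and applies Lemma \ref{lemma_stationry_limit} to conclude $T=\bigcup_{\alpha\in S}(T\cap\alpha)$ is subtle, a contradiction. Your explicit base case and the remark about nontriviality of $\NSub_\kappa$ are harmless elaborations of the same argument.
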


\begin{proof}
For the sake of contradiction, suppose $\kappa$ is the least counterexample. That is, $\kappa$ is the least cardinal such that $\kappa$ is subtle and $\{\alpha<\kappa\st\text{$\alpha$ is not subtle}\}$ is not subtle. Then $S:=\kappa\setminus T\in\NSub_\kappa^*$ and hence $S\in\NSub_\kappa^+$. For each $\alpha\in S$, by the minimality of $\kappa$, the set $S_\alpha:=T\cap\alpha$ is subtle in $\alpha$. Thus, by Lemma \ref{lemma_stationry_limit}, the set $T=\bigcup_{\alpha\in S}S_\alpha$ is subtle in $\kappa$, a contradiction.
\end{proof}

\begin{corollary}\label{corollary_sub_prop_contained}
Suppose $\kappa$ is almost ineffable. Then 
\[\NSub_\kappa\subsetneq \I([\kappa]^{<\kappa}).\]
\end{corollary}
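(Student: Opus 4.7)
The plan is to establish $\NSub_\kappa \subsetneq \I([\kappa]^{<\kappa})$ by exhibiting a set that is subtle but not almost ineffable. The weak inclusion $\NSub_\kappa \subseteq \I([\kappa]^{<\kappa})$ was already part of the chain of ideal inclusions discussed before the corollary. Indeed, given $S$ almost ineffable and $C \subseteq \kappa$ club, the normality of $\I([\kappa]^{<\kappa})$ together with $[\kappa]^{<\kappa} \subseteq \I([\kappa]^{<\kappa})$ forces $\NS_\kappa \subseteq \I([\kappa]^{<\kappa})$, so $S \cap C \in \I([\kappa]^{<\kappa})^+$; applying almost ineffability to any $S$-list restricted to $S \cap C$ produces a homogeneous subset of $S \cap C$ of size $\kappa$, and any two of its elements witness subtlety.

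For the strict part I would take
\[T = \{\alpha < \kappa \st \alpha \text{ is not subtle}\}.\]
Since $\kappa$ is almost ineffable, it is in particular subtle by the weak inclusion just noted, so Lemma \ref{lemma_set_of_non_subtles} gives $T \in \NSub_\kappa^+$. It remains to show $T \notin \I([\kappa]^{<\kappa})^+$.

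Suppose toward a contradiction that $T$ is almost ineffable. By Lemma \ref{lemma_ineffability_implies_indescribability} applied with $n = -1$ (recalling $\Pi^1_{-1}(\kappa) = [\kappa]^{<\kappa}$), $T$ is $\Pi^1_1$-indescribable in $\kappa$. Now the statement ``I am subtle'' is expressible as a $\Pi^1_1$-sentence $\varphi$ over $(V_\kappa, \in)$: it quantifies universally over two subsets of $V_\kappa$ (coding a $\kappa$-list $\vec S$ and a club $C \subseteq \kappa$), followed by the first-order matrix that, assuming $\vec S$ is a $\kappa$-list and $C$ is club, there exist $\beta < \gamma$ in $C$ with $S_\beta = S_\gamma \cap \beta$. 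Since $\kappa$ is subtle, $(V_\kappa, \in) \models \varphi$, and by $\Pi^1_1$-indescribability of $T$ there is some $\alpha \in T$ with $(V_\alpha, \in) \models \varphi$. The reflected sentence asserts exactly that $\alpha$ is subtle, contradicting $\alpha \in T$.

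The only point requiring care is verifying that subtlety at $\kappa$ is genuinely captured by a $\Pi^1_1$-sentence over $V_\kappa$ whose reflection to $V_\alpha$ recovers the set-theoretic notion of $\alpha$ being subtle. This is routine once one codes $\kappa$-lists and clubs on $\kappa$ as subsets of $V_\kappa$ (so that the two universal second-order quantifiers range over the intended objects), and similarly at each $V_\alpha$; once this coding is in hand, the argument is a direct combination of Lemma \ref{lemma_set_of_non_subtles} and Lemma \ref{lemma_ineffability_implies_indescribability}.
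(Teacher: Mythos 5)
Your proof is correct and follows essentially the same route as the paper: the witness set $T=\{\alpha<\kappa\st\alpha\text{ is not subtle}\}$, Lemma \ref{lemma_set_of_non_subtles} to get $T\notin\NSub_\kappa$, Lemma \ref{lemma_ineffability_implies_indescribability} with $n=-1$, and the $\Pi^1_1$-expressibility of subtlety. The only cosmetic difference is that you argue by contradiction from the $\Pi^1_1$-indescribability of $T$ itself, whereas the paper shows directly that $\kappa\setminus T$ lies in $\Pi^1_1(\kappa)^*\subseteq\I([\kappa]^{<\kappa})^*$; these are interchangeable.
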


\begin{proof}
Lemma \ref{lemma_set_of_non_subtles} implies that $T=\{\alpha<\kappa\st\text{$\alpha$ is not subtle}\}\notin\NSub_\kappa$. Furthermore, by definition of subtlety, there is a $\Pi^1_1$ sentence $\varphi$ such that for all $\alpha\leq\kappa$ we have
\[\text{$\alpha$ is subtle if and only if $V_\alpha\models\varphi$}.\]
Since $\kappa$ is almost ineffable, it is also subtle, and hence $V_\kappa\models\varphi$. By Lemma \ref{lemma_ineffability_implies_indescribability}, $\kappa$ is $\Pi^1_1$-indescribable and hence the set
\[\kappa\setminus T=\{\alpha<\kappa\st\text{$\alpha$ is subtle}\}=\{\alpha<\kappa\st V_\alpha\models\varphi\}\]
is in $\Pi^1_1(\kappa)^*$. By Lemma \ref{lemma_ineffability_implies_indescribability}, it follows that $\I([\kappa]^{<\kappa})^+\subseteq\Pi^1_1(\kappa)^+$ and hence $\Pi^1_1(\kappa)^*\subseteq\I([\kappa]^{<\kappa})^*$. Therefore, we see that $\kappa\setminus T\in\I([\kappa]^{<\kappa})^*$ and now it easily follows that $T\in I([\kappa]^{<\kappa})\setminus\NSub_\kappa$.
\end{proof}

Notice that taking $n=-1$ in the following yields Baumgartner's characterization of almost ineffability, and taking $n=0$ yields his characterization of ineffability, both of which were mentioned above in Remark \ref{remark_baumgartners_characterizations_of_ineffability}.

\begin{theorem}[Baumgartner \cite{MR0384553}]\label{theorem_baumgartner_ineffable_ideal}
For all $n\in\{-1\}\cup\omega$, we have $\kappa\notin\I(\Pi^1_n(\kappa))$ if and only if the following properties hold.
\begin{enumerate}
\item $\kappa$ is subtle and $\Pi^1_{n+2}$-indescribable.
\item The ideal generated by the subtle ideal and the $\Pi^1_{n+2}$-indescribable ideal is a nontrivial normal ideal; in this case, \[\I(\Pi^1_n(\kappa))=\overline{\NSub_\kappa\cup\Pi^1_{n+2}(\kappa)}.\]
\end{enumerate}
Furthermore, (2) cannot be removed from this characterization because the least cardinal $\kappa$ which is subtle and $\Pi^1_{n+2}$-indescribable is in $\I(\Pi^1_n(\kappa))$.
\end{theorem}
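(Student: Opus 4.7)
The plan is to split the biconditional into forward and backward directions, placing essentially all the work in the forward direction and treating the ideal equality as the main technical component; the ``furthermore'' clause is then handled by a minimality argument.

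The backward direction is immediate: assuming (1) and (2), the nontriviality clause of (2) together with the stated equality yields $\kappa \notin \I(\Pi^1_n(\kappa))$. For the forward direction, assume $\kappa \in \I(\Pi^1_n(\kappa))^+$. The $\Pi^1_{n+2}$-indescribability of $\kappa$ is Lemma \ref{lemma_ineffability_implies_indescribability} applied to $S = \kappa$, and subtlety of $\kappa$ follows by taking any $\kappa$-list $\vec{S}$ and club $C$, extracting a $\Pi^1_n$-indescribable homogeneous set $H$ via the hypothesis, and using that $H$ meets $C$ in at least two points---stationarity of $\Pi^1_n$-indescribable sets handles $n \geq 0$, while for $n = -1$ one first replaces $\vec{S}$ by a G\"odel-pair-coded list (Lemma \ref{lemma_godel_pairing}) built from $\vec{S}$ and $C$ so that every size-$\kappa$ homogeneous set is forced to meet $C$ in the required pair of points.

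Condition (2) reduces to proving the ideal equality $\I(\Pi^1_n(\kappa)) = \overline{\NSub_\kappa \cup \Pi^1_{n+2}(\kappa)}$: normality of the right-hand side follows routinely from normality of each summand (since diagonal unions commute with the binary union decomposition), and nontriviality follows from the equality combined with the assumption $\kappa \notin \I(\Pi^1_n(\kappa))$. For the inclusion $\supseteq$, Remark \ref{remark_ideal_containment} reduces the task to showing $\I(\Pi^1_n(\kappa))^+ \subseteq \NSub_\kappa^+ \cap \Pi^1_{n+2}(\kappa)^+$, which is exactly the two arguments above applied to an arbitrary positive set $S$ in place of $\kappa$. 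The inclusion $\subseteq$ is the main obstacle. Given $S \in \I(\Pi^1_n(\kappa))$ witnessed by an $S$-list $\vec{S}$ lacking a $\Pi^1_n$-indescribable homogeneous set, I would partition $S = A \cup B$ where
\[
B = \{\alpha \in S : \exists X \subseteq S \cap \alpha \text{ homogeneous for } \vec{S} \text{ and } \Pi^1_n\text{-indescribable in } \alpha\}
\]
and $A = S \setminus B$. Lemma \ref{lemma_subtlety_is_stronger_than_indescribability} delivers $A \in \NSub_\kappa$, and the hard step is to show $B \in \Pi^1_{n+2}(\kappa)$: otherwise, exploiting that ``$\alpha$ carries a $\Pi^1_n$-indescribable set homogeneous for $\vec{S}\restrict\alpha$'' is a second-order statement over $V_\alpha$ of the correct complexity to reflect, $\Pi^1_{n+2}$-indescribability of $B$ together with normality of $\Pi^1_{n+2}(\kappa)^*$ should amalgamate the local witnesses into a single $\Pi^1_n$-indescribable homogeneous subset of $S$, contradicting the choice of $\vec{S}$. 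Making this amalgamation precise is the delicate calculation.

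For the ``furthermore,'' let $\kappa$ be the least subtle $\Pi^1_{n+2}$-indescribable cardinal. Minimality yields $\kappa = A' \cup B'$ where every $\alpha \in A'$ is non-subtle and every $\alpha \in B'$ is non-$\Pi^1_{n+2}$-indescribable; diagonalizing the corresponding local witnesses (list/club pairs witnessing non-subtleness on $A'$, and parameters together with $\Pi^1_{n+2}$-sentences witnessing non-indescribability on $B'$) into a single $\kappa$-list shows that a putative $\Pi^1_n$-indescribable homogeneous set would reflect to some $\alpha < \kappa$ at which one of the destroyed local witnesses is resurrected, giving $\kappa \in \I(\Pi^1_n(\kappa))$. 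Lemma \ref{lemma_set_of_non_subtles} is used here to control the overlap between the two types of witnesses.
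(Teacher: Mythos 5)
Your skeleton matches the paper's: both reduce the theorem to the ideal equality, both obtain the inclusion $\supseteq$ from Remark \ref{remark_ideal_containment} together with Lemma \ref{lemma_ineffability_implies_indescribability} and the subtlety of positive sets, and your partition $S=A\cup B$ for the inclusion $\subseteq$ is exactly the paper's decomposition $S=(S\cap C)\cup(S\setminus C)$ along the reflection set $C$ of the sentence ``every homogeneous set for $\vec{S}$ is not $\Pi^1_n$-indescribable.'' The one place your description goes off target is the ``hard step'' $B\in\Pi^1_{n+2}(\kappa)$: no amalgamation and no normality is needed. Since ``$X\in\Pi^1_n(\kappa)$'' is $\Sigma^1_{n+1}$-expressible, the assertion that $\vec{S}$ has no $\Pi^1_n$-indescribable homogeneous set is a $\Pi^1_{n+2}$-sentence true in $(V_\kappa,\in,\vec{S})$, so by Lemma \ref{lemma_indescribable_filter} its reflection set lies in $\Pi^1_{n+2}(\kappa)^*$ and $B$, contained in the complement, lies in $\Pi^1_{n+2}(\kappa)$. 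Your contrapositive (if $B$ were $\Pi^1_{n+2}$-indescribable, the $\Sigma^1_{n+2}$ existential would transfer from $B$ up to $\kappa$) is logically the same fact, but the global witness is simply asserted by this transfer rather than glued together from the local $X_\alpha$'s, and normality plays no role; as stated, an actual ``amalgamation'' would be a dead end. Two smaller points: Lemma \ref{lemma_subtlety_is_stronger_than_indescribability} assumes $S$ is subtle, so first dispose of the trivial case $S\in\NSub_\kappa$; and the paper's version of the contradiction is purely local, applying Corollary \ref{corollar_indescribable_hom_sets_from_subtlety} at a single $\alpha\in S\cap C$. Your extra care with the $n=-1$ case of ``positive implies subtle'' (coding the club into the list via Lemma \ref{lemma_godel_pairing}) is the right fix for a step the paper dismisses as ``clearly.'' For the ``furthermore,'' the paper avoids your diagonalization entirely: it shows that whenever $\kappa\notin\I(\Pi^1_n(\kappa))$, the set of $\alpha<\kappa$ that are both subtle and $\Pi^1_{n+2}$-indescribable is already in the filter $\I(\Pi^1_n(\kappa))^*$ (subtlety is $\Pi^1_1$-reflecting, and Lemma \ref{lemma_subtlety_is_stronger_than_indescribability} puts the $\Pi^1_{n+2}$-indescribable cardinals into $\NSub_\kappa^*$), so the least such cardinal cannot be positive; your direct construction of a bad $\kappa$-list can be made to work but is considerably more labor, and Lemma \ref{lemma_set_of_non_subtles} is not actually needed on either route.
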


\begin{proof}
Let $I=\overline{\NSub_\kappa\cup\Pi^1_{n+2}(\kappa)}$. We will show that $S\in\I(\Pi^1_n(\kappa))^+$ if and only if $S\in I^+$.

Suppose $S\in \I(\Pi^1_n(\kappa))^+$. To show $S\in I^+$, it suffices to show $S$ is subtle and $\Pi^1_{n+2}$-indescribable by Remark \ref{remark_ideal_containment}. Clearly, $S$ is subtle, and by Lemma \ref{lemma_ineffability_implies_indescribability}, $S$ is $\Pi^1_{n+2}$-indescribable. Thus $S\in I^+$. Conversely, suppose $S\in I^+$. For the sake of contradiction, suppose $S\in\I(\Pi^1_n(\kappa))$. Then there is an $S$-list $\vec{S}=\<S_\alpha\st\alpha\in S\>$ such that every homogeneous set for $\vec{S}$ is in the ideal $\Pi^1_n(\kappa)$. This is expressible by a $\Pi^1_{n+2}$-sentence $\varphi$ over $(V_\kappa,\in,\vec{S})$. Thus, it follows that the set
\begin{align*}
C&=\{\alpha<\kappa\st(V_\alpha,\in,\vec{S}\cap V_\alpha)\models\varphi\}\\
	&=\{\alpha<\kappa\st \text{every hom. set for $\vec{S}\restrict\alpha$ is in $\Pi^1_n(\alpha)$}\}
\end{align*} 
is in the filter $\Pi^1_{n+2}(\kappa)^*$. Since $S\in I^+$, it follows that $S$ is not equal to the union of a non-subtle set and a non--$\Pi^1_{n+2}$-indescribable set. Since $S=(S\cap C)\cup(S\setminus C)$ and $S\setminus C\in\Pi^1_{n+2}(\kappa)$, it follows that $S\cap C$ must be subtle. Thus, by Lemma \ref{corollar_indescribable_hom_sets_from_subtlety}, there is some $\alpha\in S\cap C$ for which there is an $H\subseteq S\cap C\cap\alpha$ which is $\Pi^1_n$-indescribable in $\alpha$ and homogeneous for $\vec{S}$. This contradicts $\alpha\in C$.

For the remaining statement, let us show that if $\kappa\notin\I(\Pi^1_n(\kappa))$, then there are many cardinals below $\kappa$ which are both subtle and $\Pi^1_{n+2}$-indescribable. Suppose $\kappa\notin\I(\Pi^1_n(\kappa))$. Notice that the fact that $\kappa$ is subtle can be expressed by a $\Pi^1_1$-sentence $\varphi$ over $V_\kappa$ and thus the set
\[C=\{\alpha<\kappa\st (V_\alpha,\in)\models\varphi\}=\{\alpha<\kappa\st\text{$\alpha$ is subtle}\}\]
is in the filter $\Pi^1_1(\kappa)^*\subseteq\Pi^1_{n+2}(\kappa)^*$. Furthermore, by Lemma \ref{lemma_subtlety_is_stronger_than_indescribability}, the set
\[H=\{\alpha<\kappa\st\text{$\alpha$ is $\Pi^1_{n+2}$-indescribable}\}\]
is in the filter $\NSub_\kappa^*$. Since $\I(\Pi^1_n(\kappa))\supseteq\NSub_\kappa\cup\Pi^1_{n+2}(\kappa)$, it follows that $C\cap H$ is in the filter $\I(\Pi^1_n(\kappa))^*$.
\end{proof}

\begin{remark}\label{remark_iterating_ineffability}
Let us note that the preceding result of Baumgartner can be generalized to the ideals of the form $\I(\Pi^1_\xi(\kappa))$ where $\xi\geq\omega$ and $\Pi^1_\xi(\kappa)$ denotes the $\Pi^1_\xi$-indescribability ideal defined in Remark \ref{remark_transfinite_indescribability}. For example, if $\kappa\notin\I(\Pi^1_\omega(\kappa))$ then one has
\[\I(\Pi^1_{\omega}(\kappa))=\overline{\NSub_\kappa\cup\Pi^1_{\omega+2}(\kappa)}.\] Furthermore, by iterating the ineffability operator one obtains ideals of the form $\I^\alpha(\Pi^1_\xi(\kappa))$, and the previous Theorem of Baumgartner can also be generalized to these ideals. Considering the large cardinal notions associated to these ideals provides a strict refinement of Baumgartner's original ineffability hierarchy \cite{MR0540770}. This refinement of the ineffability hierarchy is analogous to the refinement of the Ramsey hierarchy studied in \cite{CodyRefinement}.
\end{remark}

Using the previous theorem of Baumgartner, we obtain another generic embedding characterization of almost ineffability (taking $n=-1$) and ineffability (taking $n=0$). Note that in Proposition \ref{proposition_ineffable_embedding_stationary}, the generic embedding was obtained by forcing with the nonstationary ideal below a particular stationary anticipation set, whereas in the following proposition, the generic embedding is obtained by forcing with the ineffable ideal below an ineffable set.

\begin{proposition}\label{proposition_ineffable_embedding_baumgartner}
For all $n\in\{-1\}\cup\omega$, for all cardinals $\kappa$ and all $S\subseteq\kappa$ the following are equivalent.
\begin{enumerate}
\item $S\in\I(\Pi^1_n(\kappa))^+$.
\item There is a generic elementary embedding $j:V\to M\subseteq V[G]$ with critical point $\kappa$ and $\kappa\in j(S)$ such that the following properties hold.
\begin{enumerate}
\item For all $A\in V_{\kappa+1}^V$ and all $\Pi^1_{n+2}$-sentences $\varphi$ we have
\[((V_\kappa,\in,A)\models\varphi)^V\implies((V_\kappa,\in,A)\models\varphi)^M.\]
\item For every $S$-list $\vec{S}=\<S_\alpha\st\alpha\in S\>$ in $V$, it follows that $M\models$ ``there is a set $H\subseteq S$  which is $\Pi^1_n$-indescribable in $\kappa$ and homogeneous for $\vec{S}$''.
\end{enumerate}
\end{enumerate}
\end{proposition}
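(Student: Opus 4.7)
The plan is to force with the single quotient $P(\kappa)/(\I(\Pi^1_n(\kappa))\restrict S)$; the resulting generic ultrapower $j:V\to M\subseteq V[G]$ will witness both clauses of (2) for every $S$-list $\vec{S}\in V$ simultaneously, and the converse comes from a $\Pi^1_{n+2}$-reflection argument. The crucial preliminary is that, by Theorem \ref{theorem_baumgartner_ineffable_ideal} applied to the fact that $\kappa\notin\I(\Pi^1_n(\kappa))$ (since $S\subseteq\kappa$ is positive),
\[\NSub_\kappa\cup\Pi^1_{n+2}(\kappa)\;\subseteq\;\I(\Pi^1_n(\kappa)),\]
so both $\NSub_\kappa^*$ and $\Pi^1_{n+2}(\kappa)^*$ are contained in $(\I(\Pi^1_n(\kappa))\restrict S)^*$, and hence in the induced $V$-normal ultrafilter $U_G$.

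For (1)$\Rightarrow$(2), normality of $\I(\Pi^1_n(\kappa))$ gives $\crit(j)=\kappa$, and $S\in U_G$ gives $\kappa\in j(S)$. Clause (2)(a) then follows exactly as in Proposition \ref{proposition_indescribability_embedding}: any $\Pi^1_{n+2}$-sentence $\varphi$ with $((V_\kappa,\in,A)\models\varphi)^V$ has reflection set in $\Pi^1_{n+2}(\kappa)^*\subseteq U_G$ by Lemma \ref{lemma_indescribable_filter}, so $\kappa$ lies in $j$ of that set, giving $((V_\kappa,\in,A)\models\varphi)^M$. For (2)(b), fix $\vec{S}\in V$. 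Since $S$ is subtle, Lemma \ref{lemma_subtlety_is_stronger_than_indescribability} shows that
\[B_{\vec{S}}=\{\alpha\in S:\exists X\subseteq S\cap\alpha\text{ which is }\Pi^1_m\text{-indesc.\ in }\alpha\ \forall m<\omega\text{ and homog.\ for }\vec{S}\}\]
has complement in $\NSub_\kappa\subseteq\I(\Pi^1_n(\kappa))$; hence $B_{\vec{S}}\in U_G$ and $\kappa\in j(B_{\vec{S}})$. Unpacking elementarity and using $j(S)\cap\kappa=S$ and $j(\vec{S})\restrict\kappa=\vec{S}$, this says precisely that in $M$ there is $H\subseteq S$ which is $\Pi^1_n$-indescribable in $\kappa$ and homogeneous for $\vec{S}$.

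For (2)$\Rightarrow$(1), fix an $S$-list $\vec{S}\in V$. The sentence ``no $H\subseteq S$ is homogeneous for $\vec{S}$ and $\Pi^1_n$-indescribable in $\kappa$'' is $\Pi^1_{n+2}$ over $(V_\kappa,\in,(S,\vec{S}))$, since ``$H$ is $\Pi^1_n$-indescribable in $\kappa$'' is $\Pi^1_{n+1}$ (and degenerates to the first-order statement $|H|=\kappa$ when $n=-1$). If this sentence held in $V$, clause (2)(a) would transfer it to $M$, directly contradicting (2)(b). So it is false in $V$, yielding an $H\subseteq S$ in $V$ which is homogeneous for $\vec{S}$ and $\Pi^1_n$-indescribable in $\kappa$; since $\vec{S}$ was arbitrary, $S\in\I(\Pi^1_n(\kappa))^+$.

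The main subtlety, which distinguishes this proposition from Proposition \ref{proposition_ineffability_anticipation_embedding}, is that (2) demands a single generic embedding handling \emph{all} $S$-lists at once, rather than one embedding per list obtained by forcing below an anticipation set. The resolution is to force with $\I(\Pi^1_n(\kappa))\restrict S$ and to invoke Lemma \ref{lemma_subtlety_is_stronger_than_indescribability}, which is exactly what is needed to place each reflection set $B_{\vec{S}}$ into the dual normal filter uniformly in $\vec{S}$.
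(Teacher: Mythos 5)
Your proposal is correct, and its overall architecture — forcing with $P(\kappa)/(\I(\Pi^1_n(\kappa))\restrict S)$, deriving clause (2)(a) from the containment $\Pi^1_{n+2}(\kappa)^*\subseteq\I(\Pi^1_n(\kappa))^*\subseteq U_G$ supplied by Theorem \ref{theorem_baumgartner_ineffable_ideal}, and running the converse as a $\Pi^1_{n+2}$-transfer contradiction against (2)(b) — is exactly the paper's. The one place you genuinely diverge is the verification of (2)(b): the paper simply takes, for each $S$-list $\vec{S}$, a homogeneous $H\subseteq S$ in $V$ that is $\Pi^1_n$-indescribable (which exists by the very definition of $S\in\I(\Pi^1_n(\kappa))^+$), notes $H=j(H)\cap\kappa\in M$, and uses (2)(a) once more to see that $M$ agrees $H$ is $\Pi^1_n$-indescribable, since that assertion is $\Pi^1_{n+1}$ over $(V_\kappa,\in,H)$. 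You instead reflect the Baumgartner set $B_{\vec{S}}$ of Lemma \ref{lemma_subtlety_is_stronger_than_indescribability}, whose complement in $S$ is non-subtle and hence lands in $\I(\Pi^1_n(\kappa))$, so that $\kappa\in j(B_{\vec{S}})$ produces the required witness inside $M$. Both are valid (your use of $S$ being subtle, of $j(S)\cap\kappa=S$, and of $j(\vec{S})\restrict\kappa=\vec{S}$ all check out); the paper's route is more economical, needing nothing beyond the definition of the ideal and clause (2)(a), while yours buys a marginally stronger conclusion — the homogeneous set $M$ sees is $\Pi^1_m$-indescribable for every $m<\omega$, not just for the fixed $n$ — at the cost of invoking Lemma \ref{lemma_subtlety_is_stronger_than_indescribability} a second time (it is already implicit in Theorem \ref{theorem_baumgartner_ineffable_ideal}).
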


\begin{proof}
For (1) implies (2), suppose $S\in\I(\Pi^1_n(\kappa))^+$. Let $G\subseteq P(\kappa)/(\I(\Pi^1_n(\kappa))\restrict S)$ be generic and let $j:V\to M=V^\kappa/U_G\subseteq V[G]$ be the corresponding generic ultrapower. Clearly, $\crit(j)=\kappa$ and $\kappa\in j(S)$. By Theorem \ref{theorem_baumgartner_ineffable_ideal}, we have $\Pi^1_{n+2}(\kappa)^*\subseteq\I(\Pi^1_n(\kappa))^*\subseteq U_G$, and thus (2)(a) follows by an argument similar to that of Proposition \ref{proposition_indescribability_embedding}. For (2)(b), fix an $S$-list $\vec{S}=\<S_\alpha\st\alpha\in S\>$. Since $S\in\I(\Pi^1_n(\kappa))^+$, there is a set $H\subseteq S$ which is $\Pi^1_n$-indescribable in $\kappa$ and homogeneous for $\vec{S}$. Clearly, $H=j(H)\cap\kappa\in M$, and furthermore, the fact that $H$ is $\Pi^1_n$-indescribable is expressible by a $\Pi^1_{n+1}$-sentence over $(V_\kappa,\in,H)$. Thus, by (2)(a), it follows that $M\models$ ``$H$ is $\Pi^1_n$-indescribable in $\kappa$ and homogeneous for $\vec{S}$''.

Suppose (2) holds. Let $\vec{S}=\<S_\alpha\st\alpha\in S\>$ be an $S$-list in $V$. We must show that $\vec{S}$ has a $\Pi^1_n$-indescribable homogeneous set. Suppose not. Recall that ``$X\in\Pi^1_n(\kappa)$'' is expressible by a $\Sigma^1_{n+1}$-sentence over $(V_\kappa,\in,X)$. Thus, there is a $\Pi^1_{n+2}$-sentence $\varphi$ over $(V_\kappa,\in,\vec{S})$ asserting that every homogeneous set for $\vec{S}$ is not $\Pi^1_n$-indescribable. Now let $j:V\to M\subseteq V[G]$ be a generic embedding as in (2). By (2)(a), it follows that $((V_\kappa,\in,\vec{S})\models\varphi)^M$ and hence $M\models$ ``every homogeneous set for $\vec{S}$ is not $\Pi^1_n$-indescribable''. This contradicts (2)(b).
\end{proof}

\subsection{The Ramsey ideal}\label{section_ramsey} Recall that $\kappa>\omega$ is a \emph{Ramsey cardinal} if for every function $f:[\kappa]^{<\omega}\to 2$ there is a set $H\subseteq\kappa$ of size $\kappa$ which is \emph{homogeneous} for $f$, meaning that $f\restrict[H]^n$ is constant for all $n<\omega$. Furthermore, for $S\subseteq\kappa$ where $\kappa$ is a cardinal, a function $f:[S]^{<\omega}\to\kappa$ is \emph{regressive} if $f(a)<\min a$ for all $a\in[S]^{<\omega}$. Given an ideal $I\supseteq[\kappa]^{<\kappa}$ on $\kappa$ we define another ideal $\R(I)$ on $\kappa$ by letting $S\notin\R(I)$ if and only if for every regressive function $f:[S]^{<\omega}\to \kappa$ there is a set $H\subseteq S$ in $I^+$ which is homogeneous for $f$. Feng proved \cite[Theorem 2.1]{MR1077260}, that $\R(I)$ is always a normal ideal. A set $S\subseteq\kappa$ is \emph{Ramsey} if $S\in\R([\kappa]^{<\kappa})^+$, that is every regressive function $f:[S]^{<\omega}\to \kappa$ has a homogeneous set $H\subseteq S$ of size $\kappa$. Baumgartner showed that when $\kappa$ is Ramsey, the collection $\R([\kappa]^{<\kappa})$ is a nontrivial normal ideal called the \emph{Ramsey ideal} on $\kappa$. In this section we will study the ideals $\R(\Pi^1_n(\kappa))$ for $n\in\{-1\}\cup\omega$.

In order to give a characterization of sets in $\R(\Pi^1_n(\kappa))^+$ for $n<\omega$ which is analogous to Proposition \ref{proposition_ineffability_anticipation_embedding}, let us present an alternative characterization of Ramseyness due to Feng. Indeed, Feng \cite[Theorem 2.3]{MR1077260} gave a characterization of Ramseyness which resembles the definition of ineffability. 

\begin{definition}[Feng \cite{MR1077260}] Suppose $S\subseteq\kappa$. For each $n<\omega$ and for all increasing sequences $\alpha_1<\cdots<\alpha_n$ taken from $S$ suppose that $S_{\alpha_1\ldots\alpha_n}\subseteq\alpha_1$. Then we say that
\[\vec{S}=\<S_{\alpha_1\ldots\alpha_n}\st n<\omega\land (\alpha_1,\ldots,\alpha_n)\in[S]^n\>\]
is an \emph{$(\omega,S)$-list}. A set $H\subseteq S$ is said to be \emph{homogeneous} for an $(\omega,S)$-lists $\vec{S}$ if for all $0<n<\omega$ and for all increasing sequences $\alpha_1<\cdots<\alpha_n$ and $\beta_1<\cdots<\beta_n$ taken from $S$ with $\alpha_1\leq\beta_1$ we have $S_{\alpha_1\cdots\alpha_n}=S_{\beta_1\cdots\beta_n}\cap\alpha_1$.
\end{definition}


\begin{theorem}[Feng \cite{MR1077260}]\label{theorem_omega_S}
Let $\kappa$ be a regular cardinal and suppose $I$ is an ideal on $\kappa$ such that $I\supseteq \NS_\kappa$. For $S\subseteq \kappa$ the following are equivalent.
\begin{enumerate}
\item $S\in\R(I)^+$, that is, every function $f:[S]^{<\omega}\to 2$ has a homogeneous set $H\in P(S)\cap I^+$.
\item For all $\gamma<\kappa$, every function $f:[S]^{<\omega}\to\gamma$ has a homogeneous set $H\in P(S)\cap I^+$.
\item Every structure $\mathcal{A}$ in a language of size less than $\kappa$ with $\kappa\subseteq\mathcal{A}$ has a set of indiscernibles $H\in P(S)\cap I^+$.
\item $S\in\R(I)^+$, that is, for every regressive function $f:[S]^{<\omega}\to\kappa$ there is a set $H\in P(S)\cap I^+$ which is homogeneous for $f$.
\item For every club $C\subseteq\kappa$, every regressive function $f:[S]^{<\omega}\to\kappa$ has a homogeneous set $H\in P(S\cap C)\cap I^+$.
\item For all $(\omega,S)$-sequences $\vec{S}$ there is a set $H\in P(S)\cap I^+$ which is homogeneous for $\vec{S}$.
\item For all $(\omega,S)$-lists $\vec{S}$ there is a set $H\subseteq S$ with $H\in I^+$ and a sequence $\<D_n\st n<\omega\>$ such that for each $n<\omega$ and for all $\alpha_1<\cdots<\alpha_n$ from $H$ we have $S_{\alpha_1\cdots\alpha_n}=D_n\cap\alpha_1$.
\end{enumerate}
\end{theorem}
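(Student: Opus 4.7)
The plan is to prove the cycle of implications $(1)\Rightarrow(2)\Rightarrow(3)\Rightarrow(4)\Leftrightarrow(5)\Rightarrow(6)\Leftrightarrow(7)\Rightarrow(1)$. The first leg $(1)\Rightarrow(2)\Rightarrow(3)\Rightarrow(4)$ consists of classical Ramsey-cardinal manipulations: $(1)\Rightarrow(2)$ is the iterated reduction of $\gamma$-colorings with $\gamma<\kappa$ to binary colorings; $(2)\Rightarrow(3)$ colors each tuple in $[S]^{<\omega}$ by its complete type in the given structure, using that $(2)$ forces $\kappa$ to be a strong limit so that a language of size $<\kappa$ admits fewer than $\kappa$ types; and $(3)\Rightarrow(4)$ applies $(3)$ to $\mathcal{A}=(V_\kappa,\in,f)$ and exploits indiscernibility of $H$ via the formula $f(\bar x)=f(\bar y)$ (the \emph{always-equal} case gives constancy through a bridging tuple disjoint from both $\bar x$ and $\bar y$, while the \emph{always-unequal} case is ruled out by Fodor's lemma, applicable since $I\supseteq\NS_\kappa$). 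The equivalence $(4)\Leftrightarrow(5)$ is easy: the hypothesis $I\supseteq\NS_\kappa$ gives $\kappa\setminus C\in I$ for every club $C$, so intersecting a homogeneous $H\in P(S)\cap I^+$ with $C$ leaves it in $I^+$ and still homogeneous.

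The equivalence $(6)\Leftrightarrow(7)$ is direct: $(7)\Rightarrow(6)$ is immediate, and for $(6)\Rightarrow(7)$ one defines $D_n=\bigcup_{\bar\alpha\in[H]^n}S_{\bar\alpha}$ and verifies via the coherence property of $(6)$ that $D_n\cap\alpha_1=S_{\bar\alpha}$ whenever $\bar\alpha\in[H]^n$ has $\min\bar\alpha=\alpha_1$. For $(7)\Rightarrow(1)$, we may assume $S\subseteq\kappa\setminus 2$, and given a binary coloring $f\colon[S]^{<\omega}\to 2$ we encode it as the $(\omega,S)$-list $T_{\alpha_1\cdots\alpha_n}=\{f(\alpha_1,\ldots,\alpha_n)\}\subseteq\{0,1\}\subseteq\alpha_1$; an application of $(7)$ yields $H\in P(S)\cap I^+$ and a sequence $\<D_n\st n<\omega\>$ with $\{f(\bar\alpha)\}=D_n\cap\alpha_1$ for all $\bar\alpha\in[H]^n$. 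Since $D_n\cap\alpha_1$ must be a singleton in $\{0,1\}$ that does not grow with $\alpha_1$, we conclude $f$ is constant on $[H]^n$.

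The main obstacle is the implication $(5)\Rightarrow(6)$. Given an $(\omega,S)$-list $\vec{S}$, define a regressive function $f\colon[S]^{<\omega}\to\kappa$ by means of symmetric differences: for $a=\{\alpha_0<\alpha_1<\cdots<\alpha_{2n}\}\in[S]^{2n+1}$ with $n\geq 1$, let $\bar\alpha^1=(\alpha_1,\ldots,\alpha_n)$ and $\bar\alpha^2=(\alpha_{n+1},\ldots,\alpha_{2n})$, and set $f(a)=\gp(1,\delta)$ where $\delta=\min\bigl((S_{\bar\alpha^1}\cap\alpha_0)\triangle(S_{\bar\alpha^2}\cap\alpha_0)\bigr)$ when this symmetric difference is non-empty, and $f(a)=\gp(0,0)$ otherwise; restricting to the club of closure points of $\gp$ makes $f$ regressive. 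Applying $(5)$ yields $H\in P(S)\cap I^+$ on which $f$ is constant over $[H]^{2n+1}$ for each $n\geq 1$. The pathological case $f\equiv\gp(1,d_n)$ with $d_n<\min H$ is ruled out by taking three disjoint $n$-tuples $\bar\alpha^1<\bar\alpha^2<\bar\alpha^3$ from $H$ above some common $\alpha_0\in H$: constancy of $f$ on each of the three $(2n+1)$-tuples $(\alpha_0,\bar\alpha^i,\bar\alpha^j)$ for $1\leq i<j\leq 3$ would force $d_n$ to belong simultaneously to each of the three pairwise symmetric differences $S_{\bar\alpha^i}\triangle S_{\bar\alpha^j}$, contradicting that the binary values $\chi_{S_{\bar\alpha^i}}(d_n)$ for $i\in\{1,2,3\}$ cannot be pairwise distinct. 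Hence $f\equiv\gp(0,0)$ on $[H]^{2n+1}$, which gives $(S_{\bar\alpha^1}\cap\alpha_0)=(S_{\bar\alpha^2}\cap\alpha_0)$ for all such tuples from $H$. Shrinking $H$ to its intersection with the club of its own limit points ensures that $H\cap\alpha_1$ is cofinal in $\alpha_1$ for each $\alpha_1\in H$, and the desired equality $S_{\bar\alpha}=S_{\bar\beta}\cap\alpha_1$ of $(6)$ then follows by cofinal approximation; overlapping tuples $\bar\alpha,\bar\beta$ are handled by routing through a third tuple from $H$ entirely above both.
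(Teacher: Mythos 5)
The paper does not actually prove this theorem: it is quoted from Feng \cite{MR1077260} with only a citation, so your argument has to stand on its own — and it does. The cycle you run is essentially Feng's, and the genuinely nontrivial legs are handled correctly. In $(3)\Rightarrow(4)$ you correctly isolate the one place where $I\supseteq\NS_\kappa$ is used: a set of indiscernibles $H\in I^+$ is stationary, so Fodor applied to $\alpha\mapsto f(\alpha,\ldots)$ (with the next $n-1$ elements of $H$ filled in) produces two stacked disjoint tuples with equal $f$-value, killing the ``always unequal'' branch of the indiscernibility dichotomy for the formula $f(\bar x)=f(\bar y)$. In $(5)\Rightarrow(6)$ the coding of the list by the $\gp$-packed least element of $(S_{\bar\alpha^1}\cap\alpha_0)\triangle(S_{\bar\alpha^2}\cap\alpha_0)$, the three-tuple parity contradiction ruling out the constant value $\gp(1,d_n)$, and the passage to $H\cap\Lim(H)$ so that agreement below every $\alpha_0\in H\cap\alpha_1$ upgrades to agreement below $\alpha_1$ itself are all exactly right, as is the routing of overlapping tuples through a third tuple above both. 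The singleton coding for $(7)\Rightarrow(1)$ and the computation $D_n=\bigcup_{\bar\alpha\in[H]^n}S_{\bar\alpha}$ for $(6)\Rightarrow(7)$ also check out.

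The one step you should restate is $(1)\Rightarrow(2)$. The theorem assumes only that $I$ is an ideal containing $\NS_\kappa$ — no $\sigma$- or $\gamma^+$-completeness — so for infinite $\gamma$ one cannot literally ``iterate'' a reduction to binary colorings: that would require intersecting $\gamma$ many successively shrinking sets in $I^+$. The argument that does work (and is presumably what you intend) is the doubled-arity equality coloring $g(\alpha_1,\ldots,\alpha_{2n})=0$ iff $f(\alpha_1,\ldots,\alpha_n)=f(\alpha_{n+1},\ldots,\alpha_{2n})$: on a $g$-homogeneous $H\in P(S)\cap I^+$ the constant value $1$ is impossible because $|H|=\kappa>\gamma$ yields $\gamma^+$ pairwise stacked $n$-tuples on which $f$ would be injective into $\gamma$, and the constant value $0$ gives constancy of $f$ on $[H]^n$ by the same bridging through a tuple above both that you use elsewhere. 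With that reading the step is fine; as written, ``iterated reduction'' describes an argument that fails for infinite $\gamma$.
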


In the previous theorem, if one weakens the assumption $I\supseteq\NS_\kappa$ to $I\supseteq [\kappa]^{<\kappa}$ one can still prove \emph{some} of the equivalences. For more on this issue see \cite{CodyRefinement}.

\begin{proposition}
Suppose $I\supseteq[\kappa]^{<\kappa}$ is an ideal on a regular cardinal $\kappa$. Then clauses (4), (5), (6) and (7) of Theorem \ref{theorem_omega_S} are equivalent.
\end{proposition}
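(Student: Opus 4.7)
My plan is to establish the equivalences through the cycle $(5)\Rightarrow(4)$, $(4)\Rightarrow(5)$, $(6)\Leftrightarrow(7)$, $(7)\Rightarrow(4)$, and $(4)\Rightarrow(6)$. The first is trivial upon taking $C=\kappa$. For $(4)\Rightarrow(5)$, given a regressive $f:[S]^{<\omega}\to\kappa$ and a club $C\subseteq\kappa$, I would set $s(\alpha)=\sup(C\cap\alpha)$ if $\alpha\notin\Lim(C)$ and $s(\alpha)=0$ otherwise, and apply (4) to the enriched regressive function $g(\{\alpha_1,\ldots,\alpha_n\})=\gp(f(\{\alpha_1,\ldots,\alpha_n\}),s(\alpha_1))$, defined on the club of G\"odel-closure points and extended by $0$ elsewhere. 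Using that (4) for $S$ forces $S$ to meet every club on an $I^+$ set, one finds a homogeneous $H\in I^+$ inside the club of G\"odel-closure points; the constancy of $s$ on $H$ rules out positive values (since a positive constant would confine $H$ to a bounded interval) and so forces $s\equiv 0$, placing $H$ inside $\Lim(C)\subseteq C$ modulo a bounded piece, with $f$ constant on it.

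For $(6)\Leftrightarrow(7)$: $(7)\Rightarrow(6)$ is immediate; $(6)\Rightarrow(7)$ follows by noting that the coherence equation with $\alpha_1=\beta_1$ forces $S_{\alpha_1\cdots\alpha_n}$ to depend only on $\alpha_1$ and the arity $n$ for tuples in $H$, so $D_n:=\bigcup_{(\alpha_i)\in[H]^n}S_{\alpha_1\cdots\alpha_n}$ satisfies $D_n\cap\alpha_1=S_{\alpha_1\cdots\alpha_n}$ after a short verification. For $(7)\Rightarrow(4)$, given a regressive $f:[S]^{<\omega}\to\kappa$, I would work on the club of G\"odel-closure points and define the $(\omega,S)$-list
\[S_{\alpha_1\cdots\alpha_n}=\{\gp(k,j_2,\ldots,j_k,f(\{\alpha_1,\alpha_{j_2},\ldots,\alpha_{j_k}\})):1\leq k\leq n,\ 2\leq j_2<\cdots<j_k\leq n\}\subseteq\alpha_1.\]
Applying (7) yields $H\in I^+$ and $\<D_n\>$ with $S_{\alpha_1\cdots\alpha_n}=D_n\cap\alpha_1$; for tuples $(\alpha_i),(\beta_i)\in[H]^n$ with $\alpha_1\leq\beta_1$, injectivity of $\gp$ extracts $f(\{\alpha_1,\alpha_{j_2},\ldots,\alpha_{j_k}\})=f(\{\beta_1,\beta_{j_2},\ldots,\beta_{j_k}\})$ for every choice of indices, giving the required $f$-homogeneity.

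The main obstacle is $(4)\Rightarrow(6)$, because the naive regressive indicator $(\{\beta,\alpha_1,\ldots,\alpha_n\})\mapsto\chi_{S_{\alpha_1\cdots\alpha_n}}(\beta)$ only controls the trace $S_{\alpha_1\cdots\alpha_n}\cap H\cap\alpha_1$, while (6) demands the full identity of $S_{\alpha_1\cdots\alpha_n}$ with $S_{\beta_1\cdots\beta_n}\cap\alpha_1$ as subsets of $\alpha_1$. To close this gap I would use that (4) implies $\kappa$ is inaccessible (so $|P(\alpha_0)|<\kappa$ for $\alpha_0<\kappa$) and that (4) implies the bounded-color version of Ramseyness (by restricting $\gamma<\kappa$-valued functions to $[S\cap(\gamma,\kappa)]^{<\omega}$); this lets me code $S_{\alpha_1\cdots\alpha_n}\cap\alpha_0\in P(\alpha_0)$ as an ordinal below $2^{\alpha_0}<\kappa$ and aggregate over the tuple via G\"odel pairing into a single regressive function whose homogeneous set $H\in I^+$ has $S_{\alpha_1\cdots\alpha_n}\cap\alpha_0$ uniform in $(\alpha_i)\in[H\cap(\alpha_0,\kappa)]^n$. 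Assembling these uniform values as $\alpha_0$ varies yields the sequence $\<D_n\>$ witnessing (7), and the normality of $\R(I)$ extending $I$ supplies the diagonal-type argument needed to accommodate all $\alpha_0$ in a single $I^+$ set.
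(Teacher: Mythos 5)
Your handling of $(5)\Rightarrow(4)$, $(4)\Rightarrow(5)$ and $(6)\Leftrightarrow(7)$ is essentially right (for $(4)\Rightarrow(5)$ one can also simply quote the normality of $\R(I)$: since $\R(I)\supseteq\NS_\kappa$, $S\in\R(I)^+$ implies $S\cap C\in\R(I)^+$ for every club $C$). In $(7)\Rightarrow(4)$ there is a small but real oversight: after restricting your coding to the club $D$ of G\"odel-closure points you must declare $S_{\alpha_1\cdots\alpha_n}$ for $\alpha_1\notin D$, and if you set it to $\emptyset$ the set $H\in I^+$ produced by (7) may lie entirely outside $D$ --- nothing guarantees $H\cap D\in I^+$ when $I$ is only assumed to contain $[\kappa]^{<\kappa}$ --- in which case you learn nothing about $f$. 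This is fixable, most easily by dropping the coding altogether and using the list $S_{\alpha_1\cdots\alpha_n}=\{f(\{\alpha_1,\ldots,\alpha_n\})\}$, which is a subset of $\alpha_1$ outright since $f$ is regressive.

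The genuine gap is $(4)\Rightarrow(6)$. The step ``code $S_{\alpha_1\cdots\alpha_n}\cap\alpha_0$ as an ordinal below $2^{|\alpha_0|}$ and aggregate over the tuple via G\"odel pairing into a single regressive function'' cannot be carried out: a regressive function must take a value below $\alpha_0=\min$ of its argument, while the trace $S_{\alpha_1\cdots\alpha_n}\cap\alpha_0$ has $2^{|\alpha_0|}>\alpha_0$ possible values, so no single regressive coloring can record it uniformly in $\alpha_0$. Treating each $\alpha_0$ separately with the bounded-colour version yields $\kappa$ many homogeneous sets $H_{\alpha_0}\in I^+$, and the announced ``diagonal-type argument'' from the normality of $\R(I)$ does not combine them: normality lets you intersect an $\R(I)$-positive set with a club, or take diagonal unions of $\R(I)$-null sets, but it gives no way to locate a single $I^+$ set inside $\kappa$ many prescribed $I^+$ sets (even for a normal ideal the diagonal intersection of \emph{positive} sets can be empty; that operation is only guaranteed for sets in the dual filter), and the uniform traces computed on distinct $H_{\alpha_0}$'s need not cohere, so the $D_n$'s cannot be assembled. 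The idea that closes the gap is to colour only the \emph{least point of disagreement}: for $a=\{\alpha_0<\cdots<\alpha_n\}$ with $\alpha_0$ a closure point of $\gp$, let $F(a)$ code via $\gp$ the ordinal $\min\bigl(S_{\alpha_0\cdots\alpha_{n-1}}\,\triangle\,(S_{\alpha_1\cdots\alpha_n}\cap\alpha_0)\bigr)$ when this set is nonempty, and let $F(a)=0$ otherwise; this is a single ordinal below $\alpha_0$, so $F$ is regressive. On a homogeneous $H\in I^+$ a fixed $\xi$ would have to lie in $S_{\alpha_0\cdots\alpha_{n-1}}\,\triangle\,S_{\alpha_1\cdots\alpha_n}$ for every $(n+1)$-tuple from $H$, and a parity argument (two $n$-tuples of $H$ can be joined by chains of ``drop the bottom, add a new top'' moves of both parities) rules this out; hence $S_{\alpha_0\cdots\alpha_{n-1}}=S_{\alpha_1\cdots\alpha_n}\cap\alpha_0$ throughout $[H]^{n+1}$, and chaining these one-step coherences through a common tuple lying above both given tuples yields full homogeneity of $H$ for $\vec{S}$. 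Without an argument of this kind your cycle of implications does not close.
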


\begin{remark}
Although the ideals $\I(\Pi^1_n(\kappa))$ for $n\in\{-1\}\cup\kappa$ as well as the ideals $\R(\Pi^1_{-1}(\kappa))=\R([\kappa]^{<\kappa})$ and $\R(\Pi^1_0(\kappa))=\R(\NS_\kappa)$, have been well-studied (see \cite{MR0384553}, \cite{MR0540770} and \cite{MR1077260}), ideals of the form $\R(\Pi^1_n(\kappa))$ for $n>1$ and the corresponding large cardinal properties have not been studied until recently \cite{CodyRefinement}.
\end{remark}

\begin{remark}\label{remark_trivial}
Let $\kappa$ be a cardinal and suppose $I\supseteq\NS_\kappa$ is a normal ideal on $\kappa$. Suppose $S\in\R(I)^+$ and let $\vec{S}=\<S_{\alpha_1\ldots\alpha_n}\st n<\omega\land (\alpha_1,\ldots,\alpha_n)\in[S]^n\>$ be an $(\omega,S)$-list. Let $H\subseteq S$ and $\<D_n\st n<\omega\>$ be as in Theorem \ref{theorem_omega_S}(7). Then for each $n<\omega$ we have
\[H\subseteq\{\alpha\in S\st (\forall\alpha_2\ldots\alpha_n\in H)(\alpha<\alpha_2<\cdots<\alpha_n\implies S_{\alpha\alpha_2\cdots\alpha_n}=D_n\cap\alpha)\}.\]
\end{remark}

The set which contains $H$ in the statement of Remark \ref{remark_trivial} can be thought of as the set $X$ of ordinals at which the sequence $\<D_n\st n<\omega\>$ is anticipated by $\vec{S}$. We obtain the following generic embedding characterization of $\R(\Pi^1_n(\kappa))$ by forcing with $P(\kappa)/\Pi^1_n(\kappa)$ below the `anticipation set' $X$.

\begin{theorem}\label{theorem_ramsey_anticipation_embedding}
For all $n<\omega$, all cardinals $\kappa$ and all $S\subseteq\kappa$, the following are equivalent.
\begin{enumerate}
\item $S\in\R(\Pi^1_n(\kappa))^+$
\item For all $(\omega,S)$-lists $\vec{S}$ there is a set $H\subseteq\kappa$ and there is a generic elementary embedding $j:V\to M\subseteq V[G]$ with critical point $\kappa$ and $\kappa\in j(H)$ such that the following properties hold.\footnote{Note that since $\kappa\in j(H)$ it follows that $H$ is stationary in $\kappa$, and more.}
\begin{enumerate}
\item For any $\<a_m\st m<\omega\>\in\left(\prod_{1<m<\omega}[j(H)]^m\right)^{V[G]}$ such that $a_m(0)=\kappa$ for all $m<\omega$, we have 
\[\<j(\vec{S})(a_m)\st m<\omega\>\in V.\]
\item For all $A\in V_{\kappa+1}^V$ and all $\Pi^1_n$-sentences $\varphi$ we have
\[((V_\kappa,\in,A)\models\varphi)^V\implies((V_\kappa,\in,A)\models\varphi)^M.\]
\end{enumerate}
\end{enumerate}
\end{theorem}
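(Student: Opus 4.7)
The plan is to adapt the proof of Proposition \ref{proposition_ineffability_anticipation_embedding} --- the analog of this result for $\I(\Pi^1_n(\kappa))$ --- by replacing $S$-lists with $(\omega,S)$-lists and using Feng's anticipation characterization (Theorem \ref{theorem_omega_S}(7)) as the bridge between the combinatorial and embedding sides. For $(1)\Rightarrow(2)$, given $S\in\R(\Pi^1_n(\kappa))^+$ and an $(\omega,S)$-list $\vec{S}$, I would first invoke Theorem \ref{theorem_omega_S}(7) to extract $H\in P(S)\cap\Pi^1_n(\kappa)^+$ together with a sequence $\<D_m\st m<\omega\>$ satisfying $S_{\alpha_1\cdots\alpha_m}=D_m\cap\alpha_1$ for every increasing $m$-tuple in $H$. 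Note that $\R(\Pi^1_n(\kappa))^+\subseteq\Pi^1_n(\kappa)^+$ ensures $\kappa$ is $\Pi^1_n$-indescribable and $\Pi^1_n(\kappa)\restrict H$ is a normal ideal. Forcing with $P(\kappa)/(\Pi^1_n(\kappa)\restrict H)$ and taking the resulting generic ultrapower $j:V\to M\subseteq V[G]$ yields $\crit(j)=\kappa$, $\kappa\in j(H)$, and $\Pi^1_n(\kappa)^*\subseteq U_G$. Condition (b) then follows exactly as in Proposition \ref{proposition_indescribability_embedding} via Lemma \ref{lemma_indescribable_filter}. For (a), elementarity lifts the anticipation of $\<D_m\>$ by $H$ up to $j(H)$ with witness $j(\vec{D})$: for any $a\in[j(H)]^m$ with $a(0)=\kappa$ we have $j(\vec{S})(a)=j(D_m)\cap\kappa=D_m$, so $\<j(\vec{S})(a_m)\>_m=\<D_m\>$ lies in $V$ regardless of the choice of $\<a_m\>$.

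For $(2)\Rightarrow(1)$, by Theorem \ref{theorem_omega_S}(7) it suffices to prove that every $(\omega,S)$-list $\vec{S}$ admits an $H'\in P(S)\cap\Pi^1_n(\kappa)^+$ and a sequence $\<D_m\>$ with $S_{\alpha_1\cdots\alpha_m}=D_m\cap\alpha_1$ for all increasing $m$-tuples in $H'$. Given $\vec{S}$, I would apply (2) to obtain $H$ and $j$, fix any $\<a_m\>$ as in the hypothesis of (a), and set $\<D_m\>:=\<j(\vec{S})(a_m)\>\in V$. Then define in $V$ the tautological anticipation set
\[H'=\{\alpha\in H\cap S\st (\forall m\geq 2)(\forall\alpha_2<\cdots<\alpha_m\in H\setminus(\alpha+1))\ S_{\alpha\alpha_2\cdots\alpha_m}=D_m\cap\alpha\},\]
which is homogeneous for $\vec{S}$ with witness $\<D_m\>$ by construction. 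To conclude, I would argue that $\kappa\in j(H')$ and then invoke (b) (at level $\Pi^1_{n+1}$, which follows from (b) together with one additional universal second-order quantifier) to transfer the $\Pi^1_n$-indescribability of $H'$ from $M$ down to $V$, just as in the ``$(2)\Rightarrow(1)$'' direction of Proposition \ref{proposition_ineffability_anticipation_embedding}.

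The hard part is verifying $\kappa\in j(H')$: this requires that $j(\vec{S})(\{\kappa,\beta_2,\ldots,\beta_m\})=D_m$ for \emph{every} choice of $\beta_2<\cdots<\beta_m$ in $j(H)\setminus(\kappa+1)$, not merely for the tail of the particular $a_m$ that defined $D_m$. The point is that (a) quantifies over \emph{all} sequences $\<a_m\>$ in $V[G]$, so by varying $a_m$ within $V[G]$ and reapplying (a), any genuine dependence of $j(\vec{S})(a)$ on the tail of $a$ would produce a $V[G]$-definable sequence of values that could not all lie in $V$; this forces the required uniformity and hence $\kappa\in j(H')$.
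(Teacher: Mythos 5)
Your proposal tracks the paper's proof essentially step for step: the forward direction extracts $H$ and $\<D_m\st m<\omega\>$ from Theorem \ref{theorem_omega_S}(7), forces with $P(\kappa)/(\Pi^1_n(\kappa)\restrict H)$, and reads off clause (a) from the fact that $\kappa$ lands in $j$ of the anticipation set of Remark \ref{remark_trivial}; the converse fixes one sequence $\<a_m\st m<\omega\>$, sets $D_m=j(\vec{S})(a_m)$, passes to the tautological anticipation set $H'$, and uses (b) together with $\kappa\in j(H')$ to get $\Pi^1_n$-indescribability of $H'$. (A small point: this last transfer only needs (b) at level $\Pi^1_n$, exactly as in Proposition \ref{proposition_ineffability_anticipation_embedding}; your aside about upgrading to $\Pi^1_{n+1}$ is unnecessary.)

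The step you single out as the hard part is indeed the crux, but your justification of it is not airtight as written. Applying (a) to a single sequence through a given tuple $a\in[j(H)]^m$ with $\min a=\kappa$ already shows $j(\vec{S})(a)\in P(\kappa)^V$, so the totality of value-sequences produced by varying $\<a_m\st m<\omega\>$ is contained in a product $\prod_{m}\mathcal{D}_m$ with each $\mathcal{D}_m\subseteq V$; such a product can lie entirely inside $V$ without each $\mathcal{D}_m$ being a singleton --- for instance if the failure of uniformity occurs at only finitely many $m$, or if the forcing adds no new $\omega$-sequences of ground-model sets. So ``some value-sequence escapes $V$'' does not follow from non-uniformity alone, and clause (a) as literally stated does not obviously yield $j(\vec{S})(\{\kappa,\kappa_2,\ldots,\kappa_m\})=D_m$ for \emph{every} tail from $j(H)\setminus(\kappa+1)$, which is exactly what $\kappa\in j(H')$ requires. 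To be fair, the paper's own proof asserts this uniformity with no more justification than you give (``from (2)(a) and the definition of $D_m$ we see that\ldots''), so your write-up is no less complete than the source; but a self-contained argument would either have to prove the uniformity outright or strengthen (a) to say that $j(\vec{S})(a_m)$ depends only on $m$ (which is what the embedding built in the forward direction actually satisfies).
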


\begin{proof}
Suppose $S\in\R(\Pi^1_n(\kappa))^+$ and let $\vec{S}$ be an $(\omega,S)$-list. Let $H\subseteq S$ and $\<D_m\st m<\omega\>$ be the sets obtained from Theorem \ref{theorem_omega_S}(7). Since $H\in \Pi^1_n(\kappa)^+$, the ideal $\Pi^1_n(\kappa)\restrict H$ is normal. Let $G\subseteq P(\kappa)/(\Pi^1_n(\kappa)\restrict H)$ be generic and let $j:V\to M=V^\kappa/U_G\subseteq V[G]$ be the corresponding generic ultrapower embedding. Then $\crit(j)=\kappa$ and $\kappa\in j(H)\subseteq j(S)$. To prove that (2)(a) holds, fix any $\<a_m\st m<\omega\>\in(\prod_{1<m<\omega}[j(H)]^m)^{V[G]}$ such that $a_m(0)=\kappa$. By Remark \ref{remark_trivial}, the fact that $\kappa\in j(H)$ implies that $D_m=j(\vec{S})(a_m)\cap\kappa$ for all $m<\omega$, and thus it follows that $\<j(\vec{S})(a_m)\st m<\omega\>\in V$. To see that (2)(b) holds, fix any $A\in V_{\kappa+1}^V$ and any $\Pi^1_n$-sentence $\varphi$ such that $((V_\kappa,\in,A)\models\varphi)^V$. Then $C=\{\alpha<\kappa\st(V_\alpha,\in,A\cap V_\alpha)\models\varphi\}\in\Pi^1_n(\kappa)^*\subseteq U_G$ and hence $\kappa\in j(C)$, which implies $((V_\kappa,\in,A)\models\varphi)^M$.

Conversely, suppose (2) holds. To see that $S\in\R(\Pi^1_n(\kappa))^+$, fix an $(\omega,S)$-list $\vec{S}\in V$. Let $H\in P(S)\cap V$ and $j:V\to M\subseteq V[G]$ be as given in (2). Select any $\<a_m\st m<\omega\>\in (\prod_{1<m<\omega}[j(H)]^m)^{V[G]}$ with $a_m(0)=\kappa$ for all $m<\omega$, and define $D_m=j(\vec{S})(a_m)$. By (2)(a) we have $\<D_m\st m<\omega\>\in V$. We define
\begin{align*}
H'=\{\alpha\in H\st &(\forall m\in\omega\setminus 2)(\forall\alpha_2\cdots\alpha_m\in H) \\
	&(\alpha<\alpha_2<\cdots<\alpha_m\implies S_{\alpha\alpha_2\cdots\alpha_m}=D_m\cap\alpha\}.
\end{align*}
Let us check that $H'$ is $\Pi^1_n$-indescribable in $\kappa$ and homogeneous for $\vec{S}$. Suppose $R\in V_{\kappa+1}^V$ and $\varphi$ is a $\Pi^1_n$-sentence such that $((V_\kappa,\in,R)\models\varphi)^V$. By (2)(b) we have $((V_\kappa,\in,R)\models\varphi)^M$. Furthermore, from (2)(a) and the definition of $D_m$ we see that \[j(\vec{S})_{\kappa\kappa_2\ldots\kappa_m}=j(D_m)\cap\kappa\]
holds for $m\in\omega\setminus 2$ and all $\kappa_2<\ldots<\kappa_m$ from $j(H)\setminus(\kappa+1)$. Thus $\kappa\in j(H')$ and  therefore, by elementarity there is some $\alpha\in H'$ such that $(V_\alpha,\in,R\cap V_\alpha)\models\varphi$. To check that $H'$ is homogeneous for $\vec{S}$, fix $m<\omega$ and let $\beta_1<\cdots<\beta_m$ and $\gamma_1<\cdots<\gamma_m$ be elements of $H'$ with $\beta_1\leq\gamma_1$. Then since $\beta_1\in H'$ and $\beta_2,\ldots,\beta_m\in H$ we have $S_{\beta_1\cdots\beta_m}=D_m\cap\beta_1$. Similarly, $S_{\gamma_1\cdots\gamma_m}=D_m\cap\gamma_1$. Since $\beta_1\leq\gamma_1$ we have $S_{\beta_1\cdots\beta_m}=S_{\gamma_1\cdots\gamma_m}\cap\beta_1$.
\end{proof}

In order to give a second generic embedding characterization of sets which are positive for the ideal $\R(\Pi^1_n(\kappa))$, where $n\in\{-1\}\cup\omega$, we will prove a generalization of results due to Baumgartner \cite[Theorem 4.4]{MR0540770} and Feng \cite[Theorem 4.8]{MR1077260}. In the study of the Ramsey ideal $\R([\kappa]^{<\kappa})$, the notion of pre-Ramseyness plays a role which is analogous to that of subtlety in the study of ineffability. A set $S\subseteq\kappa$ is \emph{pre-Ramsey} if and only if for every regressive function $f:[S]^{<\omega}\to \kappa$ and every club $C\subseteq\kappa$ there is some $\alpha\in S\cap C$ such that there is a set $H\subseteq S\cap C\cap \alpha$ which has size $\alpha$ and is homogeneous for $f$. Baumgartner proved \cite{MR0540770} that if $\kappa$ is pre-Ramsey then the collection 
\[\text{NPreRam}_\kappa=\{X\subseteq\kappa\st\text{$X$ is not pre-Ramsey}\}\]
is a nontrivial normal ideal on $\kappa$ called the \emph{pre-Ramsey ideal on $\kappa$}. In order to generalize the results of Baumgartner and Feng, we will need a generalization of pre-Ramseyness. 

Let us define an operator $\R_0$ which maps ideals on a cardinal $\kappa$ to ideals on $\kappa$. Suppose that $\vec{I}=\<I_\alpha\st\text{$\alpha\leq\kappa$ is a cardinal}\>$ is a sequence such that each $I_\alpha\supseteq[\alpha]^{<\alpha}$ is an ideal on $\alpha$. We define an ideal $\R_0(\vec{I})$ on $\kappa$ by letting $S\in\R_0(\vec{I})^+$ if and only if for every regressive function $f:[S]^{<\omega}\to\kappa$ and every club $C\subseteq\kappa$ there is an $\alpha\in S\cap C$ for which there is a set $H\subseteq S\cap C\cap\alpha$ in $I_\alpha^+$ which is homogeneous for $f$. It should be understood that many of the ideals $I_\alpha$ will be trivial, and when no confusion will arise, as in the case where the ideals $I_\alpha$ have a uniform definition, we write $\R_0(I_\kappa)$ instead of $\R_0(\vec{I})$. Thus a set $S\subseteq\kappa$ is \emph{pre-Ramsey} if and only if $S\in\R_0([\kappa]^{<\kappa})^+$.

We are now ready to present the generalization of the results of Feng and Baumgartner. The case $n=-1$ (i.e. $\Pi^1_{-1}(\kappa)=[\kappa]^{<\kappa}$) is due to Baumgartner, the case $n=0$ (i.e. $\Pi^1_0(\kappa)=\NS_\kappa$) is due to Feng and the remaining cases are due to the author (this theorem is a special case of \cite[Theorem 6.1]{CodyRefinement}).

\begin{theorem}\label{theorem_ramsey_ideals}
For all $n\in\{-1\}\cup\omega$ and all cardinals $\kappa$, we have $\kappa\notin\R(\Pi^1_n(\kappa))$ if and only if the following properties hold.
\begin{enumerate}
\item $\kappa\in\R_0(\Pi^1_n(\kappa))^+$ and $\kappa$ is $\Pi^1_{n+2}$-indescribable.
\item The ideal generated by $\R_0(\Pi^1_n(\kappa))$ and the $\Pi^1_{n+2}$-indescribable ideal is a nontrivial normal ideal; in this case, \[\R(\Pi^1_n(\kappa))=\overline{\R_0(\Pi^1_n(\kappa))\cup\Pi^1_{n+2}(\kappa)}.\]
\end{enumerate}
Furthermore, (2) cannot be removed from this characterization because the least cardinal $\kappa$ such that $\kappa\in\R_0(\Pi^1_n(\kappa))^+$ and $\kappa$ is $\Pi^1_{n+2}$-indescribable is in $\R(\Pi^1_n(\kappa))$.
\end{theorem}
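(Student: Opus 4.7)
The strategy is to imitate the proof of Theorem \ref{theorem_baumgartner_ineffable_ideal}, with the pre-Ramsey-like ideal $\R_0(\Pi^1_n(\kappa))$ playing the role of $\NSub_\kappa$ and the $(\omega,S)$-list characterization of Ramseyness from Theorem \ref{theorem_omega_S}(7) replacing the $S$-list machinery used there. Let me set $J := \overline{\R_0(\Pi^1_n(\kappa)) \cup \Pi^1_{n+2}(\kappa)}$. The bulk of the argument will consist of proving that $\kappa \in \R(\Pi^1_n(\kappa))^+$ if and only if $\kappa \in J^+$; once this is established, both (1) and the ideal-equality part of (2) fall out together, just as in the ineffable case.

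For the inclusion $\R(\Pi^1_n(\kappa))^+ \subseteq J^+$, by Remark \ref{remark_ideal_containment} I will separately show $\R(\Pi^1_n(\kappa))^+ \subseteq \R_0(\Pi^1_n(\kappa))^+$ and $\R(\Pi^1_n(\kappa))^+ \subseteq \Pi^1_{n+2}(\kappa)^+$. The first inclusion will use that $\Pi^1_n$-indescribability reflects: given a $\Pi^1_n$-indescribable homogeneous set $H$ for a regressive $f$, the set of $\alpha < \kappa$ at which $H \cap \alpha$ is $\Pi^1_n$-indescribable in $\alpha$ lies in $\Pi^1_n(\kappa)^*$, and intersecting with a prescribed club $C$ supplies the needed reflection point. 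The second inclusion will be an adaptation of Lemma \ref{lemma_ineffability_implies_indescribability} to the $(\omega, S)$-list setting: a $\Pi^1_{n+2}$-sentence witnessing a failure of indescribability of $S$ is converted into an $(\omega,S)$-list with no $\Pi^1_n$-indescribable homogeneous set, contradicting Theorem \ref{theorem_omega_S}(7).

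For the reverse inclusion $J^+ \subseteq \R(\Pi^1_n(\kappa))^+$, suppose for contradiction that $S \in J^+$ but $S \in \R(\Pi^1_n(\kappa))$. By Theorem \ref{theorem_omega_S}(7) fix an $(\omega,S)$-list $\vec{S}$ every homogeneous set of which lies in $\Pi^1_n(\kappa)$. The statement ``every homogeneous set for $\vec{S}$ fails to be $\Pi^1_n$-indescribable'' is expressible by a $\Pi^1_{n+2}$-sentence $\varphi$ over $(V_\kappa, \in, \vec{S})$, so by Lemma \ref{lemma_indescribable_filter} the reflection set $C := \{\alpha < \kappa : (V_\alpha, \in, \vec{S} \cap V_\alpha) \models \varphi\}$ lies in $\Pi^1_{n+2}(\kappa)^*$. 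Writing $S = (S \cap C) \cup (S \setminus C)$ and invoking $S \in J^+$ together with $S \setminus C \in \Pi^1_{n+2}(\kappa)$ forces $S \cap C \in \R_0(\Pi^1_n(\kappa))^+$. Applying the definition of $\R_0$ to the regressive function coding $\vec{S}$ and to the club $C$ itself then yields some $\alpha \in S \cap C$ together with $H \subseteq S \cap C \cap \alpha$ which is $\Pi^1_n$-indescribable in $\alpha$ and homogeneous for $\vec{S}$, directly contradicting $\alpha \in C$.

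For the final clause, I will show that whenever $\kappa \in \R(\Pi^1_n(\kappa))^+$, the set $A$ of $\alpha < \kappa$ that are both $\R_0(\Pi^1_n(\alpha))^+$-positive and $\Pi^1_{n+2}$-indescribable in fact lies in the filter $J^* = \R(\Pi^1_n(\kappa))^*$; this forces the least cardinal satisfying only the conditions of (1) to sit strictly below $\kappa$. The property ``$\alpha \in \R_0(\Pi^1_n(\alpha))^+$'' is expressible by a $\Pi^1_{n+2}$-sentence over $V_\alpha$, so the corresponding set lies in $\Pi^1_{n+2}(\kappa)^*$; meanwhile, the set of $\Pi^1_{n+2}$-indescribable $\alpha < \kappa$ must be shown to lie in $\R_0(\Pi^1_n(\kappa))^*$ by an adaptation of Lemma \ref{lemma_subtlety_is_stronger_than_indescribability}, in which a hypothetical failure of $\Pi^1_{n+2}$-indescribability at a potential reflection point is coded, via G\"odel pairing, into the values of a single regressive function so as to obstruct the promised $\Pi^1_n$-indescribable homogeneous set below that point. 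I expect this last coding step, which must simultaneously carry the offending $\Pi^1_{n+2}$-parameters and track the anticipation equalities forced by homogeneity, to be the most delicate part of the argument.
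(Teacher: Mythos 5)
Your overall architecture matches the paper's: set $J=\overline{\R_0(\Pi^1_n(\kappa))\cup\Pi^1_{n+2}(\kappa)}$, prove that positivity for $\R(\Pi^1_n(\kappa))$ is equivalent to positivity for $J$ via Remark \ref{remark_ideal_containment} and two reflection arguments, and then handle the final clause by showing that the conjunction of the properties in (1) holds on a set in $\R(\Pi^1_n(\kappa))^*$. The two main inclusions are essentially as in the paper (modulo your detour through $(\omega,S)$-lists, which the paper avoids by working with regressive functions directly), with one indexing slip: the set of $\alpha$ at which $H\cap\alpha$ is $\Pi^1_n$-indescribable in $\alpha$ lies in $\Pi^1_{n+1}(\kappa)^*$, not $\Pi^1_n(\kappa)^*$, since ``$H\in\Pi^1_n(\kappa)^+$'' is a $\Pi^1_{n+1}$-sentence over $(V_\kappa,\in,H)$ --- already for $n=0$ a stationary set need not reflect at club-many points. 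This is harmless here because $S\cap C$ has already been shown $\Pi^1_{n+2}$-indescribable, which is exactly how the paper locates the reflection point.

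The genuine gap is in the final clause. You propose to show that $\{\alpha<\kappa\st\text{$\alpha$ is $\Pi^1_{n+2}$-indescribable}\}$ lies in $\R_0(\Pi^1_n(\kappa))^*$ by ``an adaptation of Lemma \ref{lemma_subtlety_is_stronger_than_indescribability}'' in which indescribability-failure witnesses are coded into a regressive function on finite tuples. That adaptation is precisely what Remark \ref{remark_preramsey_question} and Question \ref{question_generalize_subtle_result} record as open: homogeneity for a regressive function only forces the coded parameters $A_\alpha$ and $A_\beta$ to agree on the homogeneous set itself, whereas the subtlety argument extracts the full agreement $A_\alpha=A_\beta\cap V_\alpha$ from the list coherence $E_\alpha=E_\beta\cap\alpha$, and this is exactly why Baumgartner's proof is not known to transfer to the pre-Ramsey setting. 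The paper sidesteps the difficulty entirely: every set in $\R_0(\Pi^1_n(\kappa))^+$ is subtle (apply the definition of $\R_0$ to a regressive function coding an $S$-list), so $\NSub_\kappa\subseteq\R_0(\Pi^1_n(\kappa))$ and hence $\NSub_\kappa^*\subseteq\R_0(\Pi^1_n(\kappa))^*$, and Lemma \ref{lemma_subtlety_is_stronger_than_indescribability}, applied to the subtle ideal as it stands, already places the set of $\Pi^1_{n+2}$-indescribable cardinals in $\NSub_\kappa^*$. You should replace your coding step with this reduction; as written, the step you flag as ``the most delicate part'' is an unsolved problem rather than a delicate computation.
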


\begin{proof}
Let $I=\overline{\R_0(\Pi^1_n(\kappa))\cup\Pi^1_{n+2}(\kappa)}$. We will show that $S\in\R(\Pi^1_n(\kappa))^+$ if and only if $S\in I^+$.

Suppose $S\in \R(\Pi^1_n(\kappa))^+$. To show that $S\in I^+$, it suffices to show that $S\in \Pi^1_{n+2}(\kappa)^+$ and $S\in\R_0(\Pi^1_n(\kappa))^+$ (see Remark \ref{remark_ideal_containment}). Let $\vec{S}=\<S_\alpha\st\alpha\in S\>$ be an $S$-list. By arbitrarily extending $\vec{S}$ to an $(\omega,S)$-list, we see that the fact that $S\in\R(\Pi^1_n(\kappa))^+$ implies that the $S$-list $\vec{S}$ has a homogeneous $H\subseteq S$ which is $\Pi^1_n$-indescribable in $\kappa$. Thus, by Lemma \ref{lemma_ineffability_implies_indescribability}, it follows that $S\in\Pi^1_{n+2}(\kappa)^+$. To see that $S\in\R_0(\Pi^1_n(\kappa))^+$, fix a regressive function $f:[S]^{<\omega}\to\kappa$ and a club $C\subseteq\kappa$. Since $S\cap C\in\R(\Pi^1_n(\kappa))^+$, there is set $H\subseteq S\cap C$ in $\Pi^1_n(\kappa)^+$ which is homogeneous for $f$. The fact that $H\in\Pi^1_n(\kappa)^+$ can be expressed by a $\Pi^1_{n+1}$ sentence $\varphi$ over $(V_\kappa,\in,H)$. Since $S\cap C$ is $\Pi^1_{n+2}$-indescribable, it follows that there is an $\alpha\in S\cap C$ such that $(V_\alpha,\in,H\cap V_\alpha)\models\varphi$, which implies that $H\cap\alpha$ is $\Pi^1_n$-indescribable in $\alpha$. Thus $S\in \R_0(\Pi^1_n(\kappa))^+$.

Suppose $S\in I^+$. To show that $S\in\R(\Pi^1_n(\kappa))$ fix a regressive function $f:[S]^{<\omega}\to \kappa$ and suppose, for the sake of contradiction, that every homogeneous set for $f$ is not $\Pi^1_n$-indescribable in $\kappa$. This can be expressed by a
$\Pi^1_{n+2}$ sentence $\varphi$ over $(V_\kappa,\in,S,f)$. Thus, the set
\begin{align*}
C&=\{\alpha<\kappa\st (V_\alpha,\in,S\cap\alpha,f\cap V_\alpha)\models\varphi\}\\
	&=\{\alpha<\kappa\st \text{every hom. set $H\subseteq S\cap\alpha$ for $f$ is in $\Pi^1_n(\alpha)$}\}
\end{align*}
is in the filter $\Pi^1_{n+2}(\kappa)^*$. Since $S\in I^+$ it follows that $S$ is not the union of a set in $\R_0(\Pi^1_n(\kappa))$ and a set in $\Pi^1_{n+2}(\kappa)$. Since $S=(S\cap C)\cup (S\setminus C)$ and $S\setminus C\in\Pi^1_{n+2}$, we see that $S\cap C\in\R_0(\Pi^1_n(\kappa))^+$. Hence, there is an $\alpha\in S\cap C$ such that there is an $H\subseteq S\cap C\cap\alpha$ which is $\Pi^1_n$-indescribable in $\alpha$ and homogeneous for $f$. This contradicts $\alpha\in C$.

For the remaining statement, let us show that if $\kappa\notin\R(\Pi^1_n(\kappa))$, then there are many cardinals $\alpha<\kappa$ such that $\alpha\in\R_0(\Pi^1_n(\alpha))^+$ and $\alpha\in\Pi^1_{n+2}(\alpha)^+$. Suppose $\kappa\notin\R(\Pi^1_n(\kappa))$. Notice that the fact that $\kappa\in\R_0(\Pi^1_n(\kappa))^+$ can be expressed by a $\Pi^1_1$-sentence $\varphi$ over $V_\kappa$ and thus the set
\[C=\{\alpha<\kappa\st (V_\alpha,\in)\models\varphi\}=\{\alpha<\kappa\st\text{$\alpha\in\R_0(\Pi^1_n(\kappa))^+$}\}\]
is in the filter $\Pi^1_1(\kappa)^*\subseteq\Pi^1_{n+2}(\kappa)^*$. Furthermore, by Lemma \ref{lemma_subtlety_is_stronger_than_indescribability}, the set
\[H=\{\alpha<\kappa\st\text{$\alpha$ is $\Pi^1_{n+2}$-indescribable}\}\]
is in the filter $\NSub_\kappa^*\subseteq\R_0(\Pi^1_n(\kappa))^*$. Since $\R(\Pi^1_n(\kappa))^*\supseteq \R_0(\Pi^1_n(\kappa))^*\cup\Pi^1_{n+2}(\kappa)^*$, it follows that $C\cap H$ is in the filter $\R(\Pi^1_n(\kappa))^*$.
\end{proof}

\begin{remark}\label{remark_preramsey_question}
Comparing the statements of Theorem \ref{theorem_baumgartner_ineffable_ideal} and \ref{theorem_ramsey_ideals}, one would like to strengthen Theorem \ref{theorem_ramsey_ideals} by replacing $\R_0(\Pi^1_n(\kappa))$ with the pre-Ramsey ideal $\NPreRam_\kappa$. However, it seems to be unknown whether or not this is possible. The problem is that it is not known whether Lemma \ref{lemma_subtlety_is_stronger_than_indescribability} can be generalized to the pre-Ramsey ideal. See Question \ref{question_pi1n_ramsey_ideal} and Question \ref{question_generalize_subtle_result} in Section \ref{section_questions} below.
\end{remark}

\begin{remark}\label{remark_iterating_ineffability}
Let us note that the preceding result of Baumgartner can be generalized to the ideals of the form $\R(\Pi^1_\xi(\kappa))$ where $\xi\geq\omega$ and $\Pi^1_\xi(\kappa)$ denotes the $\Pi^1_\xi$-indescribability ideal defined in Remark \ref{remark_transfinite_indescribability}. Furthermore, by iterating the Ramsey operator one obtains ideals of the form $\R^\alpha(\Pi^1_\beta(\kappa))$, and the previous Theorem of Baumgartner can also be generalized to these ideals. For example, if $\kappa\in\R^m(\Pi^1_\beta(\kappa))^+$, one has
\[\R^m(\Pi^1_\beta(\kappa))=\overline{\R_0(\R^{m-1}(\Pi^1_\beta(\kappa)))\cup\Pi^1_{\beta+2m}(\kappa)}.\] See \cite{CodyRefinement} for more details.
\end{remark}

Next we give a second generic embedding characterization of sets which are positive for the ideal $\R(\Pi^1_n(\kappa))$. Taking $n=-1$ yields a generic embedding characterization of Ramsey cardinals (and sets). The following is a special case of \cite[Theorem 8.4]{CodyRefinement}.

\begin{theorem}\label{theorem_ramsey_embedding_baumgartner}
For all $n\in\{-1\}\cup\omega$, for all cardinals $\kappa$ and all $S\subseteq\kappa$ the following are equivalent.
\begin{enumerate}
\item $S\in\R(\Pi^1_n(\kappa))^+$.
\item There is a generic elementary embedding $j:V\to M\subseteq V[G]$ with critical point $\kappa$ and $\kappa\in j(S)$ such that the following properties hold.
\begin{enumerate}
\item For all $A\in V_{\kappa+1}^V$ and all $\Pi^1_{n+2}$-sentences $\varphi$ we have
\[((V_\kappa,\in,A)\models\varphi)^V\implies((V_\kappa,\in,A)\models\varphi)^M.\]
\item For every regressive function $f:[S]^{<\omega}\to \kappa$ in $V$, it follows that $M\models$ ``there is a set $H\subseteq S$  which is $\Pi^1_n$-indescribable in $\kappa$ and homogeneous for $f$''.
\end{enumerate}
\end{enumerate}
\end{theorem}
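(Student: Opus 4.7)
The plan is to mimic the proof of Proposition \ref{proposition_ineffable_embedding_baumgartner}, with Theorem \ref{theorem_ramsey_ideals} playing the role that Theorem \ref{theorem_baumgartner_ineffable_ideal} played there, and regressive functions $f:[S]^{<\omega}\to\kappa$ playing the role of $S$-lists. The two main inputs are: (i) the decomposition $\R(\Pi^1_n(\kappa))=\overline{\R_0(\Pi^1_n(\kappa))\cup\Pi^1_{n+2}(\kappa)}$, which yields the filter inclusion $\Pi^1_{n+2}(\kappa)^*\subseteq\R(\Pi^1_n(\kappa))^*$; and (ii) the standard observation that ``$H\in\Pi^1_n(\kappa)^+$'' is a $\Pi^1_{n+1}$-statement over $(V_\kappa,\in,H)$, while ``every homogeneous $H\subseteq S$ for $f$ is not $\Pi^1_n$-indescribable'' is $\Pi^1_{n+2}$ over a single parameter in $V_{\kappa+1}^V$ coding $S$ and $f$.

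For (1)$\Rightarrow$(2), I would force with $P(\kappa)/(\R(\Pi^1_n(\kappa))\restrict S)$ and form the generic ultrapower $j:V\to M=V^\kappa/U_G\subseteq V[G]$, where $U_G$ extends $(\R(\Pi^1_n(\kappa))\restrict S)^*$. Normality of $\R(\Pi^1_n(\kappa))\restrict S$ (coming from Feng's theorem) gives $\crit(j)=\kappa$, and $S\in U_G$ gives $\kappa\in j(S)$. For (2)(a), Theorem \ref{theorem_ramsey_ideals} places $\Pi^1_{n+2}(\kappa)^*$ inside $U_G$, so Lemma \ref{lemma_indescribable_filter} together with the usual {\L}o\'s-style argument of Proposition \ref{proposition_indescribability_embedding} transfers any $\Pi^1_{n+2}$-truth over $(V_\kappa,\in,A)$ from $V$ into $M$. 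For (2)(b), given $f\in V$, Ramseyness of $S$ with respect to $\Pi^1_n(\kappa)$ supplies some $H\subseteq S$ in $V$ which is $\Pi^1_n$-indescribable in $\kappa$ and homogeneous for $f$; since $H\subseteq\kappa=\crit(j)$, $H=j(H)\cap\kappa\in M$, $H\subseteq j(S)$, and because $f$ is regressive we have $j(f)\restrict[\kappa]^{<\omega}=f$, so homogeneity is visible in $M$. The $\Pi^1_{n+1}$-assertion ``$H$ is $\Pi^1_n$-indescribable in $\kappa$'' then lifts from $V$ to $M$ via (2)(a). For (2)$\Rightarrow$(1), fix a regressive $f\in V$ and suppose, for contradiction, that no homogeneous $H\subseteq S$ for $f$ is $\Pi^1_n$-indescribable in $\kappa$; encoding this as a $\Pi^1_{n+2}$-sentence $\varphi$ over $(V_\kappa,\in,A)$ (with $A$ coding both $S$ and $f$) and applying (2)(a) forces the same statement to hold in $M$, directly contradicting (2)(b).

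The step I expect to require the most care is confirming the complexity calculation and parameter-handling in the $(2)\Rightarrow(1)$ direction: one must check that the quantification ``for every $H\subseteq S$ which is homogeneous for $f$'' composed with ``$H$ is not $\Pi^1_n$-indescribable'' stays within $\Pi^1_{n+2}$ once $S$ and $f$ are bundled into a single $A\in V_{\kappa+1}^V$, and that the $M$-interpretation of the resulting sentence is genuinely incompatible with the witness provided by (2)(b). Once that is verified, the remaining identifications---namely $j(f)\restrict[\kappa]^{<\omega}=f$ and $S\subseteq j(S)$---are routine consequences of $\crit(j)=\kappa$ together with regressiveness of $f$, and the rest of the argument is a direct adaptation of the ineffable case.
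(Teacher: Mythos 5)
Your proposal is correct and follows essentially the same route as the paper's proof: forcing with $P(\kappa)/(\R(\Pi^1_n(\kappa))\restrict S)$, using Theorem \ref{theorem_ramsey_ideals} to place $\Pi^1_{n+2}(\kappa)^*$ inside $U_G$ for (2)(a), lifting the $\Pi^1_{n+1}$-expressible indescribability of a ground-model homogeneous set $H$ into $M$ for (2)(b), and deriving the converse by expressing ``every homogeneous set for $f$ is not $\Pi^1_n$-indescribable'' as a $\Pi^1_{n+2}$-sentence and contradicting (2)(b). The only cosmetic difference is that the paper takes $f$ itself as the second-order parameter rather than bundling $S$ and $f$ into a single $A$, which changes nothing.
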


\begin{proof}
The proof is very similar to that of Proposition \ref{proposition_ineffable_embedding_baumgartner}. For (1) implies (2), suppose $S\in\R(\Pi^1_n(\kappa)^+$. Let $G\subseteq P(\kappa)/(\R(\Pi^1_n(\kappa)))\restrict S)$ be generic and let $j:V\to M=V^\kappa/U_G\subseteq V[G]$ be the corresponding generic ultrapower. Since $\Pi^1_{n+2}(\kappa)^*\subseteq\R(\Pi^1_n(\kappa))^*\subseteq U_G$, (2)(a) holds (see the proof of Proposition \ref{proposition_indescribability_embedding} or Proposition \ref{proposition_ineffable_embedding_baumgartner}). Fix a regressive function $f:[S]^{<\omega}\to \kappa$ in $V$. Since $S\in\R(\Pi^1_n(\kappa))^+$ there is a set $H\subseteq S$ in $V$ which is $\Pi^1_n$-indescribable in $\kappa$ and homogeneous for $f$. Then $H=j(H)\cap\kappa\in M$ and by (2)(a) together with the fact that the $\Pi^1_n$-indescribability of $H$ is expressible by a $\Pi^1_{n+1}$ sentence over $(V_\kappa,\in,H)$, we conclude that $M\models$ ``$H\subseteq S$ is $\Pi^1_n$-indescribable and homogeneous for $f$''.

For (2) implies (1), suppose there is a generic embedding $j:V\to M\subseteq V[G]$ as in (2). To show that $S\in\R(\Pi^1_n(\kappa))^+$, fix a regressive function $f:[S]^{<\omega}\to \kappa$. For the sake of contradiction assume every homogeneous set for $f$ is in $\Pi^1_n(\kappa)$. Recall that ``$X\in\Pi^1_n(\kappa)$'' is expressible by a $\Sigma^1_{n+1}$ sentence over $(V_\kappa,\in,X)$. Thus there is a $\Pi^1_{n+2}$ sentence $\varphi$ over $(V_\kappa,\in,f)$ asserting that every homogeneous set for $f$ is not $\Pi^1_n$-indescribable. By (2)(a), it follows that $((V_\kappa,\in,f)\models\varphi)^M$ and hence $M\models$ ``every homogeneous set for $f$ is not $\Pi^1_n$-indescribable''. This contradicts (2)(b).
\end{proof}



\section{Splitting positive sets assuming weak compactness}\label{section_splitting}

In this section we present a folklore result which states that assuming $\kappa$ is weakly compact, many large cardinal ideals on $\kappa$ are nowhere $\kappa$-saturated. We put together techniques used by Hellsten \cite[Theorem 2]{MR2653962} in the context of $\Pi^1_n$-indescrib\-ability and Brickhill-Welch \cite{BrickhillWelch} in the context of $\Pi^1_\gamma$-indescribability, and note that previously known methods allow for more general conclusions than what may have been known. Hellsten \cite{MR2653962} attributes the following result to Tarski \cite{MR17737}.

\begin{theorem}[Tarski]\label{theorem_tarski}
If $\kappa$ is a weakly compact cardinal and $I$ is a $\kappa$-complete ideal on $\kappa$ such that for every $S\in I^+$ there are $S_0,S_1\in I^+$ such that $S=S_0\sqcup S_1$, then $I$ is nowhere $\kappa$-saturated.
\end{theorem}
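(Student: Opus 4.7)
The plan is to fix an arbitrary $S\in I^+$ and explicitly produce a family of $\kappa$-many pairwise disjoint $I$-positive subsets of $S$, thereby showing $I\restrict S$ is not $\kappa$-saturated. The main tool will be a binary tree of $I$-positive subsets of $S$ built by recursively applying the splitting hypothesis, combined with the tree property at $\kappa$ (which holds since $\kappa$ is weakly compact).

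First I would construct a tree $T\subseteq {}^{<\kappa}2$, together with sets $A_s\in I^+$ for $s\in T$. Set $A_\emptyset=S$. Given $s\in T$, use the splitting hypothesis to choose a disjoint decomposition $A_s=A_{s\concat 0}\sqcup A_{s\concat 1}$ with both $A_{s\concat 0},A_{s\concat 1}\in I^+$, and put both successors into $T$. At a limit level $\alpha<\kappa$, for each $b\in{}^\alpha 2$ all of whose initial segments lie in $T$, declare $b\in T$ precisely when $A_b:=\bigcap_{\beta<\alpha}A_{b\restrict\beta}\in I^+$.

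Next I would verify that $T$ is a $\kappa$-tree. Levels are small because, since $\kappa$ is weakly compact and hence inaccessible, $|T\cap{}^\alpha 2|\leq 2^\alpha<\kappa$ for every $\alpha<\kappa$. To show $T$ has height $\kappa$, the key observation at a limit $\alpha<\kappa$ is that the sets $\{A_b:b\in{}^\alpha 2\}$ partition $S$, because induction on $\beta<\alpha$ shows that the level-$\beta$ sets partition $S$. Hence
\[
S=\bigsqcup_{b\in{}^\alpha 2, A_b\in I^+}A_b\;\sqcup\;\bigsqcup_{b\in{}^\alpha 2, A_b\in I}A_b,
\]
and since the second term is a union of fewer than $\kappa$ many sets from $I$, the $\kappa$-completeness of $I$ places it in $I$. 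As $S\in I^+$, the first union must be nonempty, so $T$ has at least one node at level $\alpha$.

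Now the tree property at $\kappa$ supplies a cofinal branch $b\in{}^\kappa 2$. Along this branch, define $B_\alpha:=A_{(b\restrict\alpha)\concat(1-b(\alpha))}$, the ``sibling'' of $b\restrict(\alpha+1)$ inside the splitting of $A_{b\restrict\alpha}$. Each $B_\alpha\in I^+$ by construction, and the family $\{B_\alpha:\alpha<\kappa\}$ is pairwise disjoint: if $\alpha<\alpha'$, then $B_{\alpha'}\subseteq A_{b\restrict\alpha'}\subseteq A_{b\restrict(\alpha+1)}$, which is disjoint from its sibling $B_\alpha$. This exhibits an antichain of size $\kappa$ in $P(\kappa)/(I\restrict S)$, as required; since $S\in I^+$ was arbitrary, $I$ is nowhere $\kappa$-saturated. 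The only step with any subtlety is the height-$\kappa$ claim at limit levels, which is where $\kappa$-completeness is used essentially; everything else is bookkeeping.
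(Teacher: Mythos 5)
Your proposal is correct and follows essentially the same route as the paper: build the binary tree of $I^+$-subsets of $S$ by repeated splitting, use $\kappa$-completeness and inaccessibility to see it is a $\kappa$-tree, and extract a size-$\kappa$ antichain from a cofinal branch via the siblings along the branch. The only (harmless) imprecision is that at a limit level the surviving sets $A_b$ partition $S$ only up to the union of the $<\kappa$ many $I$-sets discarded at earlier limit stages, but that union lies in $I$ by the same $\kappa$-completeness appeal you already make, so the height argument goes through.
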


\begin{proof} Suppose $S\in I^+$. We will show that $I\restrict S$ is not $\kappa$-saturated. We define a tree $(T,\supseteq)$ where $T\subseteq I^+\cap P(S)$ as follows. We will inductively define $\Sigma_\alpha\subseteq\! ^\alpha 2$ and $\Lev_\alpha(T)\subseteq I^+$ such that for each $ j\in\Sigma_\alpha$ there is some $S_ j\in \Lev_\alpha(T)$ and $\text{Lev}_\alpha(T)=\{s_j\st j\in \Sigma_\alpha\}$. Let $\Sigma_0=\{\emptyset\}$ and $S_\emptyset=S$. Suppose $\Sigma_\alpha$ and $\Lev_\alpha(T)$ have been defined. For each $ j\in\Sigma_\alpha$ we have $S_ j\in\Lev_\alpha(T)\subseteq I^+$. Let $S_{ j\concat 0},S_{ j\concat 1}\in I^+$ be such that $S_ j=S_{ j\concat 0}\sqcup S_{ j\concat 1}$ and add $ j\concat 0, j\concat 1$ to $\Sigma_{\alpha+1}$. In other words, $\Sigma_{\alpha+1}=\{ j\concat i\st  j\in\Sigma_\alpha\land i=0,1\}$. Now, suppose $\Sigma_\alpha$ and $\Lev_\alpha(T)$ have been defined for all $\alpha<\eta$ where $\eta$ is a limit ordinal. For each $ j\in\!^\eta 2$, if $ j\restrict\alpha\in\Sigma_\alpha$ for all $\alpha<\eta$ and $\bigcap_{\alpha<\eta}S_{ j\restrict\alpha}\in I^+$, then let $S_ j=\bigcap_{\alpha<\eta}S_{ j\restrict\alpha}$ and add $ j$ to $\Sigma_\eta$. This completes the definition of the tree $(T,\supseteq)$. It is relatively straight forward to show that $(S,\supseteq)$ is a $\kappa$-tree, and thus, applying the weak compactness of $\kappa$, must have a cofinal branch $b\subseteq T$, which provides a partition of $S$ into $\kappa$ disjoint $I^+$-sets.
\end{proof}

The following result is attributed to L\'evy and Silver in \cite{MR495118}.

\begin{corollary}[L\'evy-Silver]
If $\kappa$ is weakly compact and not measurable then every normal ideal on $\kappa$ is nowhere $\kappa$-saturated.
\end{corollary}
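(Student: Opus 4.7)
The plan is to deduce the corollary directly from Theorem \ref{theorem_tarski} by showing that, under the hypotheses, every normal ideal $I$ on $\kappa$ satisfies the splitting hypothesis of Tarski's theorem: for every $S\in I^+$ there exist disjoint $S_0, S_1\in I^+$ with $S=S_0\sqcup S_1$. Since normal ideals on $\kappa$ are automatically $\kappa$-complete, this splitting property together with the weak compactness of $\kappa$ will immediately imply that $I$ is nowhere $\kappa$-saturated.

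To establish the splitting property, I argue by contradiction. Suppose $I$ is a normal ideal on $\kappa$ and there is some $S\in I^+$ which cannot be partitioned into two disjoint $I^+$-sets. This means that for every $A\subseteq S$, either $A\in I$ or $S\setminus A\in I$; in other words, the dual filter $(I\restrict S)^*$ decides every subset of $\kappa$ and is therefore a $V$-ultrafilter on $\kappa$. Since $I\restrict S$ inherits normality and $\kappa$-completeness from $I$, its dual is a normal, $\kappa$-complete, uniform ultrafilter on $\kappa$.

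The existence of such an ultrafilter means that $\kappa$ is measurable, which contradicts the hypothesis. Hence the splitting property holds, and Theorem \ref{theorem_tarski} applies to give the conclusion.

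The main step is the translation of ``cannot split an $I^+$-set'' into ``the dual filter is a measure,'' but this is essentially a definitional unpacking once one notes that normality and $\kappa$-completeness pass from $I$ to $I\restrict S$. No serious obstacle is anticipated; the content of the corollary is really that weak compactness plus Tarski's theorem rule out non-splitting positive sets except via a genuine normal measure.
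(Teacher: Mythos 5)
Your proposal is correct and follows essentially the same route as the paper: reduce to Tarski's theorem by showing every $I$-positive set splits, and derive the splitting property by observing that a non-splitting $S\in I^+$ would make $(I\restrict S)^*$ a normal measure on $\kappa$, contradicting non-measurability. No further comment is needed.
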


\begin{proof}
Suppose $\kappa$ is weakly compact and not measurable. Fix a normal ideal $I$ on $\kappa$. By Theorem \ref{theorem_tarski}, it suffices to show that for every $S\in I^+$ there are $S_0,S_1\in I^+$ such that $S=S_0\sqcup S_1$. Suppose $S\in I^+$ does not split. Then
\[U:=(I\restrict S)^*=\{X\subseteq\kappa\st(\kappa\setminus X)\cap S\in I\}\]
is a normal measure on $\kappa$, a contradiction.
\end{proof}

\begin{corollary}[Folklore]\label{corollary_folklore_splitting}
If $\kappa$ is weakly compact and $I$ is a normal ideal which is definable over $H_\kappa^+$, then $I$ is nowhere $\kappa$-saturated.\footnote{The author would like to thank Sean Cox for pointing out the statement and proof of this result.}
\end{corollary}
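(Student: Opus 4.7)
My plan is to invoke Theorem~\ref{theorem_tarski} (Tarski), which, using the $\kappa$-completeness of the normal ideal $I$ and the weak compactness of $\kappa$, reduces the problem to showing that every $S \in I^+$ splits into two disjoint $I$-positive pieces. The remainder of the argument establishes this splitting property and relies not on weak compactness but on the $H(\kappa^+)$-definability of $I$ together with standard facts about ultrapowers by normal $V$-ultrafilters.

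Suppose, for contradiction, that some $S \in I^+$ is an atom, so that $U := (I \restrict S)^*$ is a $V$-normal, $\kappa$-complete $V$-ultrafilter on $\kappa$. Because $I$ is defined over $H(\kappa^+)$ by some formula $\phi$ with parameter $p$, the ultrafilter $U$ is likewise definable over $H(\kappa^+)$, using $\phi$, $p$, and $S$. I will consider the property of cardinals
\[\Psi(\alpha)\;\equiv\;\exists\, q,T \in H(\alpha^+)\ \big[\{X \subseteq \alpha : H(\alpha^+)\models\phi(X,q)\}\text{ is a normal ideal on }\alpha\text{ with atom }T\big].\]
Then $\Psi(\kappa)$ holds, so there is a least cardinal $\alpha_0$ with $\Psi(\alpha_0)$; fix witnesses $I_0$, $q_0$, $S_0$ in $V$. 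The goal is to contradict the minimality of $\alpha_0$ by producing some $\beta < \alpha_0$ with $\Psi(\beta)$.

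To this end I will form the ultrapower $j\colon V \to M = \Ult(V,(I_0 \restrict S_0)^*)$, which has critical point $\alpha_0$. Since the ultrafilter is $\alpha_0$-complete and $V$-normal, $M$ is well-founded and $M^{\alpha_0} \cap V \subseteq M$; in particular $P(\alpha_0)^V = P(\alpha_0)^M$ and $H(\alpha_0^+)^V = H(\alpha_0^+)^M$. The crucial observation is that this agreement on $H(\alpha_0^+)$ forces the defining formula $\phi$, together with the parameter $q_0$, to compute the same normal ideal $I_0$ (and hence the same ultrafilter) inside $M$, so $(I_0 \restrict S_0)^* \in M$ and $M \models \Psi(\alpha_0)$. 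Since $\alpha_0 < j(\alpha_0)$, the model $M$ witnesses $\exists \beta < j(\alpha_0)\ \Psi(\beta)$, and elementarity of $j$ pulls this back to $V \models \exists \beta < \alpha_0\ \Psi(\beta)$, contradicting the minimality of $\alpha_0$. I expect the most delicate step to be verifying that the $H(\alpha_0^+)$-definability of $I_0$ transfers into $M$ so that the ultrafilter is actually an element of $M$; this is exactly where the $\alpha_0$-closure of the ultrapower is used.
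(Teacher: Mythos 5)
Your reduction to Theorem~\ref{theorem_tarski} and your key observation are exactly the paper's: assuming $S\in I^+$ does not split, $(I\restrict S)^*$ is a normal measure, and the agreement $H(\kappa^+)^V=H(\kappa^+)^M$ for the ultrapower $M$ forces this definable ultrafilter to be an element of $M$. Where you diverge is the final contradiction. The paper stops immediately: once $(I\restrict S)^*\in M$, it invokes the standard fact that a normal measure cannot belong to its own ultrapower (e.g.\ because $M$ would then compute $j(\kappa)$ as the order type of ${}^\kappa\kappa$ modulo the measure and see $|j(\kappa)|\leq 2^\kappa<j(\kappa)$, contradicting the inaccessibility of $j(\kappa)$ in $M$). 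You instead run a least-counterexample reflection on the property $\Psi$, using $M\models\Psi(\alpha_0)$ and $\alpha_0<j(\alpha_0)$ to pull $\Psi$ below $\alpha_0$. Both work; your version is self-contained in that it never cites the ``measure not in its own ultrapower'' lemma, at the cost of an extra layer of setup, while the paper's is a two-line finish for anyone who knows that lemma.

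One point you should tighten: as written, $\Psi(\alpha)$ only asks that $\phi$ define ``a normal ideal on $\alpha$ with atom $T$,'' and under the paper's bare definition of normality (closure under diagonal unions) this does not by itself rule out degenerate witnesses --- e.g.\ a principal ideal at some small or singular $\alpha$ --- for which $(I_0\restrict S_0)^*$ is not a nonprincipal $\alpha_0$-complete ultrafilter and the ultrapower need not have critical point $\alpha_0$. Since $\alpha_0$ is chosen as the \emph{least} cardinal satisfying $\Psi$, you cannot assume it inherits the good properties of $\kappa$. The fix is routine: build into $\Psi(\alpha)$ that $\alpha$ is regular uncountable and that the ideal contains all bounded subsets of $\alpha$ (conditions satisfied by the original instance at $\kappa$); then the least witness $\alpha_0$ genuinely carries a normal measure and your argument goes through as intended.
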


\begin{proof}
Suppose $\kappa$ is weakly compact and $I$ is a normal ideal on $\kappa$ which is definable over $H_{\kappa^+}$. By Theorem \ref{theorem_tarski}, it suffices to show that every $I$-positive set splits. Choose $S\in I^+$ and suppose $S$ does not split. Then $(I\restrict S)^*$ is a normal measure on $\kappa$. Let $j:V\to M$ be the ultrapower by $(I\restrict S)^*$. Then $H_{\kappa^+}=H_{\kappa^+}^M$ which implies $(I\restrict S)^*\in M$, which is a contradiction since a normal measure cannot be an element of its own ultrapower.
\end{proof}

Since the assumption that a cardinal $\kappa$ is $\Pi^1_n$-indescribable, almost ineffable or Ramsey implies that $\kappa$ is weakly compact, the next corollary follows directly from Corollary \ref{corollary_folklore_splitting}. Let us note that Hellsten attributes Corollary \ref{corollary_hellsten_splitting}(1) to Solovay and Tarski (see the end of Section 1 in \cite{MR2653962}).

\begin{corollary}\label{corollary_hellsten_splitting}
The following hold.
\begin{enumerate}
\item (Hellsten \cite{MR2653962}) For $n<\omega$, if $\kappa$ is $\Pi^1_n$-indescribable then the $\Pi^1_n$-indescribable ideal on $\kappa$ is nowhere $\kappa$-saturated.
\item If $\kappa$ is (almost) ineffable then the (almost) ineffable ideal on $\kappa$ is nowhere $\kappa$-saturated.
\item If $\kappa$ is Ramsey then the Ramsey ideal on $\kappa$ is nowhere $\kappa$-saturated. 
\item If $\kappa$ is weakly compact and subtle then the subtle ideal on $\kappa$ is nowhere $\kappa$-saturated.
\item For $n<\omega$, if $\kappa$ is $\Pi^1_n$-indescribable then the $\Pi^1_n$-indescribable ideal on $\kappa$ is nowhere $\kappa$-saturated.
\end{enumerate}
\end{corollary}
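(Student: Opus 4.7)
The proof of each clause of Corollary \ref{corollary_hellsten_splitting} is a direct application of Corollary \ref{corollary_folklore_splitting}: given that $\kappa$ is weakly compact and the ideal in question is normal, I only need to check that the ideal is definable over $H_{\kappa^+}$. So my plan breaks into two steps. First, I verify the weak compactness hypothesis for each clause. For (1) and (5), $\Pi^1_n$-indescribability for $n\geq 1$ implies $\Pi^1_1$-indescribability, which is precisely weak compactness. For (2), ineffability implies $\Pi^1_2$-indescribability and almost ineffability implies $\Pi^1_1$-indescribability (Lemma \ref{lemma_ineffability_implies_indescribability}), both giving weak compactness. For (3), Ramseyness implies weak compactness (in fact $\Pi^1_1$-indescribability), and for (4), weak compactness of $\kappa$ is assumed outright.

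Second, I verify that each of these ideals is definable over $H_{\kappa^+}$. The key observation is that since $\kappa$ is inaccessible, $|V_\kappa|=\kappa$ and hence $V_{\kappa+1}\subseteq H_{\kappa^+}$; in particular, every subset of $\kappa$ and every sequence of subsets of $\kappa$ indexed by ordinals less than $\kappa$ lies in $H_{\kappa^+}$. For the $\Pi^m_n$-indescribable ideal, the statement ``$X\in\Pi^m_n(\kappa)$'' is expressed by quantifying over $A\in V_{\kappa+1}$ and over $\Pi^m_n$-sentences $\varphi$, both of which are objects in $H_{\kappa^+}$, and then checking the satisfaction relation on structures in $H_{\kappa^+}$. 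Similarly, ``$X\in\NSub_\kappa$'' is expressed by the existence of an $X$-list $\vec{S}$ and a club $C\subseteq\kappa$ (both in $H_{\kappa^+}$) with no $\alpha<\beta$ in $X\cap C$ satisfying $S_\alpha=S_\beta\cap\alpha$. Membership in $\I(\Pi^1_n(\kappa))$ is witnessed by an $X$-list in $H_{\kappa^+}$ whose only homogeneous subsets are non-$\Pi^1_n$-indescribable, and membership in $\R(\Pi^1_n(\kappa))$ is witnessed by a regressive function $f\colon[X]^{<\omega}\to\kappa$ in $H_{\kappa^+}$ whose only homogeneous subsets are non-$\Pi^1_n$-indescribable. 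All of these descriptions are definable with parameters and quantifiers ranging over $H_{\kappa^+}$.

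Given that each of the ideals named in (1)--(5) is normal (this is the content of the propositions of Section \ref{section_embeddings}, together with Baumgartner's theorem for the subtle ideal) and definable over $H_{\kappa^+}$, Corollary \ref{corollary_folklore_splitting} immediately gives the desired conclusion that each ideal is nowhere $\kappa$-saturated. I expect no real obstacle: the main thing to keep honest is the observation that the witnessing parameters (lists, regressive functions, clubs, sets in $V_{\kappa+1}$) all have hereditary cardinality at most $\kappa$ and hence lie in $H_{\kappa^+}$, together with the reminder that each large cardinal assumption in (1)--(5) carries weak compactness with it.
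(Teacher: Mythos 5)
Your proposal is correct and follows exactly the paper's route: the paper derives this corollary in a single line from Corollary \ref{corollary_folklore_splitting}, observing that each hypothesis in (1)--(5) entails weak compactness and that each ideal is normal and definable over $H_{\kappa^+}$, and you have simply written out the details of those two verifications. (Like the paper, your argument silently excludes $n=0$ in clauses (1) and (5), where inaccessibility does not give weak compactness; there the ideal is $\NS_\kappa$ and the conclusion is instead Solovay's classical splitting theorem.)
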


See Section \ref{section_questions} for several questions related to Corollary \ref{corollary_hellsten_splitting} which appear to remain open.

The next result can be used to show that if a cardinal $\kappa$ satisfies a large enough large cardinal property, then many ideals on $\kappa$ will not be $\kappa^+$-saturated. This result bears some similarity to work of Leary \cite{MR2634284} on ideal families and is implicit in the work of Hellsten \cite{MR2252250}.

\begin{proposition}[Hellsten \cite{MR2252250}]\label{proposition_general_splitting}
Suppose that for each $\alpha\leq\kappa$, $I_\alpha$ is a normal ideal on $\alpha$ and let $I=\<I_\alpha\st\alpha\leq\kappa\>$.\footnote{Here we allow for ideals to be trivial. For example, when $\alpha$ is singular we have $I_\alpha=P(\alpha)$.} We define the trace operation $\Tr_I:P(\kappa)\to P(\kappa)$ by $\Tr_I(X)=\{\alpha<\kappa\st X\cap\alpha\in I_\alpha^+\}$ and suppose that the following conditions hold.
\begin{enumerate}
\item For every $S\in I_\kappa^+$ we have $S\setminus \Tr_I(S)\in I_\kappa^+$.
\item Suppose there is a normal ideal $J\supseteq I_\kappa$ on $\kappa$ and there is an $A\in J^+$ such that the filter $(J\restrict A)^*$ is closed under $\Tr_I$.
\end{enumerate}
Then $I_\kappa\restrict A$ is not $\kappa^+$-saturated.
\end{proposition}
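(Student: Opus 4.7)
I would argue by contradiction: assume $I_\kappa \restrict A$ is $\kappa^+$-saturated and try to construct an antichain of size $\kappa^+$ in $(I_\kappa \restrict A)^+$. The starting ingredient is a \emph{density lemma} extracted from condition (1), namely that the family
\[D = \{S \in (I_\kappa \restrict A)^+ : S \cap \Tr_I(S) = \emptyset\}\]
is dense in the forcing $P(\kappa)/(I_\kappa \restrict A)$. To see this, given any $T \in (I_\kappa \restrict A)^+$ contained in $A$, condition (1) gives $T' := T \setminus \Tr_I(T) \in I_\kappa^+$, and the monotonicity $\Tr_I(T') \subseteq \Tr_I(T)$ guarantees $T' \cap \Tr_I(T') = \emptyset$, so $T' \in D$ and $T' \subseteq T$.

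Next I would use condition (2), together with the $\kappa$-completeness of the filter $(J \restrict A)^*$ (which comes from normality of $J$), to build a decreasing chain $A = A_0 \supseteq A_1 \supseteq \cdots$ of length $\kappa$, all lying in $(J \restrict A)^*$, by the recursion $A_{\xi+1} = A_\xi \cap \Tr_I(A_\xi)$ at successors and intersections at limits. Since $(J \restrict A)^* \subseteq J^+ \subseteq I_\kappa^+$, each $A_\xi$ is $I_\kappa$-positive, and condition (1) applied to $A_\xi$ shows that the layers $B_\xi := A_\xi \setminus A_{\xi+1}$ belong to $I_\kappa^+$. The $B_\xi$ are pairwise disjoint subsets of $A$, producing a $\kappa$-sized antichain in $(I_\kappa \restrict A)^+$.

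The essential difficulty is boosting this $\kappa$-sized antichain to one of size $\kappa^+$, since only the latter refutes $\kappa^+$-saturation. My plan is to recursively refine each $B_\xi$ using the density lemma, which is preserved under restriction: although condition (2) may well fail for the restricted ideal $I_\kappa \restrict B_\xi$, condition (1) still holds there, so the analogue of $D$ is dense in the forcing $P(\kappa)/(I_\kappa \restrict B_\xi)$. Interleaving these secondary splittings in a Ulam-matrix fashion over the $\kappa$ layers should yield a total of $\kappa^+$ disjoint $I_\kappa$-positive sub-pieces. An alternative I would pursue in parallel is to force with $P(\kappa)/(I_\kappa \restrict A)$ and pass to the generic ultrapower $j: V \to M$, which is well-founded by the saturation assumption; the density lemma places some $S \in U_G$ into the $\kappa$-th coordinate $j(I)_\kappa$ of the ideal sequence by Los's theorem, while (2) simultaneously forces $A$'s iterated trace to stay large in $M$. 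The combinatorial art --- where the techniques of Hellsten \cite{MR2252250} are indispensable --- lies in arranging this so that the saturation hypothesis itself ultimately constrains the refinements in a way incompatible with the unbounded supply of non-reflecting positive sets provided by the density lemma.
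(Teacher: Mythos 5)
Your preliminary observations are fine: the density of $\{S\in(I_\kappa\restrict A)^+\st S\cap\Tr_I(S)=\emptyset\}$ follows from condition (1) and the monotonicity of $\Tr_I$ exactly as you say, and the decreasing chain $A_{\xi+1}=A_\xi\cap\Tr_I(A_\xi)$ together with the layers $B_\xi=A_\xi\setminus A_{\xi+1}$ does produce $\kappa$ pairwise disjoint $I_\kappa$-positive subsets of $A$. But a $\kappa$-sized antichain is perfectly compatible with $\kappa^+$-saturation, so at this point nothing has been refuted, and the entire content of the proposition is the step you leave open. The ``Ulam-matrix fashion'' interleaving is not an argument: you do not say how the secondary splittings of the $B_\xi$ are to be indexed, why the resulting $\kappa\cdot\kappa^+$ cells would be $I_\kappa$-positive (a Ulam matrix argument normally needs each row or column to cover a positive set up to a small error, and nothing in conditions (1) and (2) supplies such a covering for the restricted ideals $I_\kappa\restrict B_\xi$, for which you concede condition (2) may fail), nor why $\kappa^+$ many columns exist at all. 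The generic-ultrapower alternative is likewise only a sketch of a setup, with no identified contradiction. So the proof has a genuine gap precisely at the point where the theorem lives.

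The paper's proof takes a different and much shorter route that avoids constructing any $\kappa^+$-sized antichain. It invokes the theorem of Baumgartner, Taylor and Wagon that an ideal $I$ on $\kappa$ is $\kappa^+$-saturated if and only if the only normal ideals on $\kappa$ extending $I$ are those of the form $I\restrict S$ for $S\in I^+$. Assuming $I_\kappa\restrict A$ is $\kappa^+$-saturated and applying this to the normal ideal $J\restrict A\supseteq I_\kappa\restrict A$ yields $J\restrict A=I_\kappa\restrict(A\cap B)$ for some $B$, whence $A\cap B\in(J\restrict A)^*\cap I_\kappa^+$. Condition (1) gives $(A\cap B)\setminus\Tr_I(A\cap B)\in I_\kappa^+$, while closure of $(J\restrict A)^*$ under $\Tr_I$ puts $\Tr_I(A\cap B)$ in $(J\restrict A)^*=\bigl(I_\kappa\restrict(A\cap B)\bigr)^*$ and hence forces $(A\cap B)\setminus\Tr_I(A\cap B)\in I_\kappa$, a contradiction. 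If you want to salvage your approach, you would essentially need to reprove that characterization of $\kappa^+$-saturation; as written, your argument stops short of the decisive step.
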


\begin{proof}
Suppose $J\supseteq I_\kappa$ is a normal ideal on $\kappa$ and $A\in J^+$ is such that $(J\restrict A)^*$ is closed under $\Tr_I$. Suppose $I_\kappa\restrict A$ is $\kappa^+$-saturated. Recall that by \cite[Theorem 3.1]{MR0505505}, an ideal $I$ on $\kappa$ is $\kappa^+$-saturated if and only if the ideals $I\restrict S$ for $S\in I^+$ are the only normal ideals on $\kappa$ which extend $I$. Therefore, since $I_\kappa\restrict A\subseteq J\restrict A$ and $J\restrict A$ is a normal ideal, it follows that $J\restrict A=(I_\kappa\restrict A)\restrict B$ for some $B\in (I_\kappa\restrict A)^+$. Thus $J\restrict A=I_\kappa\restrict (A\cap B)$. Now $A\cap B\in I_\kappa^+$ which implies $(A\cap B)\setminus \Tr_I(A\cap B)\in I_\kappa^+$ by assumption. Furthermore, since $A\cap B\in (J\restrict A)^*$, it follows that $\Tr_I(A\cap B)\in (J\restrict A)^*$, which implies $\kappa\setminus\Tr_I(A\cap B)\in J\restrict A=I_\kappa\restrict(A\cap B)$ and hence $(A\cap B)\setminus\Tr_I(A\cap B)\in I_\kappa$, a contradiction.
\end{proof}

From Proposition \ref{proposition_general_splitting} we can easily derive the following result of Hellsten \cite{MR2252250}.

\begin{corollary}[Hellsten \cite{MR2252250}]
If $\kappa$ is $\kappa^+$-$\Pi^1_n$-indescribable (i.e. $\Tr_{\Pi^1_n(\kappa)}^\alpha(\kappa)\in\Pi^1_n(\kappa)^+$ for all $\alpha<\kappa^+$) then the $\Pi^1_n$-indescribable ideal on $\kappa$ is not $\kappa^+$-saturated.
\end{corollary}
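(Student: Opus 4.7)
The plan is to derive this as an application of Proposition~\ref{proposition_general_splitting}. Set $I_\alpha = \Pi^1_n(\alpha)$ for those $\alpha \leq \kappa$ where $\alpha$ is $\Pi^1_n$-indescribable and $I_\alpha = P(\alpha)$ otherwise, so that $I_\kappa = \Pi^1_n(\kappa)$ and the trace operator $\Tr_I(X) = \{\alpha<\kappa : X \cap \alpha \in \Pi^1_n(\alpha)^+\}$ coincides with the one appearing in the hypothesis of $\kappa^+$-$\Pi^1_n$-indescribability. The iterated trace $\Tr^\alpha_I(\kappa)$ is then exactly $\Tr^\alpha_{\Pi^1_n(\kappa)}(\kappa)$ from the hypothesis.

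First I would verify condition~(1): for every $S \in \Pi^1_n(\kappa)^+$, the set $S \setminus \Tr_I(S) = \{\alpha \in S : S \cap \alpha \in \Pi^1_n(\alpha)\}$ lies in $\Pi^1_n(\kappa)^+$. This is a folklore non-reflection property for the $\Pi^1_n$-indescribable ideal; its verification uses that the failure of $\Pi^1_n$-indescribability of $S \cap \alpha$ at $\alpha$ is expressible by a $\Sigma^1_{n+1}$-statement over $(V_\alpha, \in, S \cap V_\alpha)$ and exploits the $\Pi^1_n$-indescribability of $S$ via a diagonal argument to produce $\Pi^1_n$-indescribably many such $\alpha \in S$.

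Next I would construct the ideal $J$ and positive set $A$ required for condition~(2). Let $F$ be the filter on $\kappa$ generated by $\Pi^1_n(\kappa)^* \cup \{\Tr^\alpha_I(\kappa) : \alpha < \kappa^+\}$. By the $\kappa^+$-$\Pi^1_n$-indescribability hypothesis each $\Tr^\alpha_I(\kappa)$ is in $\Pi^1_n(\kappa)^+$, and since the iterated traces form a decreasing sequence modulo $\Pi^1_n(\kappa)$, any $\kappa$-indexed diagonal intersection of generators of $F$ can be handled by passing to a single $\Tr^{\alpha^*}_I(\kappa)$ with $\alpha^* = \sup_{\beta<\kappa} \alpha_\beta < \kappa^+$ and then invoking normality of $\Pi^1_n(\kappa)$; this shows $F$ is a proper normal filter. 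Let $J$ be its dual ideal and set $A = \kappa$, so $A \in J^+$, $J \supseteq \Pi^1_n(\kappa)$ is normal, and $(J \restrict A)^* = J^*$. For closure of $J^*$ under $\Tr_I$: if $X \in J^*$ then $\kappa \setminus X \in J$, so there exists some $\alpha < \kappa^+$ with $\Tr^\alpha_I(\kappa) \subseteq X$ modulo $\Pi^1_n(\kappa)$, and then by the monotonicity of $\Tr_I$ modulo $\Pi^1_n(\kappa)$ one concludes $\Tr^{\alpha+1}_I(\kappa) \subseteq \Tr_I(X)$ modulo $\Pi^1_n(\kappa)$, placing $\Tr_I(X)$ in $J^*$. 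Applying Proposition~\ref{proposition_general_splitting} then gives that $I_\kappa \restrict A = \Pi^1_n(\kappa)$ is not $\kappa^+$-saturated.

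The main obstacle is to verify the two technical properties underlying the construction: (i) that $F$ is genuinely a proper normal filter extending $\Pi^1_n(\kappa)^*$, which relies on the iterated traces being suitably coherent at limit stages below $\kappa^+$ and on the $\kappa^+$-$\Pi^1_n$-indescribability hypothesis ensuring that arbitrary finite (and hence, under normality, $\kappa$-indexed) meets of generators remain $\Pi^1_n$-positive; and (ii) that small-in-$\Pi^1_n(\kappa)$ sets have small-in-$\Pi^1_n(\kappa)$ traces, which delivers the monotonicity of $\Tr_I$ modulo $\Pi^1_n(\kappa)$ used above. The latter follows from the $\Pi^1_{n+1}$-definability of $\Pi^1_n$-indescribability combined with normality of $\Pi^1_n(\kappa)$ and the generic-ultrapower characterization recalled in Proposition~\ref{proposition_indescribability_embedding}.
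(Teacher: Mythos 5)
Your proposal is correct and follows exactly the route the paper intends: the paper states this corollary as an immediate application of Proposition \ref{proposition_general_splitting} without spelling out the details, and you supply precisely those details — verifying condition (1) via the standard non-reflection property of the $\Pi^1_n$-indescribable ideal, and condition (2) by taking $J$ dual to the normal filter generated by $\Pi^1_n(\kappa)^*$ together with the iterated traces and $A=\kappa$. The supporting facts you invoke (that $\Tr_I$ sends $\Pi^1_n(\kappa)$-small sets to small sets, hence is monotone modulo the ideal, and that the iterated traces decrease modulo the ideal) all hold and make the argument go through.
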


\section{Consistency results}\label{section_consistency_results}

Recently progress has been made in generalizing forcing constructions from the nonstationary ideal to the weakly compact ideal. Such generalizations often require a substantial amount of effort because there are some significant differences between stationarity and weak compactness: for example, one cannot force ground model nonstationary sets to become stationary in an extension, whereas, in some settings, ground model non--weakly compact sets can become weakly compact in after certain kinds of forcing \cite{MR495118}. The main reasons for the success in this direction are twofold. First, weak compactness has a simple characterization in terms of elementary embeddings which allows one to use standard techniques to argue that weak compactness is preserved after certain Easton-support forcing iterations (see \cite{MR2768691}). Second, weakly compact sets have a characterization in terms of $1$-clubs which resembles the definition of stationarity, so constructions which involve club shooting forcings can sometimes be generalized to forcings which shoot $1$-clubs. In the current section we briefly discuss these characterizations of weakly compact sets as well as some forcing constructions concerning the weakly compact ideal. We also discuss related characterizations of $\Pi^1_n$-indescribability and state a few open questions.

\subsection{$n$-clubs and indescribability embeddings}

A transitive set $M\models\ZFC^-$ of size $\kappa$ with $\kappa\in M$ and $M^{<\kappa}=\kappa$ is called a \emph{$\kappa$-model}. The following folklore result follows easily from \cite[Section 2]{MR0540770}.

\begin{lemma}
For all cardinals $\kappa$ and $S\subseteq\kappa$ the following are equivalent.
\begin{enumerate}
\item $S$ is $\Pi^1_1$-indescribable.
\item For all $A\subseteq\kappa$ there are $\kappa$-models $M$ and $N$ with $A,S\in M$ such that there is an elementary embedding $j:M\to N$ with critical point $\kappa$ and $\kappa\in j(S)$.
\end{enumerate}
\end{lemma}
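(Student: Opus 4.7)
The plan is to prove the equivalence in both directions, with $(2) \Rightarrow (1)$ a standard reflection argument and $(1) \Rightarrow (2)$ the main construction.

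For $(2) \Rightarrow (1)$, given $A \in V_{\kappa+1}$ and a $\Pi^1_1$-sentence $\varphi$ with $(V_\kappa,\in,A) \models \varphi$, I would first observe that (2) forces $\kappa$ to be inaccessible: otherwise a cofinal sequence of length less than $\kappa$, or a surjection witnessing $\kappa$ being a successor, would lie in a $\kappa$-model by $<\kappa$-closure, contradicting the existence of an elementary embedding with critical point $\kappa$. Then I would choose a $\kappa$-model $M$ containing $A$, $S$, and $V_\kappa$, and apply (2) to obtain $j: M \to N$ with $\crit(j)=\kappa$ and $\kappa\in j(S)$. The key observations are that $j$ fixes $V_\kappa$ pointwise so $j(A)=A$, that $V_\kappa^N = V_\kappa$, and that $P(V_\kappa)^N \subseteq P(V_\kappa)^V$. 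Since $\varphi$ is $\Pi^1_1$, its truth at $(V_\kappa,\in,A)$ transfers from $V$ to $N$, and $\kappa$ itself serves in $N$ as a witness to the existential statement ``$\exists \alpha \in j(S) \cap j(\kappa)$ such that $(V_\alpha,\in,j(A)\cap V_\alpha) \models \varphi$''. Pulling back through $j$ and using absoluteness of satisfaction for structures of size less than $\kappa$ yields the desired $\alpha \in S$ in $V$.

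For $(1) \Rightarrow (2)$, given $A \subseteq \kappa$, I would pick a $\kappa$-model $M$ containing $A$ and $S$, and construct an $M$-normal $M$-ultrafilter $U$ on $\kappa$ with $S \in U$ such that $\Ult(M,U)$ is well-founded; the embedding $j:M\to N$ is then the ultrapower composed with Mostowski collapse. The construction adapts the standard tree-property proof of the embedding characterization of weak compactness: enumerate $P(\kappa)\cap M = \langle X_\alpha : \alpha<\kappa\rangle$ with $X_0 = S$, and enumerate the $M$-regressive functions on $\kappa$, and form the tree $T \subseteq 2^{<\kappa}$ of $s$ with $s(0)=1$ for which the signed intersection $Y_s = \bigcap\{X_\beta : s(\beta)=1\} \cap \bigcap\{\kappa\setminus X_\beta : s(\beta)=0\}$ remains nonempty, with further thinning to accommodate normality. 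Each level of $T$ has size less than $\kappa$ by inaccessibility, and since $\Pi^1_1$-indescribability of $S$ implies $\kappa$ is weakly compact, the tree property yields a cofinal branch defining $U$. The constraint $s(0)=1$ guarantees $S \in U$, while well-foundedness of $\Ult(M,U)$ follows from a standard $\Pi^1_1$-reflection argument: any ill-founded $\omega$-descending sequence in the ultrapower would translate to a $\Sigma^1_1$-definable pathology over $V_\kappa$ that would reflect down, contradicting well-foundedness of $\in$ on the reflected structure.

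The main obstacle lies in the limit stages of the tree construction and in the normality bookkeeping. My plan is to handle limits using $M$'s $<\kappa$-closure, which places the relevant threads of the construction into $M$ as needed, and to incorporate normality by thinning at each successor step by the least $\xi<\kappa$ for which $Y_s \cap f_\alpha^{-1}\{\xi\}$ remains nonempty (with $f_\alpha$ the $\alpha$-th enumerated regressive function). This adapts the argument from Baumgartner's paper \cite{MR0540770} cited just before the lemma, tracking $S$ throughout the construction via the initial choice $X_0 = S$.
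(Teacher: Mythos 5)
Your direction $(2)\Rightarrow(1)$ is essentially the standard argument and is fine modulo routine coding: note that $(2)$ only hands you \emph{some} $\kappa$-model $M$ containing a prescribed $A\subseteq\kappa$, so you cannot literally ``choose $M$ containing $V_\kappa$''; but this costs nothing, since once $\kappa$ is inaccessible every $\kappa$-model computes each $V_\alpha$ ($\alpha<\kappa$) correctly and contains $V_\kappa$ as an element, and a predicate $A\subseteq V_\kappa$ can be passed to $M$ as a coded subset of $\kappa$. (For the record, the paper itself offers no proof of this lemma; it cites it as folklore following from Section 2 of Baumgartner's \emph{Ineffability properties of cardinals II}, so I am comparing your argument with that standard one.)

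The direction $(1)\Rightarrow(2)$ has a genuine gap at the normality step, and normality is exactly where the content lies. First, the proposed thinning does not survive the construction: a regressive $f_\alpha$ can be injective on $Y_s$ (e.g.\ $f(\alpha+1)=\alpha$), so no fiber $Y_s\cap f_\alpha^{-1}\{\xi\}$ has size $\kappa$; if you only insist the fiber be nonempty, the branch ultrafilter need not be a uniform $\kappa$-complete $M$-ultrafilter, and in any case the level-counting that drives the tree-property argument breaks, since each regressive function contributes $\kappa$-many possible values $\xi$ and the levels no longer have size ${<}\kappa$. Second, normality cannot be dispensed with: without it, $S\in U$ gives $[\id]_U\in j(S)$, not $\kappa\in j(S)$, and $[\id]_U$ need not equal $\kappa$. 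Third, and most tellingly, your construction uses nothing about $S$ beyond $|S|=\kappa$ (it enters only as $X_0=S$ with $s(0)=1$); if the argument worked, then by your own direction $(2)\Rightarrow(1)$ every unbounded subset of a weakly compact cardinal would be $\Pi^1_1$-indescribable, which is false (e.g.\ the set of non-inaccessible ordinals below $\kappa$ has size $\kappa$ but lies in $\Pi^1_1(\kappa)$, as its complement belongs to the weakly compact filter). The $\Pi^1_1$-indescribability of $S$ itself must be used to place the relevant object in $S$, and the standard way to do this --- the route Baumgartner takes --- is by reflection rather than by a tree: code $M$ together with an enumeration of $P(\kappa)\cap M$ as a predicate $R\subseteq V_\kappa$, observe that the nonexistence of a suitable elementary end-extension of $(V_\kappa,\in,R,S)$ realizing a point of $S$ above $\kappa$ is expressible by a $\Pi^1_1$-sentence, reflect it to some $\alpha\in S$ to reach a contradiction, and then read off from the end-extension an $M$-normal $M$-ultrafilter $U$ with $S\in U$ whose ultrapower embeds into a well-founded structure. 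Your well-foundedness paragraph is also off target as stated ($\Pi^1_1$-indescribability reflects true $\Pi^1_1$-statements downward; it does not ``reflect a $\Sigma^1_1$ pathology''), though for the unthinned tree construction well-foundedness does follow correctly from countable completeness of the branch ultrafilter.
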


\begin{definition}[Hauser \cite{MR1133077}]
Suppose $\kappa$ is inaccessible. For $n\geq 0$, a $\kappa$-model $N$ is \emph{$\Pi^1_n$-correct at $\kappa$} if and only if
\[V_\kappa\models\varphi \iff (V_\kappa\models\varphi)^N\]
for all $\Pi^1_n$-formulas $\varphi$ whose parameters are contained in $N\cap V_{\kappa+1}$.
\end{definition}

\begin{remark}
	Notice that every $\kappa$-model is $\Pi^1_0$-correct at $\kappa$.
\end{remark}

An easy adaptation of the arguments in \cite{MR1133077} establishes the following.

\begin{theorem}[Hauser \cite{MR1133077}]\label{theorem_hauser}
The following statements are equivalent for every inaccessible cardinal $\kappa$, every subset $S \subseteq \kappa$, and all $0<n<\omega$.
\end{theorem}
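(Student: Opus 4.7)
The plan is to prove each direction of the equivalence separately, using throughout that every $\kappa$-model $M$ has $V_\kappa\subseteq M$ with $V_\kappa^M=V_\kappa$ (by inaccessibility of $\kappa$ and $<\!\kappa$-closure of $M$) and that satisfaction for set-sized structures is absolute between transitive models. For the direction from the embedding condition to $\Pi^1_n$-indescribability, I fix $A\in V_{\kappa+1}$ and a $\Pi^1_n$-sentence $\varphi$ with $(V_\kappa,\in,A)\models\varphi$; after coding via a definable bijection $V_\kappa\to\kappa$ with a club of closure points, one may assume $A\subseteq\kappa$. The hypothesis produces $\kappa$-models $M,N$ with $A,S\in M$, $N$ being $\Pi^1_n$-correct at $\kappa$, and $j:M\to N$ elementary with $\crit(j)=\kappa$ and $\kappa\in j(S)$. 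Since $A\in V_{\kappa+1}\cap N$, $\Pi^1_n$-correctness transfers $(V_\kappa,\in,A)\models\varphi$ into $N$, and combined with $j(A)\cap V_\kappa=A$ and $\kappa\in j(S)$, $N\models$ ``$\exists\alpha\in j(S)\,((V_\alpha,\in,j(A)\cap V_\alpha)\models\varphi)$''. Elementarity of $j$ followed by absoluteness then yields a reflection point $\alpha\in S$ in $V$.

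For the reverse direction, I fix $A\subseteq\kappa$, build a $\kappa$-model $M$ with $A,S\in M$ via a Skolem-hull construction in $H(\kappa^+)$, and construct an $M$-normal $M$-ultrafilter $U$ on $\kappa$ containing $S$ so that $N=\Ult(M,U)$ is well-founded, is a $\kappa$-model, and is $\Pi^1_n$-correct at $\kappa$; the canonical ultrapower map $j:M\to N$ will then satisfy $\crit(j)=\kappa=[\id]_U\in j(S)$. The construction of $U$ proceeds by a $\kappa$-length induction enumerating $M\cap P(\kappa)$ together with the $\kappa$-many pairs $(f,\varphi)$, where $f\in M$ is a function $\kappa\to V_\kappa$ coding a potential element $[f]_U\in V_{\kappa+1}^N$ and $\varphi$ is a $\Pi^1_n$-formula. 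At each stage, invoke $\Pi^1_n$-indescribability of $S$ and Lemma~\ref{lemma_indescribable_filter} to refine the current $\Pi^1_n$-positive subset of $S$ both to decide the next set in $M\cap P(\kappa)$ (ensuring that $U$ becomes an ultrafilter on this algebra) and to lie inside one of $\{\alpha:(V_\alpha,\in,f(\alpha))\models\varphi\}$ or its $\Pi^1_n$-positive complement, in accordance with the truth value of the corresponding $\Pi^1_n$-assertion about $[f]_U$ in $V$. A diagonal intersection of these $\kappa$-many refinements then generates the desired $U$.

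The main obstacle will be verifying $\Pi^1_n$-correctness of $N$ for arbitrary parameters $B=[f]_U\in V_{\kappa+1}^N$ that do not originally lie in $M$. The direction $V\Rightarrow N$ follows once $\Pi^1_n(\kappa)^*\cap M\subseteq U$, by adapting the argument of Proposition~\ref{proposition_indescribability_embedding} through {\L}os. The direction $N\Rightarrow V$ is more delicate and is precisely what the pair-by-pair refinement above is designed to control: when the $\Pi^1_n$-statement about $[f]_U$ fails in $V$, throwing the guessing set into the $\Pi^1_n$-positive complement of $\{\alpha:(V_\alpha,\in,f(\alpha))\models\varphi\}$ prevents spurious internal reflection in $N$. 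A bootstrapping subtlety arises because $[f]_U$ is referenced before $U$ is fully defined; this is resolved by fixing the enumeration of pairs in $M$ at the outset (using $|M|=\kappa$) and noting that the truth value needed at each stage depends only on the portion of $U$ already fixed, so the induction proceeds coherently. Once $\Pi^1_n$-correctness is verified, the remaining $\kappa$-model conditions on $N$ follow from standard ultrapower transfer.
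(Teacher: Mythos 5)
There is a genuine gap, and it is worth noting first that the paper itself offers no proof of this theorem (it defers to Hauser's arguments in \cite{MR1133077}), so your proposal must stand on its own. The central problem is an off-by-one error in the correctness level of the target model that is not cosmetic. The theorem asks for a $\Pi^1_{n-1}$-correct $\kappa$-model $N$, but you work throughout with a $\Pi^1_n$-correct $N$. In the direction from the embedding to indescribability this only means you are assuming more than the hypothesis grants (the fix is easy: for $\varphi=\forall X\,\psi(X)$ with $\psi\in\Sigma^1_{n-1}$, each $X\in V_{\kappa+1}^N\subseteq V_{\kappa+1}$ satisfies $\psi$ in $V$, hence in $N$ by $\Sigma^1_{n-1}$-correctness, so $N$ already believes $\varphi$). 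But in the direction from $\Pi^1_n$-indescribability to the embedding, building a $\Pi^1_n$-correct target is not merely harder than necessary --- it is impossible in general. By this very theorem applied with $n+1$ in place of $n$, the existence of a $\Pi^1_n$-correct target with $\kappa\in j(S)$ characterizes $\Pi^1_{n+1}$-indescribability of $S$; your construction, if it worked, would show that every weakly compact cardinal is $\Pi^1_2$-indescribable, which is false at the least weakly compact cardinal.

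The mechanism you propose also fails on its own terms. First, the circularity you flag is not actually resolved: $[f]_U$, as a subset of $V_\kappa$, is determined by the membership in $U$ of the $\kappa$-many sets $\{\alpha<\kappa\st x\in f(\alpha)\}$ for $x\in V_\kappa$, and these are decided cofinally throughout the length-$\kappa$ induction, not before the single stage at which the pair $(f,\varphi)$ is processed; so the claim that ``the truth value needed at each stage depends only on the portion of $U$ already fixed'' is false. Second, even ignoring this, the construction has no freedom at the critical moment: if $X_{f,\varphi}=\{\alpha\st(V_\alpha,\in,f(\alpha))\models\varphi\}$ happens to lie in the filter $\Pi^1_n(\kappa)^*$ (and nothing prevents this while $(V_\kappa,\in,[f]_U)\models\lnot\varphi$ in $V$, since Lemma \ref{lemma_indescribable_filter} has no converse), then $X_{f,\varphi}$ must enter $U$, {\L}os forces $N\models$ ``$(V_\kappa,\in,[f]_U)\models\varphi$'', and correctness fails. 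A workable proof keeps the target only $\Pi^1_{n-1}$-correct and obtains it globally --- e.g.\ by reflecting a single $\Pi^1_n$-statement asserting the nonexistence of a suitable $M$-ultrafilter, as Hauser does --- rather than by deciding $\Sigma^1_{n}$ facts about not-yet-determined ultrapower elements stage by stage. Finally, clause (3) of the theorem (arranging $j,M\in N$) is not addressed at all in your proposal.
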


\begin{enumerate}
\item $S$ is $\Pi^1_n$-indescribable.
\item For every $A\subseteq\kappa$ there is a $\kappa$-model $M$ with $A,S\in M$ for which there is a $\Pi^1_{n-1}$-correct $\kappa$-model $N$ and an elementary embedding $j:M\to N$ with $\crit(j)=\kappa$ such that $\kappa\in j(S)$.
\item For every $A\subseteq\kappa$ there is a $\kappa$-model $M$ with $A,S\in M$ for which there is a $\Pi^1_{n-1}$-correct $\kappa$-model $N$ and an elementary embedding $j:M\to N$ with $\crit(j)=\kappa$ such that $\kappa\in j(S)$ and $j,M\in N$.
\end{enumerate}

Recall that $\kappa$ is inaccessible if and only if it is $\Pi^1_0$-indescribable, and in this case the $\Pi^1_0$-indescribable ideal equals $\NS_\kappa$. For $\kappa$ a regular cardinal, we say that a set $C\subseteq\kappa$ is \emph{$1$-closed below $\kappa$} if for all inaccessible $\alpha<\kappa$, $C\cap\alpha$ stationary in $\alpha$ implies $\alpha\in C$. Furthermore, we say that $C\subseteq\kappa$ is \emph{$1$-club} if it is stationary in $\kappa$ and $1$-closed below $\kappa$. More generally, for $n<\omega$ we say that $C\subseteq\kappa$ is \emph{$(n+1)$-closed below $\kappa$} if for all $\Pi^1_n$-indescribable $\alpha<\kappa$, $C\cap\alpha$ being $\Pi^1_n$-indescribable in $\alpha$ implies $\alpha\in C$. We say that $C\subseteq\kappa$ is \emph{$(n+1)$-club} if $C$ is $\Pi^1_n$-indescribable in $\kappa$ and $n$-closed below $\kappa$.

The following result, due to Sun \cite{MR1245524} in the case $n=1$ and due to Hellsten \cite{MR2026390} for $n>1$, provides a characterization of $\Pi^1_n$-indescribability which resembles the definition of stationarity.

\begin{theorem}[Sun, Hellsten]\label{theorem_n_club_characterization}
Suppose $\kappa$ is $\Pi^1_n$-indescribable. The following are equivalent for all $S\subseteq\kappa$.
\begin{enumerate}
\item $S$ is $\Pi^1_n$-indescribable.
\item For all $n$-clubs $C\subseteq\kappa$ we have $S\cap C\neq\emptyset$.
\end{enumerate}
\end{theorem}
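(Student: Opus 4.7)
The plan is to prove both directions by reflecting carefully chosen sentences, relying on one central complexity count: for any $X \subseteq \kappa$, the assertion ``$X$ is $\Pi^1_{n-1}$-indescribable in $\kappa$'' is expressible by a $\Pi^1_n$-sentence over $(V_\kappa,\in,X)$. The antecedent $(V_\kappa,\in,B)\models\varphi$ for $\varphi$ a $\Pi^1_{n-1}$-sentence in the parameter $B$ is $\Pi^1_{n-1}$ in $B$, the consequent $\exists\alpha\in X\,(V_\alpha,\in,B\cap V_\alpha)\models\varphi$ is of similar complexity in $B$ and $X$, and prepending the universal second-order quantifier $\forall B$ (plus the finite quantifier over formulas $\varphi$) yields a $\Pi^1_n$-statement.

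For (1)$\Rightarrow$(2), fix $S \subseteq \kappa$ which is $\Pi^1_n$-indescribable and an $n$-club $C$. By the observation above, the conjunction ``the universe is $\Pi^1_{n-1}$-indescribable and $C$ is $\Pi^1_{n-1}$-indescribable'' is a $\Pi^1_n$-sentence over $(V_\kappa,\in,C)$; it holds because $\kappa$ is $\Pi^1_n$-indescribable (hence $\Pi^1_{n-1}$-indescribable) and $C$ is $\Pi^1_{n-1}$-indescribable. Reflecting via the $\Pi^1_n$-indescribability of $S$, there is $\alpha \in S$ such that $\alpha$ is $\Pi^1_{n-1}$-indescribable and $C \cap \alpha$ is $\Pi^1_{n-1}$-indescribable in $\alpha$. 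The $n$-closedness of $C$ then forces $\alpha \in C$, so $\alpha \in S \cap C$.

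For (2)$\Rightarrow$(1), I argue the contrapositive. Suppose $S$ is not $\Pi^1_n$-indescribable; fix $A \subseteq V_\kappa$ and a $\Pi^1_n$-sentence $\varphi \equiv \forall X\,\psi(X,A)$, with $\psi$ of complexity $\Sigma^1_{n-1}$, such that $(V_\kappa,\in,A)\models\varphi$ while no $\alpha \in S$ reflects $\varphi$. Define
\[
C = \{\alpha<\kappa : \alpha \text{ is } \Pi^1_{n-1}\text{-indescribable and } (V_\alpha,\in,A\cap V_\alpha)\models\varphi\}.
\]
By construction $S \cap C = \emptyset$. The complexity count above, combined with $\kappa$ being $\Pi^1_n$-indescribable, puts the set of $\Pi^1_{n-1}$-indescribable $\alpha<\kappa$ into $\Pi^1_n(\kappa)^* \subseteq \Pi^1_{n-1}(\kappa)^*$, and by Lemma \ref{lemma_indescribable_filter}, $\{\alpha : V_\alpha\models\varphi\} \in \Pi^1_n(\kappa)^* \subseteq \Pi^1_{n-1}(\kappa)^*$. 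Intersecting, $C \in \Pi^1_{n-1}(\kappa)^*$, so $C$ is $\Pi^1_{n-1}$-indescribable.

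It remains to check $n$-closedness of $C$. If $\alpha$ is $\Pi^1_{n-1}$-indescribable and $C \cap \alpha$ is $\Pi^1_{n-1}$-indescribable in $\alpha$, I claim $V_\alpha \models \varphi$. Otherwise, pick $X \subseteq V_\alpha$ witnessing $V_\alpha \models \lnot\psi(X,A\cap V_\alpha)$, a $\Pi^1_{n-1}$-sentence with parameters in $V_\alpha$. Lemma \ref{lemma_indescribable_filter} applied to $\alpha$ then places $\{\beta<\alpha : V_\beta\models\lnot\psi(X\cap V_\beta,A\cap V_\beta)\}$ into $\Pi^1_{n-1}(\alpha)^*$; each such $\beta$ has $X\cap V_\beta$ witnessing $V_\beta\not\models\varphi$, hence $\beta\notin C$. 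This forces $C\cap\alpha \in \Pi^1_{n-1}(\alpha)$, contradicting its $\Pi^1_{n-1}$-indescribability in $\alpha$. So $\alpha \in C$, and $C$ is an $n$-club disjoint from $S$, contradicting (2). The principal obstacle is purely one of bookkeeping: pinning down that $\Pi^1_{n-1}$-indescribability is a $\Pi^1_n$-property and ensuring all the auxiliary reflection sets used to ``assemble'' $C$ lie in the appropriate normal filter.
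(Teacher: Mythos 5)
The paper states this theorem without proof, citing Sun (for $n=1$) and Hellsten (for $n>1$), so there is no in-paper argument to compare against. Your proof is the standard argument from those sources and is essentially correct: both directions turn on the complexity count that ``$X$ is $\Pi^1_{n-1}$-indescribable'' is $\Pi^1_n$ over $(V_\kappa,\in,X)$, and you deploy it properly — reflecting ``$C$ is $\Pi^1_{n-1}$-indescribable'' at a point of $S$ and then invoking $n$-closedness for (1)$\Rightarrow$(2), and for (2)$\Rightarrow$(1) building the $n$-club of $\Pi^1_{n-1}$-indescribable reflection points of a non-reflecting $\Pi^1_n$-sentence, with the closedness verification via reflecting $\lnot\psi(X,A\cap V_\alpha)$ inside $\alpha$ done exactly right.

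One step is misstated, although the conclusion you draw from it is the correct one. The containment $\Pi^1_n(\kappa)^*\subseteq\Pi^1_{n-1}(\kappa)^*$ is false in general, and so is the literal claim $C\in\Pi^1_{n-1}(\kappa)^*$: already for $n=1$ your $C$ consists of inaccessible cardinals, and the inaccessibles below a weakly compact $\kappa$ are co-stationary, hence not in the club filter $\Pi^1_0(\kappa)^*=\NS_\kappa^*$. The inclusions of ideals and filters run the other way: $\Pi^1_{n-1}(\kappa)\subseteq\Pi^1_n(\kappa)$ and hence $\Pi^1_{n-1}(\kappa)^*\subseteq\Pi^1_n(\kappa)^*$. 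What you actually need, and what your Lemma \ref{lemma_indescribable_filter} computation does deliver, is $C\in\Pi^1_n(\kappa)^*\subseteq\Pi^1_n(\kappa)^+\subseteq\Pi^1_{n-1}(\kappa)^+$: a set in the $\Pi^1_n$-indescribable filter is $\Pi^1_n$-indescribable (the ideal is proper because $\kappa$ is $\Pi^1_n$-indescribable) and therefore $\Pi^1_{n-1}$-indescribable, which is all that the definition of $n$-club requires. With that one-line repair the argument is complete.
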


An easy consequence of Theorem \ref{theorem_n_club_characterization} is that when $\kappa$ is $\Pi^1_n$-indescribable, the filter dual to the $\Pi^1_n$-indescribable ideal on $\kappa$ equals the filter generated by the collection of $n$-club subsets of $\kappa$. In other words, we have
\[\Pi^1_n(\kappa)^*=\{X\subseteq\kappa\st \text{there is an $n$-club $C\subseteq\kappa$ such that $C\subseteq X$}\}\]
Let us point out here that it is consistent that the $1$-club filter on $\kappa$ is nontrivial when $\kappa$ is not weakly compact (see \cite[Theorem 1.18]{MR1245524}).

\begin{remark}
Recall that when $\kappa$ is inaccessible and $S\subseteq\kappa$ is stationary there is a natural forcing to add a club subset of $S$, namely
\[\text{CU}(S)=\{p\st \text{$p\subseteq S$, $|p|<\kappa$ and $p$ is a closed set of ordinals}\}\]
where conditions in $\text{CU}(S)$ are ordered by letting $q\leq p$ if and only if $q$ is an end extension of $p$, meaning that $p=q\cap\sup\{\alpha+1\st\alpha\in p\}$. A generic filter for $\text{CU}(S)$ yields a club subset of $\kappa$ which is contained in $S$. In general, there is no reason to expect that this forcing preserves cofinalities. However, if $S$ contains the singular cardinals below $\kappa$, then the forcing $\text{CU}(S)$ is well-behaved: if $\SING\cap\kappa\subseteq S$ then for every regular $\gamma<\kappa$ the poset $\text{CU}(S)$ contains a dense set that is $\gamma$-closed. See \cite{MR805967} for more details.
\end{remark}

Suppose $\gamma$ is an inaccessible cardinal and $A\subseteq\gamma$ is cofinal. For $n\geq 1$, we define a poset $T^n(A)$ consisting of all bounded $n$-closed $c\subseteq A$ ordered by end extension: $c\leq d$ if and only if $d=c\cap\sup_{\alpha\in d}(\alpha+1)$. A generic filter for $T^n(A)$ will in many cases provide an $n$-club contained in $A$. Notice that these results show that $n$-club shooting forcings are, in general, more well-behaved than club shooting forcings. Indeed, let us show that $T^n(A)$ often has a closure property which club shooting forcings lack. 

Given a forcing $\P$ and an ordinal $\alpha$, we define $\mathcal{G}_\alpha(\P)$, a two-player game of perfect information as follows. Player I and Player II take turns to play conditions from $\P$ for $\alpha$ many moves, where Player I plays at odd stages and Player II plays at even stages (including all limit stages). Let $p_\beta$ be the condition played at move $\beta$. The player who played $p_\beta$ loses immediately unless $p_\beta\leq p_\gamma$ for all $\gamma<\beta$. If neither player loses at any stage $\beta<\alpha$, then Player II wins. As in \cite[Definition 5.15]{MR2768691}, we say that $\P$ is \emph{$\kappa$-strategically closed} if and only if Player II has a winning strategy for $\mathcal{G}_\kappa(\P)$.

\begin{lemma}[Hellsten \cite{MR2653962}]
For $n\geq 1$, if $\gamma$ is inaccessible and $A\subseteq\gamma$ is cofinal, then $T^n(A)$ is $\gamma$-strategically closed.
\end{lemma}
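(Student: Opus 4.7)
My plan is to exhibit an explicit winning strategy $\sigma$ for Player II in the game $\mathcal{G}_\gamma(T^n(A))$. At stage $0$, $\sigma$ directs Player II to play $p_0 = \{a_0\}$ for an arbitrary $a_0 \in A$. At a Player II successor move at stage $\eta > 0$, having seen Player I's response $p_{\eta-1}$, Player II plays $p_\eta = p_{\eta-1} \cup \{a_\eta\}$ for an element $a_\eta \in A$ specified below. At a limit stage $\delta < \gamma$, Player II plays $p_\delta = \bigcup_{\eta < \delta} p_\eta$.

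The careful specification of $a_\eta$ is the core of the argument. For each ordinal $\eta < \gamma$, let $\lambda(\eta)$ be the least limit cardinal strictly greater than $\eta$. Inaccessibility of $\gamma$ implies that the limit cardinals form a cofinal class below $\gamma$, so $\lambda(\eta) < \gamma$, and furthermore $\lambda(\eta)^+ < \gamma$. I would set $a_\eta$ to be any element of $A$ with $a_\eta > \max(\max p_{\eta-1}, \lambda(\eta)^+)$; such $a_\eta$ exists by the cofinality of $A$ in $\gamma$. A routine check shows that appending a single element above the maximum preserves membership in $T^n(A)$: for any $\alpha$ with $\max p_{\eta-1} < \alpha < a_\eta$, $p_\eta \cap \alpha = p_{\eta-1}$ is bounded in $\alpha$ and hence not $\Pi^1_{n-1}$-indescribable in $\alpha$ for $n \geq 1$, while at $\alpha = a_\eta$ the element $a_\eta \in p_\eta$ witnesses the closure condition.

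The crux is to verify that at each limit stage $\delta < \gamma$, $p_\delta = \bigcup_{\eta < \delta} p_\eta$ is a valid condition. Inclusion in $A$ and boundedness (from regularity of $\gamma$) are immediate. For the $n$-closedness condition at a $\Pi^1_{n-1}$-indescribable $\alpha$ with $p_\delta \cap \alpha$ $\Pi^1_{n-1}$-indescribable in $\alpha$, the case $\alpha < \sup p_\delta$ follows from $n$-closedness of some $p_\eta$ via end-extension, and the case $\alpha > \sup p_\delta$ is vacuous because bounded subsets of an indescribable ordinal are not indescribable. The hard case, where the bookkeeping via $\lambda(\eta)$ is essential, is $\alpha = \sup p_\delta$.

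To dispatch the hard case, I would show that $\sup p_\delta$ is always singular, hence never $\Pi^1_{n-1}$-indescribable. Let $\mu$ be the greatest limit cardinal strictly less than $\delta$ (taking $\mu = 0$ if $\delta \leq \omega$), and let $\lambda^*$ be the least limit cardinal exceeding $\mu$. Then $\lambda^* \geq \delta$, with equality exactly when $\delta$ is itself a limit cardinal. Since $\delta - \mu \geq \omega$, the interval $(\mu, \delta)$ contains Player II successor moves; for any such $\eta$, $\lambda(\eta) = \lambda^* \geq \delta$, so $a_\eta > (\lambda^*)^+ > \delta$. Therefore $\sup p_\delta > \delta$, while the sequence of maxima is cofinal in $\sup p_\delta$ with order type at most $\delta$, giving $\cf(\sup p_\delta) \leq \cf(\delta) \leq \delta < \sup p_\delta$. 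Thus $\sup p_\delta$ is singular, hence not inaccessible, hence not $\Pi^1_{n-1}$-indescribable for $n \geq 1$, and the $n$-closedness condition at $\alpha = \sup p_\delta$ is vacuous. This completes the verification that $\sigma$ is a winning strategy for Player II.
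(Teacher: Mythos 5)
There is a genuine gap, and it sits exactly where you locate the difficulty: at a limit stage $\delta$ where $\sup p_\delta$ could be inaccessible. Your singularity argument rests on ``the greatest limit cardinal $\mu$ strictly less than $\delta$,'' but this ordinal need not exist: when the limit cardinals are cofinal in $\delta$ there is no greatest one, and this happens precisely when $\delta$ is a regular limit cardinal --- which, since one always has $\sup p_\delta\geq\delta$ and $\cf(\sup p_\delta)\leq\cf(\delta)$, is the \emph{only} case in which $\sup p_\delta$ can fail to be singular. For such $\delta$ one has $\lambda(\eta)<\delta$ for every $\eta<\delta$, so your bookkeeping imposes no constraint beyond $a_\eta>\max p_{\eta-1}$ and the conclusion $\sup p_\delta>\delta$ fails. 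In other words, the cardinal-arithmetic device does work only at stages where singularity was already automatic, and does nothing at the one kind of stage that matters.

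The failure is not merely expository: the strategy is not winning. Take $n=1$, let $A=\gamma$, and suppose $\alpha<\gamma$ is the least inaccessible cardinal. Player I responds to each of your moves by playing the full closed interval $p_\eta\cup[\max p_\eta+1,\beta]$ (respectively $p_\eta\cup[\sup p_\eta,\beta]$ after a limit stage), which is always a $1$-closed condition no matter what $\beta$ is, choosing the right endpoints to increase cofinally in $\alpha$; since there are no inaccessibles below $\alpha$, all intermediate limit stages produce valid conditions. Your rule permits $a_\eta=\max p_{\eta-1}+1$ once $\max p_{\eta-1}\geq\lambda(\eta)^+$, so no gaps are ever created, and at stage $\alpha$ the union is the tail $[a_0,\alpha)$: stationary in the inaccessible $\alpha$ but omitting $\alpha$, hence not a condition. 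The missing idea is the one move your strategy skips. At each limit stage $\delta$ Player II should play not the bare union but the union together with a fresh element of $A$ strictly above $\sigma_\delta=\sup\bigcup_{\eta<\delta}p_\eta$; by the end-extension ordering this guarantees $\sigma_\delta$ never enters any later condition, so at any limit stage the recorded ordinals $\{\sigma_\eta\st\eta<\delta\text{ a limit}\}$ form a club in $\sigma_\delta$ disjoint from the union, witnessing that the union is nonstationary --- hence not $\Pi^1_{n-1}$-indescribable --- in $\sigma_\delta$ whenever $\sigma_\delta$ is regular. That is the paper's argument, and it needs no auxiliary bookkeeping.
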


\begin{proof}
We describe a winning strategy for player II in the game $\mathcal{G}_\kappa(T^n(A))$. Player II begins the game by playing $c_0=\emptyset$. At an even successor stage $\alpha+2$, player II chooses a condition $c_{\alpha+2}\in T^n(A)$ such that $c_{\alpha+2}\lneq c_{\alpha+1}$. At limit stages $\alpha<\gamma$, player II records an ordinal $\gamma_\alpha=\bigcup_{\beta<\alpha}c_\beta$, chooses an element $\eta_\alpha\in A\setminus(\gamma_\alpha+1)$ and plays $c_\alpha=\left(\bigcup_{\beta<\alpha}c_\beta\right) \cup\{\eta_\alpha\}$. In order to argue that $c_\alpha$ is a condition in $T^n(A)$, we need to verify, letting $c=\bigcup_{\beta<\alpha}c_\beta$, that $c$ is not a $\Pi^1_{n-1}$-indescribable subset of $\gamma_\alpha$.  We can assume that
$\gamma_\alpha$ is $\Pi^1_{n-1}$-indescribable, as otherwise $c \cap \gamma_\alpha$ is clearly not $\Pi^1_{n-1}$-indescribable. But then, by construction,
$\{\gamma_\xi \st \xi < \alpha \text{ is a limit ordinal}\}$ is a club (and hence an $(n-1)$-club) in $\gamma_\alpha$ disjoint from $c$, which implies that
$c$ is not a $\Pi^1_{n-1}$-indescribable subset of $\gamma_\alpha$. Thus, $c_\alpha$ is a valid play by Player II, and we have described a winning strategy
in $\mathcal{G}_\kappa(T^n(A))$.
\end{proof}

The following result is due to Hellsten (see \cite{MR2026390} and \cite{MR2653962}) in the case in which $n=1$ and to \cite{CodyGitmanLambieHanson} for $n>1$. Notice the forcing is the same for all $n<\omega$ in the proof of the following. The difficult part for $n>1$ is arguing that $\Pi^1_n$-indescribable sets are preserved.

\begin{theorem}[Hellsten (for $n=1$), Cody, Gitman, Lambie-Hanson (for $n>1$)] \label{theorem_n_club_shooting}
Suppose that $n \geq 1$ and $S\subseteq\kappa$ is $\Pi^1_n$-indescribable. Then there is a cofinality-preserving forcing extension in which $S$
contains a $1$-club and all $\Pi^1_n$-indescribable subsets of $S$ from $V$ remain $\Pi^1_n$-indescribable.\footnote{Notice that the conclusion ``$S$ contains a $1$-club'' directly implies that ``$S$ contains an $n$-club'' because for $n<\omega$, every $n$-club is an $(n+1)$-club.}
\end{theorem}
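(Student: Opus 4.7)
The forcing is $T^1(S)$, the poset of bounded $1$-closed subsets of $S$ ordered by end-extension. Since $T^1(S)$ has size $\kappa$ and (by the strategic closure lemma above with $n=1$) is $\kappa$-strategically closed, it preserves all cofinalities: strategic closure handles cofinalities $\leq\kappa$, and the $\kappa^+$-chain condition handles the rest. A standard density argument shows that $C := \bigcup G$ is an unbounded $1$-closed subset of $S$ in $V[G]$ (if $\alpha < \kappa$ is inaccessible and $C \cap \alpha$ is stationary in $\alpha$, then any $c \in G$ with $\max c > \alpha$ satisfies $c \cap \alpha = C \cap \alpha$, so the $1$-closedness of $c$ forces $\alpha \in c \subseteq C$), giving the $1$-club through $S$.

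For the preservation clause, fix a $\Pi^1_n$-indescribable $T \subseteq S$ in $V$, and in $V[G]$ choose any $A \subseteq \kappa$ with a name $\dot A \in V$. Apply Theorem \ref{theorem_hauser}(3) in $V$ to obtain $\kappa$-models $M, N$ with $\dot A, T, S, T^1(S) \in M$ and an elementary $j : M \to N$ with $\crit(j) = \kappa$, $\kappa \in j(T)$, $N$ is $\Pi^1_{n-1}$-correct, and $j, M \in N$. Since $\kappa \in j(S)$ and $C$ is a $1$-club in $\kappa$, the set $\bar C := C \cup \{\kappa\}$ is a bounded $1$-closed subset of $j(S)$, hence a condition in $j(T^1(S))$; moreover, because $j$ fixes each $c \in G$ (as $c \subseteq \kappa$ is bounded, so $c \in V_\kappa \subseteq M$ and $j \restriction V_\kappa = \id$), $\bar C$ end-extends every element of $j[G] = G$, making it a master condition. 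A standard master-condition lifting argument now produces in $V[G]$ an $N$-generic filter $G' \subseteq j(T^1(S))$ containing $\bar C$: since $|N|^{V[G]} = \kappa$, one enumerates the $\kappa$-many dense subsets of $j(T^1(S))$ lying in $N$ and diagonalizes below $\bar C$ using a strategy in $N$ for the game associated with the $j(\kappa)$-strategic closure of $j(T^1(S))$ in $N$. The lifting criterion then yields an elementary $j^* : M[G] \to N[G']$ defined by $j^*(\tau_G) = j(\tau)_{G'}$. With $A, T \in M[G]$, $\crit(j^*) = \kappa$, and $\kappa \in j^*(T) = j(T)$, Theorem \ref{theorem_hauser}(2) applied in $V[G]$ witnesses $\Pi^1_n$-indescribability of $T$ in $V[G]$, provided $N[G']$ is $\Pi^1_{n-1}$-correct.

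The main obstacle is verifying this last condition. For $n=1$ it is automatic (Hellsten's case), but for $n \geq 2$ one must show by induction on $n$ that $\Pi^1_{n-1}$-truth at $V_\kappa$ is preserved to $V[G]$ and correctly computed by $N[G']$. The key inputs are: that $T^1(S)$ has size $\kappa$ and is definable over $V_\kappa$, so that $V_\kappa^{V[G]} = V_\kappa[G]$, with the analogous statement on the $N$ side using $G' \cap T^1(S) = G$; that the forcing relation for $\Pi^1_{n-1}$-sentences over $(V_\kappa, \in, \dot A)$ is itself expressible at a bounded level of the projective hierarchy over $V_\kappa$; and the $\Pi^1_{n-1}$-correctness of $N$. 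Chaining these observations yields $\Pi^1_{n-1}$-correctness of $N[G']$ at $\kappa$ in $V[G]$, completing the proof. Pushing this induction through is the technical heart of the extension of Hellsten's theorem from $n=1$ to arbitrary $n$, carried out in \cite{CodyGitmanLambieHanson}.
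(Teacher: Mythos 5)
Your forcing is not the one the paper uses, and the difference is not cosmetic. The paper (following Hellsten and \cite{CodyGitmanLambieHanson}) forces with an Easton-support \emph{iteration} $\P_{\kappa+1}$ that shoots a $1$-club through $S\cap\gamma$ at every inaccessible $\gamma\le\kappa$ where $S\cap\gamma$ is cofinal, whereas you force with the single poset $T^1(S)$. The single-step approach breaks down exactly at your master-condition step. The set $\bar C=C\cup\{\kappa\}$ is not a condition in $j(T^1(S))=T^1(j(S))^N$: the conditions of that poset are elements of $N$, and $C=\bigcup G$ is generic over $V\supseteq N$, so $C\notin N$. Worse, no $N$-generic filter $G'\subseteq j(T^1(S))$ can contain $j[G]=G$ at all: genericity forces $G'$ to contain some $d$ with $\sup d>\kappa$, the end-extension ordering makes $d$ comparable with every $c\in G$, hence $d\cap\kappa=C$ and so $C\in N$ --- a contradiction. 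So the lift $j^*:M[G]\to N[G']$ you invoke cannot exist for this forcing.

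The iteration is what repairs this. Writing $\P_{\kappa+1}=\P_\kappa*\dot{\Q}_\kappa$ and $G_{\kappa+1}=G_\kappa*g$, the stage-$\kappa$ iterand of $j(\P_{\kappa+1})$ is $T^1(S)$ as computed in $N[G_\kappa]$, which agrees with the actual stage-$\kappa$ forcing since $N[G_\kappa]$ and $V[G_\kappa]$ share $V_\kappa[G_\kappa]$; thus the true generic $g$ is absorbed into the $N$-side generic below $j(\kappa)$, the set $C=\bigcup g$ \emph{is} an element of the intermediate model $N[\hat G_{j(\kappa)}]$, and $C\cup\{\kappa\}$ is a legitimate condition at stage $j(\kappa)$ of $j(\P_{\kappa+1})$ below which one diagonalizes to build the rest of the generic. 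Your cofinality-preservation and density computations for $T^1(S)$ are fine as far as they go, and your closing remarks about $\Pi^1_{n-1}$-correctness of the target model do identify the genuine remaining difficulty for $n\ge 2$, but both points are moot until the forcing is set up so that a lift is possible at all.
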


\begin{proof}
We only provide a definition of the forcing. The reader should consult \cite{CodyGitmanLambieHanson} for details.

Let $\P_{\kappa+1}=\<(\P_\alpha,\dot{\Q}_\beta)\st\alpha\leq\kappa+1, ~ \beta\leq\kappa\>$ be an Easton-support iteration such that
\begin{itemize}
\item if $\gamma\leq\kappa$ is inaccessible and $S\cap\gamma$ is cofinal in $\gamma$, then $\dot{\Q}_\gamma=(T^1(S\cap\gamma))^{V^{\P_\gamma}}$;
\item otherwise, $\dot{\Q}_\gamma$ is a $\P_\gamma$-name for trivial forcing.
\end{itemize}
\end{proof}

\subsection{A theorem of Hellsten}

Recall that the Mahlo operation $M:P(\kappa)\to P(\kappa)$ defined by $M(X)=\{\alpha<\kappa\st \text{$X\cap\alpha$ is stationary in $\alpha$}\}$, can be iterated as follows to define the hierarchy of $\alpha$-Mahlo cardinals for $\alpha<\kappa^+$. Baumgartner, Taylor and Wagon \cite{MR687276} defined $M^\alpha:P(\kappa)/\NS_\kappa\to P(\kappa)/\NS_\kappa$ for $\alpha<\kappa^+$ by letting $M^0([X])=[X]$, $M([X])=[M(X)]$, $M^{\alpha+1}([X])=M(M^\alpha([X])$ and $M^\beta([X])=\bigtriangleup\{M^\alpha([X])\st\alpha<\beta\}$ where $\beta<\kappa^+$ is a limit ordinal and the diagonal intersection is taken relative to some function $f:\kappa\to\{M^\alpha([X])\st\alpha<\beta\}$. It is well known that such diagonal intersections are independent of the indexing used when the ideal one is working with is normal. A cardinal $\kappa$ is called \emph{$\alpha$-Mahlo}, where $\alpha\leq\kappa^+$, if for all $\beta<\alpha$ we have $M^\beta([\kappa])>0$. A cardinal $\kappa$ is called \emph{greatly Mahlo} if it is $\kappa^+$-Mahlo. 

If $\kappa$ is $\kappa^+$-Mahlo then there is a natural decomposition of $\kappa$ into $\kappa^+$ almost disjoint stationary sets \cite{MR687276}, and hence 
\[\NS_\kappa\restrict\REG=\{X\subseteq\kappa\st X\cap\REG\in\NS_\kappa\}\] 
is not $\kappa^+$-saturated. Jech and Woodin \cite{MR805967} showed that this result is sharp in the sense that $\NS_\kappa\restrict\REG$ can be saturated when $\kappa$ is $\alpha$-Mahlo where $\alpha<\kappa^+$, relative to the existence of a measurable cardinal of Mitchell order $\alpha$. In fact, Jech and Woodin gave an equiconsistency. If $\kappa$ is $\alpha$-Mahlo where $\alpha<\kappa^+$ and $\NS_\kappa\restrict\REG$ is $\kappa^+$-saturated, then there is an inner model with a measurable cardinal of Mitchell order $\alpha$. Furthermore, if $\kappa$ is measurable with Mitchell order $\alpha<\kappa^+$ then there is a forcing extension in which $\kappa$ is $\alpha$-Mahlo and $\NS_\kappa\restrict\REG$ is $\kappa^+$-saturated. By Gitik-Shelah \cite{MR1363421}, restricting to the regulars is necessary.

Hellsten showed that the forcing construction of Jech and Woodin mentioned above can be generalized from the nonstationary ideal $\NS_\kappa$ to the the weakly compact ideal $\Pi^1_1(\kappa)$. First, Hellsten iterated the generalized Mahlo operation $\Tr_1:P(\kappa)\to P(\kappa)$ defined by
\[\Tr_1(X)=\{\alpha<\kappa\st \text{$X\cap\alpha$ is weakly compact in $\alpha$}\}\]
modulo the weakly compact ideal in order to define the notion of $\kappa^+$--weak compactness. For $A\subseteq\kappa$ we let $[A]_1$ denote the equivalence class of $A$ modulo the weakly compact ideal and, as in the definition of $\alpha$-Mahlo cardinals and the nonstationary ideal, working modulo the weakly compact ideal we can iterate $\Tr_1$ to define a sequence $\<\Tr^\alpha_1([A]_1)\st \alpha<\kappa^+\>$ in the boolean algebra $P(\kappa)/\Pi^1_1(\kappa)$.

\begin{definition}
A cardinal $\kappa$ is \emph{$\alpha$-weakly compact}, where $\alpha\leq\kappa^+$, if and only if $\Tr_1^\beta([\kappa]_1)>0$ for all $\beta<\alpha$.
\end{definition}

As in the case of $\kappa^+$-Mahloness and the nonstationary ideal, it is easy to see that if $\kappa$ is $\kappa^+$--weakly compact then the weakly compact ideal $\Pi^1_1(\kappa)$ is not $\kappa^+$-saturated. Hellsten used an Easton support iteration of iterated $1$-club shootings to prove that, relative to the existence of a measurable cardinal, the weakly compact ideal $\Pi^1_1(\kappa)$ can be $\kappa^+$-saturated when $\kappa$ is weakly compact (take $\alpha=1$ in the following).

\begin{theorem}[Hellsten \cite{MR2653962}]
If $\kappa$ is measurable with Mitchell order $\alpha\in\kappa^+\setminus\{0\}$ then there is a forcing extension in which $\kappa$ is $\alpha$-$\Pi^1_1$-indescribable and the weakly compact ideal $\Pi^1_1(\kappa)$ is $\kappa^+$-saturated.
\end{theorem}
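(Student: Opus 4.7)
The plan is to imitate the Jech--Woodin construction that made $\NS_\kappa\restrict\REG$ saturated at an $\alpha$-Mahlo cardinal, but with the club-shooting forcing replaced by the $1$-club shooting poset $T^1(\cdot)$ from the previous section. Start with a Mitchell-order-$\alpha$ coherent sequence $\vec{U}=\<U_\beta\st\beta<\alpha\>$ of normal measures on $\kappa$, and (after possibly a preparatory GCH forcing) perform an Easton-support iteration $\P_{\kappa+1}=\<(\P_\gamma,\dot{\Q}_\gamma)\st\gamma\leq\kappa\>$ guided by a bookkeeping function $F:\kappa\to H(\kappa^+)$ enumerating, at each weakly compact stage $\gamma$, a candidate $\P_\gamma$-name for a weakly compact subset of $\gamma$. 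At such a $\gamma$, set $\dot{\Q}_\gamma=T^1(\dot{A}_\gamma)$ where $\dot{A}_\gamma$ is the set singled out by $F(\gamma)$; elsewhere $\dot{\Q}_\gamma$ is trivial. The book-keeping is arranged so that every potential name for a subset of $\kappa$ in $V^{\P_{\kappa+1}}$ is considered cofinally often.

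For saturation, suppose toward contradiction that in $V[G]$ there is an antichain $\<A_\xi\st\xi<\kappa^+\>$ in $P(\kappa)/\Pi^1_1(\kappa)$. A $\Delta$-system / reflection argument in the spirit of \cite{MR805967}, combined with the $\gamma$-strategic closure of each $T^1$-factor, lets one find a pair $\xi<\eta$ and a stage $\gamma<\kappa$ at which the $1$-club added by $\dot{\Q}_\gamma$ is forced to miss $A_\eta$; since the generic $1$-club is eventually contained in every $A_\zeta$ reflected by the bookkeeping, this contradicts antichain-ness. (The closure of the tail forcing is crucial so that one can reduce $\kappa^+$-many names to $\P_\gamma$-names for subsets of $\gamma$ and invoke weak compactness to reflect.)

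For preservation of $\alpha$-$\Pi^1_1$-indescribability, one lifts each $j_\beta:V\to M_\beta=\Ult(V,U_\beta)$ to $j_\beta:V[G]\to M_\beta[G*H]$ for $\beta<\alpha$. The lift on the bottom $\P_\kappa$ portion is immediate since $\P_\kappa\subseteq V_\kappa$ and $\crit(j_\beta)=\kappa$. Because $M_\beta$ contains $\<U_\gamma\st\gamma<\beta\>$, it sees the relevant $\P_\gamma$-side of the bookkeeping and computes $j_\beta(\P_{\kappa+1})$ correctly past $\kappa$; using $M_\beta^\kappa\subseteq M_\beta$ and the $\kappa^+$-strategic closure of the tail $j_\beta(\P_{\kappa+1})/G_{\kappa+1}$ in $M_\beta[G]$, one constructs a master condition and a generic $H$ diagonalizing the $\leq\kappa$-many dense sets of $M_\beta[G]$. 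The lifted $j_\beta$ then witnesses $\Tr_1^\beta([\kappa]_1)>0$ in $V[G]$ for every $\beta<\alpha$, by the standard correspondence between $j$-witnesses and the iterated trace operation coupled with Hauser's $\Pi^1_1$-correct embedding characterization (Theorem~\ref{theorem_hauser}).

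The principal obstacle is the compatibility between the bookkeeping $F$, which must be rich enough to absorb all $\kappa^+$-many potential antichains for saturation, and the lifting arguments for each $j_\beta$, which require the iteration and its generic to be definable inside $M_\beta[G]$ up to corrections of Mitchell degree $<\beta$. One must verify that the $1$-clubs added at intermediate stages are not killed by later stages (that is, that they remain $\Pi^1_1$-indescribable subsets of $\gamma$ for weakly compact $\gamma\leq\kappa$ in the final model), and that the combinatorics of the lift do not inadvertently collapse the Mitchell hierarchy by adding too many subsets of $\kappa$ of small Mitchell rank. These are precisely the points at which the use of $T^1$ rather than ordinary club-shooting, together with the $\gamma$-strategic closure established by Hellsten, pays off.
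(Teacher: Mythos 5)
The paper does not actually reprove this theorem: it only records Hellsten's result and describes his method in one sentence as ``an Easton support iteration of \emph{iterated} $1$-club shootings,'' and that word ``iterated'' already points at the main gap in your sketch. Your $\P_{\kappa+1}$ devotes a single $1$-club--shooting step to each stage $\gamma\leq\kappa$, guided by a bookkeeping function $F:\kappa\to H(\kappa^+)$; thus at stages $\gamma<\kappa$ you only ever treat subsets of $\gamma$, and at stage $\kappa$ you treat exactly one subset of $\kappa$. Under $\GCH$ there are $\kappa^+$-many subsets of $\kappa$ in the final model that the ideal must decide, and their names live at or above stage $\kappa$, so a bookkeeping function with domain $\kappa$ cannot ``consider every potential name for a subset of $\kappa$ cofinally often.'' In Jech--Woodin, and in Hellsten's generalization, the component at each inaccessible $\gamma\leq\kappa$ is itself a ${<}\gamma$-support iteration of length $\gamma^+$ of $1$-club shootings. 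Relatedly, your saturation argument does not work as stated: a $1$-club added at a stage $\gamma<\kappa$ is a subset of $\gamma$ and says nothing about an antichain in $P(\kappa)/\Pi^1_1(\kappa)$. The actual mechanism is to show that in the extension the weakly compact ideal is precisely the dual of the filter generated by the generic $1$-clubs, whence $P(\kappa)/\Pi^1_1(\kappa)$ embeds densely into the tail of the stage-$\kappa$ iteration of length $\kappa^+$, which is $\kappa^+$-c.c.\ by a $\Delta$-system argument.

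The second missing idea is a permission clause calibrated by the Mitchell order: at step $\xi$ of the stage-$\gamma$ iteration one shoots a $1$-club through the bookkept set $\dot A$ only if $\dot A$ is forced to belong to the filter generated by the previously added $1$-clubs together with the canonical sets derived from $\vec{U}\restrict\gamma$, and otherwise the step is trivial. This clause does two jobs you cannot do without it. First, it makes the stage-$\kappa$ component of $j_\beta(\P)$, as computed in $M_\beta$ where $o(\kappa)=\beta$, cohere with the true stage-$\kappa$ iteration, so that a master condition exists and $j_\beta$ lifts; without it, $M_\beta$'s version would shoot $1$-clubs through complements of the very sets needed to witness $\Tr_1^\beta([\kappa]_1)>0$, and your lifting argument collapses. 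Second, it is what lets one prove that a set rejected by the bookkeeping really is non--weakly-compact in the extension, which is needed to identify the extension's ideal with the generated one and hence to transfer the chain condition. Two smaller points: the number of dense subsets of the tail forcing lying in $M_\beta[G]$ is $\kappa^+$, not $\leq\kappa$, so the diagonalization must be a $\kappa^+$-length construction using $M_\beta[G]^\kappa\subseteq M_\beta[G]$; and lifting through the length-$\kappa^+$ top iteration requires a genuine master-condition argument, since $M_\beta$ is not closed under $\kappa^+$-sequences.
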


\subsection{The weakly compact reflection principle}\label{section_weakly_compact_reflection}

The \emph{weakly compact reflection principle} holds at $\kappa$, which we write as $\Refl_1(\kappa)$ or $\Refl_{wc}(\kappa)$, if and only if $\kappa$ is a weakly compact cardinal and every weakly compact subset of $\kappa$ has a weakly compact proper initial segment; in other words, $\kappa$ is weakly compact and $S\in\Pi^1_1(\kappa)^+$ implies there is an $\alpha<\kappa$ such that $S\cap\alpha\in\Pi^1_1(\alpha)^+$. It is easy to see that the weakly compact reflection principle $\Refl_1(\kappa)$ implies that $\kappa$ is $\omega$-weakly compact (see \cite{MR3985624}). In this section we address the question: what is the relationship between the weakly compact reflection principle $\Refl_1(\kappa)$ and the $\alpha$--weak compactness of $\kappa$? Several related questions remain open.

 It follows from a result of Bagaria, Magidor and Sakai \cite{MR3416912} that, in $L$, a cardinal $\kappa$ is $\Pi^1_2$-indescribable if and only if $\Refl_1(\kappa)$. Since there are many $\omega$-weakly compact cardinals below any $\Pi^1_2$-indescribable cardinal, the Bagaria-Magidor-Sakai result shows that $\kappa$ being $\omega$-weakly compact does not imply $\Refl_1(\kappa)$. The author \cite{MR3985624} gave a forcing construction for adding a non-reflecting weakly compact set which also establishes that the $\alpha$-weak compactness of $\kappa$, where $\alpha<\kappa^+$, does not imply $\Refl_1(\kappa)$.

\begin{theorem}[Cody \cite{MR3985624}]
Suppose $\kappa$ is $(\alpha+1)$-weakly compact where $\alpha<\kappa^+$. Then there is a cofinality-preserving forcing extension in which $\kappa$ remains $(\alpha+1)$-weakly compact and there is a weakly compact subset of $\kappa$ with no weakly compact proper initial segment, thus $\Refl_1(\kappa)$ fails in the extension.
\end{theorem}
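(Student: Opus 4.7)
The plan is to define, in $V$, a two-step forcing $\mathbb{P}=\mathbb{P}_0\ast\dot{\mathbb{P}}_1$. The first step adds a ``candidate'' set $T\subseteq\kappa$ by a forcing whose conditions are bounded partial characteristic functions $t:\delta\to 2$, with the additional constraint that for each weakly compact $\gamma\leq\delta$ the set $t^{-1}(1)\cap\gamma$ is not weakly compact in $\gamma$. The second step is an Easton-support iteration that at each weakly compact $\gamma<\kappa$ forces with $T^{1}(\gamma\setminus T_\gamma)$ from Section 5.1, where $T_\gamma$ is the restriction of the generic set to $\gamma$. (Equivalently, one can collapse the two steps into a single forcing whose conditions are pairs $(t,c)$ with $t$ as above and $c$ a bounded partial function assigning to each weakly compact $\gamma$ in its domain a bounded $1$-closed subset of $\gamma$ disjoint from $t^{-1}(1)$.) The generic $T\subseteq\kappa$ will be the desired non-reflecting weakly compact set: by construction, a $1$-club avoiding $T$ has been shot through every weakly compact $\gamma<\kappa$, so $T\cap\gamma$ fails to be weakly compact in $\gamma$ by Theorem \ref{theorem_n_club_characterization}.

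First I would verify that $\mathbb{P}$ is $\kappa$-strategically closed and satisfies the $\kappa^+$-cc, which suffices to preserve cofinalities. Strategic closure follows the strategy described for $T^{1}(A)$: at limit stages, Player II inserts a spacing ordinal above the current union $\gamma_\alpha$, ensuring that the club $\{\gamma_\xi\st \xi<\alpha\text{ a limit}\}\subseteq\gamma_\alpha\setminus t^{-1}(1)$ prevents $t^{-1}(1)\cap\gamma_\alpha$ from being weakly compact in $\gamma_\alpha$. The $\kappa^+$-cc follows by a standard $\Delta$-system argument using the fact that conditions have bounded support below $\kappa$. To show $T$ is weakly compact in $V[G]$, I would argue by density: any $1$-club $C\subseteq\kappa$ in $V[G]$ with $C\cap T=\emptyset$ would have a nice name $\dot{C}$, and using weak compactness of $\kappa$ in $V$ one can find a condition forcing a contradiction, since nothing in the forcing constrains membership of $T$ at the ``top'' stage $\kappa$ itself.

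Preservation of $(\alpha+1)$-weak compactness will use the embedding characterization of Theorem \ref{theorem_hauser}. Given $S\subseteq\kappa$ in $V[G]$ with $\Tr_1^\beta([S]_1)>0$ for some $\beta\leq\alpha$, I would take a $\kappa$-model $M$ in $V$ containing a name for $S$ and the relevant hierarchy of trace sets, together with an elementary embedding $j:M\to N$ with critical point $\kappa$ into a $\Pi^1_0$-correct $\kappa$-model $N$ witnessing the $(\alpha+1)$-weak compactness of $\kappa$ in $V$. Working in $V[G]$, I would lift $j$ to $j:M[G]\to N[G\ast H]$, where $H$ is $N[G]$-generic for the tail of $j(\mathbb{P})$ above stage $\kappa$. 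A master condition for this lift can be built as $(j(t_G)\cup\emptyset_{[\kappa,j(\kappa))}, j(c_G)\cup\{(\kappa,\emptyset)\})$ — that is, extending each generic piece by doing nothing at stage $\kappa$ in $N$. Standard bookkeeping, using that $j(\mathbb{P})$ below stage $\kappa$ is $\mathbb{P}$ and the tail is strategically closed in $N[G]$, yields the lift. By iterating through $\beta\leq\alpha$ using the trace operator $\Tr_1$, one concludes that each level of the $(\alpha+1)$-weak compactness hierarchy is preserved.

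The main obstacle will be verifying that the master condition is genuinely a condition, i.e., that adding the pair $(\kappa,\emptyset)$ to the $c$-part does not violate the constraint that $T\cap\kappa$ must fail to be weakly compact in $\kappa$ at this stage of $j(\mathbb{P})$ inside $N$. Since $c(\kappa)=\emptyset$ is a bounded $1$-closed set trivially disjoint from $T$, the constraint is about ``future'' $1$-closed end-extensions; the verification amounts to checking that the tail forcing in $N[G]$ never completes a $1$-club below $j(\kappa)$ meeting $T$ at stage $\kappa$, which follows from the strategic closure of $j(\mathbb{P})/G_{\kappa+1}$. A secondary difficulty is that at limit stages $\beta\leq\alpha$ of the trace hierarchy one must piece together lifts indexed over $\beta$-many levels; here the closure of the forcing and the absorbing properties of the embedding $j$ (namely $j,M\in N$ in clause (3) of Theorem \ref{theorem_hauser}) let one transfer the required trace-set positivity from $V$ to $V[G]$ without obstruction.
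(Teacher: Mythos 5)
Your overall architecture --- bounded approximations to $T$ together with partial $1$-clubs through the complements $\gamma\setminus T_\gamma$, followed by lifting arguments --- has the right shape and is close to the construction in \cite{MR3985624}. The genuine gap is at the single hardest point of the argument: showing that the generic $T$ is still weakly compact in $V[G]$. The density argument you propose does not work. To see that $T$ meets a $1$-club $C$ of $V[G]$ you would need, below an arbitrary condition, an extension $q$ and an ordinal $\alpha$ with $q\Vdash\alpha\in\dot C$ and $q(\alpha)=1$; but any extension deciding ``$\alpha\in\dot C$'' may already have decided $\alpha\notin T$, and $1$-clubs are not closed under suprema, so the usual trick of catching one's tail at a limit stage and then inserting the limit point into $T$ is unavailable. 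The weak compactness of $T$ has to be proved by an elementary embedding argument in which $\kappa\in j(T)$, i.e.\ the master condition must send $\kappa$ to $1$ in its $t$-part. Your master condition does the opposite: $j(t_G)\cup\emptyset_{[\kappa,j(\kappa))}$ (which, incidentally, does not typecheck, since $t_G\notin M$) puts nothing at or above $\kappa$ into $j(T)$, so it is only suited to preserving degrees of weak compactness of $\kappa$ itself, and the weak compactness of $T$ is left unproved.

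Once the master condition is required to satisfy $m(\kappa)=1$, the constraint you built into $\mathbb{P}_0$ --- that $t^{-1}(1)\cap\gamma$ is not weakly compact in $\gamma$ for every weakly compact $\gamma$ in the domain --- must be verified at $\gamma=\kappa$ inside the target model: one must arrange that $N[G\ast H]$ believes $T\in\Pi^1_1(\kappa)$ while $V[G]$ believes $T\in\Pi^1_1(\kappa)^+$. This is the real content of the proof and your sketch does not engage with it. The tail of $j(\mathbb{P})$ \emph{does} complete a cofinal $1$-closed set $D\subseteq\kappa\setminus T$ at coordinate $\kappa$ (that is precisely what legitimizes the master condition), and one must then argue that $D$, being generic only over the size-$\kappa$ model $N[G]$, can be arranged to be non-stationary in $V[G]$, so that it witnesses non--weak compactness of $T$ inside the small model without doing so in $V[G]$; asserting that the tail ``never completes a $1$-club meeting $T$'' by strategic closure misses this point entirely. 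Two smaller issues: your two mechanisms for non-reflection are redundant and in tension (if the $\mathbb{P}_0$-constraint already puts $T\cap\gamma$ into $\Pi^1_1(\gamma)$ for all weakly compact $\gamma<\kappa$, then $\gamma\setminus T_\gamma$ already contains a $1$-club by Theorem \ref{theorem_n_club_characterization} and the iteration $\mathbb{P}_1$ accomplishes nothing, while that very constraint is what creates the difficulty at $\gamma=\kappa$ just described); and the role of the hypothesis that $\kappa$ is $(\alpha+1)$-weakly compact, namely supplying embeddings whose targets correctly compute enough of the iterated trace $\Tr_1^\beta$ for $\beta\leq\alpha$, needs to be made explicit rather than gestured at.
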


One may also wonder, does the weakly compact reflection principle $\Refl_1(\kappa)$ imply that $\kappa$ must be $(\omega+1)$-weakly compact? The author and Sakai \cite{MR4050036} proved that the answer is no.

\begin{theorem}[Cody-Sakai \cite{MR4050036}]\label{theorem_cody_sakai}
Suppose the weakly compact reflection principle $\Refl_1(\kappa)$ holds. Then there is a forcing extension in which $\Refl_1(\kappa)$ holds and $\kappa$ is the least $\omega$-weakly compact cardinal.
\end{theorem}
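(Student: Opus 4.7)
The plan is to build a cofinality-preserving Easton-support iteration $\P_\kappa = \langle (\P_\alpha, \dot{\mathbb{Q}}_\alpha) : \alpha < \kappa \rangle$ of length $\kappa$ in $V$ so that in a $V$-generic extension $V[G]$ by $\P_\kappa$, the principle $\Refl_1(\kappa)$ persists while every $\alpha < \kappa$ becomes non-$\omega$-weakly-compact. At each stage $\alpha$ that is weakly compact in $V^{\P_\alpha}$, I would let $\dot{\mathbb{Q}}_\alpha$ be a $\P_\alpha$-name for the $1$-club shooting forcing $T^1(\dot{A}_\alpha)$ of Theorem \ref{theorem_n_club_shooting}, where $\dot{A}_\alpha$ names a weakly compact subset of $\alpha$ that avoids, as much as possible, the set $W_\alpha$ of weakly compact cardinals below $\alpha$. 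A first candidate is $\alpha \setminus W_\alpha$ whenever that set is itself weakly compact in $\alpha$, with a suitable higher-level variant using $\Tr_1^n$-type sets at those $\alpha$ where $W_\alpha$ already lies in the weakly compact filter. At all other stages, $\dot{\mathbb{Q}}_\alpha$ is trivial.

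Cofinality preservation and the verification that $\kappa$ is the \emph{least} $\omega$-weakly-compact cardinal in $V[G]$ are routine. Each factor $T^1(\dot{A}_\alpha)$ is $\alpha$-strategically closed, so a standard Easton-support analysis yields cardinal and cofinality preservation and shows that the tail $\P_\kappa/\P_{\alpha+1}$ preserves the $1$-clubness of the generic club $C_\alpha$ added at stage $\alpha$. Since strategically closed forcings of this shape do not create new weakly compact cardinals, $W_\alpha^{V[G]} \subseteq W_\alpha^V$, so $C_\alpha$ remains a $1$-club in $V[G]$ disjoint from $W_\alpha^{V[G]}$; by Theorem \ref{theorem_n_club_characterization} this means $W_\alpha^{V[G]}$ is not weakly compact in $\alpha$ in $V[G]$, hence $\alpha$ is not $2$-weakly-compact and so not $\omega$-weakly-compact.

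The main technical obstacle is the preservation of $\Refl_1(\kappa)$ in $V[G]$. I would argue via the $\kappa$-model/embedding characterization of weak compactness of subsets from Theorem \ref{theorem_hauser} at level $n = 1$. Given any weakly compact $T \subseteq \kappa$ in $V[G]$, choose a $\P_\kappa$-name $\dot{T} \in V$, and in $V$ apply $\Refl_1(\kappa)$ to an appropriate weakly compact subset of $\kappa$ coding $\dot{T}$, $\P_\kappa$, and the relevant parameters. This yields a $\kappa$-model $M \in V$ containing $\dot{T}$ and $\P_\kappa$, together with an elementary embedding $j : M \to N$ with $\crit(j) = \kappa$ witnessing weakly compact reflection. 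I would then lift $j$ through $\P_\kappa$ via a master-condition argument: the tail $j(\P_\kappa)/G$ is $\kappa$-closed in $N[G]$, so in $V[G]$ one builds an $N[G]$-generic filter $H$ by diagonalizing through the $\leq \kappa$-many dense subsets of $j(\P_\kappa)/G$ in $N[G]$. The resulting lifted embedding $j^* : M[G] \to N[G \ast H]$ applied to $\dot{T}$ produces, by elementarity, an $\alpha < \kappa$ with $T \cap \alpha$ weakly compact in $\alpha$ in $V[G]$, as needed. The hardest part will be organizing the choice of $\dot{A}_\alpha$ across all stages so that $j(\P_\kappa)$ behaves predictably under lifting, and verifying the $\kappa$-closure of the tail despite the possibly-complicated variation of $\dot{A}_\alpha$ from stage to stage.
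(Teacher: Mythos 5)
There is a fatal structural flaw in your construction, independent of any of the technical issues you flag as "the hardest part." You arrange that \emph{every} $\alpha<\kappa$ which is weakly compact at stage $\alpha$ acquires a $1$-club disjoint from $W_\alpha$, and you correctly conclude that every such $\alpha$ fails to be $2$-weakly compact in $V[G]$ (while every other $\alpha<\kappa$ is not even weakly compact there). But then $\Tr_1^2([\kappa]_1)=0$ in $V[G]$: the set $\{\alpha<\kappa \st W_\alpha \text{ is weakly compact in } \alpha\}$ is literally empty in the extension, so $\kappa$ is not $3$-weakly compact, hence not $\omega$-weakly compact. Since $\Refl_1(\kappa)$ implies that $\kappa$ is $\omega$-weakly compact (as noted at the start of Section \ref{section_weakly_compact_reflection}), no lifting or master-condition argument can preserve $\Refl_1(\kappa)$ over this iteration; the target conclusion is inconsistent with what the forcing does below $\kappa$. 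The point of the theorem is precisely that $\kappa$ must remain $\omega$-weakly compact while being the \emph{least} such cardinal, so for every $n<\omega$ the extension must still contain a weakly compact (indeed $\Tr_1^{n-1}$-positive) set of $n$-weakly compact cardinals below $\kappa$, even though no single $\alpha<\kappa$ may be $n$-weakly compact for all $n$. Your forcing flattens everything below $\kappa$ to order at most $1$, which destroys exactly the stratification that must survive. A correct construction has to act only at (roughly) the $\omega$-weakly compact $\alpha<\kappa$, demoting each to some finite order while leaving the exactly-$n$-weakly-compact cardinals of $V$ (which form positive sets for each $n$, since $S\setminus\Tr_1(S)$ is always positive for positive $S$) untouched, and then verify that their orders and the positivity of $\Tr_1^n(\kappa)$ are preserved.

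Two secondary remarks. First, your hedge about the case where ``$W_\alpha$ already lies in the weakly compact filter'' is vacuous: for weakly compact $\alpha$ the set of non--weakly-compact $\beta<\alpha$ is \emph{always} weakly compact in $\alpha$ (take the least $\beta$ reflecting a putative counterexample $(A,\varphi)$; if it were weakly compact it would reflect $\varphi$ further down), so the ``first candidate'' always applies and your construction is exactly the fatal one above. Second, even setting aside the main problem, your preservation sketch for $\Refl_1(\kappa)$ elides the genuinely hard step: after lifting $j$ you obtain a reflection point $\alpha$ at which $T\cap\alpha$ is weakly compact in the sense of $N[G\ast H]$, but what is needed is weak compactness of $T\cap\alpha$ in $V[G]$, and relating $\Pi^1_1(\alpha)^{V[G]}$ to $\Pi^1_1(\alpha)^{V}$ is exactly what results like Theorem \ref{theorem_weakly_compact_ideal_after_forcing} are for; moreover the stage-$\alpha$ forcing deliberately places certain ground-model weakly compact subsets of $\alpha$ into the ideal, so one must also rule out that $T\cap\alpha$ is one of the sets being killed.
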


In the course of proving Theorem \ref{theorem_cody_sakai}, the authors generalized the well-known fact that after $\kappa$-c.c. forcing the nonstationary ideal of the extension equals the ideal generated by the ground model nonstationary ideal, to the weakly compact ideal.

\begin{theorem}[Cody-Sakai \cite{MR4050036}]\label{theorem_weakly_compact_ideal_after_forcing}
Suppose $\kappa$ is a weakly compact cardinal, assume that $\P=\<(\P_\alpha,\dot{\Q}_\alpha)\st\alpha<\kappa\>\subseteq V_\kappa$ is a good\footnote{Such an iteration is \emph{good} if if for all
$\alpha < \kappa$, if $\alpha$ is inaccessible, then $\dot{\Q}_\alpha$ is a $\P_\alpha$-name for a poset such that $\forces_{\P_\alpha} \dot \Q_\alpha\in \dot V_\kappa$,
where $\dot{V}_\kappa$ is a $\P_\alpha$-name for $(V_\kappa)^{V^{\P_\alpha}}$ and, otherwise, $\dot{\Q}_\alpha$ is a $\P_\alpha$-name for trivial forcing.} Easton-support forcing iteration such that for each $\alpha<\kappa$, $\forces_{\P_\alpha}$ ``$\dot{\Q}_\alpha$ is $\alpha$-strategically closed''. Let $\dot{\Pi}^1_1(\kappa)$ be a $\P_\kappa$-name for the weakly compact ideal of the extension $V^{\P_\kappa}$ and let $\check{\Pi}^1_1(\kappa)$ be a $\P_\kappa$-check name for the weakly compact ideal of the ground model. Then $\forces_{\P_\kappa}$ $\dot{\Pi}^1_1(\kappa)=\overline{\check{\Pi}^1_1(\kappa)}$. 
\end{theorem}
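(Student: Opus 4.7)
My plan is to reduce the ideal equality to two preservation claims about $1$-clubs, using Theorem \ref{theorem_n_club_characterization} in both $V$ and $V^{\P_\kappa}$. The first claim will be that every $V$-$1$-club remains a $V^{\P_\kappa}$-$1$-club; the second will be that every set in the $V^{\P_\kappa}$-weakly-compact filter contains a $V$-$1$-club. From the first, any $V$-non-weakly-compact $Y$ is disjoint from some $V$-$1$-club which remains a $V^{\P_\kappa}$-$1$-club, so $Y$ remains non-weakly-compact in $V^{\P_\kappa}$, yielding $\overline{\check{\Pi}^1_1(\kappa)} \subseteq \dot{\Pi}^1_1(\kappa)$ by ideal closure. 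From the second, if $X \in V^{\P_\kappa}$ meets every $V$-$1$-club (equivalently, $X \notin \overline{\check{\Pi}^1_1(\kappa)}$), then $X$ meets every $V^{\P_\kappa}$-$1$-club (each of which contains a $V$-$1$-club), so $X$ is weakly compact in $V^{\P_\kappa}$; contrapositively this yields the reverse inclusion.

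\textbf{Paragraph 2 (the preservation claim):} I will verify preservation of weak compactness and of $1$-closure for a $V$-$1$-club $C$ separately. Preservation of weak compactness will proceed via the standard Silver-style lifting: given a target $A \in V^{\P_\kappa}$, first choose a $\kappa$-model $M_0 \in V$ with nice names for $A$, $C$, and $\P_\kappa$; apply the weak compactness of $C$ in $V$ to obtain $j_0 \colon M_0 \to N_0$ with $\crit(j_0) = \kappa$ and $\kappa \in j_0(C)$; then lift to $\tilde{j} \colon M_0[G_\kappa] \to N_0[G_\kappa \ast H]$ by constructing $H \in V[G_\kappa]$ generic over $N_0[G_\kappa]$ for the tail $j_0(\P_\kappa)/G_\kappa$, using the $\kappa$-strategic closure of that tail in $N_0[G_\kappa]$ to enumerate and meet its $\kappa$-many dense sets. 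Preservation of $1$-closure will follow from the fact that good Easton-support iterations of $\alpha$-strategically closed posets preserve inaccessibility and stationarity below $\kappa$: if $\alpha < \kappa$ is inaccessible in $V^{\P_\kappa}$ with $C \cap \alpha$ stationary in $V^{\P_\kappa}$, then $\alpha$ is inaccessible in $V$ (using $\kappa$-c.c.\ and the bounding of $\P_\alpha$) and $C \cap \alpha$ is stationary in $V$ (since $V$-clubs in $\alpha$ remain $V^{\P_\kappa}$-clubs), so $\alpha \in C$ by the $V$-$1$-closure of $C$.

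\textbf{Paragraph 3 (the hard claim and the main obstacle):} For the second claim, by Lemma \ref{lemma_indescribable_filter} applied in $V^{\P_\kappa}$, it will suffice to show that each reflection set $Z_{A,\varphi} := \{\alpha < \kappa \st (V_\alpha, \in, A \cap V_\alpha) \models \varphi\}$ (in $V^{\P_\kappa}$) contains a $V$-$1$-club, for any $A \in V^{\P_\kappa}$ and $\Pi^1_1$-sentence $\varphi$ with $(V_\kappa, \in, A) \models \varphi$ in $V^{\P_\kappa}$. The plan is to take a nice $\P_\kappa$-name $\dot{A} \in V$ for $A$ and a condition $p \in G_\kappa$ forcing $(V_{\check{\kappa}}, \dot{A}) \models \varphi$ over $\P_\kappa$, and then observe that the $V$-statement ``$p \forces_{\P_\kappa} (V_{\check{\kappa}}, \dot{A}) \models \varphi$'' is itself $\Pi^1_1$ over $V_\kappa^V$ with parameters $p, \dot{A}, \P_\kappa \in V_{\kappa+1}^V$, since $\varphi = \forall X\, \psi(X)$ unfolds into a universal second-order quantifier over $\P_\kappa$-names for subsets of $V_\kappa$ with a first-order inner forcing relation. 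By weak compactness of $\kappa$ in $V$ combined with Lemma \ref{lemma_indescribable_filter}, this statement would reflect to a $V$-$1$-club $C$ of $\alpha$ at which the analogous ``$p \restriction \P_\alpha \forces_{\P_\alpha} (V_{\check{\alpha}}, \dot{A} \cap V_\alpha) \models \varphi$'' holds, using $\P_\kappa \cap V_\alpha = \P_\alpha$ from the Easton-support structure. The remaining step would then transfer this reflected $\P_\alpha$-forcing conclusion into the full extension: for $\alpha \in C$ we have $p \restriction \P_\alpha \in G_\alpha$, and the $\P_\alpha$-truth of $(V_\alpha, \in, A \cap V_\alpha) \models \varphi$ must be propagated to the $\P_\kappa$-truth in $V^{\P_\kappa}$. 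This transfer is the main technical obstacle, since the tail $\P_{[\alpha,\kappa)}$ (which is $\alpha$-strategically closed in $V^{\P_\alpha}$) could in principle add new subsets of $V_\alpha$ witnessing a failure of the $\Pi^1_1$-sentence at level $\alpha$; the delicate point will be that the reflected statement is \emph{not} merely $(V_\alpha, \in, A \cap V_\alpha) \models \varphi$ in $V^{\P_\alpha}$ but the stronger assertion that $p \restriction \P_\alpha$ \emph{forces} it, which, combined with a careful analysis of the interaction between the tail's strategic closure and the $\Pi^1_1$-satisfaction, ultimately yields $\alpha \in Z_{A,\varphi}$ and completes the proof.
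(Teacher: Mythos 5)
Your overall route is genuinely different from the paper's, and the first direction is fine: $\overline{\check{\Pi}^1_1(\kappa)}\subseteq\dot{\Pi}^1_1(\kappa)$ follows exactly as you say from the preservation of $V$-$1$-clubs (indeed you are over-proving here, since a $1$-club need only be stationary and $1$-closed, not weakly compact, so $\kappa$-c.c.\ plus downward absoluteness of ``inaccessible and stationary'' suffices; the paper dispatches this direction in one sentence). Your Paragraph 2 lifting argument is also sound and is needed anyway to see that $\kappa$ stays weakly compact, so that Theorem \ref{theorem_n_club_characterization} applies in $V^{\P_\kappa}$.

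The genuine gap is exactly the step you flag at the end of Paragraph 3, and it is not a routine ``careful analysis'': it is the entire content of the hard direction, and the mechanism you propose does not reach it. Reflecting the $\Pi^1_1$ statement ``$p\forces_{\P_\kappa}(V_\kappa,\dot{A})\models\varphi$'' down to $\alpha$ yields only ``$p\restrict\P_\alpha\forces_{\P_\alpha}(V_\alpha,\dot{A}\cap V_\alpha)\models\varphi$'', i.e.\ $\varphi$ holds at $\alpha$ with the second-order quantifier ranging over $P(V_\alpha)^{V[G_\alpha]}$: the reflection of a second-order quantifier over $\P_\kappa$-names is a quantifier over $\P_\alpha$-names only. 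But the tail $\P_\kappa/G_\alpha$ is merely $\alpha$-strategically closed, hence ${<}\alpha$-distributive but not ${\leq}\alpha$-distributive, so it can and typically does add new subsets of $V_\alpha$ (already $\dot{\Q}_\alpha$ can be $\Add(\alpha,1)$), and a universal second-order sentence verified in $V[G_\alpha]$ can be falsified by such a new set --- for instance, an $\alpha$-strategically closed poset can add a cofinal branch to an $\alpha$-tree that is branchless in $V[G_\alpha]$, killing a $\Pi^1_1$ truth. Knowing that $p\restrict\P_\alpha$ \emph{forces} $\varphi$ over $\P_\alpha$ gives no control over these tail-added witnesses, so the set of $\alpha$ where the transfer succeeds is not identified by your argument. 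The paper avoids this problem entirely by never reflecting below $\kappa$: it uses the fullness principle to fix a single name $\dot{A}$ witnessing (via the $\kappa$-model embedding characterization) that $\dot{X}$ is forced non--weakly compact, sets $B=\{\alpha<\kappa\st\exists q\leq p\ (q\forces_{\P_\kappa}\alpha\in\dot{X})\}$, and shows $B\in\Pi^1_1(\kappa)^V$ by lifting an embedding $j:M\to N$ with $\kappa\in j(B)$ through $\P_\kappa$, building a generic for $j(\P_\kappa)/G$ over $N[G]$ using $\kappa$-strategic closure of that quotient \emph{in $N[G]$}. There the strategic closure is used to construct a tail generic for the image forcing above $\kappa$, where no residual second-order quantifier remains to be transferred; that is the move your reflection-based approach is missing, and without it (or a substitute) the proof is incomplete.
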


\begin{proof}
Since ground-model $1$-club subsets of $\kappa$ remain $1$-club after $\kappa$-c.c. forcing, it is easy to see that $\forces_{\P_\kappa}$ $\dot{\Pi}^1_1(\kappa)\supseteq \overline{\check{\Pi}^1_1(\kappa)}$.

To show that $\forces_{\P_\kappa}$ $\dot{\Pi}^1_1(\kappa)\subseteq\overline{\check{\Pi}^1_1(\kappa)}$, we will show that if $p\forces_{\P_\kappa}$ $\dot{X}\in \dot{\Pi}^1_1(\kappa)$, then there is $B\in\Pi^1_1(\kappa)^V$ with $p\forces_{\P_\kappa}$ $\dot{X}\subseteq B$. Suppose $p\forces_{\P_\kappa}$ $\dot{X}\in \dot{\Pi}^1_1(\kappa)$. Since $\P_\kappa\subseteq V_\kappa$ is $\kappa$-c.c. we may assume that $\dot{X}\in H_{\kappa^+}$. By the fullness principle, take a $\P_\kappa$-name $\dot{A}\in H_{\kappa^+}$ for a subset of $\kappa$ such that
\begin{align*}
p\forces_{\P_\kappa} & \text{``for every $\kappa$-model $M$ with $\kappa,\dot{A},\dot{X}\in M$ and for every}\tag{$*$}\\
 & \text{elementary embedding $j:M\to N$ where $N$ is a $\kappa$-model}\\
 & \text{we have $\kappa\notin j(\dot{X})$''}
\end{align*}
Let $B=\{\alpha<\kappa\st\exists q\in\P_\kappa\ (q\leq p)\land (q\forces_{\P_\kappa} \alpha\in \dot{X})\}$ and notice that $B\in V$ and $p\forces_{\P_\kappa}$ $\dot{X}\subseteq B$. Thus, to complete the proof it will suffice to show that $B\in\Pi^1_1(\kappa)^V$.

Suppose $B\notin\Pi^1_1(\kappa)^V$. Using the weak compactness of $B$ in $V$, let $M$ be a $\kappa$-model with $\kappa,B,\dot{A},\P_\kappa,\dot{X},p,\ldots\in M$ and let $j:M\to N$ be an elementary embedding with critical point $\kappa$ such that $\kappa\in j(B)$ where $N$ is a $\kappa$-model. Since $\kappa\in j(B)$, it follows by elementarity that there is a condition $r\in j(\P_\kappa)$ with $r\leq j(p)=p$ such that $r\forces_{j(\P_\kappa)}\kappa\in j(\dot{X})$. Let $G\subseteq\P_\kappa$ be generic over $V$ with $r\restrict\kappa\in G$. Since $\P_\kappa$ is $\kappa$-c.c. the model $N[G]$ is closed under ${<}\kappa$-sequences in $V[G]$. Furthermore, the poset $j(\P_\kappa)/G$ is $\kappa$-strategically closed in $N[G]$. Thus, working in $V[G]$ we can build a filter $H\subseteq j(\P_\kappa)/G$ which is generic over $N[G]$ with $r/G\in H$. Let $\hat{G}$ denote the filter for $j(\P_\kappa)$ obtained from $G*H$ and notice that $r\in \hat{G}$. Since conditions in $\P_\kappa$ have support bounded below the critical point of $j$, it follows that $j[G]\subseteq \hat{G}$. Thus the embedding extends to $j:M[G]\to N[\hat{G}]$. Since $r\in \hat{G}$ and $r\forces_{j(\P_\kappa)}$ $\kappa\in j(\dot{X})$, we have $\kappa\in j(\dot{X}_G)$. Notice that $p\in G$; this contradicts ($*$) since $M[G]$ and $N[\hat{G}]$ are $\kappa$-models.
\end{proof}

See Section \ref{section_questions} for some relevant open questions.

\subsection{A $\square(\kappa)$-like principle consistent with weak compactness}

Recall that Todor\v{c}evi\'{c}'s principle $\square(\kappa)$ \cite{MR908147} implies that $\kappa$ is not weakly compact. In this section we survey some results from \cite{CodyGitmanLambieHanson}, which concern forcing a $\square(\kappa)$-like principle to hold at a weakly compact cardinal $\kappa$.

Recall that Todor\v{c}evi\'{c}'s principle $\square(\kappa)$ asserts that there is a $\kappa$-length coherent sequence of clubs $\vec{C}=\<C_\alpha\st\alpha\in\lim(\kappa)\>$ that cannot be threaded. For an uncountable cardinal $\kappa$, a sequence $\vec{C}=\<C_\alpha\st\alpha\in\lim(\kappa)\>$ of clubs $C_\alpha\subseteq\alpha$ is called \emph{coherent} if whenever $\beta$ is a limit point of $C_\alpha$ we have $C_\beta=C_\alpha\cap\beta$. Given a coherent sequence $\vec{C}$, we say that $C$ is a \emph{thread} through $\vec{C}$ if $C$ is a club subset of $\kappa$ and $C\cap\alpha=C_\alpha$ for every limit point $\alpha$ of $C$. A coherent sequence $\vec{C}$ is called a \emph{$\square(\kappa)$-sequence} if it cannot be threaded. Gitman, Lambie-Hanson and the author \cite{CodyGitmanLambieHanson}, and independently Welch and Brickhill \cite{BrickhillWelch}, introduced generalizations of $\square(\kappa)$, by replacing the use of clubs with $n$-clubs. The Brickhill-Welch principle $\square^n(\kappa)$ is defined differently from the principle $\square_n(\kappa)$ from \cite{CodyGitmanLambieHanson} which we present here. Although it is easy to see that when $\kappa$ is $\Pi^1_n$-indescribable the Brickhill-Welch principle $\square^n(\kappa)$ implies $\square_n(\kappa)$, but it is not known whether the converse holds. See \cite{CodyGitmanLambieHanson} for more information on the relationship between $\square^n(\kappa)$ and $\square_n(\kappa)$.

For $n<\omega$ and $X\subseteq\kappa$, we define the \emph{$n$-trace of $X$} to be
\[\Tr_n(X)=\{\alpha<\kappa\st X\cap\alpha\in \Pi^1_n(\alpha)^+\}.\]
Notice that when $X=\kappa$, $\Tr_n(\kappa)$ is the set of $\Pi^1_n$-indescribable cardinals below $\kappa$, and in particular $\Tr_0(\kappa)$ is the set of inaccessible cardinals less than $\kappa$.
For uniformity of notation, let us say that an ordinal $\alpha$ is \emph{$\Pi^1_{-1}$-indescribable} if it is a limit ordinal, and if $\alpha$ is a limit ordinal, $S \subseteq \alpha$ is
\emph{$\Pi^1_{-1}$-indescribable} if it is unbounded in $\alpha$. Thus, if $X \subseteq \kappa$, then 
\[\Tr_{-1}(\kappa) = \{\alpha < \kappa \st \text{$\alpha$ is a limit ordinal and $\sup(X \cap \alpha) = \alpha$}\}.\]

\begin{definition} \label{definition_n_square}
Suppose $n<\omega$ and $\Tr_{n-1}(\kappa)$ is cofinal in $\kappa$. A sequence $\vec{C}=\<C_\alpha\st\alpha\in \Tr_{n-1}(\kappa)\>$ is called a \emph{coherent sequence of $n$-clubs} if
\begin{enumerate}
\item for all $\alpha\in \Tr_{n-1}(\kappa)$, $C_\alpha$ is an $n$-club subset of $\alpha$ and
\item for all $\alpha<\beta$ in $\Tr_{n-1}(\kappa)$, $C_\beta\cap\alpha\in\Pi^1_{n-1}(\alpha)^+$ implies $C_\alpha=C_\beta\cap\alpha$.
\end{enumerate}
We say that a set $C\subseteq\kappa$ is a \emph{thread} through a coherent sequence of $n$-clubs $$\vec{C}=\<C_\alpha\st\alpha\in \Tr_{n-1}(\kappa)\>$$ if $C$ is $n$-club and for all $\alpha\in \Tr_{n-1}(\kappa)$, $C\cap\alpha\in\Pi^1_{n-1}(\alpha)^+$ implies $C_\alpha=C\cap\alpha$.
A coherent sequence of $n$-clubs $\vec{C}=\<C_\alpha\st\alpha\in \Tr_{n-1}(\kappa)\>$ is called a \emph{$\square_n(\kappa)$-sequence} if there is no thread through $\vec{C}$. We say that $\square_n(\kappa)$ holds if there is a $\square_n(\kappa)$-sequence $\vec{C}=\<C_\alpha\st\alpha\in\Tr_{n-1}(\kappa)\>$.
\end{definition}

\begin{remark}
Note that $\square_0(\kappa)$ is simply $\square(\kappa)$. For $n=1$, the principle $\square_1(\kappa)$ states that there is a coherent sequence of $1$-clubs \[\< C_\alpha\st\alpha < \kappa \text{ is inaccessible}\>\] that cannot be threaded.
\end{remark}

Generalizing the fact that $\square(\kappa)$ implies $\kappa$ is not weakly compact, we prove the following fact from \cite{CodyGitmanLambieHanson}.

\begin{proposition}\label{proposition_n_square_denies_n_plus_1_indescribability}
For every $n<\omega$, $\square_n(\kappa)$ implies that $\kappa$ is not $\Pi^1_{n+1}$-indescriba\-ble.
\end{proposition}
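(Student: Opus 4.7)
The plan is to suppose, toward contradiction, that $\kappa$ is $\Pi^1_{n+1}$-indescribable and that $\vec{C}=\langle C_\alpha \mid \alpha \in \Tr_{n-1}(\kappa)\rangle$ witnesses $\square_n(\kappa)$, and then to use a $\Pi^1_n$-correct small embedding coming from $\Pi^1_{n+1}$-indescribability to produce a thread through $\vec{C}$, contradicting unthreadability. This is the natural generalization of the classical argument that $\square(\kappa)$ precludes weak compactness (the case $n=0$), with clubs replaced by $n$-clubs, weak compactness replaced by $\Pi^1_{n+1}$-indescribability, and plain $\kappa$-model embeddings replaced by the $\Pi^1_n$-correct embeddings of Theorem \ref{theorem_hauser}.

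First I would apply Theorem \ref{theorem_hauser} to the set $S=\Tr_{n-1}(\kappa)$. Since ``$\alpha$ is $\Pi^1_{n-1}$-indescribable'' is a $\Pi^1_n$-statement over $V_\alpha$, Lemma \ref{lemma_indescribable_filter} gives $S\in\Pi^1_n(\kappa)^*$, and since $\kappa$ is $\Pi^1_{n+1}$-indescribable this entails $S\in\Pi^1_{n+1}(\kappa)^+$. Hauser's theorem then yields a $\kappa$-model $M$ with $\vec{C},S\in M$, a $\Pi^1_n$-correct $\kappa$-model $N$, and an elementary embedding $j\colon M\to N$ with $\crit(j)=\kappa$ and $\kappa\in j(S)$; arranging $M$ as the transitive collapse of a suitable elementary submodel of a sufficiently large $H(\theta)$, we may further assume that $M$ sees $\vec{C}$ as a coherent sequence of $n$-clubs with domain $S$ (for $n=0$ one uses instead the standard $\kappa$-model characterization of weak compactness). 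Set $D:=j(\vec{C})(\kappa)$, which is well-defined in $N$ since $\kappa\in j(S)=j(\dom\vec{C})$. By elementarity, $N$ believes $D$ is $\Pi^1_{n-1}$-indescribable in $\kappa$ and $(n-1)$-closed below $\kappa$; both are $\Pi^1_n$-statements over $V_\kappa$ with parameter $D$, so by the $\Pi^1_n$-correctness of $N$ they hold in $V$, making $D$ an $n$-club in $V$.

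It remains to show $D$ threads $\vec{C}$. Fix $\alpha\in\Tr_{n-1}(\kappa)^V$ with $D\cap\alpha\in\Pi^1_{n-1}(\alpha)^+$ in $V$; both conditions are $\Pi^1_n$ over $V_\kappa$ and therefore transfer to $N$. Since $\alpha<\kappa=\crit(j)$, we have $j(\alpha)=\alpha\in j(S)$, and the coherence of $j(\vec{C})$ in $N$ (inherited via elementarity from the coherence of $\vec{C}$ in $M$) gives $j(\vec{C})(\alpha)=j(\vec{C})(\kappa)\cap\alpha=D\cap\alpha$. But $j$ fixes $V_\kappa$, so $j(\vec{C})(\alpha)=\vec{C}(\alpha)=C_\alpha$, whence $C_\alpha=D\cap\alpha$ and $D$ is a thread through $\vec{C}$, contradicting $\square_n(\kappa)$. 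The main obstacle is the complexity bookkeeping: verifying that ``$D$ is $\Pi^1_{n-1}$-indescribable in $\kappa$'', ``$D$ is $(n-1)$-closed below $\kappa$'', and ``$D\cap\alpha\in\Pi^1_{n-1}(\alpha)^+$'' can all be expressed as $\Pi^1_n$-sentences over $V_\kappa$ with the appropriate parameters, so that $\Pi^1_n$-correctness of $N$ is exactly enough to transfer them; a secondary subtlety is arranging $M$ so that it recognizes the coherence of $\vec{C}$, allowing elementarity of $j$ to yield the coherence of $j(\vec{C})$ in $N$.
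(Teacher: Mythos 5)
Your proposal is correct and follows essentially the same route as the paper: apply Hauser's $\Pi^1_n$-correct embedding characterization of $\Pi^1_{n+1}$-indescribability, set $D=j(\vec{C})(\kappa)$, use $\Pi^1_n$-correctness of $N$ to see that $D$ is a genuine $n$-club in $V$, and use coherence of $j(\vec{C})$ together with $j\restrict V_\kappa=\id$ to conclude that $D$ threads $\vec{C}$. The only cosmetic difference is that you arrange $\kappa\in j(\Tr_{n-1}(\kappa))$ by applying the embedding to the positive set $\Tr_{n-1}(\kappa)$ directly, whereas the paper deduces $\kappa\in\Tr_{n-1}^N(j(\kappa))$ from the $\Pi^1_n$-correctness of $N$; both are fine.
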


\begin{proof}
Suppose $\vec{C}=\<C_\alpha\st\alpha\in\Tr_n(\kappa)\>$ is a $\square_n(\kappa)$-sequence and $\kappa$ is $\Pi^1_{n+1}$-indescr\-ibable. Let $M$ be a $\kappa$-model with $\vec{C}\in M$. Since $\kappa$ is $\Pi^1_{n+1}$-indescribable, we may let $j:M\to N$ be an elementary embedding with critical point $\kappa$ and a $\Pi^1_n$-correct $N$ as in Theorem \ref{theorem_hauser}(2). By elementarity, it follows that $j(\vec{C})=\<\bar{C}_\alpha\st\alpha\in\Tr_{n-1}^N(j(\kappa))\>$ is a $\square_n(j(\kappa))$-sequence in $N$. Since $N$ is $\Pi^1_n$-correct, we know that $\kappa \in \Tr_{n-1}^N(j(\kappa))$ and $\bar C_\kappa$ must also be $n$-club in $V$. Since $j(\vec{C})$ is a $\square_n(j(\kappa))$-sequence in $N$, it follows that for every $\Pi^1_{n-1}$-indescribable $\alpha<\kappa$ if $\bar{C}_\kappa\cap\alpha\in\Pi^1_{n-1}(\alpha)^+$, then $\bar{C}_\kappa\cap\alpha=C_\alpha$, and hence $\bar{C}_\kappa$ is a thread through $\vec{C}$, a contradiction.
\end{proof}

\begin{remark}
It follows easily that $\square_1(\kappa)$ holds trivially at weakly compact cardinals $\kappa$ below which stationary reflection fails (see \cite{CodyGitmanLambieHanson}). Thus, the task at hand  is not just to force $\square_1(\kappa)$ to hold while $\kappa$ is weakly compact, but to force $\square_1(\kappa)$ to hold while $\kappa$ is weakly compact and stationary reflection holds often below $\kappa$, so that the coherence requirement in $\square_1(\kappa)$ is nontrivial.
\end{remark}

By adapting techniques used to force the existence of $\square(\kappa)$-sequences, Gitman, Lambie-Hanson and the author \cite{CodyGitmanLambieHanson} proved the following two results.

\begin{theorem}[Cody, Gitman and Lambie-Hanson \cite{CodyGitmanLambieHanson}]\label{theorem_1_square_wc}
If $\kappa$ is $\kappa^+$-weakly compact and the $\GCH$ holds, then there is a cofinality-preserving forcing extension in which
\begin{enumerate}
\item $\kappa$ remains $\kappa^+$-weakly compact and
\item $\square_1(\kappa)$ holds.
\end{enumerate}
\end{theorem}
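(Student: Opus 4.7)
The plan is to adapt the classical forcing construction that adds a $\square(\kappa)$-sequence at a Mahlo cardinal to the weakly compact, $1$-club setting. I would define $\P$ to be the poset whose conditions are coherent sequences of $1$-clubs $\vec{c} = \langle c_\alpha : \alpha \in \Tr_0(\beta+1)\rangle$ for some inaccessible $\beta < \kappa$, ordered by end-extension. The two main properties of $\P$ to establish are: (i) $\P$ is $\kappa$-strategically closed, so cofinalities below $\kappa$ are preserved and no bounded subsets of $\kappa$ are added; and (ii) $\P$ has the $\kappa^+$-cc via a $\Delta$-system argument using $\GCH$, so cofinalities at and above $\kappa^+$ are preserved.

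For $\kappa$-strategic closure, at a limit stage of play of cofinality $\delta < \kappa$ with top $\beta^* < \kappa$, Player II takes the pointwise union of the play and, when $\beta^*$ is inaccessible, appends a top-level $1$-club $c^*_{\beta^*}$ built from cofinal closure points accumulated during the play. The verification that $c^*_{\beta^*}$ is genuinely a $1$-club proceeds analogously to the proof of strategic closure of $T^1(A)$ given just before Theorem \ref{theorem_n_club_shooting}: Player II bookkeeps so that the added cofinal sequence is not stationary at any earlier-committed $\alpha \in \Tr_0(\beta^*)$ where coherence would be violated. Density of conditions with arbitrarily large inaccessible top then follows by applying the strategy.

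To show that the generic $\vec{C}$ has no thread, I would use the standard mutual genericity trick. If $\dot{C}$ were a $\P$-name for a thread, then forcing with $\P \times \P$ would produce mutually generic sequences $\vec{C}_0$ and $\vec{C}_1$ together with threads $C_0$ and $C_1$ obtained by the two projections. On a tail of inaccessible $\alpha < \kappa$, both $C_0 \cap \alpha$ and $C_1 \cap \alpha$ are stationary in $\alpha$, so by the thread property applied to each sequence we get $C^{(0)}_\alpha = C_0 \cap \alpha = C_1 \cap \alpha = C^{(1)}_\alpha$ for cofinally many $\alpha$; hence $\vec{C}_0$ and $\vec{C}_1$ agree on a cofinal set of coordinates and therefore everywhere, contradicting mutual genericity.

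Finally, to verify that $\kappa$ remains $\kappa^+$-weakly compact, I would lift witnessing embeddings for each $\xi < \kappa^+$. Given a $\kappa$-model $M$ with $\P, \xi \in M$ and an elementary embedding $j : M \to N$ with $N$ a $\kappa$-model witnessing $\kappa \in \Tr^\xi_1([\kappa]_1)^M$, the tail $j(\P)/G$ is $\kappa$-strategically closed in $N[G]$; since $\P$ has size $\kappa$ and the $\kappa^+$-cc, enough closure survives in $V[G]$ to build a generic $\bar{G} \subseteq j(\P)$ over $N[G]$ below a master condition obtained by topping $\bigcup G$ with a $1$-club of $\kappa$ produced using the ground-model weak compactness. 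Standard lifting yields $\bar{j} : M[G] \to N[\bar{G}]$. The main obstacle is to coordinate the lifting uniformly across all $\xi < \kappa^+$, which amounts to showing that the iteration of $\Tr_1$ is computed the same in $V[G]$ as in $V$: no new $\Pi^1_1$-indescribable sets appear by the $\kappa^+$-cc and strategic closure, and old ones stay $\Pi^1_1$-indescribable by the lifting argument. The essential input here is the full $\kappa^+$-weak compactness of $\kappa$ in $V$ together with the good behavior of $\P$ on $H(\kappa^+)$.
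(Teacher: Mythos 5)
This theorem is stated in the paper only as a survey item with the proof deferred to \cite{CodyGitmanLambieHanson}, so I am comparing your outline against the cited construction. Your overall architecture (forcing with initial segments of a coherent sequence of $1$-clubs, $\kappa$-strategic closure via the bookkeeping of closure points as in the lemma on $T^n(A)$, and a lifting argument for preservation) is the right shape, and note that the $\kappa^+$-cc is automatic since $|\P|=\kappa$ for inaccessible $\kappa$ -- no $\Delta$-system is needed. However, your non-threading argument has a genuine gap. First, the claim that $C_0\cap\alpha$ and $C_1\cap\alpha$ are stationary in $\alpha$ for a tail (or even for cofinally many) inaccessible $\alpha<\kappa$ is unjustified: a $1$-club in $\kappa$ need not reflect at any $\alpha$ at all, and indeed the remark following Proposition \ref{proposition_n_square_denies_n_plus_1_indescribability} points out that when stationary reflection fails below $\kappa$ the coherence and threading conditions become vacuous. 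Second, even at an $\alpha$ where both $C_0\cap\alpha$ and $C_1\cap\alpha$ happen to be stationary, the thread property only gives $C^{(0)}_\alpha=C_0\cap\alpha$ and $C^{(1)}_\alpha=C_1\cap\alpha$; your middle equality $C_0\cap\alpha=C_1\cap\alpha$ is a non sequitur, since the two threads pass through different generic sequences and have no reason to agree anywhere. The actual argument must use the weak compactness of $\kappa$ itself: roughly, one reflects the name for the putative thread through an embedding $j:M\to N$ so that an $M$-generic play of length $\kappa$ decides the thread's trace on $\kappa$ to be a set that is \emph{stationary in $\kappa$} from $N$'s point of view, and then seals the construction with two incompatible top elements at $\kappa$, contradicting the fact that the thread property forces the top element to equal the decided trace.

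A second, related gap is in the lifting. The master condition you propose -- ``topping $\bigcup G$ with a $1$-club of $\kappa$'' -- must satisfy the coherence clause of Definition \ref{definition_n_square}: at every inaccessible $\alpha<\kappa$ where the appended top $C_\kappa$ reflects, you need $C_\kappa\cap\alpha=C^G_\alpha$. A top element that reflects often and agrees with $\vec{C}_G$ wherever it reflects is essentially a thread, which your non-threading argument (once repaired) rules out; so the top element must instead be a $1$-closed set, stationary in $N[G]$'s sense, whose trace on every inaccessible $\alpha<\kappa$ is nonstationary. Producing such a set in $N[G]$ is a nontrivial step (in the cited construction it is extracted from the tops of conditions in $G$ together with the strategy's closure points), and ``produced using the ground-model weak compactness'' does not engage with this coherence constraint. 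Finally, the assertion that ``no new $\Pi^1_1$-indescribable sets appear by the $\kappa^+$-cc and strategic closure'' also requires proof rather than citation of general principles: the paper itself records that $\kappa$-strategically closed forcing \emph{can} create new $\Pi^1_2$-indescribable sets, so the corresponding fact for weakly compact sets is a specific theorem (proved by a lifting/pull-back argument), not a formal consequence of the chain condition.
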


\begin{theorem}[Cody, Gitman and Lambie-Hanson \cite{CodyGitmanLambieHanson}]\label{theorem_1_square_and_reflection}
Suppose that $\kappa$ is $\Pi^1_2$-indescribable and the $\GCH$ holds. Then there is a cofinality-preserving forcing extension in which
\begin{enumerate}
\item $\square_1(\kappa)$ holds,
\item $\Refl_1(\kappa)$ holds and
\item $\kappa$ is $\kappa^+$-weakly compact.
\end{enumerate}
\end{theorem}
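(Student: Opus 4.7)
The plan is to apply (a refinement of) the forcing construction from Theorem \ref{theorem_1_square_wc}, starting now from the stronger hypothesis that $\kappa$ is $\Pi^1_2$-indescribable, and verify that this strengthened assumption yields $\Refl_1(\kappa)$ in the extension in addition to properties (1) and (3). As in Theorem \ref{theorem_1_square_wc}, I would take $\mathbb{P} = \mathbb{P}_\kappa * \dot{\mathbb{S}}(\kappa)$, where $\mathbb{P}_\kappa$ is an Easton-support preparation arranging enough reflection and threading structure below $\kappa$, and $\dot{\mathbb{S}}(\kappa)$ is the natural forcing to add a $\square_1(\kappa)$-sequence by bounded initial segments. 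The poset $\dot{\mathbb{S}}(\kappa)$ admits a threading poset $\dot{\mathbb{T}}$, and the two-step $\mathbb{P} * \dot{\mathbb{T}}$ is projection-equivalent to a forcing with sufficient chain condition and strategic closure to admit standard embedding-lifting arguments. From this one immediately obtains cofinality preservation, the $\kappa^+$-weak compactness of $\kappa$ in $V^{\mathbb{P}}$, and the existence of the $\square_1(\kappa)$-sequence, following Theorem \ref{theorem_1_square_wc} verbatim.

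To verify $\Refl_1(\kappa)$ in $V^{\mathbb{P}}$, suppose $G \subseteq \mathbb{P}$ is generic and $S \in V[G]$ is weakly compact in $\kappa$. Pick a $\kappa$-model $M \in V$ containing $\mathbb{P}$, a $\mathbb{P}$-name $\dot S$ for $S$, and a condition $p \in G$ forcing $\dot S$ to be weakly compact. Using the $\Pi^1_2$-indescribability of $\kappa$ and Theorem \ref{theorem_hauser}(3), obtain an elementary embedding $j : M \to N$ with $\crit(j) = \kappa$, where $N$ is a $\Pi^1_1$-correct $\kappa$-model with $M, j \in N$. Using the threading poset, one lifts $j$ to $j : M[G] \to N[\hat G]$ in $V[G]$ (after auxiliary forcing with $\dot{\mathbb{T}}$), for $\hat G$ an appropriate $N$-generic filter for $j(\mathbb{P})$ with $j[G] \subseteq \hat G$. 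By Theorem \ref{theorem_weakly_compact_ideal_after_forcing}, the weakly compact ideals of $V[G]$ and $N[\hat G]$ on $\kappa$ are generated by their ground-model counterparts, which agree via the $\Pi^1_1$-correctness of $N$; consequently the weak compactness of $S = \dot S_G$ in $V[G]$ transfers to the assertion that $j(\dot S)_{\hat G} \cap \kappa$ is weakly compact in $\kappa$ in $N[\hat G]$. By elementarity of $j$, there is then some $\alpha < \kappa$ in $M[G]$, hence in $V[G]$, with $S \cap \alpha$ weakly compact in $\alpha$, which is exactly what $\Refl_1(\kappa)$ requires.

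The main obstacle is the apparent tension between properties (1) and (2): by Proposition \ref{proposition_n_square_denies_n_plus_1_indescribability}, $\square_1(\kappa)$ in $V[G]$ forces $\kappa$ to fail $\Pi^1_2$-indescribability in $V[G]$, yet $\Refl_1(\kappa)$ is (in $L$) equivalent to $\Pi^1_2$-indescribability. The resolution is that the lifting argument above uses only $\Pi^1_1$-correct embeddings whose targets do not detect the $\Pi^1_2$ assertion ``there is no $\square_1(\kappa)$-sequence''; such embeddings suffice to witness the reflection of individual weakly compact subsets of $\kappa$ (via Theorem \ref{theorem_weakly_compact_ideal_after_forcing}), but are too weak to recover full $\Pi^1_2$-indescribability in $V[G]$. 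A secondary technical point is ensuring that the preparation $\mathbb{P}_\kappa$ neither destroys too much reflection below $\kappa$ nor obstructs the threading lift, which is where the $\GCH$ hypothesis and the precise design of $\mathbb{P}_\kappa$ enter.
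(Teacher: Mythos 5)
The paper itself offers no proof of this theorem: it is a survey statement cited to \cite{CodyGitmanLambieHanson}, so I can only measure your proposal against what that argument has to accomplish. Your architecture is the right one — reuse the forcing of Theorem~\ref{theorem_1_square_wc}, use the $\Pi^1_2$-indescribability of $\kappa$ via Theorem~\ref{theorem_hauser} to obtain embeddings $j:M\to N$ with $\Pi^1_1$-correct targets, lift through the threading poset, and conclude reflection by elementarity — and your diagnosis of how $\square_1(\kappa)$ and $\Refl_1(\kappa)$ coexist (the witnessing embeddings are only $\Pi^1_1$-correct, so they cannot see the $\Pi^1_2$ failure caused by the square sequence) is exactly the right conceptual point.

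However, the central step is asserted rather than proved, and the justification you give for it does not work. You claim that Theorem~\ref{theorem_weakly_compact_ideal_after_forcing} shows the weakly compact ideals of $V[G]$ and $N[\hat G]$ are generated by their ground-model counterparts and hence that the weak compactness of $S$ transfers to $N[\hat G]$. But Theorem~\ref{theorem_weakly_compact_ideal_after_forcing} applies only to good Easton-support iterations of length $\kappa$, i.e.\ to forcing done strictly below $\kappa$; it says nothing about the stage-$\kappa$ poset $\dot{\S}(\kappa)$, whose effect on the weakly compact ideal is precisely the hard part of \cite{CodyGitmanLambieHanson} and not something you may import. Moreover, even granting control of the ideals, the transfer you need is the genuinely nontrivial direction of correctness: ``$S$ is weakly compact in $\kappa$'' is $\Pi^1_2$ over $V_\kappa$, and to push it \emph{down} into the small model $N[\hat G]$ you must show that every $\Pi^1_1$ statement over $(V_\kappa,\in,A)$ with $A\in N[\hat G]$ that holds in $N[\hat G]$ also holds in $V[G]$ — that is, that $N[\hat G]$ remains $\Pi^1_1$-correct at $\kappa$ relative to $V[G]$. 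This does not follow from the $\Pi^1_1$-correctness of $N$ in $V$ (the ground-model ideals of $V$ and $N$ do not ``agree''; $N$ has size $\kappa$, and correctness can be destroyed by forcing), and establishing it for the composition of $\P_\kappa$, $\dot\S(\kappa)$, and the tail of $j(\P)$ in $N$ is the heart of the matter. Until that correctness-preservation lemma is stated and proved, the step ``$N[\hat G]\models$ $j(\dot S)_{\hat G}\cap\kappa$ is weakly compact'' — and with it the whole reflection argument — is a gap.
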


Additionally, one can prove \cite{CodyGitmanLambieHanson}, that $\square_n(\kappa)$ is incompatible with simultaneous reflection of $\Pi^1_n$-indescribable sets; a similar result was proven by Brickhill and Welch \cite{BrickhillWelch} using their principle $\square^n(\kappa)$.

\begin{theorem}\label{theorem_n_square_refutes_simultaneous_refl}
  Suppose that $1\leq n<\omega$, $\kappa$ is $\Pi^1_n$-indescribable and $\square_n(\kappa)$ holds. Then
  there are two $\Pi^1_n$-indescribable subsets $S_0, S_1 \subseteq \kappa$ that do not
  reflect simultaneously, i.e., there is no $\beta < \kappa$ such that
  $S_0 \cap \beta$ and $S_1 \cap \beta$ are both $\Pi^1_n$-indescribable subsets of $\beta$.
\end{theorem}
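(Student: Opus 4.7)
The plan is to construct two $\Pi^1_n$-indescribable subsets of $\kappa$ whose simultaneous reflection would be incompatible with the coherence of the $\square_n(\kappa)$-sequence. Fix a $\square_n(\kappa)$-sequence $\vec{C} = \langle C_\alpha : \alpha \in \Tr_{n-1}(\kappa) \rangle$ and, for each $\gamma < \kappa$, set
\[ A_\gamma := \{\alpha \in \Tr_{n-1}(\kappa) : \gamma < \alpha \text{ and } \gamma \in C_\alpha\}, \qquad B_\gamma := \{\alpha \in \Tr_{n-1}(\kappa) : \gamma < \alpha \text{ and } \gamma \notin C_\alpha\}. \]
The aim is to find some $\gamma < \kappa$ for which both $A_\gamma, B_\gamma \in \Pi^1_n(\kappa)^+$ and then take $S_0 := A_\gamma$ and $S_1 := B_\gamma$.

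Assuming such a $\gamma$ exists, the argument that $S_0, S_1$ cannot simultaneously reflect runs as follows. Suppose $\beta < \kappa$ satisfies $S_0 \cap \beta, S_1 \cap \beta \in \Pi^1_n(\beta)^+$. Then $\beta$ is $\Pi^1_n$-indescribable, hence $\Pi^1_{n-1}$-indescribable, so $\beta \in \Tr_{n-1}(\kappa)$ and $C_\beta$ is defined and $n$-club in $\beta$. The assertion that $C_\beta$ is $\Pi^1_{n-1}$-indescribable in $V_\beta$ is a $\Pi^1_n$-statement over $(V_\beta, \in, C_\beta)$, so Lemma \ref{lemma_indescribable_filter} gives
\[ L(C_\beta) := \{\alpha < \beta : C_\beta \cap \alpha \in \Pi^1_{n-1}(\alpha)^+\} \in \Pi^1_n(\beta)^*. \]
Thus $S_i \cap L(C_\beta) \cap (\gamma, \beta)$ is $\Pi^1_n$-indescribable, and in particular unbounded, in $\beta$ for $i = 0, 1$. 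Pick $\alpha_0 \in S_0 \cap L(C_\beta) \cap (\gamma, \beta)$ and $\alpha_1 \in S_1 \cap L(C_\beta) \cap (\gamma, \beta)$. Coherence of $\vec{C}$ gives $C_{\alpha_i} = C_\beta \cap \alpha_i$ for $i = 0, 1$, whence $\alpha_0 \in A_\gamma$ forces $\gamma \in C_\beta$ while $\alpha_1 \in B_\gamma$ forces $\gamma \notin C_\beta$ --- a contradiction.

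The main obstacle is producing the witnessing $\gamma$. Since $A_\gamma \sqcup B_\gamma = \Tr_{n-1}(\kappa) \cap (\gamma, \kappa) \in \Pi^1_n(\kappa)^*$, if no such $\gamma$ exists then for every $\gamma < \kappa$ exactly one of $A_\gamma, B_\gamma$ belongs to $\Pi^1_n(\kappa)^*$. Let $G := \{\gamma < \kappa : A_\gamma \in \Pi^1_n(\kappa)^*\}$. The strategy is to show $G$ is a thread through $\vec{C}$, contradicting $\square_n(\kappa)$. One must check that $G$ is $n$-club in $\kappa$ and that $G \cap \beta = C_\beta$ for every $\beta \in \Tr_{n-1}(\kappa)$ with $G \cap \beta \in \Pi^1_{n-1}(\beta)^+$. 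The inclusion $C_\beta \subseteq G \cap \beta$ would be obtained by propagating $\gamma \in C_\beta$ through coherent extensions $C_{\beta'}$ at levels $\beta' > \beta$ with $\beta \in L(C_{\beta'})$, forcing $A_\gamma$ into $\Pi^1_n(\kappa)^*$; the reverse inclusion follows from the dichotomy combined with the local reflection argument of the previous paragraph, since any $\gamma \in G \cap \beta$ has $A_\gamma \cap \beta \cap L(C_\beta) \neq \emptyset$ and thus $\gamma \in C_\beta$ via coherence. Carrying out these threading verifications carefully is the technical heart of the proof.
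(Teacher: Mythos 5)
Your overall strategy is the right one (and is the natural adaptation of the classical proof that $\square(\kappa)$ refutes simultaneous stationary reflection, which is how the cited source proceeds): define $A_\gamma$ and $B_\gamma$, show that a $\gamma$ with both sets $\Pi^1_n$-positive yields the desired pair, and otherwise extract a thread. Your first half is correct and essentially complete: $\Tr_{n-1}(\kappa)\cap(\gamma,\kappa)=A_\gamma\sqcup B_\gamma\in\Pi^1_n(\kappa)^*$, the set $L(C_\beta)$ lies in $\Pi^1_n(\beta)^*$ by Lemma \ref{lemma_indescribable_filter}, and the coherence argument at $\alpha_0,\alpha_1$ is airtight.

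The gap is in the threading step, and the specific route you sketch does not work. For the inclusion $C_\beta\subseteq G$, you propose to fix $\gamma\in C_\beta$ and propagate through levels $\beta'>\beta$ with $C_{\beta'}\cap\beta\in\Pi^1_{n-1}(\beta)^+$; but for a \emph{fixed} $\beta$ there is no reason the set of such $\beta'$ is $\Pi^1_n$-positive (a square-like sequence can easily have $C_{\beta'}\cap\beta$ small for almost all $\beta'$), so you cannot conclude $A_\gamma\in\Pi^1_n(\kappa)^*$ this way. For the reverse inclusion you assert that $\gamma\in G\cap\beta$ gives $A_\gamma\cap\beta\cap L(C_\beta)\neq\emptyset$; but $A_\gamma\in\Pi^1_n(\kappa)^*$ is a statement about $A_\gamma$ high up in $\kappa$ and gives no control whatsoever over $A_\gamma\cap\beta$ for a particular $\beta<\kappa$ (it could be empty). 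The missing idea is \emph{normality} of $\Pi^1_n(\kappa)$: let $W_\gamma$ be whichever of $A_\gamma$, $B_\gamma$ lies in $\Pi^1_n(\kappa)^*$ and set $W=\bigtriangleup_{\gamma<\kappa}W_\gamma\cap\Tr_{n-1}(\kappa)\in\Pi^1_n(\kappa)^*$. For every $\beta\in W$ and every $\gamma<\beta$ one then has $\beta\in W_\gamma$, which unravels to $\gamma\in C_\beta\iff\gamma\in G$, i.e., $C_\beta=G\cap\beta$ outright on a filter-large set of $\beta$. All three thread properties of $G$ now follow by comparing an arbitrary level $\alpha$ to some $\beta\in W$ \emph{above} it: $G$ is $\Pi^1_{n-1}$-indescribable in $\kappa$ because any $\Pi^1_{n-1}$-sentence true in $V_\kappa$ reflects to some $\beta\in W$ and then into $C_\beta=G\cap\beta$; $G$ is $n$-closed because $G\cap\alpha=C_\beta\cap\alpha$ and $C_\beta$ is $n$-closed below $\beta$; and the coherence clause for $G$ at $\alpha$ follows from the coherence of $\vec{C}$ at the pair $(\alpha,\beta)$. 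Without this diagonal-intersection step your argument does not close.
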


\section{Large cardinal ideals on $P_\kappa\lambda$}\label{section_p_kappa_lambda}








In this section we survey some results on two-cardinal versions of subtlety, ineffability and indescribability. Recall that, for cardinals $\kappa\leq\lambda$, Jech defined notions of closed unbounded and stationary subsets of $P_\kappa\lambda$. Carr \cite{MR667297} proved that, given a regular cardinal $\kappa$, Jech's nonstationary ideal $\NS_{\kappa,\lambda}$ is the minimal normal fine $\kappa$-complete ideal on $P_\kappa\lambda$. Using a two-cardinal version of the cumulative hierarchy up to $\kappa$, Baumgartner introduced a notion of $\Pi^1_n$-indescribability for sets $S\subseteq P_\kappa\lambda$ (see Section \ref{section_two_cardinal_indescribability} below), which led to the consideration of associated ideals
\[\Pi^1_n(\kappa,\lambda)=\{X\subseteq P_\kappa\lambda\st\text{$X$ is not $\Pi^1_n$-indescribable}\}.\]
Recall that a cardinal $\kappa$ is inaccessible if and only if it is $\Pi^1_0$-indescribable, and furthermore, when $\kappa$ is inaccessible we have $\Pi^1_0(\kappa)=\NS_\kappa$. Thus, one might expect that when $P_\kappa\lambda$ is $\Pi^1_0$-indescribable the $\Pi^1_0$-indescribable ideal $\Pi^1_0(\kappa,\lambda)$ equals Jech's nonstationary ideal on $P_\kappa\lambda$. However, this is not the case. The author proved \cite{MR4082998} that, when it is nontrivial, the ideal $\Pi^1_0(\kappa,\lambda)$ equals the minimal \emph{strongly normal} ideal on $P_\kappa\lambda$, which does not equal $\NS_\kappa$, and which consists of all non--\emph{strongly stationary} subsets of $P_\kappa\lambda$ (see Section \ref{section_strong_stationarity} for a discussion of strong stationarity and Section \ref{section_two_cardinal_indescribability} for more details regarding this result). This leads to a definition of \emph{$1$-club subset of $P_\kappa\lambda$} (see Definition \ref{definition_1_club} below), and the following characterization of $\Pi^1_1$-indescribable subsets of $P_\kappa\lambda$: when $P_\kappa\lambda$ is $\Pi^1_1$-indescribable, a set $S\subseteq P_\kappa\lambda$ is $\Pi^1_1$-indescribable if and only if $S\cap C\neq\emptyset$ for all $1$-clubs $C\subseteq P_\kappa\lambda$ (see Section \ref{section_two_cardinal_indescribability} below).

In Section \ref{section_strongly_subtle} we survey some results on two-cardinal versions of subtlety and ineffability.

Finally, in Section \ref{section_two_cardinal_generic_embeddings}, motivated by \cite{MR3913154}, we give generic embedding characterizations of two-cardinal indescribability, subtlety and ineffability.

\subsection{Stationary vs. strongly stationary subsets of $P_\kappa\lambda$}\label{section_strong_stationarity}


Throughout this section we assume $\kappa\leq\lambda$ are cardinals and $\kappa$ is a regular cardinal. Recall that an ideal $I$ on $P_\kappa\lambda$ is \emph{normal} if for every $X\in I^+$ and every function $f:P_\kappa\lambda\to\lambda$ with $\{x\in X \st f(x)\in x\}\in I^+$ there is a $Y\in P(X)\cap I^+$ such that $f\restrict Y$ is constant. Equivalently, an ideal $I$ on $P_\kappa\lambda$ is normal if and only if for every $\{X_\alpha\st\alpha<\kappa\}\subseteq I$ the set $\diagonalunion_{\alpha<\kappa}X_\alpha=_{\defn}\{x\st\text{$x\in X_\alpha$ for some $\alpha\in x$}\}$ is in $I$. An ideal $I$ on $P_\kappa\lambda$ is \emph{fine} if and only if $\{x\in P_\kappa\lambda\st \alpha\in x\}\in I^*$ for every $\alpha<\lambda$. Jech \cite{MR0325397} generalized the notion of closed unbounded and stationary subsets of cardinals to subsets $P_\kappa\lambda$ and proved that the nonstationary ideal $\NS_{\kappa\lambda}$ is a normal fine $\kappa$-complete ideal on $P_\kappa\lambda$. Carr \cite{MR667297} proved that, when $\kappa$ is a regular cardinal, the nonstationary ideal $\NS_{\kappa,\lambda}$ is the minimal normal fine $\kappa$-complete ideal on $P_\kappa\lambda$.


When considering ideals on $P_\kappa\lambda$ for $\kappa$ inaccessible, it is quite fruitful to work with a different notion of closed unboundedness obtained by replacing the structure $(P_\kappa\lambda,\subseteq)$ with a different one. For $x\in P_\kappa\lambda$ we define $\kappa_x=|x\cap\kappa|$ and we define an ordering $(P_\kappa\lambda,<)$ by letting 
\[\text{$x<y$ if and only if $x\in P_{\kappa_y}y$}.\] Given a function $f:P_\kappa\lambda\to P_\kappa\lambda$ we let
\[C_f=_{\defn}\{x\in P_\kappa\lambda\st x\cap\kappa\neq\emptyset\land f[P_{\kappa_x}x]\subseteq P_{\kappa_x}x\}.\] 
We say that a set $C\subseteq P_\kappa\lambda$ is \emph{weakly closed unbounded} if there is an $f$ such that $C= C_f$. Moreover, $X\subseteq P_\kappa\lambda$ is called \emph{strongly stationary} if for every $f$ we have $C_f\cap X\neq\emptyset$. An ideal $I$ on $P_\kappa\lambda$ is \emph{strongly normal} if for any $X\in I^+$ and function $f:P_\kappa\lambda\to P_\kappa\lambda$ such that $f(x)< x$ for all $x\in X$ there is $Y\in P(X)\cap I^+$ such that $f\restrict Y$ is constant. It follows easily that an ideal $I$ on $P_\kappa\lambda$ is strongly normal if and only if for any $\{X_a\st a\in P_\kappa\lambda\}\subseteq I$ the set $\diagonalunion_{<} X_a=_{\defn}\{x\st \text{$x\in X_a$ for some $a< x$}\}$ is in $I$. Note that an easy argument shows that if $\kappa$ is $\lambda$-supercompact then the prime ideal dual to a normal fine ultrafilter on $P_\kappa\lambda$ is strongly normal. Matet \cite{MR954259} showed that if $\kappa$ is Mahlo then the collection of non--strongly stationary sets
\[\NSS_{\kappa,\lambda}=_{def}\{X\subseteq P_\kappa\lambda\st\text{$\exists f:P_\kappa\lambda\to P_\kappa\lambda$ such that $X\cap C_f=\emptyset$}\}\]
is the minimal strongly normal ideal on $P_\kappa\lambda$. Improving this, Carr, Levinski and Pelletier obtained the following.

\begin{theorem}[Carr-Levinski-Pelletier \cite{MR1074449}]\label{cpl_minimal_strongly_normal_ideal} $P_\kappa\lambda$ carries a strongly normal ideal if and only if $\kappa$ is Mahlo or $\kappa=\mu^+$ where $\mu^{<\mu}=\mu$; moreover, in this case $\NSS_{\kappa,\lambda}$ is the minimal such ideal. 
\end{theorem}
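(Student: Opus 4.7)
My plan is to prove the theorem in three stages: first, establish the minimality statement independently, then use it to prove the two directions of the equivalence.

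For minimality, suppose $I$ is any strongly normal ideal on $P_\kappa\lambda$. A routine argument shows $I$ must be $\kappa$-complete and fine (if $\{x : \alpha \notin x\} \in I^+$, then the strongly regressive function $x \mapsto \{\alpha\}$ cannot be stabilized on an $I^+$ set). Given $X \in \NSS_{\kappa,\lambda}$ with witness $f$, decompose $P_\kappa\lambda \setminus C_f = \{x : x \cap \kappa = \emptyset\} \cup Y$ where $Y = \{x : \exists a < x,\ f(a) \not\in P_{\kappa_x}x\}$. The first piece lies in $I$ by fineness and $\kappa$-completeness. On $Y$, define $g(x)$ to be the $<$-least $a < x$ witnessing membership (with respect to a fixed well-ordering of $P_\kappa\lambda$); then $g$ is strongly regressive, so if $Y \in I^+$ strong normality produces $Z \in P(Y) \cap I^+$ and $a_0$ with $g \restriction Z \equiv a_0$. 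However, $\{x : f(a_0) < x\}$ is in $I^*$ by fineness together with $\kappa$-completeness, contradicting the definition of $Z$. Hence $X \in I$.

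For the necessity of the hypothesis, I will argue contrapositively. By minimality, the existence of a strongly normal ideal implies $\NSS_{\kappa,\lambda}$ is proper, i.e., $C_f \neq \emptyset$ for every $f : P_\kappa\lambda \to P_\kappa\lambda$. Suppose $\kappa$ is neither Mahlo nor a successor $\mu^+$ with $\mu^{<\mu} = \mu$. If $\kappa$ is inaccessible but not Mahlo, fix a club $D \subseteq \kappa$ consisting of singular cardinals and, for each $\alpha \in D$, a cofinal $c_\alpha \subseteq \alpha$ of order type $\mathrm{cf}(\alpha) < \alpha$; using $c_\alpha$ on inputs $a$ with $|a \cap \kappa| = \alpha$, I construct an $f$ such that for every $x$ with $\kappa_x \in D$ some $a \in P_{\kappa_x}x$ witnesses $f(a) \not\in P_{\kappa_x}x$, forcing $C_f = \emptyset$. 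A parallel construction handles the case $\kappa = \mu^+$ with $\mu^{<\mu} > \mu$, using an injection of $\mu^{<\mu}$ into bounded subsets together with cofinal sequences if $\mu$ is singular.

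For sufficiency, I must show that under the hypothesis $\NSS_{\kappa,\lambda}$ is a proper strongly normal ideal. Closure under finite unions follows from $C_{f_1 \cup f_2} \subseteq C_{f_1} \cap C_{f_2}$. For strong normality, given $\{X_a : a \in P_\kappa\lambda\}$ witnessed by $\{f_a\}$, I define $F(x) = \bigcup_{a < x} f_a(x)$ and verify $\diagonalunion_< X_a \cap C_F = \emptyset$ directly from the definitions. Fineness is immediate. For properness (the heart of the argument), fix $f$ and construct $x \in C_f$: in the Mahlo case, for $\theta$ sufficiently large, I use the stationarity of regular cardinals below $\kappa$ to produce $M \prec H(\theta)$ of size $\gamma$ for some regular uncountable $\gamma < \kappa$ containing $f, \kappa, \lambda$, closed under $<\gamma$-sequences, with $M \cap \kappa = \gamma$; setting $x = M \cap \lambda$ yields $\kappa_x = \gamma$ and closure of $x$ under $f$ in the required sense by elementarity. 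In the case $\kappa = \mu^+$ with $\mu^{<\mu} = \mu$, I build a chain $\<x_\alpha : \alpha \leq \mu\>$ with $x_0 = \mu$, $x_{\alpha+1} = x_\alpha \cup \bigcup f[P_\mu x_\alpha]$, and unions at limits; the hypothesis $\mu^{<\mu} = \mu$ guarantees $|x_\alpha| \leq \mu$ throughout, while regularity of $\mu$ ensures any $a \in P_\mu x_\mu$ is captured in some $x_\alpha$, giving $x_\mu \in C_f$.

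The main obstacle is the properness proof in the successor case: controlling $|P_{\kappa_{x_\alpha}} x_\alpha|$ at each stage so the chain remains in $P_\kappa\lambda$ is exactly where $\mu^{<\mu} = \mu$ is unavoidable, and the symmetric failure of this cardinal arithmetic is precisely what enables the contradiction in the necessity direction. Care is also needed to ensure $\kappa_{x_\mu} = \mu$ rather than something smaller (forced by $\mu \subseteq x_0$) and to adapt the verification of $f[P_{\kappa_{x_\mu}}x_\mu] \subseteq P_{\kappa_{x_\mu}}x_\mu$ to the precise definition of $P_{\kappa_x}x$ used in the excerpt.
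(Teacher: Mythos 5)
This theorem is quoted in the paper from Carr--Levinski--Pelletier without proof, so your proposal stands on its own; the overall architecture (a regressive-function argument for minimality, closure chains and elementary submodels for properness, diagonal constructions for necessity) is the right one, but two steps you treat as routine contain genuine gaps. First, the verification of strong normality via $F(x)=\bigcup_{a<x}f_a(x)$ does not go through. If $x\in\diagonalunion_{<}X_a\cap C_F$ with witness $a_0<x$, then $x\notin C_{f_{a_0}}$ yields some $b\in P_{\kappa_x}x$ with $f_{a_0}(b)\notin P_{\kappa_x}x$; but $F(b)$ contains $f_{a_0}(b)$ only when $a_0<b$, and the witnessing $b$ need not lie $<$-above $a_0$ (it can be $\emptyset$). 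The natural repair, $F(c)=\bigcup\{f_a(b)\st a,b\in P_{\kappa_c}c\}$, requires that a given pair $a_0,b<x$ be absorbed into a single $c<x$ with $a_0,b\in P_{\kappa_c}c$; this needs $|a_0|,|b|<|c\cap\kappa|$ with $|c|<\kappa_x$, which is impossible when $\kappa_x$ is a successor cardinal $\nu^+$ and $|a_0|=\nu$. One must first intersect with an auxiliary weak club forcing $x\cap\kappa$ to be (at least) a limit cardinal -- available in the Mahlo case via functions like $a\mapsto\sup(a\cap\kappa)+1$ -- and this is where the real work of the sufficiency direction lives. Second, in the necessity direction your $f$ only ejects from $C_f$ those $x$ with $\kappa_x\in D$; there are always $x$ with $\kappa_x$ a regular cardinal (e.g.\ $x=\omega_1$) even when $\kappa$ is not Mahlo, and nothing in your construction excludes them, so $C_f=\emptyset$ is not established. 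You must additionally compose with functions forcing $x\cap\kappa$ to be an ordinal lying in the prescribed club, and your case split omits entirely the case of a regular limit $\kappa$ that is not a strong limit, which is neither Mahlo nor a successor and so must also be shown to carry no strongly normal ideal.

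Two further points. Your derivation of fineness is incorrect: on $\{x\st\alpha\notin x\}$ the map $x\mapsto\{\alpha\}$ is not strongly regressive, since $\{\alpha\}<x$ requires $\alpha\in x$; fineness does not follow from strong normality (a principal ideal concentrating on a single point is strongly normal) and must be taken as part of the standing definition of an ideal on $P_\kappa\lambda$, as in \cite{MR1074449}. Finally, your closing worry about ``adapting to the precise definition of $P_{\kappa_x}x$'' in the successor case is not cosmetic: with the definition of $C_f$ as rendered in this survey, $f(a)\in P_{\kappa_x}x$ forces $|f(a)|<\kappa_x\leq\mu$ for every $a$, so any $f$ taking even one value of cardinality $\mu$ has $C_f=\emptyset$ and no chain construction can succeed; the same issue breaks the step $\{x\st f(a_0)<x\}\in I^*$ in your minimality argument when $|f(a_0)|=\mu$. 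The successor half of the theorem is only provable for the formulation of weak closure actually used in \cite{MR1074449}, and a complete proof must state explicitly which formulation it is working with rather than deferring this.
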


In these cases, since every strongly normal ideal on $P_\kappa\lambda$ is normal, we have
\[\NS_{\kappa,\lambda}\subseteq\NSS_{\kappa,\lambda}.\]
The following lemma, due to Zwicker (see the discussion on page 61 of \cite{MR1074449}), shows that if $\kappa$ is weakly inaccessible the previous containment is strict. 
\begin{lemma}
If $\kappa$ is weakly inaccessible then $\NS_{\kappa,\lambda}$ is not strongly normal.
\end{lemma}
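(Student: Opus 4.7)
The plan is to disprove strong normality of $\NS_{\kappa,\lambda}$ by exhibiting a family $\{X_a : a\in P_\kappa\lambda\}\subseteq\NS_{\kappa,\lambda}$ whose $<$-diagonal union is Jech-stationary; by the characterization of strong normality recalled just above (strong normality is equivalent to closure under $<$-diagonal unions), this suffices. For each $a\in P_\kappa\lambda$ I would take
\[X_a=\{x\in P_\kappa\lambda : \kappa_x\leq|a|^+\}.\]
Weak inaccessibility of $\kappa$ enters first to ensure $|a|^{++}<\kappa$ for every $a\in P_\kappa\lambda$, so by fineness and $\kappa$-completeness of $\NS_{\kappa,\lambda}$ the set $\{x\in P_\kappa\lambda : |a|^{++}\subseteq x\}$ belongs to $\NS_{\kappa,\lambda}^*$; it is clearly disjoint from $X_a$, so $X_a\in\NS_{\kappa,\lambda}$.

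The main content is then to verify
\[\diagonalunion_<X_a\supseteq\{x\in P_\kappa\lambda : \kappa_x\text{ is a successor cardinal}\}.\]
Indeed, if $\kappa_x=\mu^+$, then $|x|\geq\kappa_x>\mu$, so I can pick any $a\subseteq x$ of cardinality $\mu$; this $a$ satisfies $a\in P_{\kappa_x}x$ and $|a|^+=\mu^+=\kappa_x$, so $x\in X_a$. It thus remains to check that $\{x\in P_\kappa\lambda : \kappa_x\text{ is a successor cardinal}\}$ is Jech-stationary, and for this I would verify the stronger fact that $\{x\in P_\kappa\lambda : \kappa_x=\omega_1\}$ is Jech-stationary. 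Given any Jech-club $C$, one fixes a Skolem-type function whose closure set lies in $C$, starts from the ordinal $\omega_1$, and iterates this closure for $\omega$ many steps; the resulting $x$ lies in $C$, contains $\omega_1$, and has cardinality $\omega_1$, so $\kappa_x=\omega_1$.

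The step most deserving of attention is confirming $|x_\omega|=\omega_1$ so that $x_\omega\in P_\kappa\lambda$ and $|x_\omega\cap\kappa|=\omega_1$; this is routine cardinal arithmetic, since each iterate satisfies $|x_{n+1}|\leq|x_n|^{<\omega}=\omega_1$, and $\omega_1<\kappa$ is immediate because $\kappa$ is an uncountable limit cardinal. With these pieces in place, if $\NS_{\kappa,\lambda}$ were strongly normal then $\diagonalunion_<X_a$ would lie in $\NS_{\kappa,\lambda}$, contradicting the Jech-stationarity of $\{x\in P_\kappa\lambda : \kappa_x\text{ is a successor cardinal}\}$; this contradiction will finish the proof.
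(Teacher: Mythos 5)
Your proof is correct: the sets $X_a=\{x:\kappa_x\leq|a|^+\}$ are each nonstationary (being disjoint from the club $\{x:|a|^{++}\subseteq x\}$, which uses that $\kappa$ is a limit cardinal), their $<$-diagonal union covers the stationary set $\{x:\kappa_x=\omega_1\}$, and the Skolem-closure argument for the stationarity of that set is the standard one. The paper states this lemma without proof (citing Zwicker via the discussion in Carr--Levinski--Pelletier), and your argument is essentially the classical one, recast in the diagonal-union formulation of strong normality that the paper records just before the lemma.
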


\begin{corollary}\label{corollary_NS_not_NSS}
If $\kappa$ is Mahlo then $\NSS_{\kappa,\lambda}$ is nontrivial and $\NS_{\kappa,\lambda}\subsetneq\NSS_{\kappa,\lambda}$.
\end{corollary}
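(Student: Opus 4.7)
The plan is to read this corollary as a direct combination of the preceding Theorem \ref{cpl_minimal_strongly_normal_ideal} of Carr-Levinski-Pelletier and the Zwicker lemma, without any new construction.

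First, for the nontriviality of $\NSS_{\kappa,\lambda}$: the hypothesis that $\kappa$ is Mahlo puts us in one of the two cases of Theorem \ref{cpl_minimal_strongly_normal_ideal}, which furnishes $\NSS_{\kappa,\lambda}$ as the \emph{minimal strongly normal ideal} on $P_\kappa\lambda$. Being an ideal, $\NSS_{\kappa,\lambda}$ is proper, i.e.\ $P_\kappa\lambda \notin \NSS_{\kappa,\lambda}$, which is exactly the statement that $\NSS_{\kappa,\lambda}$ is nontrivial. Nothing more needs to be said for this half.

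Second, for the strict containment $\NS_{\kappa,\lambda}\subsetneq\NSS_{\kappa,\lambda}$: the inclusion $\NS_{\kappa,\lambda}\subseteq\NSS_{\kappa,\lambda}$ is the one already recorded in the paragraph just before the corollary, and follows from Carr's characterization of $\NS_{\kappa,\lambda}$ as the minimal normal fine $\kappa$-complete ideal on $P_\kappa\lambda$, together with the fact that every strongly normal ideal (in particular $\NSS_{\kappa,\lambda}$) is normal, fine, and $\kappa$-complete. For the strictness, I would invoke the fact that Mahloness implies weak inaccessibility, so Zwicker's lemma applies and yields that $\NS_{\kappa,\lambda}$ is \emph{not} strongly normal; on the other hand $\NSS_{\kappa,\lambda}$ \emph{is} strongly normal by construction. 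Hence the two ideals cannot coincide, and the inclusion must be proper.

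There is really no obstacle here: the corollary is essentially a bookkeeping consequence of the two cited results. The only thing to be careful about is confirming that ``nontrivial'' is being used in the sense of ``proper,'' so that the mere existence of $\NSS_{\kappa,\lambda}$ as an ideal (which Theorem \ref{cpl_minimal_strongly_normal_ideal} asserts under the Mahlo hypothesis) suffices.
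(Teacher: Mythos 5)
Your proof is correct and follows exactly the route the paper intends: nontriviality and strong normality of $\NSS_{\kappa,\lambda}$ from the Carr--Levinski--Pelletier theorem, the containment $\NS_{\kappa,\lambda}\subseteq\NSS_{\kappa,\lambda}$ from minimality of $\NS_{\kappa,\lambda}$ among normal fine $\kappa$-complete ideals, and strictness from Zwicker's lemma since Mahlo cardinals are weakly inaccessible. Nothing is missing.
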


\subsection{Indescribable subsets of $P_\kappa\lambda$}\label{section_two_cardinal_indescribability}

According to \cite{MR1635559} and \cite{MR808767}, in a set of handwritten notes, Baumgartner \cite{Baum} defined a notion of indescribability for subsets of $P_\kappa\lambda$ as follows. Give a regular cardinal $\kappa$ and a set of ordinals $A\subseteq\ORD$, consider the hierarchy:
\begin{align*}
V_0(\kappa,A)&=A\\
V_{\alpha+1}(\kappa,A)&=P_\kappa(V_\alpha(\kappa,A))\cup V_{\alpha}(\kappa,A)\\
V_\alpha(\kappa,A)&=\bigcup_{\beta<\alpha}V_\beta(\kappa,A) \text{ for $\alpha$ a limit}
\end{align*}
Clearly $V_\kappa\subseteq V_\kappa(\kappa,A)$ and if $A$ is transitive then so is $V_\alpha(\kappa,A)$ for all $\alpha\leq\kappa$. See \cite[Section 4]{MR808767} for a discussion of the restricted axioms of $\ZFC$ satisfied by $V_\kappa(\kappa,\lambda)$ when $\kappa$ is inaccessible.

\begin{definition}[Baumgartner \cite{Baum}]\label{definition_indescribable}
Let $S\subseteq P_\kappa\lambda$. We say that $S$ is \emph{$\Pi^1_n$-indescriba\-ble} if for every $R\subseteq V_\kappa(\kappa,\lambda)$ and every $\Pi^1_n$-formula $\varphi$ such that $(V_\kappa(\kappa,\lambda),\in,R)\models\varphi$, there is an $x\in S$ such that
\[\text{$x\cap\kappa=\kappa_x$ and $(V_{\kappa_x}(\kappa_x,x),\in, R\cap V_{\kappa_x}(\kappa_x,x))\models\varphi$.}\] 
\end{definition}

It is not too difficult to see that $\kappa$ is supercompact if and only if $P_\kappa\lambda$ is $\Pi^1_1$-indescribable for all $\lambda\geq\kappa$. See \cite[Section 1]{MR4082998} for a more detailed discussion of the way in which the $\Pi^1_1$-indescribability of $P_\kappa\lambda$ fits in with other large cardinal notions.

\begin{remark}
As noted in \cite{MR808767}, standard arguments using the coding apparatus available in $V_\kappa(\kappa,\lambda)$ show that if $\lambda^{<\kappa}=\lambda$, then we can replace $R\subseteq V_\kappa(\kappa,\lambda)$ in Definition \ref{definition_indescribable} by any finite sequence $R_1,\ldots,R_k$ of subsets of $V_\kappa(\kappa,\lambda)$.
\end{remark}

Abe \cite[Lemma 4.1]{MR1635559} showed that if $P_\kappa\lambda$ is $\Pi^1_n$-indescribable then 
\[\Pi^1_n(\kappa,\lambda)=\{X\subseteq P_\kappa\lambda\st \text{$X$ is not $\Pi^1_n$-indescribable}\}\]
is a strongly normal proper ideal on $P_\kappa\lambda$.

As mentioned above, the next theorem suggests that one should use strong stationarity instead of stationarity when generalizing the notion of $1$-club subset of $\kappa$ to that of $P_\kappa\lambda$.

\begin{theorem}[Cody \cite{MR4082998}]\label{lemma_Pi_1_0_ideal}
If $\kappa$ is Mahlo then $S\subseteq P_\kappa\lambda$ is in $\NSS_{\kappa,\lambda}^+$ if and only if $S$ is $\Pi^1_0$-indescribable (i.e. first-order indescribable); in other words,
\[\Pi^1_0(\kappa,\lambda)=\NSS_{\kappa,\lambda}.\]
\end{theorem}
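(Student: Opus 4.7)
The plan is to establish both containments in the equality $\NSS_{\kappa,\lambda} = \Pi^1_0(\kappa,\lambda)$, i.e., that $S \subseteq P_\kappa\lambda$ is strongly stationary exactly when it is $\Pi^1_0$-indescribable. Both directions rest on converting between the ``closure under a function $f \colon P_\kappa\lambda \to P_\kappa\lambda$'' captured by $C_f$ and first-order reflection with parameter $R \subseteq V_\kappa(\kappa,\lambda)$.

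For the forward direction, assume $S$ is $\Pi^1_0$-indescribable and fix an arbitrary $f \colon P_\kappa\lambda \to P_\kappa\lambda$. I would code both the set $P_\kappa\lambda$ and the graph of $f$ into a single relation $R \subseteq V_\kappa(\kappa,\lambda)$ via Kuratowski pairs, and then take the first-order sentence $\varphi$ asserting ``$R$ encodes a total function from $P_\kappa\lambda$ into $P_\kappa\lambda$''. Then $(V_\kappa(\kappa,\lambda),\in,R) \models \varphi$, so by $\Pi^1_0$-indescribability of $S$ there is $x \in S$ with $x \cap \kappa = \kappa_x$ for which $(V_{\kappa_x}(\kappa_x,x),\in, R\cap V_{\kappa_x}(\kappa_x,x)) \models \varphi$. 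In the reflected structure, the coded domain becomes $P_{\kappa_x}(x)$, so the truth of $\varphi$ forces that for each $y \in P_{\kappa_x}(x)$ the unique value $f(y)$ lies in $P_{\kappa_x}(x)$; thus $f[P_{\kappa_x}x] \subseteq P_{\kappa_x}x$, so $x \in C_f \cap S$ and $S$ meets every weakly closed unbounded set.

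For the reverse direction, assume $S$ is strongly stationary, and let $R \subseteq V_\kappa(\kappa,\lambda)$ and a first-order sentence $\varphi$ with $(V_\kappa(\kappa,\lambda),\in,R) \models \varphi$ be given. Fix a canonical family of Skolem functions $\{h_n : n < \omega\}$ for the expanded structure. For each $y \in P_\kappa\lambda$, let $H(y)$ be the Skolem hull of $y$ in the expansion, so $|H(y)|<\kappa$ by inaccessibility of $\kappa$, and define $f(y) = H(y) \cap \lambda \in P_\kappa\lambda$. Using Mahloness of $\kappa$, the set $\{x \in P_\kappa\lambda : x \cap \kappa = \kappa_x \text{ and } \kappa_x \text{ is inaccessible}\}$ is weakly closed unbounded; by combining $f$ with a function witnessing this we may assume $C_f$ sits inside it. For any $x \in C_f \cap S$, closure under $f$ guarantees that for every $y \in P_{\kappa_x}(x)$ the Skolem hull $H(y)$ has $H(y) \cap \lambda \subseteq x$, so a Tarski--Vaught argument yields $(V_{\kappa_x}(\kappa_x,x),\in,R\cap V_{\kappa_x}(\kappa_x,x)) \prec (V_\kappa(\kappa,\lambda),\in,R)$. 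Consequently $\varphi$ reflects to $x \in S$, and $S$ is $\Pi^1_0$-indescribable.

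The main obstacle is in the reverse direction: one must design $f$ so that closure under $f$ translates into \emph{full} first-order elementarity of $V_{\kappa_x}(\kappa_x,x)$ inside $V_\kappa(\kappa,\lambda)$, not merely containment of ordinal witnesses. This requires the two-cardinal cumulative hierarchy $V_\alpha(\kappa_x,x)$ at levels $\alpha < \kappa_x$ to track sets up to rank $\kappa_x$ correctly; inaccessibility of $\kappa_x$ (ensured by Mahloness of $\kappa$) is what makes $V_{\kappa_x}(\kappa_x,x)$ closed under pairing, bounded power set, and union, so that Skolem witnesses are always available at the correct level and the Tarski--Vaught criterion applies uniformly to all subformulas of $\varphi$.
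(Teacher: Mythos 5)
First, a housekeeping point: the survey states this theorem with a citation to \cite{MR4082998} and does not reproduce a proof, so there is no in-paper argument to compare against; I am assessing your proposal on its own terms. Your forward direction ($\Pi^1_0$-indescribability implies strong stationarity) is essentially right modulo the routine coding the survey alludes to: one should fold a predicate for $P_\kappa\lambda$ into $R$ (so that its restriction to $V_{\kappa_x}(\kappa_x,x)$ is literally $P_{\kappa_x}x$ when $x\cap\kappa=\kappa_x$), check that a pair $\la y,f(y)\ra$ lies in $V_{\kappa_x}(\kappa_x,x)$ exactly when $y,f(y)\in P_{\kappa_x}x$, and add a conjunct forcing $x\cap\kappa\neq\emptyset$. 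These are fixable details.

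The reverse direction has a genuine gap, located exactly where you flag "the main obstacle," and inaccessibility of $\kappa_x$ does not repair it. Setting $f(y)=H(y)\cap\lambda$ only records the \emph{ordinals} appearing in Skolem hulls of small \emph{sets of ordinals}, and that is not enough to run Tarski--Vaught for the substructure $V_{\kappa_x}(\kappa_x,x)$. Concretely: (i) the criterion must be checked for parameters $\bar a$ ranging over all of $V_{\kappa_x}(\kappa_x,x)$, but your $f$ is only ever applied to elements of $P_{\kappa_x}x$, and most elements of $V_{\kappa_x}(\kappa_x,x)$ lie in no hull $H(y)$ with $y\in P_{\kappa_x}x$ --- already at level one, a fixed $y\in P_{\kappa_x}x$ has $2^{|y|}$ subsets, each a legitimate parameter in $V_{\kappa_x}(\kappa_x,x)$, while $|H(y)|\leq |y|+\aleph_0$, so closure of $x$ under $f$ says nothing about witnesses for formulas with such parameters; (ii) even when a witness $w$ does lie in some $H(y)$, membership of $w$ in $V_{\kappa_x}(\kappa_x,x)$ requires that the atoms hereditarily appearing in $w$ be contained in $x$ and that $w$ be hereditarily of size $<\kappa_x$ and of rank $<\kappa_x$ over $x$; none of this follows from $H(y)\cap\lambda\subseteq x$, since $w\in H(y)$ does not give $w\subseteq H(y)$, let alone control of $|{\rm tc}(w)|$ or of rank (e.g.\ $w$ could be a subset of $\lambda$ of cardinality $\geq\kappa_x$).

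The repair is precisely the ``coding apparatus.'' Skolemize $\varphi$ into finitely many functions $F:[V_\kappa(\kappa,\lambda)]^{<\omega}\to V_\kappa(\kappa,\lambda)$ and prove that for any such $F$ the set of $x$ with $x\cap\kappa=\kappa_x$ inaccessible and $V_{\kappa_x}(\kappa_x,x)$ closed under $F$ contains some $C_f$. The right $f$ treats $y\in P_\kappa\lambda$ as coding a support set together with bounds on hereditary cardinality and rank, forms the set $W(y)$ of \emph{all} $a\in V_\kappa(\kappa,\lambda)$ whose hereditary atom-support lies in $y$ and whose hereditary size and rank respect the coded bounds (not merely the Skolem hull of $y$), and returns an element of $P_\kappa\lambda$ coding the supports and size/rank bounds of everything in $F[W(y)^{<\omega}]$. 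Inaccessibility of $\kappa$ gives $|W(y)|<\kappa$ so that $f(y)\in P_\kappa\lambda$, and inaccessibility of $\kappa_x$ guarantees that every $a\in V_{\kappa_x}(\kappa_x,x)$ belongs to $W(y)$ for some $y\in P_{\kappa_x}x$, so that closure of $x$ under $f$ really does close $V_{\kappa_x}(\kappa_x,x)$ under $F$. Your hull-of-ordinals $f$ omits exactly this bookkeeping, and that is where the substance of the theorem lies.
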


Following \cite{MR1245524}, the author showed \cite{MR4082998} that the $\Pi^1_1$-indescribable subsets of $P_\kappa\lambda$ can be characterized using the following notion of $1$-club subsets of $P_\kappa\lambda$.

\begin{definition}[Cody \cite{MR4082998}]\label{definition_1_club}
We say that $C\subseteq P_\kappa\lambda$ is \emph{$1$-club} if and only if 
\begin{enumerate}
\item $C\in \NSS_{\kappa,\lambda}^+$ and
\item $C$ is \emph{$1$-closed}, that is, for every $x\in P_\kappa\lambda$, if $\kappa_x$ is an inaccessible cardinal and $C\cap P_{\kappa_x}x\in\NSS_{\kappa_x,x}^+$ then $x\in C$.
\end{enumerate}
\end{definition}

\begin{proposition}[Cody \cite{MR4082998}] \label{proposition_1_stationarity}
Suppose $P_\kappa\lambda$ is $\Pi^1_1$-indescribable and $S\subseteq P_\kappa\lambda$. Then $S$ is $\Pi^1_1$-indescribable if and only if $S\cap C\neq\emptyset$ for every $1$-club $C\subseteq P_\kappa\lambda$.
\end{proposition}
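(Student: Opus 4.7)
The strategy is to reduce both directions to the basic reflection property of $\Pi^1_1$-indescribable subsets of $P_\kappa\lambda$. For the forward direction, suppose $S$ is $\Pi^1_1$-indescribable and $C\subseteq P_\kappa\lambda$ is a $1$-club; I would show that $P_\kappa\lambda\setminus C\in\Pi^1_1(\kappa,\lambda)$, from which $S\cap C\neq\emptyset$ follows. Consider the $\Pi^1_1$-formula $\psi(C)$ asserting ``$\kappa$ is Mahlo and $C$ is a strongly stationary subset of $P_\kappa\lambda$''; strong stationarity is $\Pi^1_1$ since it universally quantifies over $f:P_\kappa\lambda\to P_\kappa\lambda$, which is a subset of $V_\kappa(\kappa,\lambda)$. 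Since $C$ is a $1$-club and $\kappa$ is Mahlo (as $\Pi^1_1$-indescribability of $P_\kappa\lambda$ guarantees), $(V_\kappa(\kappa,\lambda),\in,C)\models\psi(C)$. If $P_\kappa\lambda\setminus C$ were $\Pi^1_1$-indescribable, reflection would yield some $x\in P_\kappa\lambda\setminus C$ with $x\cap\kappa=\kappa_x$ and $(V_{\kappa_x}(\kappa_x,x),\in,C\cap V_{\kappa_x}(\kappa_x,x))\models\psi$; this says $\kappa_x$ is Mahlo and $C\cap P_{\kappa_x}x$ is strongly stationary in $P_{\kappa_x}x$, whence $1$-closure of $C$ (Definition \ref{definition_1_club}) forces $x\in C$, a contradiction.

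For the reverse direction I would argue the contrapositive. Suppose $S$ is not $\Pi^1_1$-indescribable and fix a witness: $R\subseteq V_\kappa(\kappa,\lambda)$ and a $\Pi^1_1$-formula $\varphi=\forall X\,\varphi_0(X,R)$ such that $(V_\kappa(\kappa,\lambda),\in,R)\models\varphi$ while $(V_{\kappa_x}(\kappa_x,x),\in,R\cap V_{\kappa_x}(\kappa_x,x))\not\models\varphi$ for every $x\in S$ with $x\cap\kappa=\kappa_x$. Define
\[C=\{x\in P_\kappa\lambda \st x\cap\kappa=\kappa_x,\ \kappa_x\text{ inaccessible, and }V_{\kappa_x}(\kappa_x,x)\models\varphi(R\cap V_{\kappa_x}(\kappa_x,x))\}.\]
By construction $S\cap C=\emptyset$. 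Strong stationarity of $C$ follows from the $P_\kappa\lambda$-analogue of Lemma \ref{lemma_indescribable_filter}: the reflection set of a true $\Pi^1_1$-formula lies in $\Pi^1_1(\kappa,\lambda)^*$, and since Abe showed $\Pi^1_1(\kappa,\lambda)$ is strongly normal, minimality of $\NSS_{\kappa,\lambda}$ (Theorem \ref{cpl_minimal_strongly_normal_ideal}) gives $\Pi^1_1(\kappa,\lambda)^*\subseteq\NSS_{\kappa,\lambda}^*$, so every member is strongly stationary.

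The main technical work is verifying that $C$ is $1$-closed, which I expect to be the hard part. Fix $x\in P_\kappa\lambda$ with $\kappa_x$ inaccessible and $C\cap P_{\kappa_x}x\in\NSS_{\kappa_x,x}^+$; I must prove $V_{\kappa_x}(\kappa_x,x)\models\varphi(R\cap V_{\kappa_x}(\kappa_x,x))$. Given an arbitrary $X\subseteq V_{\kappa_x}(\kappa_x,x)$, I plan to use a Tarski--Vaught style argument: choose Skolem functions witnessing first-order truth in $(V_{\kappa_x}(\kappa_x,x),\in,X,R\cap V_{\kappa_x}(\kappa_x,x))$ and package them into a single $f:P_{\kappa_x}x\to P_{\kappa_x}x$ so that for every $y\in C_f$ (where $C_f$ is the weakly closed unbounded set as in Section \ref{section_strong_stationarity}), the structure $(V_{\kappa_y}(\kappa_y,y),\in,X\cap V_{\kappa_y}(\kappa_y,y),R\cap V_{\kappa_y}(\kappa_y,y))$ is $\varphi_0$-elementary in $(V_{\kappa_x}(\kappa_x,x),\in,X,R\cap V_{\kappa_x}(\kappa_x,x))$. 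Since $C_f\in\NSS_{\kappa_x,x}^*$ and $C\cap P_{\kappa_x}x$ is strongly stationary, pick $y\in C_f\cap C\cap P_{\kappa_x}x$. Then $y\in C$ gives $V_{\kappa_y}(\kappa_y,y)\models\varphi_0(X\cap V_{\kappa_y}(\kappa_y,y),R\cap V_{\kappa_y}(\kappa_y,y))$, and elementarity transfers this up to $V_{\kappa_x}(\kappa_x,x)\models\varphi_0(X,R\cap V_{\kappa_x}(\kappa_x,x))$, as required. The delicate point is the construction of $f$: it must simultaneously package Skolem witnesses for all relevant first-order assertions and ensure that closure under $f$ really yields the needed elementarity in the two-cardinal hierarchy $V_\alpha(\kappa_x,x)$, rather than merely in $P_{\kappa_x}x$ itself.
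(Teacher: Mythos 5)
The paper does not actually prove Proposition \ref{proposition_1_stationarity}; it is quoted from \cite{MR4082998} without proof, so there is no in-paper argument to compare against. Your outline is the natural two-cardinal transcription of Sun's proof of the $n=1$ case of Theorem \ref{theorem_n_club_characterization}, with ``club'' and ``stationary'' replaced by ``weak club $C_f$'' and ``strongly stationary,'' and the skeleton is correct. In particular the forward direction is fine: conjoining a $\Pi^1_1$ assertion of inaccessibility (Mahloness is more than you need) with the $\Pi^1_1$ assertion that $C$ is strongly stationary, and reflecting it to a point of $P_\kappa\lambda\setminus C$, contradicts $1$-closure, so the complement of every $1$-club lies in $\Pi^1_1(\kappa,\lambda)$.

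Two things to tighten in the reverse direction. First, a sign error: minimality of $\NSS_{\kappa,\lambda}$ among strongly normal ideals gives $\NSS_{\kappa,\lambda}\subseteq\Pi^1_1(\kappa,\lambda)$ and hence $\NSS_{\kappa,\lambda}^*\subseteq\Pi^1_1(\kappa,\lambda)^*$, the opposite of the containment you wrote; what you actually need is $\Pi^1_1(\kappa,\lambda)^*\subseteq\Pi^1_1(\kappa,\lambda)^+\subseteq\NSS_{\kappa,\lambda}^+$, which follows from the same ideal containment together with properness of $\Pi^1_1(\kappa,\lambda)$, so the conclusion that $C$ is strongly stationary survives. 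Second, the ``delicate point'' you flag --- producing a single $f:P_{\kappa_x}x\to P_{\kappa_x}x$ so that every $y\in C_f$ yields a $\varphi_0$-elementary substructure $(V_{\kappa_y}(\kappa_y,y),\in,X\cap V_{\kappa_y}(\kappa_y,y),R\cap V_{\kappa_y}(\kappa_y,y))$ of $(V_{\kappa_x}(\kappa_x,x),\in,X,R\cap V_{\kappa_x}(\kappa_x,x))$ --- is precisely where all the content lives; it is the two-cardinal L\"owenheim--Skolem fact underlying Theorem \ref{lemma_Pi_1_0_ideal}, and rather than rebuilding it you could invoke that first-order reflection to $V_{\kappa_y}(\kappa_y,y)$ holds on a weak club, applied inside $P_{\kappa_x}x$. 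One further detail: Definition \ref{definition_1_club} does not put $x\cap\kappa=\kappa_x$ into the hypothesis of $1$-closure, so since your $C$ only contains $y$ with $y\cap\kappa=\kappa_y$, you must check that strong stationarity of $C\cap P_{\kappa_x}x$ already forces $x\cap\kappa=\kappa_x$ (or restrict attention to the weak club of such $x$). None of these is a structural flaw.
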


The author also proved \cite{MR4082998} that two-cardinal indescribability can be characterized using elementary embeddings which resemble those considered by Schanker \cite{MR2989393} in his work on \emph{nearly $\lambda$-supercompact} cardinals. We say that a set $W\subseteq P_\kappa\lambda$ is \emph{weakly compact} if for every $A\subseteq\lambda$ there is a transitive $M\models\ZFC^-$ with $\lambda,A,W\in M$ and $M^{<\kappa}\cap V\subseteq M$, a transitive $N$ and an elementary embedding $j:M\to N$ with critical point $\kappa$ such that $j(\kappa)>\lambda$ and $j"\lambda\in j(W)$. 

\begin{theorem}[Cody \cite{MR4082998}]\label{theorem_characterizations}
Suppose $\kappa\leq\lambda$ are cardinals with $\lambda^{<\kappa}=\lambda$. A set $W\subseteq P_\kappa\lambda$ is $\Pi^1_1$-indescribable if and only if it is weakly compact.
\end{theorem}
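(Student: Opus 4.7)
\medskip

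My plan is to prove each direction by mirroring the classical proof that $\kappa$ is weakly compact iff $\kappa$ is $\Pi^1_1$-indescribable, adapted to the two-cardinal hierarchy $V_\alpha(\kappa,\lambda)$ and to sets rather than cardinals. The direction $(\Leftarrow)$ is straightforward once one sees that the witness $j"\lambda \in j(W)$ is tailor-made to reflect $\Pi^1_1$-sentences, while the direction $(\Rightarrow)$ requires building an appropriate $M$-ultrafilter and will be the main obstacle.

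For the direction $(\Leftarrow)$, I would fix a $\Pi^1_1$-sentence $\varphi$ and a predicate $R\subseteq V_\kappa(\kappa,\lambda)$ with $(V_\kappa(\kappa,\lambda),\in,R)\models\varphi$. Since $\lambda^{<\kappa}=\lambda$, there is a bijection coding $R$ as a subset $A\subseteq\lambda$. Apply weak compactness of $W$ to this $A$ to obtain a transitive $M\models\ZFC^-$ with $\lambda,A,W\in M$ and $M^{<\kappa}\cap V\subseteq M$, together with an elementary $j:M\to N$ having $\crit(j)=\kappa$, $j(\kappa)>\lambda$, and $x:=j"\lambda \in j(W)$. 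Then $x\cap j(\kappa)=j"\kappa=\kappa$, so $\kappa_x=\kappa$; moreover $j\restrict V_\kappa(\kappa,\lambda)$ is an isomorphism from $(V_\kappa(\kappa,\lambda),\in,R)$ onto $(V_{\kappa_x}(\kappa_x,x),\in,j(R)\cap V_{\kappa_x}(\kappa_x,x))$, so the target of this isomorphism also models $\varphi$. Hence in $N$ the existential statement ``$\exists x\in j(W)$ witnessing reflection of $\varphi$ at $(V_{\kappa_x}(\kappa_x,x),\in,j(R)\cap V_{\kappa_x}(\kappa_x,x))$'' holds; by elementarity of $j$ the corresponding statement about $W$ and $R$ holds in $M$, and the closure $M^{<\kappa}\cap V\subseteq M$ makes $\Pi^1_1$-satisfaction over any structure of cardinality less than $\kappa$ absolute between $M$ and $V$, giving a genuine $x\in W$ that reflects $\varphi$.

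For the direction $(\Rightarrow)$, given $A\subseteq\lambda$ I would first produce, via a Löwenheim-Skolem argument followed by a Mostowski collapse, a transitive $M\models\ZFC^-$ of cardinality $\lambda$ with $\lambda,A,W\in M$ and $M^{<\kappa}\cap V\subseteq M$. The aim is to construct a normal, fine $M$-ultrafilter $U$ on $(P(P_\kappa\lambda))^M$ with $W\in U$, because the ultrapower $j_U:M\to N:=\Ult(M,U)$ will automatically satisfy $\crit(j_U)=\kappa$, $j_U(\kappa)>\lambda$, and, by normality and fineness combined with $W\in U$, $j_U"\lambda=[\id]_U\in j_U(W)$. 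To construct $U$, fix a bijection $b:\lambda\to M$ encoding $M$ into a predicate $R\subseteq V_\kappa(\kappa,\lambda)$, enumerate $(P(P_\kappa\lambda))^M$ in order type $\lambda$, and build $U$ in $\lambda$-many stages as a coherent chain of partial decisions. The key claim — that at each stage a consistent extension containing $W$ exists — is formulated as a $\Pi^1_1$-sentence over $(V_\kappa(\kappa,\lambda),\in,R,W)$, and is verified by applying $\Pi^1_1$-indescribability of $W$ (equivalently, by Proposition \ref{proposition_1_stationarity}, by exhibiting the relevant collection of ``coherent'' approximations as a $1$-club subset of $P_\kappa\lambda$ that $W$ must meet).

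The main obstacle is in the $(\Rightarrow)$ direction, and specifically in coordinating the $M$-ultrafilter construction so that simultaneously (i) $U$ is a genuine $\kappa$-complete, normal, fine $M$-ultrafilter; (ii) $W\in U$, so that fineness propagates through the ultrapower to yield $j_U"\lambda\in j_U(W)$; and (iii) $N=\Ult(M,U)$ is well-founded, which uses $\kappa$-completeness of $U$ together with $M^{<\kappa}\cap V\subseteq M$. The standard Keisler-Silver analysis for the classical case supplies the template, and the extra ingredient here is just that fineness and the ``$W\in U$'' requirement must be packaged into the reflecting $\Pi^1_1$-sentence from the outset, so that what is reflected by $\Pi^1_1$-indescribability of $W$ is precisely the extendibility of the partial ultrafilter at each stage.
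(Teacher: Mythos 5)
First, a point of order: the survey states Theorem \ref{theorem_characterizations} without proof, importing it from \cite{MR4082998}, so there is no in-paper argument to compare yours against; what follows assesses your proposal on its own terms. Your $(\Leftarrow)$ direction is essentially the right argument and can be made to work: fold the coding bijection into the single parameter $A\subseteq\lambda$ so that $M$ can decode $R$, check that $j\restrict V_\kappa(\kappa,\lambda)$ really is an isomorphism onto $(V_{\kappa}(\kappa,j"\lambda))^N$ (using $M^{<\kappa}\cap V\subseteq M$ to see that every member of $P_\kappa(j"\lambda)^N$ is $j(a)=j"a$ for some $a\in P_\kappa\lambda\cap M$, and similarly at higher levels), and push $\varphi$ into $N$ because every second-order parameter of $N$ pulls back along $j^{-1}$. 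But your closing absoluteness claim hides a needed step: you must first show that weak compactness of $W$ makes $\kappa$ inaccessible, since only then is $|V_{\kappa_x}(\kappa_x,x)|<\kappa$, and only then does $<\kappa$-closure of $M$ give $P(V_{\kappa_x}(\kappa_x,x))^V\subseteq M$, which is what the upward transfer of a $\Pi^1_1$ statement from $M$ to $V$ actually requires.

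The genuine gap is in $(\Rightarrow)$. An $M$-ultrafilter built ``in $\lambda$-many stages as a coherent chain of partial decisions'' cannot get through limit stages. The successor steps (one of $X$ or its complement keeps the running set positive) are trivial ideal arithmetic and need no reflection; at a limit stage there is no choice left to make, and the ideal $\Pi^1_1(\kappa,\lambda)$, being strongly normal but not $\lambda^+$-complete, gives you no reason that the intersection (even the diagonal intersection) of $\geq\omega$ many accumulated positive decisions remains positive --- normality only closes the \emph{dual filter} under diagonal intersections, not the positive sets. There is no single $\Pi^1_1$ sentence about an individual limit stage whose reflection certifies extendibility there, so your proposed safety net does not engage the actual obstruction. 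This failure is already visible in the one-cardinal case, where the $M$-ultrafilter for a $\kappa$-model is obtained from the tree property or from one global reflection, never by recursion along an enumeration of $P(\kappa)^M$. The standard repair is a single application of $\Pi^1_1$-indescribability of $W$: code all of $M$ together with an enumeration of $(P(P_\kappa\lambda))^M$ into one predicate $R\subseteq V_\kappa(\kappa,\lambda)$; the assertion ``no $B\subseteq V_\kappa(\kappa,\lambda)$ codes a normal, fine $M$-ultrafilter concentrating on $W$'' is $\Pi^1_1$ over $(V_\kappa(\kappa,\lambda),\in,R)$, and if true it would reflect to some $x\in W$ with $x\cap\kappa=\kappa_x$; but in the structure coded by $R\cap V_{\kappa_x}(\kappa_x,x)$ the principal object $U_x=\{X\st x\in X\}$ is automatically a normal, fine ultrafilter on the collapsed model containing the collapse of $W$, a contradiction. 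Once the full $M$-ultrafilter $U$ is in hand, your end-game (i)--(iii) is fine: normality and fineness give $\kappa$-completeness for sequences in $M$, $j_U"\lambda=[\id]_U\in j_U(W)$ since $W\in U$, and well-foundedness holds because any countable sequence from $U$ lies in $M$ by $<\kappa$-closure, so $M$-completeness applies to it. It is only the engine producing $U$ that must be replaced.
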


See \cite[Section 1]{MR4082998} for a detailed discussion of how the weak compactness of $P_\kappa\lambda$, or in Schanker's terminology, the near $\lambda$-supercompactness of $\kappa$ fits in with other large cardinal notions.

\subsection{Subtle, strongly subtle and ineffable subsets of $P_\kappa\lambda$} \label{section_strongly_subtle}


Menas \cite{MR0357121} defined a set $S\subseteq P_\kappa\lambda$ to be \emph{subtle} if for every club $C\subseteq P_\kappa\lambda$ and every function $\vec{S}=\<S_x\st x\in S\>$ where $S_x\subseteq x$ for all $x\in S$, there are $x,y\in C\cap S$ such that $x\subsetneq y$ and $S_x=S_y\cap x$. Menas proved that no matter how large $\lambda$ is, the subtlety of $P_\kappa\lambda$ is not a stronger assumption than the subtlety of $\kappa$.

\begin{theorem}[Menas \cite{MR0357121}]
If $\kappa$ is subtle then $P_\kappa\lambda$ is subtle for all $\lambda\geq\kappa$.
\end{theorem}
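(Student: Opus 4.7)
The plan is to reduce the subtlety of $P_\kappa\lambda$ to that of $\kappa$ via a coding argument along a continuous elementary chain, using a coherent family of bijections to transfer a $P_\kappa\lambda$-list into a genuine $\kappa$-list. Fix a club $C \subseteq P_\kappa\lambda$ and a sequence $\vec{S} = \langle S_x \st x \in P_\kappa\lambda\rangle$ with $S_x \subseteq x$. Pick $\theta$ sufficiently large, and build a continuous $\subseteq$-increasing chain $\langle N_\alpha \st \alpha < \kappa \rangle$ of elementary substructures of $H(\theta)$ with $C, \vec{S}, \kappa, \lambda \in N_0$ such that $\alpha \subseteq N_\alpha$ and $|N_\alpha| \leq \max(\omega, |\alpha|) < \kappa$. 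Set $x_\alpha = N_\alpha \cap \lambda$; by the standard function-closure characterization of Jech clubs, $x_\alpha \in C$ for every $\alpha$. The set
\[
D = \{\alpha < \kappa \st N_\alpha \cap \kappa = \alpha \text{ and } \alpha \text{ is an uncountable cardinal}\}
\]
is club in $\kappa$ (using that $\kappa$ is inaccessible, since it is subtle), and for $\alpha \in D$ one has $\alpha \subseteq x_\alpha$, $x_\alpha \cap \kappa = \alpha$, and $|x_\alpha| = \alpha$.

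Along $D$, recursively define bijections $b_\alpha : \alpha \to x_\alpha$ coherently, so that $b_\beta \restrict \alpha = b_\alpha$ whenever $\alpha < \beta$ both lie in $D$. At a successor-in-$D$ stage $\alpha$ with immediate $D$-predecessor $\alpha'$, extend $b_{\alpha'}$ by any bijection $[\alpha', \alpha) \to x_\alpha \setminus x_{\alpha'}$ (both sides have cardinality $\alpha$); at a $D$-limit $\alpha$, put $b_\alpha = \bigcup_{\beta \in D \cap \alpha} b_\beta$, which is a bijection onto $x_\alpha$ by continuity of the chain. Now define a $\kappa$-list by $T_\alpha = b_\alpha^{-1}[S_{x_\alpha}] \subseteq \alpha$ for $\alpha \in D$, and $T_\alpha = \emptyset$ otherwise.

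Applying the subtlety of $\kappa$ to $\langle T_\alpha \st \alpha < \kappa\rangle$ and the club $D$ yields $\alpha < \beta$ in $D$ with $T_\alpha = T_\beta \cap \alpha$. Coherence of the $b$'s then gives
\[
S_{x_\alpha} = b_\alpha[T_\alpha] = b_\beta[T_\beta \cap \alpha] = b_\beta[T_\beta] \cap b_\beta[\alpha] = S_{x_\beta} \cap x_\alpha,
\]
and $x_\alpha \subsetneq x_\beta$ because $\beta \subseteq x_\beta$ while $\beta \not\subseteq x_\alpha$ (since $x_\alpha \cap \kappa = \alpha$). Both $x_\alpha$ and $x_\beta$ belong to $C$, witnessing the subtlety of $P_\kappa\lambda$. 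The main obstacle is arranging the bijections $b_\alpha$ to cohere along the chain while still keeping each $x_\alpha \in C$ with the correct intersection with $\kappa$; this is handled by folding the successor-stage extensions of the $b_\alpha$'s into the recursive construction of the $N_\alpha$'s, so that all the requisite bookkeeping lives inside the chain itself.
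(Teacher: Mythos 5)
The paper states this result without proof, simply citing Menas, so there is no internal argument to compare against; your proof is correct and is essentially Menas's original one: push a continuous elementary chain into the club via the function-closure characterization, use coherent bijections $b_\alpha:\alpha\to x_\alpha$ along the club $D$ of $\alpha$ with $x_\alpha\cap\kappa=\alpha$ and $|x_\alpha|=\alpha$ to pull the $P_\kappa\lambda$-list back to a $\kappa$-list, and then apply the subtlety of $\kappa$ to that list and to $D$. Two cosmetic remarks: the claim that $x_\alpha\in C$ for \emph{every} $\alpha$ really requires $N_\alpha\cap\kappa\in\kappa$ (which your chain only guarantees on $D$ --- but that is all you ever use, and one can in any case arrange it everywhere), and the closing worry about having to fold the construction of the $b_\alpha$'s into the construction of the $N_\alpha$'s is unnecessary, since the bijections never need to be elements of the models: they can be defined by a separate recursion on $D$ after the chain is already built, as nothing in the application of subtlety requires the list $\langle T_\alpha\st\alpha<\kappa\rangle$ or the club $D$ to live inside any $N_\alpha$.
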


Menas also showed that the subtlety of $P_\kappa\lambda$ implies the usual two-cardinal diamond principle $\diamondsuit_{\kappa,\lambda}$, and thus the subtlety of $\kappa$ implies $\diamondsuit_{\kappa,\lambda}$ for all $\lambda\geq\kappa$. A surprising result of Usuba demonstrates that Menas's version of two-cardinal subtlety does not behave as expected: Usuba proved that, in general, the subtlety of $P_\kappa\lambda$, where $\lambda\geq\kappa$, does not imply the subtlety of $\kappa$.

\begin{theorem}[Usuba \cite{MR3472180}]\label{theorem_usuba}
Suppose $\lambda$ is a measurable cardinal. Then there is a regular uncountable $\kappa<\lambda$ such that $P_\kappa\lambda$ is subtle but $\kappa$ is not subtle.
\end{theorem}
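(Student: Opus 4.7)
Let $U$ be a normal measure on $\lambda$ and $j\colon V\to M=\Ult(V,U)$ the associated ultrapower, so that $\crit(j)=\lambda$, $V_{\lambda+1}\subseteq M$, and ${}^\lambda M\subseteq M$. The plan is to take $\kappa$ to be a successor cardinal of the form $\kappa=\mu^+<\lambda$ with $\mu^{<\mu}=\mu$ (such a $\mu$ is available in abundance below the inaccessible $\lambda$, and can be chosen to be uncountable). Since every subtle cardinal is strongly inaccessible, no successor cardinal is subtle, which settles half the theorem for free; all substantive work goes into showing that $P_\kappa\lambda$ is nonetheless subtle in Menas's sense.

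For the subtle half, fix an arbitrary club $C\subseteq P_\kappa\lambda$ and sequence $\vec S=\<S_x:x\in P_\kappa\lambda\>$ with $S_x\subseteq x$; both objects live in $V_{\lambda+1}\subseteq M$, and since $j(\kappa)=\kappa$, applying $j$ yields $j(C)$ a club of $P_\kappa j(\lambda)$ in $M$ and $j(\vec S)$ an indexing there. I would build inside $V$ a $\subseteq$-increasing continuous chain $\<x_\alpha:\alpha<\lambda\>\subseteq C$ of sets of size $<\kappa$ with $\bigcup_{\alpha<\lambda}x_\alpha=\lambda$; because ordinals below $\lambda$ are fixed by $j$, each $x_\alpha$ is its own image and hence lies in both $P_\kappa\lambda$ and $P_\kappa j(\lambda)$. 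The aim is then to extract a single anticipation object $D\subseteq\lambda$ from the $M$-values $j(\vec S)(x_\alpha)$ so that, on a $U$-measure-one set of indices $\alpha$, one has $S_{x_\alpha}=D\cap x_\alpha$. Any two such indices $\alpha<\beta$ then deliver $x_\alpha\subsetneq x_\beta$ in $C$ with the required coherence $S_{x_\alpha}=S_{x_\beta}\cap x_\alpha$.

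The main obstacle I expect is the stabilization step, because $2^{<\kappa}\ge 2^\mu$ may exceed $\kappa$ and so a direct pigeonhole on the possible values $j(\vec S)(x_\alpha)\cap x_\alpha$ will not deliver a uniform $D$. My intended workaround is to exploit the normality and $\lambda$-completeness of $U$ together with the fact that $\<j(\vec S)(x_\alpha):\alpha<\lambda\>$, regarded as a $\lambda$-sequence of subsets of $\lambda$ in $M$, is represented by a single function in $V$ under the ultrapower; a {\L}os-style diagonal argument along $U$ should then force these ``local anticipations'' to cohere on a measure-one set, producing the required $D$. If that route fails, a fallback is to encode $(C,\vec S)$ as a single sequence on $\lambda$ via a bijection $\lambda\leftrightarrow P_\kappa\lambda$ (available when $\lambda^{<\kappa}=\lambda$, which can be arranged by a preparatory $\GCH$-style reflection from the measurability of $\lambda$) and then to apply the subtlety of $\lambda$ — which follows from $\lambda$'s measurability via Lemma~\ref{lemma_subtlety_is_stronger_than_indescribability} — to produce the coherent pair at the $\lambda$-level before decoding back to $P_\kappa\lambda$.
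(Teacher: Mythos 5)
The paper does not actually prove this theorem---it is quoted from Usuba's article---so there is no in-paper argument to measure you against; your proposal has to stand on its own, and it does not. The ``$\kappa$ is not subtle'' half is fine: a successor cardinal is not inaccessible, hence not subtle. The fatal problem is in the other half. The object your whole argument rests on---a $\subseteq$-increasing \emph{continuous} chain $\langle x_\alpha\st\alpha<\lambda\rangle$ of elements of $P_\kappa\lambda$ with $\bigcup_{\alpha<\lambda}x_\alpha=\lambda$---cannot exist when $\kappa<\lambda$. Since each $x_\alpha$ has size $<\kappa$ and the union is to be all of $\lambda$, the chain must grow strictly at $\lambda$ many indices; if $\delta<\lambda$ is the supremum of the first $\kappa$ many such indices (which exists because $\lambda$ is regular and $>\kappa$), continuity forces $x_\delta=\bigcup_{\alpha<\delta}x_\alpha$ to have cardinality at least $\kappa$, so $x_\delta\notin P_\kappa\lambda$. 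This is exactly why clubs in $P_\kappa\lambda$ are only required to be closed under increasing unions of length $<\kappa$. So the engine of your proof cannot be started.

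Even granting some replacement family of sets fixed by $j$, the ultrapower is inert in your argument: every $x\in P_\kappa\lambda$ and every $S_x$ is a subset of $\lambda=\crit(j)$ of size $<\lambda$, so $j(x)=x$ and $j(\vec S)(x)=j(S_x)=S_x$; the ``$M$-values'' you propose to stabilize are literally the original values, and the {\L}os/normality step has nothing to act on. Moreover, what you are trying to extract---a single $D\subseteq\lambda$ with $S_{x_\alpha}=D\cap x_\alpha$ on a measure-one set of indices---is an ineffability-type anticipation, strictly stronger than the two-element coherence that subtlety requires, and there is no reason it should be available here. The fallback also fails: after coding $P_\kappa\lambda$ into $\lambda$, the subtlety of $\lambda$ hands you ordinals $\alpha<\beta$ whose \emph{codes} cohere as subsets of $\alpha$ and $\beta$, but the decoded sets need not be $\subseteq$-comparable, and coherence of codes does not translate into $S_{x}=S_{y}\cap x$ for the decoded pair; this transfer obstruction is precisely what makes two-cardinal subtlety a genuinely different notion and what makes Usuba's theorem surprising. (Also, Lemma \ref{lemma_subtlety_is_stronger_than_indescribability} does not assert that measurable cardinals are subtle; that fact follows instead from measurable implies ineffable implies subtle.) Usuba's actual argument proceeds through a partition-relation characterization of the subtlety of $P_\kappa\lambda$ and uses Rowbottom-type partition properties of the measurable $\lambda$; some genuinely two-cardinal combinatorial input of this kind is needed, and your proposal does not supply it.
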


As noted above, when studying $P_\kappa\lambda$ combinatorics where $\kappa$ is inaccessible, it can be fruitful to shift attention from the structure $(P_\kappa\lambda,\subseteq)$ to the structure $(P_\kappa\lambda,<)$ where $x<y$ if and only if $x\in P_{\kappa_y}y$ (see Section \ref{section_strong_stationarity} above). Abe \cite{MR2210149} defined a set $S\subseteq P_\kappa\lambda$ to be \emph{strongly subtle} if for every function $\vec{S}=\<S_x\st x\in S\>$ where $S_x\subseteq P_{\kappa_x}x$ for $x\in S$, and for every weak club $C\in\NSS_{\kappa,\lambda}^*$ there exist $x,y\in S\cap C$ with $x<y$ (meaning $x\in P_{\kappa_y}y$) such that $S_x=S_y\cap P_{\kappa_x}x$. Abe proved \cite{MR2210149} that the collection 
\[\text{NSSub}_{\kappa,\lambda}=\{X\subseteq P_\kappa\lambda\st \text{$X$ is not strongly subtle}\}\]
of non--strongly subtle subsets of $P_\kappa\lambda$ is a strongly normal ideal on $P_\kappa\lambda$ when it is nontrivial.
Although Abe's definition of strong subtlety of $P_\kappa\lambda$  for $\lambda>\kappa$ still does not provide a hypothesis stronger than the subtlety of $\kappa$, it does not possess the same unexpected behavior as Menas's notion of subtlety.

\begin{theorem}[Abe \cite{MR2210149}]
If $\kappa$ is subtle then $P_\kappa\lambda$ is subtle for all $\lambda\geq\kappa$. Furthermore, if $P_\kappa\lambda$ is strongly subtle for some $\lambda\geq\kappa$ then $\kappa$ is subtle.
\end{theorem}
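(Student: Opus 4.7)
The theorem splits into two implications, which I would prove separately.

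For the first implication (Menas's direction), assume $\kappa$ is subtle and fix a club $C\subseteq P_\kappa\lambda$ together with a list $\vec{S}=\<S_x\st x\in P_\kappa\lambda\>$ with $S_x\subseteq x$. The plan is to encode the data into a $\kappa$-list to which the subtlety of $\kappa$ may be applied, then pull back through the encoding. Using closure of $C$ and a standard elementary-chain construction, I would build a $\subseteq$-continuous strictly increasing chain $\<y_\alpha\st\alpha<\kappa\>$ of elements of $C$ with $y_\alpha\cap\kappa=\alpha$ for every $\alpha<\kappa$. Setting $y=\bigcup_{\alpha<\kappa}y_\alpha$ and fixing a bijection $f:y\to\kappa$, the set
\[D=\{\alpha<\kappa\st f\restrict y_\alpha \text{ is a bijection from } y_\alpha \text{ onto } \alpha\}\]
is club in $\kappa$. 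For $\alpha\in D$ define $T_\alpha=f[S_{y_\alpha}]\subseteq\alpha$, extending $\vec T$ arbitrarily off $D$. Applying subtlety of $\kappa$ to $\vec T$ and $D$ yields $\alpha<\beta$ in $D$ with $T_\alpha=T_\beta\cap\alpha$; since $f$ sends $y_\alpha$ bijectively onto $\alpha$ and $y_\alpha\subseteq y_\beta$, applying $f^{-1}$ gives $S_{y_\alpha}=S_{y_\beta}\cap y_\alpha$ with $y_\alpha\subsetneq y_\beta$ both in $C$, which is what is required.

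For the second implication, assume $P_\kappa\lambda$ is strongly subtle and fix a $\kappa$-list $\vec{S}=\<S_\alpha\st\alpha<\kappa\>$ together with a club $C\subseteq\kappa$. The plan is to code $\vec S$ into a strongly subtle list on $P_\kappa\lambda$ via singletons, evaluated on the correct weak club. Let
\[C'=\{x\in P_\kappa\lambda\st x\cap\kappa=\kappa_x \text{ and } \kappa_x\in C\};\]
this set is in $\NSS_{\kappa,\lambda}^*$ because the first clause defines a weak club and the trace condition $\kappa_x\in C$ preserves this. For $x\in C'$ with $\kappa_x>1$ define
\[T_x=\{\{\xi\}\st\xi\in S_{\kappa_x}\}\subseteq P_{\kappa_x}x,\]
the inclusion being legitimate because $S_{\kappa_x}\subseteq\kappa_x=x\cap\kappa\subseteq x$ and each $\{\xi\}$ has size $1<\kappa_x$. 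Applying strong subtlety to $\vec T$ and $C'$ produces $x<y$ in $C'$, forcing $\kappa_x<\kappa_y$ with both in $C$, such that $T_x=T_y\cap P_{\kappa_x}x$. Unpacking singletons: $\xi\in S_{\kappa_x}$ iff $\{\xi\}\in T_y$ and $\xi\in x$, i.e., iff $\xi\in S_{\kappa_y}\cap x$; since $S_{\kappa_y}\subseteq\kappa_y\subseteq\kappa$ and $x\cap\kappa=\kappa_x$, this reduces to $S_{\kappa_x}=S_{\kappa_y}\cap\kappa_x$, witnessing the subtlety of $\kappa$ for $\vec S$ and $C$.

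The principal technical obstacle lies in the first direction, namely arranging the continuous chain $\<y_\alpha\st\alpha<\kappa\>$ so that the bijection $f:\bigcup_\alpha y_\alpha\to\kappa$ respects initial segments on a club of indices; the standard fix is to interleave the construction of $f$ with that of the chain, assigning $f$-values as one extends through each successor stage. The second direction is essentially a translation, its only delicate point being the restriction to $C'$, which is what turns the ambient intersection $S_{\kappa_y}\cap x$ into the desired $S_{\kappa_y}\cap\kappa_x$.
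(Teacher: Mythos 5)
The paper states this result without proof, citing Abe's article, so there is no in-paper argument to compare against; judged on its own, your proof is correct and is essentially the standard one. The first direction is Menas's original argument (collapse a continuous $\subseteq$-increasing chain $\<y_\alpha\st\alpha<\kappa\>$ through $C$ via a bijection $f:\bigcup_\alpha y_\alpha\to\kappa$ and apply the subtlety of $\kappa$ on the club where $f[y_\alpha]=\alpha$ and $y_\alpha\cap\kappa=\alpha$), and the transfer of $T_\alpha=T_\beta\cap\alpha$ back to $S_{y_\alpha}=S_{y_\beta}\cap y_\alpha$ is valid because $f$ is injective and $f[y_\alpha]=\alpha$. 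The second direction, coding a $\kappa$-list into a strongly subtle list by singletons, also goes through: $x<y$ forces $\kappa_x=|x\cap\kappa|\leq|x|<\kappa_y$, and on your $C'$ the identity $S_{\kappa_y}\cap x=S_{\kappa_y}\cap\kappa_x$ holds since $S_{\kappa_y}\subseteq\kappa$ and $x\cap\kappa=\kappa_x$. Two points deserve one more line each: (i) the claim that $\{x\st x\cap\kappa=\kappa_x\}\in\NSS^*_{\kappa,\lambda}$ should be witnessed explicitly, e.g.\ by the function $f(a)=\sup(a\cap\kappa)+1$, for which every $x\in C_f$ with $\omega\subseteq x$ has $x\cap\kappa$ equal to a cardinal; and (ii) in the first direction one cannot literally arrange $y_\alpha\cap\kappa=\alpha$ at every stage, only on a club of indices, but as you note this is absorbed into the club to which the subtlety of $\kappa$ is applied.
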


Jech \cite{MR0325397} defined a natural two-cardinal version of ineffability, which was used by Magidor \cite{MR327518} to characterize supercompactness. Carr \cite{MR854519} showed that there is an associated normal ideal. A set $S\subseteq P_\kappa\lambda$ is said to be \emph{ineffable} if for every function $\vec{S}=\<S_x\st x\in S\>$ where $S_x\subseteq x$ for $x\in S$, there is a $D\subseteq\lambda$ such that $\{x\in S\st S_x=D\cap x\}\in\NS_{\kappa,\lambda}^+$. Carr proved $P_\kappa\lambda$ is ineffable if and only if the collection
\[\text{NIn}_{\kappa,\lambda}=\{X\subseteq P_\kappa\lambda\st\text{$X$ is not ineffable}\}\]
is a normal ideal on $P_\kappa\lambda$.


\subsection{Generic embedding characterizations of large cardinal ideals on $P_\kappa\lambda$}\label{section_two_cardinal_generic_embeddings}

In this section we provide generic embedding characterizations of various large cardinal ideals on $P_\kappa\lambda$. First, let us consider the following well-known result (see \cite{MR2768692}).

\begin{lemma}[Folklore]\label{lemma_normal_generic_ultrapower}
Suppose $\kappa$ is regular, $\kappa\leq\lambda$ with $\lambda^{<\kappa}=\lambda$, and $I$ is a $\kappa$-complete normal fine ideal on $P_\kappa\lambda$. If $G\subseteq P(P_\kappa\lambda)/I$ is generic and $j:V\to M=V^{P_\kappa\lambda}/G\subseteq V[G]$ is the corresponding generic ultrapower then the following conditions hold.
\begin{enumerate}
\item $G$ extends the filter $I^*$ dual to $I$.
\item $\crit(j)=\kappa$ and $j(\kappa)>\lambda$.
\item $[\id]_G=j"\lambda\in M$ and thus for all $X\in P(P_\kappa\lambda)^V$ we have $X\in G$ if and only if $j"\lambda\in^M j(X)$.
\item For every function $f:P_\kappa\lambda\to V$ in $V$ we have $j(f)(j"\lambda)=[f]_G$.
\item $M$ is wellfounded up to $(\lambda^+)^V$.
\end{enumerate}
\end{lemma}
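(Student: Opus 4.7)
The plan is to derive (1)--(4) as direct instances of \L{}o\'s's theorem for the generic ultrapower, using the $\kappa$-completeness, fineness, and normality of $I$ in turn, and to spend most of the work on (5). Throughout, I would set $B=P(P_\kappa\lambda)/I$ and use that $B$ is a $\kappa$-complete Boolean algebra, because $I$ is $\kappa$-complete; the generic filter $G$ is then closed under meets of $V$-indexed sequences of length less than $\kappa$, and consequently the $V$-ultrafilter $U_G=\{A\in P(P_\kappa\lambda)^V : [A]_I\in G\}$ is $V$-$\kappa$-complete.

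For (1), any $A\in I^*$ has $[A]_I=\top$ in $B$, so $[A]_I\in G$ and $A\in U_G$. For (2), $j(\alpha)=\alpha$ for $\alpha<\kappa$ follows by induction: if $[f]<^M j(\alpha)$, then $\{x : f(x)<\alpha\}\in U_G$ partitions in $V$ into fewer than $\kappa$ fibres of $f$, and the $V$-$\kappa$-completeness of $U_G$ places one fibre in $U_G$, showing $[f]=j(\beta)$ for some $\beta<\alpha$. The bound $j(\kappa)>\lambda$ is most easily derived after (3): since $\{x : x\in P_\kappa\lambda\}\in U_G$, \L{}o\'s gives $[\id]_G\in (P_{j(\kappa)}(j(\lambda)))^M$, whence $|[\id]_G|^M<j(\kappa)$, while externally $[\id]_G=j"\lambda$ is in bijection with $\lambda$ via $j\restrict\lambda$, forcing $j(\kappa)>\lambda$ in $V[G]$. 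For (3), fineness yields $\{x : \alpha\in x\}\in I^*\subseteq U_G$ for each $\alpha<\lambda$, whence $j(\alpha)\in^M[\id]_G$ by \L{}o\'s; the reverse inclusion uses the normality of $U_G$, inherited from $I$ via the standard diagonal-union argument, to reduce any $f$ satisfying $\{x : f(x)\in x\}\in U_G$ to a constant on a $U_G$-large set. The biconditional $X\in U_G$ iff $j"\lambda\in^M j(X)$ is \L{}o\'s for the atomic formula $v_0\in v_1$, and (4) is \L{}o\'s for the equation $y=f(x)$.

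The main obstacle is (5). The plan is to suppose for contradiction that $\langle [f_n]_G : n<\omega\rangle\in V[G]$ is an $E$-descending sequence with each $[f_n]<^M j(\lambda^+)$, and to replace each $f_n$ by a $V$-function $P_\kappa\lambda\to\lambda^+$ on its $U_G$-large support. Setting $A_n=\{x : f_{n+1}(x)<f_n(x)\}\in U_G$, it would then suffice to locate some $x\in\bigcap_n A_n$, since $\langle f_n(x) : n<\omega\rangle$ would form an honest descending $\omega$-sequence of $V$-ordinals. The subtle point is that the sequence $\langle f_n : n<\omega\rangle$ generally lives in $V[G]\setminus V$, so the $V$-$\kappa$-completeness of $U_G$ does not directly close off the $A_n$. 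The remedy is a name-based fusion carried out in $V$: beginning with a condition $p\in G$ that forces the pathology, one descends along a $V$-name for the sequence of $\dot f_n$ through a decreasing sequence of conditions that progressively decide each $\dot f_n$ as a check-name and lie below the Boolean value asserting $\dot A_n\in\dot U_G$, and then invokes the countable completeness of $B$, which follows from the $\kappa$-completeness of $I$ together with $\kappa>\omega$, to locate a single $x$ lying in all the $A_n$, yielding the contradiction. A careful execution is given in Foreman's handbook chapter \cite[Section 2]{MR2768692}, which is the source I would follow, and this is precisely the step at which the structural assumptions on $I$ enter essentially.
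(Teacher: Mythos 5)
The paper itself states this lemma without proof, pointing to Foreman's handbook chapter \cite{MR2768692}, so there is no in-paper argument to compare against. Your treatment of (1)--(4) is the standard combination of {\L}o\'s's theorem with density arguments and is essentially correct. One small caveat on $j(\kappa)>\lambda$: compare order types rather than cardinalities. Since $j\restrict\lambda$ is order-preserving, the $E$-predecessors of $[\id]_G$ form a well-order of type exactly $\lambda$, and $[x\mapsto\ot(x)]_G \mathrel{E} j(\kappa)$ because $\ot(x)<\kappa$ for every $x\in P_\kappa\lambda$; the cardinality version you give is vulnerable to the forcing $P(P_\kappa\lambda)/I$ collapsing $\lambda$, in which case an external bijection of $j"\lambda$ with $\lambda$ no longer rules out $M\models|[\id]_G|<j(\kappa)\leq\lambda$.

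The argument you propose for (5) has a genuine gap, and it is not the argument in the source you cite. After your fusion you hold a decreasing sequence of conditions $p_0\geq p_1\geq\cdots$ with $p_{n+1}\leq[A_n]_I$, so every \emph{finite} intersection $A_0\cap\cdots\cap A_n$ is $I$-positive. The $\kappa$-completeness of $I$ tells you that the infimum $\bigwedge_n p_n$ exists and is represented by $\bigcap_n A_n$, but it does not tell you this infimum is nonzero, and $\bigcap_n A_n$ can be literally empty: partition $\omega_1$ into stationary sets $S_n$ and take $A_n=\bigcup_{m>n}S_m$, so that all finite intersections are stationary while the full intersection is empty. This is precisely the distinction between $\kappa$-completeness and precipitousness. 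Note also that your fusion never uses the hypothesis that the $[f_n]_G$ lie below $j(\lambda^+)$; if it worked, it would prove that every $\kappa$-complete normal fine ideal on $P_\kappa\lambda$ is precipitous, which is false in $\ZFC$ (the nonstationary ideal on $\omega_1$ is not precipitous in $L$) and carries the consistency strength of a measurable cardinal when it does hold. The correct proof of (5) is the canonical-functions argument: for each $\delta<\lambda^+$ fix a surjection $s_\delta:\lambda\to\delta$ and set $f_\delta(x)=\ot(s_\delta[x])$; then show by induction on $\delta$, using normality and fineness, that the $E$-predecessors of $[f_\delta]_G$ are exactly $\{[f_\gamma]_G\st\gamma<\delta\}$, so that each $[f_\delta]_G$ lies in the wellfounded part of $M$ and collapses to $\delta$. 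The bound $\lambda^+$ enters exactly here, because every ordinal below $\lambda^+$ is a surjective image of $\lambda$.
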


This previous lemma easily leads to a generic embedding characterization of stationary subsets of $P_\kappa\lambda$.

\begin{proposition}[Folklore]
A set $S\subseteq P_\kappa\lambda$ is stationary if and only if there is a generic elementary embedding $j:V\to M\subseteq V[G]$ with critical point $\kappa$ such that $j(\kappa)>\lambda$ and $j"\lambda\in j(S)\cap M$.
\end{proposition}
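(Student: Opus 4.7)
The plan is to establish the two directions in close analogy with the one-cardinal case (Proposition 2.3), using the generic ultrapower machinery recorded in Lemma 6.4 in place of the more elementary normal ultrapower facts.

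For the forward direction, suppose $S\subseteq P_\kappa\lambda$ is stationary. By Carr's theorem (cited just after the definition of normality on $P_\kappa\lambda$), $\NS_{\kappa,\lambda}$ is the minimal normal fine $\kappa$-complete ideal on $P_\kappa\lambda$, and restricting to the positive set $S$ yields another normal fine $\kappa$-complete ideal $I=\NS_{\kappa,\lambda}\restrict S$. Force with $P(P_\kappa\lambda)/I$; if $G$ is generic and $j:V\to M=V^{P_\kappa\lambda}/G\subseteq V[G]$ is the induced generic ultrapower, then by Lemma \ref{lemma_normal_generic_ultrapower} we have $\crit(j)=\kappa$, $j(\kappa)>\lambda$, and $j"\lambda\in M$. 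Since $S\in I^*\subseteq G$, clause (3) of that lemma gives $j"\lambda\in j(S)$, as required.

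For the reverse direction, suppose $j:V\to M\subseteq V[G]$ is a generic elementary embedding with $\crit(j)=\kappa$, $j(\kappa)>\lambda$, and $j"\lambda\in j(S)\cap M$. Let $C\subseteq P_\kappa\lambda$ be a club in $V$; I need to show $S\cap C\neq\emptyset$. Recall the Menas-style characterization: a set $C\subseteq P_\kappa\lambda$ is club if and only if there exists a function $F:[\lambda]^{<\omega}\to P_\kappa\lambda$ such that $C\supseteq C_F:=\{x\in P_\kappa\lambda\st x\text{ is closed under }F\}$ and $C$ itself contains a club of this form. Fix such an $F$ for $C$. By elementarity, $j(F):[j(\lambda)]^{<\omega}\to P_{j(\kappa)}j(\lambda)$ in $M$, and $M\models$ ``$C_{j(F)}\subseteq j(C)$''. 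Now for any finite $a=\la j(\alpha_1),\ldots,j(\alpha_n)\ra\in [j"\lambda]^{<\omega}$, since $a=j(\la\alpha_1,\ldots,\alpha_n\ra)$, we have $j(F)(a)=j(F(\alpha_1,\ldots,\alpha_n))$; as $F(\alpha_1,\ldots,\alpha_n)\in P_\kappa\lambda$ has size less than $\kappa=\crit(j)$, it is pointwise fixed in the sense that $j(F(\alpha_1,\ldots,\alpha_n))=j"F(\alpha_1,\ldots,\alpha_n)\subseteq j"\lambda$. Hence $j"\lambda$ is closed under $j(F)$, so $M\models j"\lambda\in C_{j(F)}\subseteq j(C)$. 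Combined with the hypothesis $j"\lambda\in j(S)$, this gives $j"\lambda\in j(S\cap C)$, and by elementarity of $j$ we conclude $S\cap C\neq\emptyset$.

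The main obstacle is the verification in the reverse direction that $j"\lambda$ lies in $j(C)$; this is where one must select an appropriate closure-function presentation of the club $C$ and exploit both the critical-point inequality $\crit(j)=\kappa$ (to keep sets of size ${<}\kappa$ pointwise under control) and the property $j(\kappa)>\lambda$ (to ensure that $j"\lambda$ is a legitimate element of $P_{j(\kappa)}j(\lambda)$ in $M$). The forward direction, by contrast, is essentially a direct invocation of the machinery already developed.
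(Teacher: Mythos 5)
Your forward direction coincides with the paper's: force with $P(P_\kappa\lambda)/(\NS_{\kappa,\lambda}\restrict S)$ and read off $\crit(j)=\kappa$, $j(\kappa)>\lambda$, $j"\lambda\in M$ and $j"\lambda\in j(S)$ from Lemma \ref{lemma_normal_generic_ultrapower}. Your converse is correct but takes a genuinely different route. The paper's converse is abstract: it forms the induced ideal $I=\{X\subseteq P_\kappa\lambda\st\ \forces_\P j"\lambda\notin j(X)\}$, observes that it is a proper normal fine $\kappa$-complete ideal, and then invokes Carr's theorem that $\NS_{\kappa,\lambda}$ is the \emph{minimal} such ideal to conclude that every club $C$ lies in $I^*$, i.e.\ $\forces_\P j"\lambda\in j(C)$. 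You instead unwind the combinatorics: you reduce to clubs of the form $C_F$ for $F:[\lambda]^{<\omega}\to P_\kappa\lambda$ via Menas's lemma and verify by hand that $j"\lambda$ is closed under $j(F)$, using the standard fact that $j(x)=j"x$ whenever $|x|<\crit(j)$. What each approach buys: the paper's argument is shorter on the page but suppresses the routine verification that $I$ is normal, fine, $\kappa$-complete and proper, and it outsources the real content to Carr's minimality theorem (whose proof is itself essentially Menas's lemma); your argument makes that combinatorial content explicit and is self-contained modulo Menas's lemma, at the cost of a few absoluteness checks in a possibly ill-founded $M$. One such check that you gesture at rather than carry out: to assert $M\models j"\lambda\in C_{j(F)}$ you need $M\models j"\lambda\in P_{j(\kappa)}j(\lambda)$, and the cleanest way to obtain this is not from $j(\kappa)>\lambda$ alone but directly from the hypothesis $j"\lambda\in j(S)$ together with $j(S)\subseteq j(P_\kappa\lambda)$ --- which is exactly how the paper's version gets it for free. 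With that small repair your proof is complete.
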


\begin{proof}
Suppose $S$ is stationary and let $G\subseteq P(\kappa)/(\NS_{\kappa,\lambda}\restrict S)$ be generic. Since $\NS_{\kappa,\lambda}\restrict S$ is a $\kappa$-complete normal ideal on $P_\kappa\lambda$ we have $\crit(j)=\kappa$ and $[\id]_G=j"\lambda$. Hence $j"\lambda\in M$. Since $\lambda=j(f)(j"\lambda)=[f]_G$ where $f(x)=\ot(x)$ and $j(\kappa)=[c_\kappa]_G$ we have $\lambda<j(\kappa)$. Clearly $S\in (\NS_{\kappa,\lambda}\restrict S)^*\subseteq G$ and hence $\kappa\in j(S)$.

Conversely, suppose we have such a $j$ added by forcing with $\P$. Fix a club $C\subseteq P_\kappa\lambda$ in $V$. Then
\[I=\{X\in P(P_\kappa\lambda)\st\ \forces_\P j"\lambda\notin j(X)\}\]
is a normal ideal in $V$ and since $\NS_{\kappa,\lambda}$ is the minimal $\kappa$-complete normal ideal on $P_\kappa\lambda$ we have $\NS_{\kappa,\lambda}^*\subseteq I^*$. This implies that $C\in I^*$, in other words, $\forces_\P$ $j"\lambda\in j(C)$. Thus $M\models j(S)\cap j(C)\not=\emptyset$ and by elementarity $S\cap C\not=\emptyset$.
\end{proof}

We close by stating generic embedding characterizations of certain two-cardin\-al versions of indescribability, subtlety and ineffability. The proofs are similar to those in Section \ref{section_embeddings} above. Proposition \ref{proposition_two_cardinal_ineffability_embedding} below should be compared with \cite[Lemma 5.5]{MR3913154}.

\begin{proposition} For $n,m<\omega$, $\kappa$ regular, $\lambda\geq\kappa$ and $S\subseteq P_\kappa\lambda$, the following are equivalent.
\begin{enumerate}
\item $S$ is $\Pi^m_n$-indescribable.
\item There is a generic embedding $j:V\to M\subseteq V[G]$ with critical point $\kappa$ such that $\crit(j)=\kappa$, $j(\kappa)>\lambda$, $j"\lambda\in j(S)\cap M$ and for every $\Pi^m_n$-sentence $\varphi$ over $(V_\kappa(\kappa,\lambda),\in,A)$ where $A\in P(V_\kappa(\kappa,\lambda))^V$ we have
\[((V_\kappa(\kappa,\lambda),\in,A)\models\varphi)^V\implies((V_\kappa(\kappa,j"\lambda),\in, j(A)\cap V_\kappa(\kappa,j"\lambda))\models\varphi)^M.\]
\end{enumerate}
\end{proposition}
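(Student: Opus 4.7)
The plan is to adapt the argument of Proposition \ref{proposition_indescribability_embedding} to the two-cardinal setting, replacing the generic ultrapower machinery there with the one furnished by Lemma \ref{lemma_normal_generic_ultrapower}.

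For $(1)\Rightarrow(2)$, assuming $S\in\Pi^m_n(\kappa,\lambda)^+$, I would force with $P(P_\kappa\lambda)/(\Pi^m_n(\kappa,\lambda)\restrict S)$. By Abe's result recalled after Definition \ref{definition_indescribable}, $\Pi^m_n(\kappa,\lambda)$ is a strongly normal, hence normal, fine, and $\kappa$-complete, proper ideal on $P_\kappa\lambda$, so Lemma \ref{lemma_normal_generic_ultrapower} produces a generic ultrapower $j:V\to M\subseteq V[G]$ with $\crit(j)=\kappa$, $j(\kappa)>\lambda$, $j"\lambda=[\id]_G\in M$, and, since $S\in(\Pi^m_n(\kappa,\lambda)\restrict S)^*\subseteq G$, $j"\lambda\in j(S)$. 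The substantive content is the reflection clause, for which I would first establish a two-cardinal analogue of Lemma \ref{lemma_indescribable_filter}: given any $A\in P(V_\kappa(\kappa,\lambda))^V$ and any $\Pi^m_n$-sentence $\varphi$ with $(V_\kappa(\kappa,\lambda),\in,A)\models\varphi$, the set
\[C=\{x\in P_\kappa\lambda\st x\cap\kappa=\kappa_x\land(V_{\kappa_x}(\kappa_x,x),\in,A\cap V_{\kappa_x}(\kappa_x,x))\models\varphi\}\]
lies in $\Pi^m_n(\kappa,\lambda)^*$. The proof is verbatim that of Lemma \ref{lemma_indescribable_filter}: if $P_\kappa\lambda\setminus C$ were $\Pi^m_n$-indescribable, Definition \ref{definition_indescribable} applied with the pair $(A,\varphi)$ would produce a reflection point inside $C$, a contradiction. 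Hence $C\in G$, so $j"\lambda\in j(C)$.

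The crucial computation is then to unpack what $j"\lambda\in j(C)$ says inside $M$. Since $\crit(j)=\kappa$, we have $j"\lambda\cap j(\kappa)=j"\kappa=\kappa$, so $\kappa_{j"\lambda}^M=\kappa$; elementarity of $j$ applied to the defining formula of $C$ then gives $((V_{\kappa_{j"\lambda}}(\kappa_{j"\lambda},j"\lambda),\in,j(A)\cap V_{\kappa_{j"\lambda}}(\kappa_{j"\lambda},j"\lambda))\models\varphi)^M$, and substituting $\kappa_{j"\lambda}^M=\kappa$ yields precisely the displayed implication in clause (2).

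The direction $(2)\Rightarrow(1)$ is a pull-back by elementarity: given $A\in P(V_\kappa(\kappa,\lambda))^V$ and a $\Pi^m_n$-sentence $\varphi$ with $(V_\kappa(\kappa,\lambda),\in,A)\models\varphi$, the hypothesis together with $\kappa_{j"\lambda}^M=\kappa$ and $j"\lambda\in j(S)$ tells $M$ that
\[\exists x\in j(S)\bigl(x\cap j(\kappa)=\kappa_x\land(V_{\kappa_x}(\kappa_x,x),\in,j(A)\cap V_{\kappa_x}(\kappa_x,x))\models\varphi\bigr),\]
and elementarity of $j$ in reverse produces the corresponding reflection point for $(A,\varphi)$ in $S$. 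The main obstacle I expect is not in the logical architecture, which mirrors Proposition \ref{proposition_indescribability_embedding}, but in the bookkeeping inside the Baumgartner hierarchy: checking that $M$'s internal computation of $V_{\kappa_{j"\lambda}}(\kappa_{j"\lambda},j"\lambda)$ at the seed $j"\lambda$ really is the intended structure $V_\kappa(\kappa,j"\lambda)$, and that $j(A)\cap V_\kappa(\kappa,j"\lambda)$ serves as the correct parameter over this structure rather than some $M$-internal distortion of it.
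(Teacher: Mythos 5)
Your proposal is correct and is exactly the adaptation the paper intends: the paper gives no separate proof here, stating only that ``the proofs are similar to those in Section \ref{section_embeddings},'' and your argument is precisely the two-cardinal transcription of Proposition \ref{proposition_indescribability_embedding} — force with $P(P_\kappa\lambda)/(\Pi^m_n(\kappa,\lambda)\restrict S)$, prove the $P_\kappa\lambda$ analogue of Lemma \ref{lemma_indescribable_filter}, read off the reflection clause from $j"\lambda\in j(C)$ using $\kappa_{j"\lambda}=j"\lambda\cap j(\kappa)=\kappa$, and pull back by elementarity for the converse. The final ``bookkeeping'' worry is moot since clause (2) relativizes the satisfaction to $M$ by fiat, so the structure in question is $M$'s internal one by definition.
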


\begin{proposition}
A set $S\subseteq P_\kappa\lambda$ is subtle if and only if there is a generic elementary embedding $j:V\to M\subseteq V[G]$ with critical point $\kappa$ such that $j(\kappa)>\lambda$, $j"\lambda\in j(S)\cap M$ and for every function $\vec{S}=\<S_x\st x\in S\>$ where $S_x\subseteq x$ for $x\in S$, and every club $C\subseteq P_\kappa\lambda$, we have $j(\vec{S})(x)=j(\vec{d})(j"\lambda)\cap x$ for some $x\in j(S\cap C)$ with $x\subsetneq j"\lambda$.
\end{proposition}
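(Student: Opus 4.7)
The proof adapts the argument for the one-cardinal case in Proposition \ref{proposition_subtle_embedding}, invoking Lemma \ref{lemma_normal_generic_ultrapower} to make $j"\lambda$ play the role that $\kappa$ played there (assuming the evident typo in the statement, so that $j(\vec{d})$ is read as $j(\vec{S})$).

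For $(1)\Rightarrow(2)$, the first step is to verify that when $P_\kappa\lambda$ is subtle the collection $\NSub_{\kappa,\lambda}$ of non-subtle subsets of $P_\kappa\lambda$ is a $\kappa$-complete, fine, normal ideal on $P_\kappa\lambda$; this is the natural two-cardinal version of Baumgartner's G\"odel-pairing argument \cite{MR0384553}. Let $G\subseteq P(P_\kappa\lambda)/(\NSub_{\kappa,\lambda}\restrict S)$ be generic and let $j:V\to M=V^{P_\kappa\lambda}/G\subseteq V[G]$ be the corresponding generic ultrapower. By Lemma \ref{lemma_normal_generic_ultrapower}, $\crit(j)=\kappa$, $j(\kappa)>\lambda$, $j"\lambda\in M$, and since $S\in G$ we have $j"\lambda\in j(S)$. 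Now fix an $S$-list $\vec{S}=\<S_x\st x\in S\>$ and a club $C\subseteq P_\kappa\lambda$. Define the anticipation set
\[X=\{y\in S\st \exists x\in S\cap C\ (x\subsetneq y\text{ and }S_x=S_y\cap x)\}.\]
The set $S\setminus X$ is non-subtle, as witnessed by $\vec{S}\restrict(S\setminus X)$ and $C$: if $y_1\subsetneq y_2$ both lie in $(S\setminus X)\cap C$, then $y_1\in S\cap C$, and failure of $y_2\in X$ forces $S_{y_1}\neq S_{y_2}\cap y_1$. Hence $S\setminus X\in \NSub_{\kappa,\lambda}$, so $X\in (\NSub_{\kappa,\lambda}\restrict S)^*\subseteq G$, which gives $j"\lambda\in j(X)$; unpacking the definition of $X$ in $M$ produces the required $x\in j(S\cap C)$ with $x\subsetneq j"\lambda$ and $j(\vec{S})(x)=j(\vec{S})(j"\lambda)\cap x$.

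For $(2)\Rightarrow(1)$, fix any $\vec{S}$ and any club $C$ and apply the hypothesis to produce $x\in j(S\cap C)$ with $x\subsetneq j"\lambda$ and $j(\vec{S})(x)=j(\vec{S})(j"\lambda)\cap x$. By Carr's theorem \cite{MR667297}, $\NS_{\kappa,\lambda}$ is the minimal $\kappa$-complete fine normal ideal on $P_\kappa\lambda$, so any generic filter coming from such an ideal extends $\NS_{\kappa,\lambda}^*$ and Lemma \ref{lemma_normal_generic_ultrapower} gives $j"\lambda\in j(C)$; combined with $j"\lambda\in j(S)$ this yields $j"\lambda\in j(S\cap C)$. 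Thus in $M$ the two elements $u:=x$ and $v:=j"\lambda$ both lie in $j(S\cap C)$ with $u\subsetneq v$ and $j(\vec{S})(u)=j(\vec{S})(v)\cap u$. By elementarity of $j$, there exist $u,v\in S\cap C$ with $u\subsetneq v$ and $S_u=S_v\cap u$, so $S$ is subtle.

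The main obstacle is the verification that $\NSub_{\kappa,\lambda}$ is normal, so that Lemma \ref{lemma_normal_generic_ultrapower} applies to the forcing $P(P_\kappa\lambda)/(\NSub_{\kappa,\lambda}\restrict S)$ and the anticipation-set argument goes through; this requires a careful adaptation of Baumgartner's G\"odel-pairing trick in the $P_\kappa\lambda$ context, where one must code sequences of subsets indexed by elements of $P_\kappa\lambda$ into single subsets of elements of $P_\kappa\lambda$. A secondary point in the backward direction is the need to know that the generic filter contains the club filter; this is handled by appeal to Carr's minimality of $\NS_{\kappa,\lambda}$.
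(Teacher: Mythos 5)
Your overall strategy is the intended one: the paper gives no proof of this proposition, deferring to Section \ref{section_embeddings}, and your forward direction is the correct two-cardinal transcription of Proposition \ref{proposition_subtle_embedding}(1)$\Rightarrow$(2) --- force with $P(P_\kappa\lambda)/(\NSub_{\kappa,\lambda}\restrict S)$, observe that the anticipation set $X$ has non-subtle complement relative to $S$ (your witness $(S\setminus X)\cap C$ is right), and conclude $j"\lambda\in j(X)$. You are also right that the load-bearing technical point is the normality of $\NSub_{\kappa,\lambda}$; the G\"odel-pairing coding does go through, working on the club of $x\in P_\kappa\lambda$ that contain $0,1$ and are closed under $\gp$ and under the coordinate projections of $\gp^{-1}$, so that the codes cohere under intersection and the diagonal intersection of the witnessing clubs $C_\eta$ can be used as in Baumgartner's argument.

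The one genuine gap is in the backward direction, at the step ``$j"\lambda\in j(C)$.'' Clause (2) asserts only the existence of \emph{some} generic elementary embedding with the listed properties; it is not assumed that $j$ is the generic ultrapower of a normal fine $\kappa$-complete ideal, so the justification ``any generic filter coming from such an ideal extends $\NS_{\kappa,\lambda}^*$'' does not apply to the hypothesis as stated. The repair is the induced-ideal argument used in the proof of the stationarity proposition immediately preceding this one in Section \ref{section_two_cardinal_generic_embeddings}: let $\P$ be the witnessing forcing, set $I=\{X\subseteq P_\kappa\lambda\st\ \forces_\P\ j"\lambda\notin j(X)\}$, and check that $I$ is a proper, $\kappa$-complete, fine, normal ideal on $P_\kappa\lambda$ (properness uses $M\models j"\lambda\in j(P_\kappa\lambda)$, fineness uses $j(\alpha)\in j"\lambda$ for $\alpha<\lambda$, and normality is the computation that $j"\lambda\in j(\diagonalunion_{\alpha}X_\alpha)$ if and only if $j"\lambda\in j(X_\alpha)$ for some $\alpha<\lambda$). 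Carr's minimality then gives $\NS_{\kappa,\lambda}\subseteq I$, hence $\forces_\P\ j"\lambda\in j(C)$ for every club $C$ in $V$. With that substitution, your final elementarity step pulling $u\subsetneq v$ with $S_u=S_v\cap u$ back to $V$ is correct.
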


\begin{proposition}\label{proposition_two_cardinal_ineffability_embedding}
A set $S\subseteq P_\kappa\lambda$ is ineffable if and only if for every function $\vec{S}=\<S_x\st x\in S\>$ where $S_x\subseteq x$ for $x\in S$, there is a generic elementary embedding $j:V\to M\subseteq V[G]$ with critical point $\kappa$ such that $j(\kappa)>\lambda$, $j"\lambda\in j(S)\cap M$ and $j(\vec{S})(j"\lambda)=j"D$ for some set $D\in P(\lambda)^V$.
\end{proposition}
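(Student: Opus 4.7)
The plan is to mirror the proof of Proposition \ref{proposition_ineffable_embedding_stationary} in the two-cardinal setting, using Lemma \ref{lemma_normal_generic_ultrapower} as the analogue of the one-cardinal facts about generic ultrapowers by normal ideals. The forward direction will be an application of generic ultrapowers by $\NS_{\kappa,\lambda}$ restricted to a suitable anticipation set, and the reverse direction will extract an anticipation set from the seed function $x\mapsto S_x$ evaluated in the generic ultrapower at $j"\lambda$.

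For the forward direction, I would fix an $S$-list $\vec{S}=\<S_x\st x\in S\>$ and first use the ineffability of $S$ to choose $D\subseteq\lambda$ such that the set $A=\{x\in S\st S_x=D\cap x\}$ is stationary in $P_\kappa\lambda$. Then I would force with $P(P_\kappa\lambda)/(\NS_{\kappa,\lambda}\restrict A)$ to obtain a generic ultrapower $j:V\to M\subseteq V[G]$. Since $\NS_{\kappa,\lambda}\restrict A$ is a $\kappa$-complete normal fine ideal, Lemma \ref{lemma_normal_generic_ultrapower} yields $\crit(j)=\kappa$, $j(\kappa)>\lambda$, $j"\lambda\in M$, and the correspondence $X\in G\iff j"\lambda\in j(X)$; since $A$ lies in the dual filter $(\NS_{\kappa,\lambda}\restrict A)^*\subseteq G$, we obtain $j"\lambda\in j(A)\subseteq j(S)$. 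Finally, elementarity applied to the defining property of $A$ yields $j(\vec{S})(j"\lambda)=j(D)\cap j"\lambda$, and the routine identity $j(D)\cap j"\lambda=j"D$ (which amounts to the equivalence $\alpha\in D\iff j(\alpha)\in j(D)$ for $\alpha<\lambda$, via elementarity applied to parameters in $V$) completes this direction.

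For the converse, I would fix an $S$-list $\vec{S}$ and apply the hypothesis to obtain a generic embedding $j:V\to M\subseteq V[G]$ together with $D\in P(\lambda)^V$ such that $j(\vec{S})(j"\lambda)=j"D$. Setting $A=\{x\in S\st S_x=D\cap x\}\in V$, reversing the identity $j"D=j(D)\cap j"\lambda$ rewrites the hypothesis as $j(\vec{S})(j"\lambda)=j(D)\cap j"\lambda$, which is precisely the statement $j"\lambda\in j(A)$. To conclude ineffability, I need $A$ stationary: given any club $C\subseteq P_\kappa\lambda$ in $V$, the generic embedding here comes (as in Proposition \ref{proposition_ineffable_embedding_stationary}(2)) from a generic ultrafilter extending a $\kappa$-complete normal fine filter on $P_\kappa\lambda$, and by Carr's minimality theorem for $\NS_{\kappa,\lambda}$ every club lies in any such filter; hence $j"\lambda\in j(C)$, so $j"\lambda\in j(A\cap C)$, and elementarity yields $A\cap C\neq\emptyset$.

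The main obstacle I anticipate is the clean identification $j"D=j(D)\cap j"\lambda$, which must be made to see that the value of the seed function $x\mapsto S_x$ at the point $j"\lambda$ in the generic ultrapower really does encode a ground-model anticipation set $A$ for $\vec{S}$. This is the two-cardinal analogue of the trivial identity $j(D)\cap\kappa=D$ used in the one-cardinal proof of Proposition \ref{proposition_ineffable_embedding_stationary}, but here the replacement of $\kappa$ by $j"\lambda$ means one must pass through the pointwise image $j"D$ rather than a simple initial segment; once this identification is in place, the stationarity of $A$ follows by the same elementarity argument as in the one-cardinal case.
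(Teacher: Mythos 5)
Your proof is correct and is essentially the argument the paper intends: the paper states this proposition without proof, remarking only that the proofs are similar to those of Section \ref{section_embeddings}, and your argument is precisely the two-cardinal transcription of Proposition \ref{proposition_ineffable_embedding_stationary} via Lemma \ref{lemma_normal_generic_ultrapower}, with the identity $j(D)\cap j"\lambda=j"D$ playing the role of $j(D)\cap\kappa=D$. The one point to tighten is in the converse, where the hypothesis does not literally say that $j$ arises from a generic ultrafilter on some normal fine ideal; as in the paper's folklore characterization of stationary subsets of $P_\kappa\lambda$, you should either observe that $I=\{X\subseteq P_\kappa\lambda\st \forces_\P j"\lambda\notin j(X)\}$ is a proper normal fine $\kappa$-complete ideal in $V$ and then invoke Carr's minimality theorem, or verify directly that $j"\lambda\in j(C)$ for every ground-model club $C$ (e.g.\ because $C$ contains a set of the form $C_g$ and $j"\lambda$ is closed under $j(g)$).
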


\section{Questions}\label{section_questions}

In this section we state several open questions relating to the topics of this article. Before stating each question we refer the reader to the relevant section above for more background information and motivation.

See Section \ref{section_ramsey} and Remark \ref{remark_preramsey_question} for background concerning the first two questions.

\begin{question}\label{question_pi1n_ramsey_ideal}
For $n>1$, if $\kappa\in\R(\Pi^1_n(\kappa))^+$, does it follow that
\[\R(\Pi^1_n(\kappa))=\overline{\NPreRam_\kappa\cup\Pi^1_{n+2}(\kappa)}?\]
\end{question}

\noindent It seems that in order to answer Question \ref{question_pi1n_ramsey_ideal} one would need an answer to the following.

\begin{question}\label{question_generalize_subtle_result}
Can Baumgartner's Lemma \ref{lemma_subtlety_is_stronger_than_indescribability} be generalized to the pre-Ramsey ideal? Specifically, if $S\subseteq\kappa$ is pre-Ramsey and $f:[S]^{<\omega}\to\kappa$ is a regressive function, does it follow that $S\setminus A$ is not pre-Ramsey where
\begin{align*}
A=\{\alpha\in S\st \exists X\subseteq S\cap\alpha & \text{ such that $X$ is $\Pi^1_n$-indescribable in $\alpha$}\\
	&\text{for all $n<\omega$ and $X$ is homogeneous for $f$}\}?
\end{align*}
Baumgartner actually proved a version of Lemma \ref{lemma_subtlety_is_stronger_than_indescribability} for the $n$-subtle ideal (for $n<\omega$), however the proof does not seem to generalize to the pre-Ramsey ideal.
\end{question}

As far as the author is aware, the following two questions are open. See Section \ref{section_splitting} above for more information.
\begin{question}
If $\kappa$ is subtle (and not weakly compact), is the subtle ideal on $\kappa$ nowhere $\kappa$-saturated?
\end{question}
\noindent Note that Abe \cite{MR2210149} proved that when $\lambda>\kappa$ the subtle ideal on $P_\kappa\lambda$ is not $\lambda$-saturated. However, the proof does not seem to give information about the subtle ideal on $\kappa$.

\begin{question}[Hellsten]
For $n<\omega$, if $\kappa$ is weakly $\Pi^1_n$-indescribable (and not inaccessible), is the weakly $\Pi^1_n$-indescribable ideal nowhere $\kappa$-saturated?
\end{question}

As pointed out by Hellsten, the techniques used in \cite{MR2653962} do not seem to provide an answer to the following.

\begin{question}[Hellsten]\label{question_hellsten}
Can the $\Pi^1_2$-indescribable ideal on $\kappa$ be $\kappa^+$-saturated?
\end{question}

The following question remains open, see Section \ref{section_weakly_compact_reflection} or \cite{MR3985624} for more details.

\begin{question}
If $\kappa$ is $\kappa^+$-weakly compact, is there a forcing extension in which $\kappa$ remains $\kappa^+$-weakly compact and there is a weakly compact subset of $\kappa$ with no weakly compact proper initial segment?
\end{question}

It is known \cite{CodyGitmanLambieHanson} that $\kappa$-strategically closed forcing cannot make ground model weakly compact sets become weakly compact in the extension, it is also known that this can fail for $\Pi^1_2$-indescribable sets, in other words, a set which is not $\Pi^1_2$-indescribable in the ground model can become so after $\kappa$-strategically closed forcing (see \cite{CodyGitmanLambieHanson}). The following related question concerning the $\Pi^1_2$-indescribable ideal remains open. See Theorem \ref{theorem_weakly_compact_ideal_after_forcing} above or \cite{MR4050036} for more details.

\begin{question}
Suppose $\kappa$ is $\Pi^1_2$-indescribable and $\P=\<(\P_\alpha,\dot{\Q}_\alpha)\st\alpha<\kappa\>\subseteq V_\kappa$ is a good Easton-support iteration of length $\kappa$ as in Theorem \ref{theorem_cody_sakai}. Is the $\Pi^1_2$-indescribable ideal of the extension by $\P$ equal to the ideal generated by the ground model $\Pi^1_2$-indescribable ideal?
\end{question}

As is the case for Question \ref{question_hellsten}, using current techniques, namely those used in \cite{CodyGitmanLambieHanson}, we seem to be unable to answer the following.

\begin{question}[\cite{CodyGitmanLambieHanson}]
From some large cardinal hypothesis on $\kappa$, can one force $\square_2(\kappa)$ to hold \emph{nontrivially}\footnote{See \cite{CodyGitmanLambieHanson} for more details.} while preserving the $\Pi^1_2$-indescribability of $\kappa$?
\end{question}


\section*{Acknowledgment}
The author would like to thank Sean Cox, Monroe Eskew, Victoria Gitman and Chris Lambie-Hanson for many helpful conversations regarding the topics of this article. The author also thanks the anonymous referee for the detailed review which greatly improved this article.




\end{document}